\documentclass[11pt, twoside, leqno]{article}

\usepackage{amsmath}
\usepackage{amssymb}
\usepackage{titletoc}
\usepackage{mathrsfs}
\usepackage{amsthm}
\usepackage{color}
\usepackage{latexsym,amssymb}

\usepackage{indentfirst}
\usepackage{color}
\usepackage{txfonts}

\textwidth=15.01cm
\textheight=21cm
\oddsidemargin 0.46cm
\evensidemargin 0.46cm
\allowdisplaybreaks

\pagestyle{myheadings}
\markboth{\footnotesize\rm\sc
Hongchao Jia, Jin Tao, Dachun Yang, Wen Yuan and Yangyang Zhang}
{\footnotesize\rm\sc
Calder\'on--Zygmund Operators
on Congruent JNC Spaces}

\def\bint{{\ifinner\rlap{\bf\kern.30em--}
\int\else\rlap{\bf\kern.35em--}\int\fi}\ignorespaces}

\def\sbint{{\ifinner\rlap{\bf\kern.32em--}
\hspace{0.078cm}\int\else\rlap{\bf\kern.45em--}\int\fi}\ignorespaces}

\def\red{\color{red}}

\def\rr{{\mathbb R}}
\def\rn{{\mathbb{R}^n}}
\def\cc{{\mathbb C}}

\def\nn{{\mathbb N}}
\def\zz{{\mathbb Z}}

\def\fz{\infty }
\def\az{\alpha}

\def\dz{\delta}

\def\lf{\left}
\def\r{\right}
\def\ls{\lesssim}
\def\noz{\nonumber}

\def\BMO{\mathop\mathrm{\,BMO\,}}

\def\loc{{\mathrm{loc}}}
\DeclareMathOperator{\supp}{supp}

\def\XXint#1#2#3{{\setbox0=\hbox{$#1{#2#3}{\int}$ }
\vcenter{\hbox{$#2#3$ }}\kern-.6\wd0}}





\newtheorem{theorem}{Theorem}[section]
\newtheorem{lemma}[theorem]{Lemma}
\newtheorem{corollary}[theorem]{Corollary}
\newtheorem{proposition}[theorem]{Proposition}

\theoremstyle{definition}
\newtheorem{remark}[theorem]{Remark}
\newtheorem{definition}[theorem]{Definition}
\renewcommand{\appendix}{\par
   \setcounter{section}{0}%
   \setcounter{subsection}{0}%
   \setcounter{subsubsection}{0}%
   \gdef\thesection{\@Alph\c@section}%
   \gdef\thesubsection{\@Alph\c@section.\@arabic\c@subsection}%
   \gdef\theHsection{\@Alph\c@section.}%
   \gdef\theHsubsection{\@Alph\c@section.\@arabic\c@subsection}%
   \csname appendixmore\endcsname
 }

\numberwithin{equation}{section}

\begin{document}

\arraycolsep=1pt

\title{\bf\Large
Boundedness of Calder\'on--Zygmund
Operators on Special John--Nirenberg--Campanato and
Hardy-Type Spaces \\ via Congruent Cubes
\footnotetext{\hspace{-0.35cm} 2020 {\it
Mathematics Subject Classification}. Primary 42B20;
Secondary 47A30, 42B30, 46E35, 42B35.
\endgraf {\it Key words and phrases.} John--Nirenberg space,
Campanato space, Hardy-kind space, Calder\'on--Zygmund operator,
molecule, dual.
\endgraf This project is partially supported by
the National Natural Science Foundation of China (Grant Nos.\
11971058, 12071197, 12122102 and 11871100) and the National
Key Research and Development Program of China
(Grant No.\ 2020YFA0712900).}}
\date{}
\author{Hongchao Jia, Jin Tao, Dachun Yang\footnote{Corresponding author,
E-mail: \texttt{dcyang@bnu.edu.cn}/{\red August 23, 2021}/Final version.},
\ Wen Yuan and Yangyang Zhang}
\maketitle

\vspace{-0.8cm}

\begin{center}
\begin{minipage}{13cm}
{\small {\bf Abstract}\quad
Let $p\in[1,\infty]$, $q\in(1,\infty)$,
$s\in\mathbb{Z}_+:=\mathbb{N}\cup\{0\}$,
and $\alpha\in\mathbb{R}$.
In this article, the authors introduce a reasonable version
$\widetilde T$ of the Calder\'on--Zygmund operator $T$ on
$JN_{(p,q,s)_\alpha}^{\mathrm{con}}(\mathbb{R}^n)$,
the special John--Nirenberg--Campanato
space via congruent cubes,
which coincides with the Campanato space
$\mathcal{C}_{\alpha,q,s}(\mathbb{R}^n)$ when $p=\infty$.
Then the authors prove that $\widetilde T$ is bounded
on $JN_{(p,q,s)_\alpha}^{\mathrm{con}}(\mathbb{R}^n)$
if and only if, for any $\gamma\in\mathbb{Z}_+^n$
with $|\gamma|\leq s$, $T^*(x^{\gamma})=0$,
which is a well-known assumption.
To this end, the authors find an equivalent version
of this assumption.
Moreover, the authors show that $T$ can be extended to a unique continuous
linear operator on the Hardy-kind space
$HK_{(p,q,s)_{\alpha}}^{\mathrm{con}}(\mathbb{R}^n)$,
the predual space of
$JN_{(p',q',s)_\alpha}^{\mathrm{con}}(\mathbb{R}^n)$
with $\frac{1}{p}+\frac{1}{p'}=1=\frac{1}{q}+\frac{1}{q'}$,
if and only if, for any $\gamma\in\mathbb{Z}_+^n$
with $|\gamma|\leq s$, $T^*(x^{\gamma})=0$.
The main interesting integrands in the latter
boundedness are that,
to overcome the difficulty caused by
that $\|\cdot\|_{HK_{(p,q,s)_{\alpha}}^{\mathrm{con}}(\mathbb{R}^n)}$
is no longer concave, the authors
first find an equivalent norm of
$\|\cdot\|_{HK_{(p,q,s)_\alpha}^{\mathrm{con}}(\mathbb{R}^n)}$, and then
establish a criterion for the boundedness
of linear operators on $HK_{(p,q,s)_\alpha}^{\mathrm{con}}(\mathbb{R}^n)$
via introducing molecules of $HK_{(p,q,s)_\alpha}^{\mathrm{con}}(\mathbb{R}^n)$,
using the boundedness of $\widetilde T$ on
$JN_{(p,q,s)_\alpha}^{\mathrm{con}}(\mathbb{R}^n)$,
and skillfully applying the dual relation
$(HK_{(p,q,s)_{\alpha}}^{\mathrm{con}}(\mathbb{R}^n))^*
=JN_{(p',q',s)_\alpha}^{\mathrm{con}}(\mathbb{R}^n)$.
}
\end{minipage}
\end{center}

\vspace{0.2cm}



\section{Introduction\label{Introduction}}

Let $q\in(0,\infty]$ and $E$ be any given measurable set of $\rn$.
The \emph{Lebesgue space} $L^q(E)$
is defined to be the set of all measurable
functions $f$ on $E$ such that
$$\|f\|_{L^q(E)}:=
\begin{cases}
\displaystyle
\lf[\int_{E}|f(x)|^q\, dx\r]^{\frac{1}{q}}
&\text{when}\quad q\in(0,\fz),\\
\displaystyle
\mathop{\mathrm{ess\,sup}}_{x\in E}\,|f(x)| &\text{when}\quad q=\fz
\end{cases}$$
is finite. Moreover, the \textit{space} $L^q_{\mathrm{loc}}(\rn)$
is defined to be the set of all measurable functions $f$ on $\rn$ such that
$f\mathbf{1}_F\in L^q(\rn)$ for
any given bounded measurable set $F\subset \rn$,
here and thereafter, we use $\mathbf{1}_F$ to
denote the \emph{characteristic function} of $F$.

In this article, a \textit{cube} $Q$ of $\rn$
always has finite side length and all its sides parallel to the coordinate
axes, but $Q$ is not necessary to be open or closed.
Let $Q_0$ be a given cube of $\rn$.
John and Nirenberg \cite{JN} introduced the
well-known space ${\mathop{\mathrm{BMO\,}}} (Q_0)$
and, in the same article, they also introduced
the space $JN_p(Q_0)$ as a natural generalization of the space $\BMO (Q_0)$,
which is now called the John--Nirenberg space
and can be naturally defined on $\rn$ as well
(see, for instance, \cite{TYY19}).
In what follows, for any $f\in L_{\loc}^1(\rn)$
and any bounded measurable set $E\subset\rn$ with $|E|>0$, let
$$f_E:=\fint_{E}f(x)\,dx:=\frac{1}{|E|}\int_E f(x)\,dx.$$

\begin{definition}\label{d-jnp}
	Let $p\in(1,\infty)$ and $Q_0$ be a given cube of $\rn$.
	The \emph{John--Nirenberg space  $JN_p(Q_0)$}
	is defined to be the set of all $f\in L^1(Q_0)$ such that
	\begin{align*}
		\|f\|_{JN_p(Q_0)}:=
		\sup\lf\{\sum_{j}\lf|Q_{j}\r|\lf[\fint_{Q_j}
		\lf|f(x)-f_{Q_{j}}\r|\,dx\r]^{p}\r\}^{\frac{1}{p}}<\infty,
	\end{align*}
	where the supremum is taken over all collections
	$\{Q_j\}_j$ of interior pairwise disjoint subcubes of $Q_0$.
\end{definition}

Recently, the John--Nirenberg space
has attracted more and more attention.
For instance, Aalto et al. \cite{ABKY}
introduced the John--Nirenberg space in the
context of doubling metric measure spaces.
Dafni et al. \cite{DHKY}
showed the non-triviality of $JN_p(Q_0)$ and
introduced a Hardy-kind space $HK_{p'}(Q_0)$
with $\frac{1}{p}+\frac{1}{p'}=1$, which proves
the predual space of the space $JN_p(Q_0)$
(see also \cite{TYY19} for more generalization results
on $Q_0$ and $\rn$).
Very recently, Dom\'inguez and Milman \cite{DM} introduced
and studied sparse Brudnyi and John--Nirenberg spaces.
Moreover, Tao et al. \cite{TYY19}
studied the John--Nirenberg--Campanato space,
which is a generalization of the space $JN_p(Q_0)$, and
Sun et al. \cite{SXY} studied
the localized John--Nirenberg--Campanato space.
We refer the reader to \cite{FPW, FMS, HMV, MP, M, MM, TYY2,TYY20S}
for more studies on John--Nirenberg-type spaces.

It is well known that the real-variable
theory of Hardy-type and Campanato-type spaces on $\rn$,
including the boundedness of Calder\'on--Zygmund operators,
plays an important role in harmonic analysis and partial differential
equations (see, for instance, \cite{CW, MS79, N97, N10, N17, EMS1970}), but,
unfortunately, it is still a challenging
and open question to obtain the boundedness
of some important operators, for instance,
the Hardy--Littlewood maximal operator, the
Calder\'on--Zygmund operator, and the fractional integrals,
on John--Nirenberg spaces and their predual spaces, namely,
some Hardy-kind spaces.

To shed some light on the boundedness of these important operators
on John--Nirenberg spaces,
we in \cite{jtyyz1} studied the
special John--Nirenberg--Campanato space via congruent cubes,
which is actually a generalization of the Campanato space,
and, in this article, we study the boundedness
of Calder\'on--Zygmund operators on
these special John--Nirenberg--Campanato spaces.
In what follows, for any $\ell\in(0,\fz)$, let
$\Pi_{\ell}(\rn)$ be the class of all collections of
interior pairwise disjoint subcubes $\{Q_j\}_j$ of $\rn$ with side length $\ell$;
for any $s\in\zz_+:=\{0,1,2,\ldots\}$,
let $\mathcal{P}_s(\rn)$ denote the set of
all polynomials of degree not greater than $s$ on $\rn$; moreover, for any
$\gamma:=(\gamma_1,\ldots,\gamma_n)\in\zz_+^n:=(\zz_+)^n$
and $x:=(x_1,\ldots,x_n)\in\rn$,
let $|\gamma|:=\gamma_1+\cdots+\gamma_n$ and
$x^{\gamma}:=x_1^{\gamma_1}\cdots x_n^{\gamma_n}$.

\begin{definition}\label{Defin.jncc}
	Let $p\in[1,\infty]$, $q\in[1,\infty)$, $s\in\zz_+$, and $\alpha\in\rr$.
	The \emph{special John--Nirenberg--Campanato space via congruent cubes}
	(for short, \emph{congruent} JNC \emph{space}),
	$JN_{(p,q,s)_\alpha}^{\mathrm{con}}(\rn)$, is defined to be the set of all
	$f\in L^q_{\mathrm{loc}}(\rn)$ such that
\begin{align*}
	&\|f\|_{JN_{(p,q,s)_\alpha}^{\mathrm{con}}(\rn)}\\
	&\quad:=
	\begin{cases}
		\displaystyle
		\sup_{\ell\in(0,\fz)}\sup_{\{Q_j \}_{j}\in\Pi_{\ell}(\rn)}
		\lf[\sum_{j}\lf|Q_{j}\r|\lf\{\lf|Q_{j}
		\r|^{-\alpha}\lf[\fint_{Q_{j}}
		\lf|f(x)-P_{Q_j}^{(s)}(f)(x)\r|^{q}\,dx
		\r]^{\frac{1}{q}}\r\}^{p} \r]^{\frac{1}{p}},
		&p\in[1,\fz), \\
		\displaystyle
		\sup_{{\rm cube\ }Q\subset \rn}|Q|^{-\alpha}\lf[\fint_{Q}
		\lf|f(x)-P_{Q}^{(s)}(f)(x)\r|^{q}\,dx\r]^{\frac{1}{q}},
		&p=\fz
	\end{cases}
\end{align*}
is finite, where, for any cube $Q$ of $\rn$,
$P_{Q}^{(s)}(f)$ denotes the unique polynomial
of degree not greater than $s$ such that,
for any $\gamma\in\zz_+^n$ with $|\gamma|\leq s$,
\begin{align}\label{pq}
	\int_{Q}\lf[f(x)-P_{Q}^{(s)}(f)(x)\r]x^{\gamma}\,dx=0.
\end{align}
\end{definition}

\begin{remark}\label{rem-JN}
\begin{itemize}
\item [(i)]
It is well known that, for any $f\in L^1_{\mathrm{loc}}(\rn)$
and any cube (or ball) $Q$ of $\rn$, $P_{Q}^{(0)}(f)=f_Q$
and, for any $s\in\zz_+$, there exists a constant $C_{(s)}\in[1,\infty)$,
independent of $f$ and $Q$, such that, for any $x\in Q$,
\begin{align}\label{p}
	\lf|P_Q^{(s)}(f)(x)\r|\leq C_{(s)}\fint_Q|f(y)|\,dy;
\end{align}
see, for instance, \cite{L, ES2012} for more properties of $P_Q^{(s)}(f)$.

\item [(ii)]
$JN_{(\fz,q,s)_\alpha}^{\mathrm{con}}(\rn)$
is just the well-known Campanato space
$\mathcal{C}_{\alpha,q,s}(\rn)$ which was introduced
by Campanato \cite{C} and which when $\alpha=0$
coincides with the space $\mathrm{BMO}\,(\rn)$.
\end{itemize}
\end{remark}

Moreover, in \cite{jtyyz1}, we introduced a
Hardy-kind space $HK_{(p,q,s)_\alpha}^{\mathrm{con}}(\rn)$
which proves the predual
space of $JN_{(p',q',s)_\alpha}^{\mathrm{con}}(\rn)$
(see, for instance, \cite[Theorem 4.10]{jtyyz1}),
here and thereafter, $p'$ and $q'$ denote, respectively,
the conjugate indexes of $p$ and $q$, namely, $\frac{1}{p}+\frac{1}{p'}=1$
and $\frac{1}{q}+\frac{1}{q'}=1$.
In this article, we also establish
the boundedness of Calder\'on--Zygmund operators on
such a Hardy-type space
(see Theorem \ref{T-bounded-HK} below).

Furthermore, inspired by the boundedness of
the Calder\'on--Zygmund operator $T$
from $L^{\fz}(\rn)$ to $\BMO (\rn)$
(see, for instance, \cite[Theorem 6.6]{Duo01}),
we also introduce the
following subspace $RM_{p,q,\alpha}^{\mathrm{con}}(\rn)$ of
$JN_{(p,q,s)_\alpha}^{\mathrm{con}}(\rn)$, and further study the boundedness
of $T$ on $RM_{p,q,\alpha}^{\mathrm{con}}(\rn)$;
see Proposition \ref{thm-C-Z-RM} below.

\begin{definition}\label{def-RM}
Let $p$, $q\in[1,\infty]$ and $\alpha\in\rr$.
The \emph{special Riesz--Morrey space via congruent cubes},
$RM_{p,q,\alpha}^{\mathrm{con}}(\rn)$,
is defined to be the set of all $f\in L^q_{\mathrm{loc}}(\rn)$ such that
\begin{align*}
\|f\|_{RM_{p,q,\alpha}^{\rm con}(\rn)}
&:=
\begin{cases}
\displaystyle
\sup_{\ell\in(0,\fz)}\sup_{\{Q_j\}_j\in\Pi_{\ell}(\rn)}
\lf\{\sum_{j}\lf|Q_{j}\r|\lf[\lf|Q_{j}\r|^{-\alpha-\frac{1}{q}}
\|f\|_{L^q(Q_j)}\r]^{p} \r\}^{\frac{1}{p}},
&p\in[1,\fz), \\
\displaystyle
\sup_{{\rm cube\ }Q\subset \rn} |Q|^{-\az-\frac1q}\|f\|_{L^q(Q)},
&p=\fz
\end{cases}
\end{align*}
is finite.
\end{definition}

\begin{remark}\label{rem-RM}
\begin{itemize}
\item [(i)]
If $1\leq q<\alpha<p\leq\fz$, then
the space $RM_{p,q,\alpha}^{\mathrm{con}}(\rn)$
coincides with the space $(L^q, \ell^p)^{\frac{p}{1-p\alpha}}(\rn)$
which was introduced by Fofana \cite{F88};
see \cite{mfs2013, ffk2010, ffk2015} for more studies on
the space $(L^q, \ell^p)^{\alpha}(\rn)$.
	
\item [(ii)]
If we do not require that $\{Q_j\}_j$ have the same size in
the definition of $RM_{p,q,\alpha}^{\mathrm{con}}(\rn)$,
then such a space is just the Riesz--Morrey space $RM_{p,q,\alpha}(\rn)$
introduced by Tao et al. \cite{TYY21} and, independently,
by Fofana et al. \cite{ffk2015}; see also,
for instance, \cite{ffk2015,TYY21,zdjy}
for more studies on $RM_{p,q,\alpha}(\rn)$.

\item[(iii)]
If $p=\infty$, $q\in(0,\infty)$,
and $\alpha\in[-\frac1q,0)$,
then the space $RM_{p,q,\alpha}^{\mathrm{con}}(\rn)$ coincides with the
Morrey space introduced by Morrey \cite{Mo};
see, for instance, \cite{N94, N06, ST05} for more studies on Morrey-type spaces.
\end{itemize}
\end{remark}

Let $T$ be a Calder\'on--Zygmund operator
as in Definition \ref{Def-T-s-v} below.
In this article, we first introduce a reasonable version of
the Calder\'on--Zygmund operator $T$ on
$JN_{(p,q,s)_\alpha}^{\rm con}(\rn)$, denoted by $\widetilde{T}$.
Then we prove that $\widetilde{T}$ is bounded
on $JN_{(p,q,s)_\alpha}^{\mathrm{con}}(\mathbb{R}^n)$
if and only if, for any $\gamma\in\mathbb{Z}_+^n$
with $|\gamma|\leq s$, $T^*(x^{\gamma})=0$,
which is a well-known assumption; see Theorem \ref{thm-bdd-JN} below.
To this end, we find an equivalent version of this assumption;
see Proposition \ref{Assume} below.
Moreover, we show that $T$ can be extended to a unique
continuous linear operator on the Hardy-kind space
$HK_{(p,q,s)_{\alpha}}^{\mathrm{con}}(\mathbb{R}^n)$
if and only if, for any $\gamma\in\mathbb{Z}_+^n$
with $|\gamma|\leq s$, $T^*(x^{\gamma})=0$;
see Theorem \ref{T-bounded-HK} below.
To obtain the desired extension of $T$
on $HK_{(p,q,s)_\alpha}^{\mathrm{con}}(\mathbb{R}^n)$,
one key step is to estimate
$\|T(g)\|_{HK_{(p,q,s)_\alpha}^{\mathrm{con}}(\mathbb{R}^n)}$
for any $g\in HK_{(p,q,s)_{\alpha}}^{\mathrm{con-fin}}(\mathbb{R}^n)$
(see Definition \ref{d3.5} below),
but the classical method used to extend
$T$ from the finite atomic Hardy space
to the corresponding Hardy space is no longer feasible because
$\|\cdot\|_{HK_{(p,q,s)_{\alpha}}^{\mathrm{con}}(\mathbb{R}^n)}$
is not concave with $p\in(1,\fz)$,
which is essentially different from the Hardy space $H^p(\rn)$ with $p\in(0,1]$.
To avoid this, we first find an equivalent norm,
$\|\cdot\|_{(JN_{(p',q',s)_\alpha}^{\mathrm{con}}(\mathbb{R}^n))^*}$, of
$\|\cdot\|_{HK_{(p,q,s)_\alpha}^{\mathrm{con}}(\mathbb{R}^n)}$
in Proposition \ref{Coro-JN} below, and hence it suffices to
estimate $\|T(g)\|_{(JN_{(p',q',s)_\alpha}^{\mathrm{con}}(\mathbb{R}^n))^*}$.
Moreover, we use the properties of molecules of $HK_{(p,q,s)_{\alpha}}^{\mathrm{con}}(\mathbb{R}^n)$
and the dual relation $(HK_{(p,q,s)_{\alpha}}^{\mathrm{con}}(\mathbb{R}^n))^*
=JN_{(p',q',s)_\alpha}^{\mathrm{con}}(\mathbb{R}^n)$
to reduce the estimate of
$\|T(g)\|_{(JN_{(p',q',s)_\alpha}^{\mathrm{con}}(\mathbb{R}^n))^*}$
to the boundedness of $\widetilde{T}$
on $JN_{(p',q',s)_\alpha}^{\mathrm{con}}(\mathbb{R}^n)$
obtained in Theorem \ref{thm-bdd-JN};
see Theorem \ref{Bounded-HK-A} below for more details.
To limit the length of this article, the boundedness of
fractional integrals and Littlewood--Paley operators on
$JN_{(p,q,s)_{\alpha}}^{\rm con}(\rn)$ will be
studied, respectively, in the forthcoming articles \cite{jtyy3,jyyz2}.

The remainder of this article is organized as follows.

The main aim of Section \ref{CZO} is to
establish the boundedness of
Calder\'on--Zygmund operators, respectively, on
$RM_{p,q,\alpha}^{\mathrm{con}}(\rn)$ and
$JN_{(p,q,s)_\alpha}^{\rm con}(\rn)$.
In Subsection \ref{sec-def-CZO-RM}, we study the boundedness of
Calder\'on--Zygmund operators on
$RM_{p,q,\alpha}^{\mathrm{con}}(\rn)$;
see Proposition \ref{thm-C-Z-RM} below.
In Subsection \ref{sec-def-CZO-JN},
we first introduce a reasonable version
of Calder\'on--Zygmund operators
via borrowing some ideas from
\cite[Section 4]{N10}.
Then we show that such a modified
Calder\'on--Zygmund operator is bounded
on $JN_{(p,q,s)_\alpha}^{\mathrm{con}}(\mathbb{R}^n)$
if and only if, for any $\gamma\in\mathbb{Z}_+^n$
with $|\gamma|\leq s$, $T^*(x^{\gamma})=0$;
see Theorem \ref{thm-bdd-JN} below.

In Section \ref{S-C-Z-HK}, we establish the boundedness of
Calder\'on--Zygmund operators on the Hardy-kind space
$HK_{(p,q,s)_\alpha}^{\mathrm{con}}(\rn)$.
In Subsection \ref{Def-HK}, we recall
the notion of $HK_{(p,q,s)_\alpha}^{\mathrm{con}}(\rn)$
in Definition \ref{d3.6} below, and
then prove that $HK_{(p,q,s)_\alpha}^{\mathrm{con}}(\rn)$
is a Banach space in Proposition \ref{HK-banach} below.
In Subsection \ref{Mo}, to overcome the difficulty caused by
the fact that $\|\cdot\|_{HK_{(p,q,s)_{\alpha}}^{\mathrm{con}}(\mathbb{R}^n)}$
is no longer concave, we establish a criterion
(see Theorem \ref{Bounded-HK-A} below) for the boundedness
of linear operators on $HK_{(p,q,s)_\alpha}^{\mathrm{con}}(\mathbb{R}^n)$
via introducing molecules of $HK_{(p,q,s)_\alpha}^{\mathrm{con}}(\mathbb{R}^n)$
and skillfully using the dual relation
$(HK_{(p,q,s)_{\alpha}}^{\mathrm{con}}(\mathbb{R}^n))^*
=JN_{(p',q',s)_\alpha}^{\mathrm{con}}(\mathbb{R}^n)$.
In Subsection \ref{S-T-HK}, as an application of Theorem \ref{Bounded-HK-A},
we prove that the Calder\'on--Zygmund operator
$T$ can be extended to a unique
continuous linear operator on the Hardy-kind space
$HK_{(p,q,s)_{\alpha}}^{\mathrm{con}}(\mathbb{R}^n)$
if and only if, for any $\gamma\in\mathbb{Z}_+^n$
with $|\gamma|\leq s$, $T^*(x^{\gamma})=0$;
see Theorem \ref{T-bounded-HK} below.

Finally, we make some conventions on notation. Let
$\nn:=\{1,2,\ldots\}$ and $\zz_+:=\nn\cup\{0\}$.
For a given $s\in\zz_+$ and a
given ball $B\subset \rn$, $\mathcal{P}_s(B)$
[resp., $\mathcal{P}_s(\rn)$] denotes the set of all
polynomials of degree not greater than $s$ on $B$ (resp., $\rn$).
We always denote by $C$ a \emph{positive constant}
which is independent of the main parameters,
but it may vary from line to line. We also use
$C_{(\alpha,\beta,\ldots)}$ to denote a positive constant depending
on the indicated parameters $\alpha,\beta,\ldots.$
The symbol $f\lesssim g$ means that $f\le Cg$.
If $f\lesssim g$ and $g\lesssim f$,
we then write $f\sim g$. If $f\le Cg$ and
$g=h$ or $g\le h$, we then write $f\ls g\sim h$
or $f\ls g\ls h$, \emph{rather than} $f\ls g=h$ or $f\ls g\le h$.
We use $\mathbf{0}$ to denote the \emph{origin} of $\rn$.
For any $E\subset \rn$, we denote by $\mathbf{1}_E$ its
characteristic function.
We use $\eta\to 0^+$ to denote $\eta \in(0,\fz)$ and $\eta\to 0$.
For any $m\in\nn$, let $m!:=1\times\cdots\times m$ and $0!:=1$.
For any $\gamma:=(\gamma_1,\ldots,\gamma_n)\in\zz_+^n$
and $x:=(x_1,\ldots,x_n)\in\rn$,
let $|\gamma|:=\gamma_1+\cdots+\gamma_n$,
$\gamma!:=\gamma_1!\times\cdots\times\gamma_n!$, and
$x^{\gamma}:=x_1^{\gamma_1}\cdots x_n^{\gamma_n}$.
Moreover, for a given $\lambda\in(0,\fz)$ and
a given ball $B:=B(x,r):=\{y\in\rn:\ |y-x|<r\}\subset\rn$ with $x\in\rn$ and
$r\in(0,\infty)$, let $\lambda B:=B(x,\lambda r)$.
For any $z\in\rn$ and $r\in(0,\fz)$,
let $Q_z(r)$ denote the cube with center $z$ and length $r$.
Finally, for any $q\in[1,\infty]$,
we denote by $q'$ its \emph{conjugate index},
namely, $\frac{1}{q}+\frac{1}{q'}=1$.

\section{Boundedness of Calder\'on--Zygmund Operators on
$RM_{p,q,\alpha}^{\rm con}(\rn)$ and
$JN_{(p,q,s)_\alpha}^{\rm con}(\rn)$\label{CZO}}

In this section, we first establish
the boundedness of Calder\'on--Zygmund operators
on the space $RM_{p,q,\alpha}^{\rm con}(\rn)$.
Moreover, we give a reasonable version
of Calder\'on--Zygmund operators
on $JN_{(p,q,s)_\alpha}^{\rm con}(\rn)$
via borrowing some ideas from \cite[Section 4]{N10}.
Then we obtain the boundedness of
such Calder\'on--Zygmund operators on
$JN_{(p,q,s)_\alpha}^{\rm con}(\rn)$.
To this end, we first find an equivalent version
of the well-known assumption that,
for any $\gamma\in\zz_+^n$ with $|\gamma|\leq s$,
$T^*(x^{\gamma})=0$; see Proposition \ref{Assume} below.

\subsection{Calder\'on--Zygmund Operators
on $RM_{p,q,\alpha}^{\rm con}(\rn)$}\label{sec-def-CZO-RM}

This subsection is devoted to studying the boundedness of
Calder\'on--Zygmund operators on $RM_{p,q,\alpha}^{\rm con}(\rn)$.
We first recall the following notion of the
(generalized) Calder\'on--Zygmund operator $T$
(see, for instance, \cite[Definition 5.11]{Duo01}).

\begin{definition}\label{def-K}
A measurable function $K$ on $\rn\times\rn\setminus \{(x,x):\ x\in\rn\}$
is called a \textit{standard kernel} if there exist a $\dz\in(0,1]$
and a positive constant $C$ such that
\begin{itemize}
	\item[\rm (i)]
	for any $x$, $y\in\rn$ with $x\neq y$,
	\begin{align*}
		|K(x,y)|\le \frac{C}{|x-y|^{n}};
	\end{align*}
	
	\item[\rm (ii)]
	for any $x$, $y$, $\omega\in\rn$ with $x\neq y$
	and $|x-y|\ge2|x-\omega|$,
	\begin{align}\label{regular}
		|K(x,y)-K(\omega,y)|\le C\frac{|x-\omega|^\dz}{|x-y|^{n+\dz}};
	\end{align}
	
	\item[\rm (iii)]
	\eqref{regular} still holds true for the second variable of $K$.
\end{itemize}
\end{definition}

In what follows, for any measurable function $f$ on $\rn$, we define
the \textit{support} $\supp\,(f)$ of $f$ by setting
$$\supp\,(f):=\overline{\{y\in\rn:\ f(y)\neq0\}}.$$

\begin{definition}\label{defin-C-Z-L2}
Let $K$ be a standard kernel as in Definition \ref{def-K}.
A linear operator $T$ is called a \emph{Calder\'on--Zygmund operator
with kernel $K$} if $T$ is bounded on $L^2(\rn)$ and,
for any given $f\in L^2(\rn)$ with compact support,
and for almost every $x\notin\supp\,(f)$,
\begin{align}\label{Tx}
T(f)(x)=\int_{\rn}K(x,y)f(y)\,dy.
\end{align}
\end{definition}

Next, we establish the boundedness of
Calder\'on--Zygmund operators on
$RM_{p,q,\alpha}^{\mathrm{con}}(\rn)$.
We give two lemmas first. The following lemma
gives an equivalent characterization of
$RM_{p,q,\alpha}^{\rm con}(\rn)$,
whose proof is similar to that used in \cite[Proposition 4.1]{ffk2010};
we present the details here for the convenience of the reader.

\begin{lemma}\label{eq-norm}
Let $p$, $q\in[1,\infty)$ and $\alpha\in \rr$.
Then $f\in RM_{p,q,\alpha}^{\rm con}(\rn)$ if and only if
$f\in L^q_{\mathrm{loc}}(\rn)$ and
\begin{align*}
[f]_{RM_{p,q,\alpha}^{\rm con}(\rn)}
:=\sup_{r\in(0,\fz)}\lf[\int_{\rn}\lf\{|B(y,r)|^{-\alpha}
\lf[\fint_{B(y,r)}\lf|f(x)\r|^q dx
\r]^{\frac{1}{q}}\r\}^p\,dy\r]^{\frac{1}{p}}<\fz.
\end{align*}
Moreover, for any $f\in RM_{p,q,\alpha}^{\rm con}(\rn)$,
$$
[f]_{RM_{p,q,\alpha}^{\rm con}(\rn)}\sim \|f\|_{RM_{p,q,\alpha}^{\rm con}(\rn)},
$$
where the positive equivalence constants are independent of $f$.
\end{lemma}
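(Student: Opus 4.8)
The plan is to compare the "continuous average" quantity $[f]_{RM_{p,q,\alpha}^{\rm con}(\rn)}$ with the "discrete" norm $\|f\|_{RM_{p,q,\alpha}^{\rm con}(\rn)}$ by discretizing the outer integral over $\rn$ into a sum over a tiling of $\rn$ by congruent cubes, exactly as in the Fofana--Fofana--Kpata type arguments. Two quantities must be estimated against each other: the cube-based sum $S_\ell(f):=\{\sum_j |Q_j|[|Q_j|^{-\alpha-1/q}\|f\|_{L^q(Q_j)}]^p\}^{1/p}$ over $\{Q_j\}_j\in\Pi_\ell(\rn)$, and the ball-based integral $I_r(f):=[\int_\rn\{|B(y,r)|^{-\alpha}(\fint_{B(y,r)}|f|^q)^{1/q}\}^p\,dy]^{1/p}$.

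First I would show $\|f\|_{RM_{p,q,\alpha}^{\rm con}(\rn)}\lesssim [f]_{RM_{p,q,\alpha}^{\rm con}(\rn)}$. Fix $\ell\in(0,\fz)$ and $\{Q_j\}_j\in\Pi_\ell(\rn)$. For each $j$, every point $y$ in the cube $\frac12 Q_j$ (the concentric cube of side $\ell/2$) satisfies $Q_j\subset B(y,cr)$ where $r$ is chosen comparable to $\ell$ (e.g. $r=\sqrt n\,\ell$ so that $Q_j\subset B(y,r)$ whenever $y\in Q_j$, hence certainly for $y\in\frac12 Q_j$); then
\begin{align*}
|Q_j|^{-\alpha-\frac1q}\|f\|_{L^q(Q_j)}
=|Q_j|^{-\alpha}\lf[\fint_{Q_j}|f(x)|^q\,dx\r]^{\frac1q}
\lesssim |B(y,r)|^{-\alpha}\lf[\fint_{B(y,r)}|f(x)|^q\,dx\r]^{\frac1q},
\end{align*}
using $|B(y,r)|\sim|Q_j|\sim \ell^n$ and positivity of the integrand (the constant absorbs the sign of $\alpha$). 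Raising to the $p$-th power, integrating in $y$ over $\frac12 Q_j$, which has measure $\sim|Q_j|$, and summing over the pairwise disjoint sets $\{\frac12 Q_j\}_j\subset\rn$ yields $S_\ell(f)^p\lesssim I_r(f)^p\le [f]_{RM_{p,q,\alpha}^{\rm con}(\rn)}^p$. Taking the supremum over $\ell$ and over $\{Q_j\}_j$ gives one direction; it also shows that $f\in L^q_{\loc}(\rn)$ together with $[f]<\fz$ forces $f\in RM_{p,q,\alpha}^{\rm con}(\rn)$.

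For the reverse inequality $[f]_{RM_{p,q,\alpha}^{\rm con}(\rn)}\lesssim \|f\|_{RM_{p,q,\alpha}^{\rm con}(\rn)}$, fix $r\in(0,\fz)$ and set $\ell:=r$. Tile $\rn$ by the grid of half-open cubes $\{Q_k\}_{k\in\zz^n}$ of side $\ell$; this collection lies in $\Pi_\ell(\rn)$. For $y\in Q_k$, the ball $B(y,r)$ is contained in the union of $Q_k$ with its finitely many (at most $3^n$, a dimensional constant) neighbouring cubes $Q_{k'}$, so $\fint_{B(y,r)}|f|^q\lesssim |B(y,r)|^{-1}\sum_{k'\sim k}\int_{Q_{k'}}|f|^q$, and $|B(y,r)|\sim |Q_k|\sim \ell^n$. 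Hence
\begin{align*}
\lf\{|B(y,r)|^{-\alpha}\lf[\fint_{B(y,r)}|f(x)|^q\,dx\r]^{\frac1q}\r\}^p
\lesssim \sum_{k'\sim k}\lf\{|Q_{k'}|^{-\alpha-\frac1q}\|f\|_{L^q(Q_{k'})}\r\}^p.
\end{align*}
Integrating in $y$ over $Q_k$ (measure $|Q_k|$) and summing over $k\in\zz^n$, the finite-overlap bookkeeping of the neighbour relation $k'\sim k$ turns the right-hand side into $C\sum_{k}|Q_k|\{|Q_k|^{-\alpha-1/q}\|f\|_{L^q(Q_k)}\}^p\le C\,\|f\|_{RM_{p,q,\alpha}^{\rm con}(\rn)}^p$. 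Taking $p$-th roots and the supremum over $r$ finishes the proof; the equivalence constants depend only on $n$, $p$, $q$, $\alpha$.

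The only mildly delicate point — the "main obstacle", though it is routine — is handling the sign of $\alpha$ uniformly when replacing $|Q_j|$ by $|B(y,r)|$: since $|Q_j|\sim|B(y,r)|$ with dimensional constants, $|Q_j|^{-\alpha}\sim|B(y,r)|^{-\alpha}$ with constants depending on $\alpha$ and $n$, which is harmless. One should also note the case $p=\fz$ is excluded by hypothesis (the statement assumes $p,q\in[1,\fz)$), so no separate $L^\fz$ argument is needed; and measurability/finiteness of all averages is guaranteed by $f\in L^q_{\loc}(\rn)$. Everything else is Fubini, Hölder-free, and the standard comparison between a ball and the cubes of a grid of comparable size.
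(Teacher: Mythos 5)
Your proof is correct and takes essentially the same approach as the paper: both directions are proved by discretizing the outer integral over $\rn$ into a sum over a tiling of $\rn$ by congruent cubes and comparing a cube $Q_j$ with a concentric ball of comparable radius. The only cosmetic difference is in the discretization bookkeeping for the direction $[f]\lesssim\|f\|$: the paper doubles each grid cube and splits by parity $i\in\{0,1\}^n$ to produce $2^n$ disjoint families in $\Pi_{2r}(\rn)$, while you instead keep the grid cubes and absorb the at-most-$3^n$ overlapping neighbours via a finite-overlap count (plus the power-mean comparison $[\sum_{k'}a_{k'}^q]^{p/q}\lesssim\sum_{k'}a_{k'}^p$ for a bounded number of terms), which is an equivalent device.
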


\begin{proof}
	Let $p$, $q$, and $\alpha$ be as in the present lemma.
	We first prove that, for any
	$f\in L^q_{\mathrm{loc}}(\rn)$,
	\begin{align}\label{pp}
		\|f\|_{RM_{p,q,\alpha}^{\mathrm{con}}(\rn)}
		\gtrsim[f]_{RM_{p,q,\alpha}^{\rm con}(\rn)}.
	\end{align}
	Fix $r\in(0,\infty)$.
	Let $2\mathbb{Z}^n:=\{2k:\ k\in\mathbb{Z}^n\}$,
	$I:=\lf\{(i_1,\ldots,i_n):\ i_1\ldots,i_n\in \{0,1\}\r\}$,
	and, for any $k\in\mathbb{Z}^n$, $Q_{r,k}:=rk+[0,r)^n$.
	Observe that, for any given $k\in\mathbb{Z}^n$
	and for any $y\in Q_{r,k}$, $B(y,r/2)\subset2Q_{r,k}$.
	From this, $p\in(1,\fz)$, and the fact that, for any given $i\in I$,
	$\{2Q_{r,k+i}\}_{k\in2\mathbb{Z}^n}$ have
	disjoint interiors, we deduced that,
	for any $f\in RM_{p,q,\alpha}^{\mathrm{con}}(\rn)$,
	\begin{align*}
		&\lf[\int_{\rn}\lf\{|B(y,r/2)|^{-\alpha}
		\lf[\fint_{B(y,r/2)}\lf|f(x)\r|^q\,dx
		\r]^{\frac{1}{q}}\r\}^p\,dy\r]^{\frac{1}{p}}\\
		&\quad=\lf[\sum_{i\in I}\sum_{k\in 2\mathbb{Z}^n}
		\int_{Q_{r,k+i}}\lf\{|B(y,r/2)|^{-\alpha}
		\lf[\fint_{B(y,r/2)}\lf|f(x)\r|^q\,dx
		\r]^{\frac{1}{q}}\r\}^p\,dy\r]^{\frac{1}{p}}\\
		&\quad\lesssim
		\lf[\sum_{i\in I}\sum_{k\in 2\mathbb{Z}^n}
		\int_{Q_{r,k+i}}\lf\{|2Q_{r,k+i}|^{-\alpha}
		\lf[\fint_{2Q_{r,k+i}}\lf|f(x)
		\r|^q\,dx\r]^{\frac{1}{q}}\r\}^p\,dy\r]^{\frac{1}{p}}\\
		&\quad\lesssim
		\sum_{i\in I}\lf[\sum_{k\in 2\mathbb{Z}^n}\lf|2Q_{r,k+i}
		\r|\lf\{\lf|2Q_{r,k+i}\r|^{-\alpha}
		\lf[\fint_{2Q_{r,k+i}}\lf|f(x)\r|^q\,dx
		\r]^{\frac{1}{q}}\r\}^p\r]^{\frac{1}{p}}\\
		&\quad\lesssim\|f\|_{RM_{p,q,\alpha}^{\mathrm{con}}(\rn)},
	\end{align*}
    which implies that \eqref{pp} holds true.
	
	Now, to complete the proof of the present lemma, we only need to show that,
	for any $f\in L^q_{\mathrm{loc}}(\rn)$,
	\begin{align}\label{pp2}
		\|f\|_{RM_{p,q,\alpha}^{\mathrm{con}}(\rn)}
		\lesssim[f]_{RM_{p,q,\alpha}^{\rm con}(\rn)}.
	\end{align}
	Let $\{Q_j\}_{j}\in \Pi_r(\rn)$ with $r\in(0,\infty)$.
	Then it is easy to see that
	\begin{align*}
		&\lf[\sum_{j}\lf|Q_{j}\r|\lf\{\lf|Q_{j}\r|^{-\alpha}\lf[\fint_{Q_{j}}
		\lf|f(x)\r|^{q}\,dx\r]^{\frac{1}{q}}\r\}^{p} \r]^{\frac{1}{p}}\\
		&\quad=\lf[\sum_{j}\int_{Q_j}\lf\{|Q_j|^{-\alpha}
		\lf[\fint_{Q_j}\lf|f(x)\r|^q\,dx
		\r]^{\frac{1}{q}}\r\}^p\,dy\r]^{\frac{1}{p}}\\
		&\quad\lesssim\lf[\sum_{j}\int_{Q_j}\lf\{\lf|B(y,\sqrt nr)\r|^{-\alpha}
		\lf[\fint_{B(y,\sqrt nr)}\lf|f(x)\r|^q\,dx
		\r]^{\frac{1}{q}}\r\}^p\,dy\r]^{\frac{1}{p}}\\
		&\quad\lesssim\lf[\int_{\rn}\lf\{\lf|B(y,\sqrt nr)\r|^{-\alpha}
		\lf[\fint_{B(y,\sqrt nr)}\lf|f(x)\r|^q\,dx
		\r]^{\frac{1}{q}}\r\}^p\,dy\r]^{\frac{1}{p}},
	\end{align*}
    which implies \eqref{pp2} holds true.
	This finishes the proof of Proposition \ref{eq-norm}.
\end{proof}

The following lemma is just \cite[Proposition 8.4(a)]{ap};
see also \cite[Theorem 7]{kntyy2007} for the
corresponding weighted case. Recall that, for any given $r\in(0,\fz)$
and $p$, $q\in(1,\fz)$, the \textit{space} $(E_q^p)_r(\rn)$
is defined to be the set of
all $f\in L_{\mathrm{loc}}^q(\rn)$ such that
$$
\|f\|_{(E_q^p)_r(\rn)}:=\lf\{\int_{\rn}\lf[\fint_{B(y,r)}
\lf|f(x)\r|^q dx\r]^{\frac{p}{q}}\,dy\r\}^\frac1p<\fz;
$$
see, for instance, \cite[p.\,1600]{ap}.

\begin{lemma}\label{lem-C-Z-Am}
	Let $p$, $q\in(1,\infty)$ and $T$
	be a Calder\'on--Zygmund operator as in Definition \ref{defin-C-Z-L2}.
	Then there exists a positive constant $C$ such that,
	for any $r\in(0,\fz)$ and $f\in (E_q^p)_r(\rn)$,
	$$\|T(f)\|_{(E_q^p)_r(\rn)}
	\le C \|f\|_{(E_q^p)_r(\rn)}.$$
\end{lemma}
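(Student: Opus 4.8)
First I would reduce to the case $r=1$. Every ball $B(y,r)$ has volume $c_nr^n$, the dilation $f\mapsto f(r\,\cdot)$ identifies $(E_q^p)_r(\rn)$ with $(E_q^p)_1(\rn)$ (the norm being multiplied by $r^{-n/p}$), and it conjugates a Calder\'on--Zygmund operator with kernel $K$ into one whose kernel satisfies Definition \ref{def-K} with the \emph{same} constants and which has the same $L^2(\rn)$-operator norm; so it suffices to bound $T$ on $(E_q^p)_1(\rn)$ by a constant depending only on $n$, $p$, $q$, $\dz$, the constant in Definition \ref{def-K}, and $\|T\|_{L^2(\rn)\to L^2(\rn)}$, and the general case follows by scaling. (No cancellation condition on $T$ is needed, since $(E_q^p)_1(\rn)$, like $L^q(\rn)$, contains no nonzero constants.) The plan is then the classical Fefferman--Stein argument. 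For $\dz\in(0,1)$ put
\[
Mg(x):=\sup_{B\ni x}\fint_{B}|g(y)|\,dy,\qquad
M_\dz^{\#}g(x):=\sup_{B\ni x}\inf_{c\in\cc}\lf[\fint_{B}|g(y)-c|^{\dz}\,dy\r]^{1/\dz},
\]
the suprema running over all balls $B\ni x$. I would first establish the pointwise estimate $M_\dz^{\#}(Tf)\lesssim Mf$, and then combine it with the boundedness of $M$ on $(E_q^p)_1(\rn)$ and a Fefferman--Stein inequality on $(E_q^p)_1(\rn)$.

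For the pointwise estimate, fix a ball $B=B(x_B,r_B)$ and split $f=f\mathbf{1}_{8B}+f\mathbf{1}_{(8B)^c}=:f_0+f_{\fz}$. Since $T$ is bounded on $L^2(\rn)$ and has a standard kernel (Definition \ref{defin-C-Z-L2}), it is of weak type $(1,1)$ (via the Calder\'on--Zygmund decomposition), so Kolmogorov's inequality gives $[\fint_{B}|Tf_0|^{\dz}]^{1/\dz}\lesssim\fint_{8B}|f|\lesssim\inf_{z\in B}Mf(z)$. For the far part, choose $c:=T(f_{\fz})(x_B)$; if $x\in B$ and $z\in(8B)^c$ then $|x-z|\ge 7r_B>2|x-x_B|$, so the regularity estimate \eqref{regular} for the first variable of $K$ yields, for a.e.\ $x\in B$,
\begin{align*}
|T(f_{\fz})(x)-c|&\le\int_{(8B)^c}|K(x,z)-K(x_B,z)|\,|f(z)|\,dz\\
&\lesssim r_B^{\dz}\sum_{k\ge3}\frac{1}{(2^kr_B)^{n+\dz}}\int_{2^{k+1}B}|f|
=\sum_{k\ge3}2^{-k\dz}\fint_{2^{k+1}B}|f|\lesssim\inf_{z\in B}Mf(z).
\end{align*}
Here the geometric factor $2^{-k\dz}$ is produced by the \emph{regularity} of $K$, not by its mere size; the size estimate in Definition \ref{def-K}(i) alone would leave the sum $\sum_k\fint_{2^{k+1}B}|f|$, which in $(E_q^p)_1(\rn)$ is only logarithmically under control. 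Combining the two parts and taking the infimum over $c$ gives $M_\dz^{\#}(Tf)(x)\lesssim Mf(x)$ for every $x$.

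To assemble the pieces, I would use that $M$ is bounded on $(E_q^p)_1(\rn)$ for $q\in(1,\fz)$ (this follows from the $L^q(\rn)$-boundedness of $M$ together with a localization/covering argument, uniformly in the dilation parameter), and that, since $q\in(1,\fz)$, the associate space of $(E_q^p)_1(\rn)$ is again a space of the same type; hence the good-$\lambda$ method provides the Fefferman--Stein inequality $\|g\|_{(E_q^p)_1(\rn)}\lesssim\|M_\dz^{\#}g\|_{(E_q^p)_1(\rn)}$, valid, say, whenever $g\in L^q(\rn)$. Applying this to $g=Tf$ for $f$ in the dense subspace of $(E_q^p)_1(\rn)$ consisting of compactly supported functions---such $f$ lie in $L^q(\rn)$, hence $Tf\in L^q(\rn)$---gives
\begin{align*}
\|Tf\|_{(E_q^p)_1(\rn)}&\le\|M(Tf)\|_{(E_q^p)_1(\rn)}\lesssim\|M_\dz^{\#}(Tf)\|_{(E_q^p)_1(\rn)}\\
&\lesssim\|Mf\|_{(E_q^p)_1(\rn)}\lesssim\|f\|_{(E_q^p)_1(\rn)},
\end{align*}
and a density argument then extends the estimate, and the unique bounded extension of $T$, to all of $(E_q^p)_1(\rn)$; by scaling this finishes the proof with a constant independent of $r$.

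The step I expect to be the main obstacle is this last one, namely setting up the Fefferman--Stein inequality on $(E_q^p)_1(\rn)$ and handling the a priori finiteness it requires. The kernel estimates control only the \emph{oscillation} of $Tf$ over a ball, not its \emph{average}: the naive size bound on the far part of $Tf$ is only logarithmically summable, so recovering $\|Tf\|_{(E_q^p)_1(\rn)}$ genuinely needs the global information carried by the Fefferman--Stein inequality, and that inequality can be invoked only after one knows $Tf$ has enough decay at infinity---which is why the argument is first run on a dense subclass where $Tf\in L^q(\rn)$, and only afterwards extended. All of this is carried out in detail in \cite{ap}; see also \cite{kntyy2007} for the weighted analogue.
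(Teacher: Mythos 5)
The paper does not prove this lemma; it simply cites \cite[Proposition 8.4(a)]{ap} (and \cite[Theorem 7]{kntyy2007} for the weighted case), so there is no ``paper proof'' to compare against line by line. Your sketch is a correct outline of a valid route to the result. It is worth noting, though, that the route taken in \cite{ap} is \emph{extrapolation}: one starts from the weighted estimate $\|Tf\|_{L^2(w)}\lesssim\|f\|_{L^2(w)}$ for all $w\in A_2$ and then applies a Rubio de Francia--type extrapolation theorem adapted to the amalgam scale $(E_q^p)_r(\rn)$. Your argument instead reproves the bound directly via the Fefferman--Stein machinery: the pointwise estimate $M_\dz^{\#}(Tf)\lesssim Mf$ (Kolmogorov on the local piece, kernel regularity on the far piece), the boundedness of the Hardy--Littlewood maximal operator on $(E_q^p)_1(\rn)$, and the good-$\lambda$ inequality $\|Mg\|_{(E_q^p)_1(\rn)}\lesssim\|M_\dz^{\#}g\|_{(E_q^p)_1(\rn)}$ run first on a dense subclass to secure the a priori finiteness. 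Both approaches are standard; yours is more self-contained (no weighted theory), while the extrapolation route of \cite{ap} is shorter once the machinery is in place and simultaneously yields a much wider family of estimates. Two cosmetic remarks: you use the same symbol $\dz$ for the sharp-maximal exponent (which must lie in $(0,1)$ for Kolmogorov) and for the kernel H\"older exponent (which Definition \ref{def-K} allows to equal $1$); these should be distinguished. And the claim that the good-$\lambda$ argument goes through on $(E_q^p)_1(\rn)$ is correct but deserves a sentence of justification (e.g.\ that the relevant local good-$\lambda$ inequality is on balls of fixed scale, which integrates against the $L^p(dy)$ outer norm), as $(E_q^p)_1(\rn)$ is not rearrangement-invariant and the classical formulation of the Fefferman--Stein inequality does not apply verbatim.
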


Using the above two lemmas, we immediately obtain the boundedness
of Calder\'on--Zygmund operators on $RM_{p,q,\alpha}^{\rm con}(\rn)$;
we omit the details here.

\begin{proposition}\label{thm-C-Z-RM}
	Let  $p$, $q\in(1,\infty)$, $\alpha\in\rr$,
	and $T$ be a Calder\'on--Zygmund operator
	as in Definition \ref{defin-C-Z-L2}.
	Then $T$ is bounded on $RM_{p,q,\alpha}^{\rm con}(\rn)$,
	namely, there exists a positive constant $C$ such that,
	for any $f\in RM_{p,q,\alpha}^{\rm con}(\rn)$,
	$$
	\|T(f)\|_{RM_{p,q,\alpha}^{\rm con}(\rn)}
	\leq C\|f\|_{RM_{p,q,\alpha}^{\rm con}(\rn)}.
	$$
\end{proposition}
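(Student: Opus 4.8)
The plan is to deduce Proposition \ref{thm-C-Z-RM} directly from Lemmas \ref{eq-norm} and \ref{lem-C-Z-Am}, the bridge being the elementary observation that, for any fixed $r\in(0,\fz)$, the Lebesgue measure $|B(y,r)|$ of the ball $B(y,r)$ does not depend on its center $y$. First I would fix $r\in(0,\fz)$ and, using this observation together with the definitions of $[\cdot]_{RM_{p,q,\alpha}^{\rm con}(\rn)}$ in Lemma \ref{eq-norm} and of $\|\cdot\|_{(E_q^p)_r(\rn)}$, pull the constant $|B(\mathbf{0},r)|^{-\alpha}$ out of the outer integral to obtain, for every $f\in L^q_{\mathrm{loc}}(\rn)$, the identity
\begin{align*}
[f]_{RM_{p,q,\alpha}^{\rm con}(\rn)}
=\sup_{r\in(0,\fz)}|B(\mathbf{0},r)|^{-\alpha}\|f\|_{(E_q^p)_r(\rn)},
\end{align*}
where both sides are interpreted as $+\fz$ when the relevant quantities are not finite. (Here the hypothesis $p$, $q\in(1,\fz)$ is exactly what makes both Lemma \ref{eq-norm} and Lemma \ref{lem-C-Z-Am} available.)

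Next, for $f\in RM_{p,q,\alpha}^{\rm con}(\rn)$, I would note that by Lemma \ref{eq-norm} we have $f\in(E_q^p)_r(\rn)$ for every $r\in(0,\fz)$, so Lemma \ref{lem-C-Z-Am} applies and provides a positive constant $C$, independent of both $r$ and $f$, such that $\|T(f)\|_{(E_q^p)_r(\rn)}\le C\|f\|_{(E_q^p)_r(\rn)}$; in particular $T(f)\in L^q_{\mathrm{loc}}(\rn)$, so the quantities below are meaningful. Multiplying this inequality by $|B(\mathbf{0},r)|^{-\alpha}$ and taking the supremum over $r\in(0,\fz)$, the displayed identity (applied to both $T(f)$ and $f$) yields
\begin{align*}
[T(f)]_{RM_{p,q,\alpha}^{\rm con}(\rn)}\le C\,[f]_{RM_{p,q,\alpha}^{\rm con}(\rn)}.
\end{align*}
Finally I would invoke the equivalence $[\cdot]_{RM_{p,q,\alpha}^{\rm con}(\rn)}\sim\|\cdot\|_{RM_{p,q,\alpha}^{\rm con}(\rn)}$ from Lemma \ref{eq-norm} twice, once for $T(f)$ and once for $f$, to conclude
\begin{align*}
\|T(f)\|_{RM_{p,q,\alpha}^{\rm con}(\rn)}
\ls[T(f)]_{RM_{p,q,\alpha}^{\rm con}(\rn)}
\le C\,[f]_{RM_{p,q,\alpha}^{\rm con}(\rn)}
\ls\|f\|_{RM_{p,q,\alpha}^{\rm con}(\rn)},
\end{align*}
which is the asserted inequality after renaming the constant.

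There is essentially no deep obstacle here. The only two points that deserve a word of care are: (a) that the supremum over radii commutes with multiplication by the $r$-dependent but $y$-independent factor $|B(\mathbf{0},r)|^{-\alpha}$, which is immediate once that factor has been extracted from the integral defining $[\cdot]_{RM_{p,q,\alpha}^{\rm con}(\rn)}$; and (b) that $T(f)$ is a well-defined element of $L^q_{\mathrm{loc}}(\rn)$ for $f$ in the space, which is supplied by Lemma \ref{lem-C-Z-Am} since $f\in(E_q^p)_r(\rn)$ for every $r$. Note that no sign restriction on $\alpha\in\rr$ is needed: the scaling factor $|B(\mathbf{0},r)|^{-\alpha}$ is simply carried along, unchanged, on both sides of the inequality. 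This is precisely why the result follows "immediately" from the two lemmas, and why the paper omits the routine details.
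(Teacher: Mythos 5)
Your proof is correct and is precisely the argument the paper has in mind when it says Proposition~\ref{thm-C-Z-RM} follows ``immediately'' from Lemmas~\ref{eq-norm} and~\ref{lem-C-Z-Am}: the key observation that $|B(y,r)|$ is independent of $y$ so the factor $|B(\mathbf{0},r)|^{-\alpha}$ factors out of the outer integral, giving $[f]_{RM_{p,q,\alpha}^{\rm con}(\rn)}=\sup_{r>0}|B(\mathbf{0},r)|^{-\alpha}\|f\|_{(E_q^p)_r(\rn)}$, after which the uniform (in $r$) bound from Lemma~\ref{lem-C-Z-Am} and the norm equivalence from Lemma~\ref{eq-norm} finish the proof.
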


\subsection{Calder\'on--Zygmund Singular Integral Operators
on $JN_{(p,q,s)_\alpha}^{\rm con}(\rn)$}\label{sec-def-CZO-JN}

In this subsection, we first give a reasonable version
of the Calder\'on--Zygmund operator $T$
on $JN_{(p,q,s)_\alpha}^{\rm con}(\rn)$, denoted by $\widetilde{T}$.
Then we establish the boundedness of $\widetilde{T}$ on
$JN_{(p,q,s)_\alpha}^{\rm con}(\rn)$.
We begin with the notion of the $s$-order standard kernel
(see, for instance \cite[Chapter III]{stein1993}).
In what follows, for any $\gamma=(\gamma_1,\ldots,\gamma_n)\in \zz_+^n$,
any $\gamma$-order differentiable function $F(\cdot,\cdot)$
on $\rn\times \rn$, and any $(x,y)\in \rn\times \rn$, let
$$
\partial_{(1)}^{\gamma}F(x,y):=\frac{\partial^{|\gamma|}}
{\partial x_1^{\gamma_1}\cdots\partial x_n^{\gamma_n}}F(x,y)
\quad\text{and}\quad\partial_{(2)}^{\gamma}F(x,y):=\frac{\partial^{|\gamma|}}
{\partial y_1^{\gamma_1}\cdots\partial y_n^{\gamma_n}}F(x,y).
$$

\begin{definition}\label{def-s-k}
Let $s\in\zz_+$. A measurable function $K$
on $\rn\times \rn\setminus\{(x,x):\ x\in\rn\}$
is called an \textit{$s$-order standard kernel} if
there exist a positive constant $C$ and
a $\dz\in(0,1]$ such that, for any $\gamma\in\zz_+^n$ with $|\gamma|\leq s$,
the following hold true:
\begin{itemize}
	\item[\rm (i)]
	for any $x$, $y\in\rn$ with $x\neq y$,
	\begin{align}\label{size-s'}
		\lf|\partial_{(2)}^{\gamma}K(x,y)\r|\le \frac{C}{|x-y|^{n+|\gamma|}};
	\end{align}
	
	\item[\rm (ii)]
	\eqref{size-s'} still holds true for the first variable of $K$;
	
	\item[\rm (iii)]
	for any $x$, $y$, $z\in\rn$
	with $x\neq y$ and $|x-y|\ge2|y-z|$,
	\begin{align}\label{regular2-s}
		\lf|\partial_{(2)}^{\gamma}K(x,y)-\partial_{(2)}^{\gamma}K(x,z)\r|
		\le C\frac{|y-z|^\dz}{|x-y|^{n+|\gamma|+\dz}};
	\end{align}
	
	\item[\rm (iv)]
	\eqref{regular2-s} still holds true for the first variable of $K$.
\end{itemize}
\end{definition}

In what follows, we use $\eta\to 0^+$
to denote $\eta \in(0,\fz)$ and $\eta\to 0$.

\begin{definition}\label{defin-C-Z-s}
Let $s\in\zz_+$ and $K$ be an $s$-order
standard kernel as in Definition \ref{def-s-k}.
A linear operator $T$ is called an
\emph{$s$-order Calder\'on--Zygmund singular integral operator
with kernel $K$} if $T$ is bounded on $L^2(\rn)$ and,
for any given $f\in L^2(\rn)$
and for almost every $x\in\rn$,
\begin{align}\label{2'}
	T(f)(x)=\lim_{\eta\to0^+}T_\eta (f)(x),
\end{align}
where
\begin{align}\label{def-T-eta}
	T_\eta (f)(x)
	:=\int_{\rn\setminus B(x,\eta)}K(x,y)f(y)\,dy.
\end{align}
\end{definition}

\begin{remark}\label{rem-2.15}
Let $T$ be as in Definition \ref{defin-C-Z-s}. Since \eqref{2'} implies
\eqref{Tx}, we find that $T$
is a Calder\'on--Zygmund operator
as in Definition \ref{defin-C-Z-L2}.
Moreover, by \cite[p.\,102]{Duo01}, we conclude that
$T$ is well defined on
$L^q(\rn)$ with $q\in(1,\fz)$ and, for any $f\in L^q(\rn)$,
\begin{align*}
	T(f)=\lim_{\eta\to0^+}T_\eta (f)
\end{align*}
both almost everywhere on $\rn$ and in $L^q(\rn)$.
\end{remark}

Let $T$ be an $s$-order Calder\'on--Zygmund singular integral operator.
Recall the well-known assumption on $T$ that,
for any $\gamma\in\zz_+^n$ with $|\gamma|\leq s$,
$T^*(x^{\gamma})=0$, namely, for any $a\in L^2(\rn)$ having compact support
and satisfying that, for any $\gamma\in\zz_+^n$ with $|\gamma|\leq s$,
$\int_{\rn} a(x)x^\gamma\,dx=0$, it holds true that
\begin{align}\label{T-x-gam}
	\int_{\rn} T(a)(x)x^\gamma\,dx=0;
\end{align}
see, for instance, \cite[Definition 9.4]{Bo2003}.
\begin{definition}\label{Def-T-s-v}
Let $s\in\zz_+$. An $s$-order
Calder\'on--Zygmund singular integral operator $T$
is said to have the \emph{vanishing moments up to
order $s$} if, for any $\gamma\in\zz_+^n$ with $|\gamma|\leq s$,
$T^*(x^{\gamma})=0$.

\end{definition}

\begin{remark}\label{rem-2.10}
Let $K$ be a measurable function on $\rn\setminus \{\mathbf{0}\}$
such that $K_1(x,y):=K(x-y)$, with $x$, $y\in\rn$
and $x\neq y$, is an $s$-order standard kernel.
Let $T$ be an $s$-order Calder\'on--Zygmund singular integral operator
with kernel $K_1$.
It is well known that such a $T$ satisfies \eqref{T-x-gam};
see, for instance, \cite{L,mc1997}. This indicates that
\eqref{T-x-gam} is a reasonable assumption.
\end{remark}

Next, we give the following notion of the $s$-order modified
Calder\'on--Zygmund operators via borrowing some
ideas from \cite[Section 4]{N10}.

\begin{definition}\label{def-JN-CZO}
	Let $s\in\zz_+$, $K$ be an $s$-order standard kernel,
	and $T$ the $s$-order Calder\'on--Zygmund operator with kernel $K$.
	For any $x$, $y\in\rn$ with $x\neq y$, let
	\begin{align}\label{Kw-K}
		\widetilde{K}(x,y):=K(y,x)
	\end{align}
    and $B_0:=B(x_0,r_0)$ be a given ball
    of $\rn$ with $x_0\in\rn$ and $r_0\in(0,\fz)$.
	The \textit{$s$-order modified Calder\'on--Zygmund
	operator $\widetilde{T}_{B_0}$ with kernel $\widetilde{K}$}
	is defined by setting, for any suitable function $f$
	on $\rn$, and almost every $x\in \rn$,
	\begin{align}\label{2.12x}
	\widetilde{T}_{B_0}(f)(x):=
	\lim_{\eta\to 0^+}\widetilde{T}_{B_0,\eta}(f)(x)
	\end{align}
    pointwisely, where, for any $\eta\in(0,\fz)$,
	\begin{align}\label{T-eta}
		\widetilde{T}_{B_0,\eta}(f)(x):=\int_{\rn\setminus B(x,\eta)}
		\lf[\widetilde{K}(x,y)-\sum_{\{\gamma\in\zz_+^n:\ |\gamma|\leq s\}}
		\frac{\partial_{(1)}^{\gamma}\widetilde{K}(x_0,y)}{\gamma!}
		(x-x_0)^{\gamma}\mathbf{1}_{\rn\setminus B_0}(y)\r]f(y)\,dy.
	\end{align}
\end{definition}

The following conclusion shows that $\widetilde{T}_{B_0}$
coincides with the $s$-order Calder\'on--Zygmund singular integral operator
with kernel $\widetilde{K}$ on $L^q(\rn)$,
with $q\in(1,\fz)$, in the sense of
modulo $\mathcal{P}_s(\rn)$.

\begin{proposition}\label{prop-2.18x}
	Let $s\in\zz_+$, $q\in(1,\fz)$, $K$
	be an $s$-order standard kernel, and $T$ the
	$s$-order Calder\'on--Zygmund
	singular integral operator with kernel $K$.
	Let $\widetilde{K}$ be as in \eqref{Kw-K},
	$T_{(1)}$ an $s$-order Calder\'on--Zygmund
	singular integral operator with kernel $\widetilde{K}$
	as in Definition \ref{defin-C-Z-s},
	and $B_0:=B(x_0,r_0)\subset \rn$ a given ball
	with $x_0\in\rn$ and $r_0\in(0,\fz)$.
	Then, for any $f\in L^q(\rn)$,
    $\widetilde{T}_{B_0}(f)$ in \eqref{2.12x}
	is well defined almost everywhere on $\rn$, and
	$T_{(1)}(f)-\widetilde{T}_{B_0}(f)\in \mathcal{P}_s(\rn)$
	after changing values on a set of measure zero.	
\end{proposition}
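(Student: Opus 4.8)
The plan is to show two things: first, that the limit defining $\widetilde{T}_{B_0}(f)(x)$ in \eqref{2.12x} exists for almost every $x\in\rn$ whenever $f\in L^q(\rn)$; and second, that the difference $T_{(1)}(f)-\widetilde{T}_{B_0}(f)$ agrees almost everywhere with a polynomial of degree at most $s$. For the first point, I would fix a large ball $B:=B(x_0,R)$ with $R>2r_0$ containing a neighborhood of a generic $x$, and split $f=f\mathbf{1}_{B}+f\mathbf{1}_{\rn\setminus B}=:f_1+f_2$. For $f_1$, the subtracted Taylor polynomial term in \eqref{T-eta} is supported away from $B_0$ only through the factor $\mathbf{1}_{\rn\setminus B_0}(y)$, and on the bounded region the correction is a fixed polynomial in $x$ times an $L^1$ (indeed $L^{q'}$-pairable) function of $y$, so that part of $\widetilde{T}_{B_0,\eta}(f_1)$ converges trivially; the genuine singular part $\int_{\rn\setminus B(x,\eta)}\widetilde{K}(x,y)f_1(y)\,dy$ converges a.e.\ because $\widetilde{K}(\cdot,\cdot)=K(\cdot,\cdot)$ with the roles of the variables swapped is again an $s$-order (hence ordinary) standard kernel and $f_1\in L^q$ with compact support, so Remark \ref{rem-2.15} applies. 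For $f_2$, the point $x$ lies well inside $B$, so $|x-x_0|<R$ while $|y-x_0|\geq R$, and here the bracketed expression
\[
\widetilde{K}(x,y)-\sum_{|\gamma|\leq s}\frac{\partial_{(1)}^{\gamma}\widetilde{K}(x_0,y)}{\gamma!}(x-x_0)^{\gamma}
\]
is exactly the $s$-th order Taylor remainder of $y\mapsto$-fixed, $x$-varying kernel; by the regularity estimates \eqref{size-s'}–\eqref{regular2-s} (applied to the first variable, which is legitimate by parts (ii) and (iv) of Definition \ref{def-s-k}) it is bounded in absolute value by $C\frac{|x-x_0|^{s+\dz}}{|x_0-y|^{n+s+\dz}}$, which is integrable in $y$ over $\rn\setminus B$ against $|f(y)|\,dy$ by Hölder's inequality since $q>1$. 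Hence $\widetilde{T}_{B_0,\eta}(f_2)$ is in fact independent of $\eta$ for $\eta$ small and absolutely convergent. Combining the two pieces gives a.e.\ existence of $\widetilde{T}_{B_0}(f)$.

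For the second point, I would compute the difference directly. For $f_1=f\mathbf{1}_B$, observe that $T_{(1)}(f_1)(x)=\lim_{\eta\to0^+}\int_{\rn\setminus B(x,\eta)}\widetilde{K}(x,y)f_1(y)\,dy$ (this is \eqref{2'}–\eqref{def-T-eta} for the operator $T_{(1)}$), so
\[
T_{(1)}(f_1)(x)-\widetilde{T}_{B_0}(f_1)(x)
=\sum_{|\gamma|\leq s}\frac{(x-x_0)^{\gamma}}{\gamma!}\int_{\rn\setminus B_0}\partial_{(1)}^{\gamma}\widetilde{K}(x_0,y)\,f_1(y)\,dy,
\]
which is manifestly a polynomial in $x$ of degree at most $s$ (the integrals are finite because $f_1$ has compact support and, on $\rn\setminus B_0$, $|\partial_{(1)}^{\gamma}\widetilde{K}(x_0,y)|\leq C|x_0-y|^{-n-|\gamma|}$ is locally $L^{q'}$ away from $B_0$ while $f_1\in L^q$). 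For $f_2=f\mathbf{1}_{\rn\setminus B}$, since $x$ lies in the interior of $B$ and $\supp(f_2)\subset\rn\setminus B$, the point $x$ is outside $\supp(f_2)$, so by \eqref{Tx} (valid for $T_{(1)}$, which is a Calderón--Zygmund operator by Remark \ref{rem-2.15}) we have $T_{(1)}(f_2)(x)=\int_{\rn}\widetilde{K}(x,y)f_2(y)\,dy$, and from the $\eta$-independence established above, $\widetilde{T}_{B_0}(f_2)(x)=\int_{\rn\setminus B_0}[\widetilde{K}(x,y)-\sum_{|\gamma|\leq s}\frac{\partial_{(1)}^{\gamma}\widetilde{K}(x_0,y)}{\gamma!}(x-x_0)^{\gamma}]f_2(y)\,dy$ (the set $\rn\setminus B_0$ here equals $\supp(f_2)$ up to the part where $\mathbf{1}_{\rn\setminus B_0}=1$, which is all of it). Subtracting,
\[
T_{(1)}(f_2)(x)-\widetilde{T}_{B_0}(f_2)(x)=\sum_{|\gamma|\leq s}\frac{(x-x_0)^{\gamma}}{\gamma!}\int_{\rn\setminus B}\partial_{(1)}^{\gamma}\widetilde{K}(x_0,y)\,f_2(y)\,dy,
\]
again a degree-$\leq s$ polynomial in $x$ (the integral converges by the size estimate and Hölder, since $|y-x_0|\geq R$ on $\rn\setminus B$). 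Adding the $f_1$ and $f_2$ contributions shows $T_{(1)}(f)-\widetilde{T}_{B_0}(f)\in\mathcal{P}_s(\rn)$.

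One subtlety I need to be careful about is that the decomposition $f=f_1+f_2$ depends on the ball $B$, whereas the claimed polynomial identity must hold a.e.\ on $\rn$ for one fixed polynomial; I would handle this by noting that changing $R$ changes each of the two coefficient integrals above but leaves their sum (the total coefficient of $(x-x_0)^\gamma$) unchanged — the piece of $f$ moved between $f_1$ and $f_2$ contributes the same integral $\int\partial_{(1)}^\gamma\widetilde K(x_0,y)f(y)\,dy$ over the annulus to both, with opposite signs. Equivalently, one can bypass this by observing that the total coefficient is simply $\frac{1}{\gamma!}\int_{\rn\setminus B_0}\partial_{(1)}^\gamma\widetilde K(x_0,y)f(y)\,dy$, which is finite for $f\in L^q(\rn)$ by splitting $\rn\setminus B_0$ into $2B_0\setminus B_0$ (where $\partial_{(1)}^\gamma\widetilde K(x_0,\cdot)\in L^{q'}$ by \eqref{size-s'}) and $\rn\setminus 2B_0$ (where $|x_0-y|^{-n-|\gamma|}\in L^{q'}$), so a single well-defined polynomial works for all $x$. \textbf{The main obstacle} will be this bookkeeping — making the $B$-dependence disappear cleanly and verifying the interchange of $\lim_{\eta\to0^+}$ with the various splittings and integrals, especially checking that all the integrals defining the polynomial coefficients genuinely converge for $f\in L^q(\rn)$ (not merely $f\in L^2$ with compact support), which is where the restriction $q\in(1,\fz)$ and Hölder's inequality are essential. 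The kernel estimates themselves are routine once the Taylor-remainder bound $|\widetilde K(x,y)-\sum_{|\gamma|\le s}\frac{\partial_{(1)}^\gamma\widetilde K(x_0,y)}{\gamma!}(x-x_0)^\gamma|\lesssim\frac{|x-x_0|^{s+\dz}}{|x_0-y|^{n+s+\dz}}$ is established from \eqref{size-s'} and \eqref{regular2-s} via the standard integral-form remainder argument.
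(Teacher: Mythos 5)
Your proposal is correct, and the streamlined ``bypass'' you give at the end is essentially the paper's proof: the paper directly observes that
$T_{(1)}(f)(x)-\widetilde T_{B_0}(f)(x)
=\int_{\rn}\sum_{\{\gamma\in\zz_+^n:\ |\gamma|\le s\}}
\frac{\partial_{(1)}^{\gamma}\widetilde K(x_0,y)}{\gamma!}
(x-x_0)^{\gamma}\mathbf{1}_{\rn\setminus B_0}(y)f(y)\,dy$,
an absolutely convergent integral and hence a polynomial of degree at most $s$, using only the size estimate \eqref{size-s'} and a single application of H\"older's inequality (the point being that $|y-x_0|^{-q'(n+|\gamma|)}\in L^1(\rn\setminus B_0)$ since $q'>1$), together with Remark \ref{rem-2.15} to guarantee a.e.\ existence of $T_{(1)}(f)$; no Taylor remainder and no splitting $f=f_1+f_2$ are needed. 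Your main-text route also reaches the conclusion, but it is strictly more work than necessary, and has the small quantitative slip that the remainder bound $|\widetilde K(x,y)-\sum_{|\gamma|\le s}\frac{\partial_{(1)}^{\gamma}\widetilde K(x_0,y)}{\gamma!}(x-x_0)^{\gamma}|\ls |x-x_0|^{s+\dz}/|x_0-y|^{n+s+\dz}$ requires $|y-x_0|\ge 2|x-x_0|$, so you should restrict to $x\in\frac12 B$ rather than merely $|x-x_0|<R$; your bypass makes all of this moot, so you were right to prefer it.
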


\begin{proof}
	Let $s$, $q$, $T$, $K$, $\widetilde{K}$, $T_{(1)}$, and $B_0:=B(x_0,r_0)$
	with $x_0\in\rn$ and $r_0\in(0,\fz)$ be as in the present proposition.
	We first show that, for any $f\in L^q(\rn)$,
	$\widetilde{T}_{B_0}(f)$ exists almost everywhere on $\rn$.
	Indeed, by Remark \ref{rem-2.15},
	\eqref{size-s'}, \eqref{Kw-K}, the H\"older inequality,
	and $q\in(1,\fz)$, we find that,
	for any $f\in L^q(\rn)$ and almost every $x\in\rn$,
	\begin{align}\label{2.14y}
		\lf|\widetilde{T}_{B_0}(f)(x)\r|
		&\leq\lf|\lim_{\eta\to0^+}\int_{\rn\setminus B(x,\eta)}
		\widetilde{K}(x,y)f(y)\,dy\r|\\
		&\quad+\int_{\rn}\sum_{\{\gamma\in\zz_+^n:\ |\gamma|\leq s\}}
		\lf|\frac{\partial_{(1)}^{\gamma}
			\widetilde{K}(x_0,y)}{\gamma!}(x-x_0)^\gamma
		\mathbf{1}_{\rn\setminus B_0}(y)f(y)\r|\,dy\noz\\
		&\ls\lf|T_{(1)}(f)(x)\r|
		+\sum_{\{\gamma\in\zz_+^n:\ |\gamma|\leq s\}}
		|x-x_0|^{|\gamma|}\int_{\rn\setminus B_0}
		\frac{|f(y)|}{|y-x_0|^{n+|\gamma|}}\,dy\noz\\
		&\ls\lf|T_{(1)}(f)(x)\r|+\sum_{\{\gamma\in\zz_+^n:\ |\gamma|\leq s\}}
		|x-x_0|^{|\gamma|}\|f\|_{L^q(\rn)}\lf[\int_{\rn\setminus B_0}
		\frac{1}{|y-x_0|^{q'(n+|\gamma|)}}\,dy\r]^{\frac{1}{q'}}\noz\\
		&<\fz.\noz
	\end{align}
	This implies that
	$\widetilde{T}_{B_0}(f)$ exists almost everywhere on $\rn$.
	
	Moreover, it is easy to see that,
	for any $f\in L^q(\rn)$ and almost every $x\in\rn$,
	\begin{align*}
		T_{(1)}(f)(x)-\widetilde{T}_{B_0}(f)(x)
		=\int_{\rn}
		\lf[\sum_{\{\gamma\in\zz_+^n:\ |\gamma|\leq s\}}
		\frac{\partial_{(1)}^{\gamma}
			\widetilde{K}(x_0,y)}{\gamma!}(x-x_0)^\gamma
		\mathbf{1}_{\rn\setminus B_0}(y)\r]f(y)\,dy.
	\end{align*}
    From this and the estimation of \eqref{2.14y}, we deduce that
	$T_{(1)}(f)-\widetilde{T}_{B_0}(f)\in \mathcal{P}_s(\rn)$
	after changing values on a set of measure zero.
	This finishes the proof of Proposition \ref{prop-2.18x}.
\end{proof}

Now, we show that $\widetilde{T}_{B_0}$
in Definition \ref{def-JN-CZO} is well defined on
$JN_{(p,q,s)_\alpha}^{\mathrm{con}}(\rn)$
(see Proposition \ref{converge} below)
and changing $B_0$ in the definition of $\widetilde{T}_{B_0}$
results in adding a polynomial of $\mathcal{P}_s(\rn)$
(see Remark \ref{rem-Tw-JN} below).
To this end, we first establish some technical lemmas.
The following lemma is the boundedness of Calder\'on--Zygmund operators
on $L^q(\rn)$ with $q\in(1,\fz)$;
see, for instance, \cite[p.\,99, Theorem 5.10]{Duo01}.
\begin{lemma}\label{Duo01}
	Let $q\in(1,\fz)$ and $T$ be a Calder\'on--Zygmund operator
	as in Definition \ref{defin-C-Z-L2}. Then
	$T$ is bounded on $L^q(\rn)$, namely,
	there exists a positive constant $C$ such that,
	for any $f\in L^q(\rn)$,
	$$
	\|T(f)\|_{L^q(\rn)}\leq C\|f\|_{L^q(\rn)}.
	$$
\end{lemma}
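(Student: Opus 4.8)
The plan is to run the classical Calder\'on--Zygmund programme: bootstrap the $L^2$ bound assumed in Definition \ref{defin-C-Z-L2} to a weak-type $(1,1)$ bound via the Calder\'on--Zygmund decomposition, interpolate to obtain boundedness on $L^q(\rn)$ for $q\in(1,2]$, and then dualize to cover $q\in(2,\fz)$.

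First I would fix $f\in L^1(\rn)\cap L^2(\rn)$ and $\lz\in(0,\fz)$, and perform the Calder\'on--Zygmund decomposition of $f$ at height $\lz$: there exist countably many pairwise disjoint cubes $\{Q_j\}_j$ with $\sum_j|Q_j|\le\lz^{-1}\|f\|_{L^1(\rn)}$, such that $\lz<\fint_{Q_j}|f(y)|\,dy\le 2^n\lz$ for each $j$ and $|f|\le\lz$ almost everywhere on $\rn\setminus\bigcup_jQ_j$. Write $f=g+b$ with $g:=f\mathbf{1}_{\rn\setminus\bigcup_jQ_j}+\sum_j f_{Q_j}\mathbf{1}_{Q_j}$ and $b:=\sum_j b_j$, $b_j:=(f-f_{Q_j})\mathbf{1}_{Q_j}$, so that $\int_{\rn}b_j(y)\,dy=0$ and $\supp b_j\subset\overline{Q_j}$. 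Since $\|g\|_{L^\fz(\rn)}\ls\lz$ and $\|g\|_{L^1(\rn)}\le\|f\|_{L^1(\rn)}$, we have $\|g\|_{L^2(\rn)}^2\ls\lz\|f\|_{L^1(\rn)}$, so Chebyshev's inequality together with the $L^2(\rn)$ boundedness of $T$ gives $|\{x\in\rn:\ |T(g)(x)|>\lz/2\}|\ls\lz^{-2}\|g\|_{L^2(\rn)}^2\ls\lz^{-1}\|f\|_{L^1(\rn)}$.

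The crux is the bad part. Let $c_j$ be the centre of $Q_j$, let $Q_j^*$ be the cube concentric with $Q_j$ whose side length is $2\sqrt n$ times that of $Q_j$, and set $\Omega^*:=\bigcup_j Q_j^*$, so $|\Omega^*|\ls\sum_j|Q_j|\ls\lz^{-1}\|f\|_{L^1(\rn)}$; it then suffices to show $\int_{\rn\setminus\Omega^*}|T(b)(x)|\,dx\ls\|f\|_{L^1(\rn)}$. For $x\notin Q_j^*$ one has $x\notin\supp b_j$, and the choice of $Q_j^*$ ensures $|x-y|\ge2|y-c_j|$ for $y\in Q_j$; hence, using \eqref{Tx} and $\int_{\rn} b_j(y)\,dy=0$,
\begin{align*}
	T(b_j)(x)=\int_{Q_j}\lf[K(x,y)-K(x,c_j)\r]b_j(y)\,dy,
\end{align*}
and the regularity of $K$ in its second variable (Definition \ref{def-K}(iii)) yields $|T(b_j)(x)|\ls\int_{Q_j}\frac{|y-c_j|^{\dz}}{|x-c_j|^{n+\dz}}|b_j(y)|\,dy$. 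Integrating over $x\in\rn\setminus Q_j^*$ gives $\int_{\rn\setminus Q_j^*}|T(b_j)(x)|\,dx\ls\|b_j\|_{L^1(\rn)}$, and summing over $j$ together with $\sum_j\|b_j\|_{L^1(\rn)}\ls\|f\|_{L^1(\rn)}$ completes the weak $(1,1)$ estimate for $f\in L^1(\rn)\cap L^2(\rn)$.

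With the weak $(1,1)$ and strong $(2,2)$ bounds in hand, the Marcinkiewicz interpolation theorem gives that $T$ is bounded on $L^q(\rn)$ for every $q\in(1,2]$ (after the routine density extension from $L^1(\rn)\cap L^2(\rn)$). For $q\in(2,\fz)$, observe that the transpose operator $T^{\mathrm t}$, determined by $\int_{\rn}T^{\mathrm t}(f)(x)g(x)\,dx=\int_{\rn}f(x)T(g)(x)\,dx$, is again a Calder\'on--Zygmund operator: it is bounded on $L^2(\rn)$ and its kernel $(x,y)\mapsto K(y,x)$ satisfies Definition \ref{def-K} because conditions (i)--(iii) there are symmetric in the two variables. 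Thus $T^{\mathrm t}$ is bounded on $L^{q'}(\rn)$ with $q'\in(1,2)$ by the previous step, and duality yields the boundedness of $T$ on $L^q(\rn)$. The main obstacle throughout is the bad-part estimate, where the cancellation $\int_{\rn} b_j=0$ must be married precisely to the H\"ormander-type smoothness \eqref{regular} of the kernel in order to produce decay that is summable in $j$.
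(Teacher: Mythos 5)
The paper itself offers no proof of this lemma; it simply cites \cite[p.\,99, Theorem 5.10]{Duo01}, and your argument is precisely the classical Calder\'on--Zygmund decomposition proof that appears there, so the approach matches. One small technical point: with $Q_j^*$ of side $2\sqrt n$ times that of $Q_j$, you only get $|x-c_j|>2|y-c_j|$, hence $|x-y|>|y-c_j|$, which is not quite the hypothesis $|x-y|\ge 2|y-c_j|$ needed to invoke the second-variable H\"ormander estimate in Definition \ref{def-K}(iii); taking the dilation factor to be $3\sqrt n$ (or, equivalently, handling the thin annular region where $|x-y|<2|y-c_j|$ via the size bound Definition \ref{def-K}(i)) repairs this, and the rest of the argument, including the interpolation and the transposition to reach $q\in(2,\infty)$, is correct.
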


The following conclusion might be well known
but, to the best of our knowledge,
we did not find a complete proof.
For the convenience of the reader, we present
the details here.

\begin{lemma}\label{int-B-P}
Let $s\in\zz_+$, a ball $B\subset \rn$, and
$\{a_{\gamma}\}_{\{\gamma\in\zz_+^n:\ |\gamma|
	\leq s\}}\subset L^1_{\mathrm{loc}}(\rn)$.
If, for any $x\in \rn$ (resp., $x\in B$),
\begin{align}\label{2.31x}
	P(x):=\int_{\rn}\sum_{\{\gamma\in\zz_+^n:\ |\gamma|
		\leq s\}}a_{\gamma}(y)x^{\gamma}\,dy
\end{align}
is finite, then $P\in \mathcal{P}_s(\rn)$ [resp., $\mathcal{P}_s(B)$].
\end{lemma}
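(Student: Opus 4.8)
The plan is to show that $P$, being a pointwise-convergent "integral of polynomials", must itself be a polynomial of degree at most $s$, by isolating the coefficient attached to each monomial $x^\gamma$ and showing these coefficients are finite. First I would write, for each $x\in\rn$ (resp.\ $x\in B$),
\begin{align*}
P(x)=\sum_{\{\gamma\in\zz_+^n:\ |\gamma|\leq s\}}
\lf[\int_{\rn}a_{\gamma}(y)\,dy\r]x^{\gamma},
\end{align*}
so the natural candidate coefficients are $c_\gamma:=\int_\rn a_\gamma(y)\,dy$; the whole issue is that the hypothesis only gives finiteness of the combined expression \eqref{2.31x}, not of each $c_\gamma$ separately, and a priori the integral defining $P(x)$ need not be absolutely convergent. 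So the real content is an interchange-of-sum-and-integral argument together with a linear-algebra extraction of coefficients.

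The key steps, in order: (1) Since $\{x^\gamma:|\gamma|\le s\}$ is a finite linearly independent family of functions on $\rn$ (and on any ball $B$ with nonempty interior), choose finitely many points $x^{(1)},\dots,x^{(N)}$ (with $N=\#\{\gamma:|\gamma|\le s\}$) — lying in $B$ in the "resp." case — such that the matrix $M:=\big((x^{(i)})^{\gamma}\big)_{i,\gamma}$ is invertible; this is possible because a nonzero polynomial of degree $\le s$ cannot vanish on a suitably chosen unisolvent set of nodes, and such nodes can be taken inside any open set. (2) By hypothesis each $P(x^{(i)})=\int_\rn\big[\sum_\gamma a_\gamma(y)(x^{(i)})^\gamma\big]\,dy$ is a finite real number; denote the integrand $F_i(y):=\sum_\gamma a_\gamma(y)(x^{(i)})^\gamma$, which lies in $L^1(\rn)$ in the sense that the displayed integral converges (here I would be a little careful: the statement "is finite" should be read as absolute convergence of the integral — if the paper only means the improper integral converges, one works instead with truncations $\int_{B(\mathbf 0,R)}$ and passes to the limit, which still yields what follows). (3) Invert the linear system: since $M$ is invertible, each $a_\gamma(y)$ is a fixed linear combination $\sum_i (M^{-1})_{\gamma i}F_i(y)$ of the $F_i$, hence $a_\gamma\in L^1(\rn)$ and $c_\gamma=\int_\rn a_\gamma(y)\,dy=\sum_i(M^{-1})_{\gamma i}P(x^{(i)})$ is a finite real number. (4) Now that every $c_\gamma$ is finite, Fubini/Tonelli (or simply finite additivity of the integral over the finite index set $\{\gamma:|\gamma|\le s\}$, using $a_\gamma\in L^1$) justifies
\begin{align*}
P(x)=\int_\rn\sum_{\{\gamma:\ |\gamma|\le s\}}a_\gamma(y)x^\gamma\,dy
=\sum_{\{\gamma:\ |\gamma|\le s\}}c_\gamma x^\gamma
\end{align*}
for every $x\in\rn$ (resp.\ $x\in B$), which exhibits $P$ as an element of $\mathcal P_s(\rn)$ (resp.\ $\mathcal P_s(B)$), completing the proof.

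The main obstacle I anticipate is step (2)–(3): making rigorous the passage from "the combined integral $\int_\rn[\sum_\gamma a_\gamma(y)x^\gamma]\,dy$ is finite for all $x$" to "each $a_\gamma\in L^1(\rn)$". The linear-algebra trick resolves it cleanly \emph{provided} one may evaluate at finitely many well-chosen points and provided "finite" is interpreted as honest $L^1$-convergence; if instead only conditional/improper convergence is assumed, one must first carry out the node-inversion on each ball $B(\mathbf 0,R)$ to learn $a_\gamma\mathbf 1_{B(\mathbf 0,R)}\in L^1$ with uniformly bounded integrals, and then invoke a monotone/dominated argument (or the uniform boundedness of $\int_{B(\mathbf 0,R)}|a_\gamma|$, which one also gets from inverting $M$ applied to the truncated integrals) to conclude $a_\gamma\in L^1(\rn)$. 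The "resp.\ $x\in B$" case is handled identically, the only change being that the unisolvent nodes $x^{(1)},\dots,x^{(N)}$ are selected inside $B$, which is possible since $B$ has nonempty interior and a unisolvent configuration for $\mathcal P_s$ exists in any nonempty open set. Everything else is routine.
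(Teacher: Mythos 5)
Your proof is correct, and it follows a route that is related in spirit to the paper's but organized differently. The paper isolates the scalar $\int_{\rn}a_{\gamma_0}(y)\,dy$ by applying the one--variable Lagrange interpolation formula in $x_n$ at $s+1$ nodes, concluding that the coefficient of each power of $x_n$ (a fixed linear combination of the values $F_{x_1,\ldots,x_{n-1},y}(a_i)$) is integrable in $y$, and then iterating this peeling-off argument through $x_{n-1},\ldots,x_1$. You instead choose a single multivariate unisolvent node set $x^{(1)},\ldots,x^{(N)}$ (which, as you note, exists inside any nonempty open set because the relevant Vandermonde-type determinant is a nonzero polynomial in the nodes), observe that the matrix $M=\big((x^{(i)})^{\gamma}\big)_{i,\gamma}$ is invertible, and invert the system $F_i(y)=\sum_{\gamma}a_{\gamma}(y)(x^{(i)})^{\gamma}$ to express each $a_{\gamma}$ as a fixed linear combination of the $F_i\in L^1(\rn)$. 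Both arguments rest on the same essential observation: a degree-$\le s$ polynomial's coefficients are determined from finitely many pointwise values by a nonsingular linear map, and hence inherit integrability in $y$. Your version does this in one shot rather than by induction on the variables, is a bit more compact, and in fact gives the slightly stronger intermediate conclusion $a_\gamma\in L^1(\rn)$ (not merely finiteness of $\int a_\gamma$). One small remark: your caution about absolute versus improper convergence of \eqref{2.31x} is unnecessary, since a Lebesgue integral that is a finite real number automatically forces the integrand to be in $L^1$; but the extra care does no harm, and the truncation variant you sketch would indeed salvage the argument under a weaker reading of ``finite.''
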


\begin{proof}
	Let $s$, $\{a_{\gamma}\}_{\{\gamma\in\zz_+^n:\ |\gamma|\leq s\}}$,
	and $P$ be as in the present lemma.
	Without loss of generality, we may only show the case $\rn$.
	To this end, it suffices to prove that,
	for any given $\gamma_0:=(\gamma^{(1)}_{0},
	\ldots,\gamma_{0}^{(n)})\in\zz_+^n$
	with $|\gamma_{0}|\leq s$,
	\begin{align}\label{2.16z}
	\lf|\int_{\rn}a_{\gamma_0}(y)\,dy\r|<\fz.	
	\end{align}
	Indeed, if \eqref{2.16z} holds true, using
	the linearity of the integral, we find that
	\begin{align*}
		P(x)=\sum_{\{\gamma\in\zz_+^n
			:\ |\gamma|\leq s\}}\int_{\rn}a_{\gamma}(y)\,dy\,x^{\gamma}
	\end{align*}
	and hence $P\in \mathcal{P}_s(\rn)$.
	To prove \eqref{2.16z},
	for any $(x_1,\ldots,x_n)\in \rn$ and $y\in\rn$, let
\begin{align}\label{2.17x}
	F_{x_1,\ldots,x_{n-1},y}(x_n)
	:=&\sum_{\{\gamma:=(\gamma_1,\ldots,\gamma_n)\in\zz_+^n:\
		|\gamma|\leq s\}}a_{\gamma}(y)
	x_1^{\gamma_1}\cdots x_n^{\gamma_n}.
\end{align}
Regard $F_{x_1,\ldots,x_{n-1},y}$
as an element of $\mathcal{P}_s(\rr)$.
	Then, from the Lagrange interpolation formula, we deduce that,
	for any $(x_1,\ldots,x_{n})$, $y\in \rr^{n}$,
	\begin{align}\label{2.17y}
		F_{x_1,\ldots,x_{n-1},y}(x_n)
		=\sum_{i=1}^{s+1}F_{x_1,\ldots,x_{n-1},y}(a_i)
		\prod_{j\neq i,\ 1\leq j\leq s+1}
		\frac{x_n-a_j}{a_i-a_j},
	\end{align}
    where $\{a_j\}_{j=1}^{s+1}\subset \rr$
    satisfy, for any $i$, $j\in\{1,\ldots,s+1\}$ with
    $i\neq j$, $a_i\neq a_j$.
    Moreover, by \eqref{2.31x} with $x$
    replaced by $(x_1,\ldots,x_{n-1},a_i)$, and \eqref{2.17x},
    we conclude that, for any $i\in\{1,\ldots,s+1\}$,
    \begin{align*}
    \lf|\int_{\rn}F_{x_1,\ldots,x_{n-1},y}(a_i)\,dy\r|<\fz
    \end{align*}
    From this and \eqref{2.17y}, we deduce that
    the coefficient of the term $x_n$ in the right hand of \eqref{2.17y}
    is integrable on $y$ and, moreover, by \eqref{2.17x}, we find that
    the coefficient of the term $x_n$ in the right hand side of \eqref{2.17y}
    is just
    $$F_{x_1,\ldots,x_{n-1},y}(1)=
    \sum_{\{\gamma:=(\gamma_1,\ldots\gamma_n)\in\zz_+^n:\
    	|\gamma|\leq s\text{ and }\gamma_n=\gamma_0^{(n)}\}}a_{\gamma}(y)
    x_1^{\gamma_1}\cdots x_{n-1}^{\gamma_{n-1}}$$
    and hence,
    for any $(x_1,\ldots,x_{n-1})\in \rr^{n-1}$,
	\begin{align*}
	\lf|\int_{\rn}\sum_{\{\gamma:=(\gamma_1,\ldots\gamma_n)\in\zz_+^n:\
	|\gamma|\leq s\text{ and }\gamma_n=\gamma_0^{(n)}\}}a_{\gamma}(y)
	x_1^{\gamma_1}\cdots x_{n-1}^{\gamma_{n-1}}\,dy\r|<\fz.
	\end{align*}
	Repeating the above procedure $n-1$ times, we conclude that
	\eqref{2.16z} holds true.
	This finishes the proof of Lemma \ref{int-B-P}.
\end{proof}

Next, we give an equivalent version of \eqref{T-x-gam};
see Proposition \ref{Assume} below.
To this end, we first establish a
technical lemma.
In what follows, for any $s\in\zz_+$, any $v\in[1,\infty]$,
and any measurable subset $E\subset\rn$,
the \emph{space $L_s^v(E)$} is defined by setting
\begin{align*}
	L_s^v(E):=\lf\{f\in L^v(E):\ \int_{E}f(x)x^{\gamma}\,dx=0
	\text{ for any } \gamma\in \zz_+^n\text{ with } |\gamma|\leq s\r\}.
\end{align*}

\begin{lemma}\label{assume-lem}
	Let $s\in\zz_+$, $K$ be an $s$-order standard kernel, and $T$ the
	$s$-order Calder\'on--Zygmund singular integral operator with kernel $K$.
	Let $\widetilde{K}$ be as in \eqref{Kw-K},
	$B_0:=B(x_0,r_0)\subset \rn$ with $x_0\in\rn$ and $r_0\in(0,\fz)$
	a given ball, and,
	for any $\nu:=(\nu_1\ldots,\nu_n)\in\zz_+^n$ with $|\nu|\leq s$,
	\begin{align}\label{P(B,B0)}
		\widetilde{T}_{B_0}(y^{\nu})(\cdot)=	
		\lim_{\eta\to0^+}\int_{\rn\setminus B(\cdot,\eta)}
		\lf[\widetilde{K}(\cdot,y)
		-\sum_{\{\gamma\in\zz_+^n:\ |\gamma|\leq s\}}
		\frac{\partial_{x}^{\gamma}
			\widetilde{K}(x_0,y)}{\gamma!}(\cdot-x_0)^\gamma
		\mathbf{1}_{\rn\setminus B_0}(y)\r]y^{\nu}\,dy
	\end{align}
    as in \eqref{2.12x} with $f$ replaced by
    $y^{\nu}:=y_1^{\nu_1}\cdots y_n^{\nu_n}$.
	Then, for any $\nu\in\zz_+^n$ with $|\nu|\leq s$,
	\begin{itemize}
		\item[\rm (i)]
		$\widetilde{T}_{B_0}(y^{\nu})$ is well defined
		almost everywhere on $\rn$, and
		$\widetilde{T}_{B_0}(y^{\nu})\in L^q_{\mathrm{loc}}(\rn)$
		for any given $q\in (1,\fz)$;

		\item[\rm (ii)]
		for any given ball $B_1\subset \rn$,
		$$\widetilde{T}_{B_0}(y^{\nu})-\widetilde{T}_{B_1}(y^{\nu})
		\in \mathcal{P}_s(\rn)$$
		after changing values on a set of measure zero;
		
		\item[\rm (iii)]
		for any given ball $B:=B(z,r)$ of $\rn$
		with $z\in\rn$ and $r\in(0,\fz)$, and for
		any $a\in L^q_{s}(\rn)$ supported in $B$ with $q\in(1,\fz)$,
		\begin{align}\label{0-0'}
			\int_{\rn} T(a)(x)x^{\nu}\,dx
			=\int_{\rn}a(x)\widetilde{T}_{B_0}(y^{\nu})(x)\,dx.
		\end{align}
	\end{itemize}	
\end{lemma}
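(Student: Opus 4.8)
The plan is to prove the three assertions in order, since each later one uses the earlier ones, and to reduce everything to statements about the $s$-order Calder\'on--Zygmund singular integral operator $T_{(1)}$ with kernel $\widetilde K$, which is genuinely bounded on $L^q(\rn)$ by Remark \ref{rem-2.15} and Lemma \ref{Duo01}. For part (i), I would first observe that $y^{\nu}\mathbf 1_{B_0}\in L^q(\rn)$ for every $q\in(1,\fz)$, so by Proposition \ref{prop-2.18x} (applied to $f=y^{\nu}\mathbf 1_{B_0}$) the corresponding piece of $\widetilde T_{B_0}(y^{\nu})$ is well defined a.e.\ and lies in $L^q_{\loc}(\rn)$ modulo $\mathcal P_s(\rn)$. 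For the far piece $y^{\nu}\mathbf 1_{\rn\setminus B_0}$ one repeats the pointwise estimate \eqref{2.14y}: using the cancellation built into \eqref{P(B,B0)} — namely the subtracted Taylor polynomial of $\widetilde K$ at $x_0$ up to order $s$ — together with \eqref{regular2-s} (the $s$-order smoothness in the second variable), the kernel in the integrand decays like $|y-x_0|^{-n-s-\dz}$ at infinity, which beats the polynomial growth $|y|^{|\nu|}\le|y|^{s}$ for $y$ away from a fixed compact set, and near $B_0$ one splits off the singularity and uses the principal-value existence from Remark \ref{rem-2.15}. Localizing to an arbitrary ball and using H\"older as in \eqref{2.14y} then gives $\widetilde T_{B_0}(y^{\nu})\in L^q_{\loc}(\rn)$.

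For part (ii), fix two balls $B_0$ and $B_1$. From the defining formula \eqref{P(B,B0)} the difference is
\begin{align*}
\widetilde T_{B_0}(y^{\nu})(x)-\widetilde T_{B_1}(y^{\nu})(x)
=\int_{\rn}\sum_{\{\gamma\in\zz_+^n:\ |\gamma|\le s\}}
\lf[\frac{\partial_{(1)}^{\gamma}\widetilde K(x_1,y)}{\gamma!}(x-x_1)^{\gamma}
\mathbf 1_{\rn\setminus B_1}(y)
-\frac{\partial_{(1)}^{\gamma}\widetilde K(x_0,y)}{\gamma!}(x-x_0)^{\gamma}
\mathbf 1_{\rn\setminus B_0}(y)\r]y^{\nu}\,dy,
\end{align*}
up to the already-established a.e.\ cancellation of the principal-value parts. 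Expanding $(x-x_i)^{\gamma}$ by the multinomial theorem rewrites the right-hand side in the form $\int_{\rn}\sum_{|\gamma|\le s}a_{\gamma}(y)\,x^{\gamma}\,dy$ for suitable $a_{\gamma}\in L^1_{\loc}(\rn)$; since the left-hand side is finite for a.e.\ $x$ by part (i), Lemma \ref{int-B-P} applies and yields that this difference is an element of $\mathcal P_s(\rn)$ after modification on a null set. (The integrability of the $a_{\gamma}$ away from $B_0\cup B_1$ again comes from the decay estimates \eqref{size-s'} and \eqref{regular2-s}.)

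For part (iii), I would compute $\int_{\rn}T(a)(x)x^{\nu}\,dx$ by Fubini, writing $T(a)=\lim_{\eta\to0^+}T_{\eta}(a)$ and using that $a\in L^q_s(\rn)$ has compact support in $B=B(z,r)$. The point is that the moment condition $\int_{\rn}a(x)x^{\gamma}\,dx=0$ for $|\gamma|\le s$ lets one replace the kernel $K(x,y)$ of $T$ (equivalently $\widetilde K(y,x)$) by $K(x,y)$ minus its order-$s$ Taylor polynomial in the $x$-variable expanded about $x_0$, without changing $\int a(x)\,[\cdots]\,dx$; this is exactly the subtraction appearing in $\widetilde T_{B_0}$. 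After swapping the order of integration — which is justified on $\rn\setminus 2B$ directly and near the diagonal via the usual $\eta$-truncation and dominated convergence using the bounds from part (i) — the inner integral over $x$ becomes $\widetilde T_{B_0}(y^{\nu})(y')$ evaluated appropriately, giving $\int_{\rn}a(x)\widetilde T_{B_0}(y^{\nu})(x)\,dx$. One subtlety: the term $\int_{\rn}T(a)(x)x^{\nu}\,dx$ must be shown to converge absolutely, which follows from $T(a)$ having decay $|x|^{-n-s-\dz}$ at infinity (a consequence of the vanishing moments of $a$ together with \eqref{regular2-s}) against the growth $|x|^{|\nu|}$, and from $T(a)\in L^q(\rn)$ locally.

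The main obstacle I anticipate is not any single estimate but the bookkeeping of the Taylor-subtraction and the Fubini interchange in part (iii): one has to be careful that the polynomial correction term in the definition of $\widetilde T_{B_0}$, which is supported (in $y$) off $B_0$ and is itself a polynomial in $x$, is annihilated when paired against $a$ only because of the vanishing moments of $a$ — so the identity \eqref{0-0'} is really an identity modulo $\mathcal P_s$ that becomes an exact equality precisely on $L^q_s$. Handling the diagonal singularity in the double integral (justifying that $\lim_{\eta\to0^+}$ commutes with $\int\,dx$) is the place where the most care is needed, and I would dispatch it by the standard device of splitting $B$ into the part within distance $C\eta$ of a point and its complement, using the $L^2$- (or $L^q$-) boundedness of $T$ on the former and uniform kernel bounds on the latter.
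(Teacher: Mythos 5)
Your overall plan matches the paper's: parts (i) and (ii) are handled the same way (localize, split near/far, use the Taylor remainder and \eqref{regular2-s} to get decay $|y-x_0|^{-n-s-\delta}$ at infinity, then invoke Lemma \ref{int-B-P} for (ii)), and the Fubini-based argument you sketch for (iii) is in spirit the paper's as well. Two points of divergence are worth flagging. First, in (iii) your variable bookkeeping slips: the correction you subtract from $K(x,y)$ must be a polynomial in the variable that $a$ is integrated against (the second slot of $K$, i.e.\ a Taylor polynomial of $K(x,\cdot)$ in $y$), since only then does $a$'s vanishing-moment condition annihilate it; writing it as ``a Taylor polynomial in the $x$-variable about $x_0$'' conflates this with the polynomial-in-$x$ structure that appears after the Fubini swap. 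The equivalent description $\partial^\gamma_{(2)}K(x,x_0)=\partial^\gamma_{(1)}\widetilde K(x_0,x)$ reconciles the two pictures, but as written the statement reads backwards. Second, the paper deliberately expands the kernel about $z$ (the center of $\mathrm{supp}\,a$) rather than about $x_0$, so that $|x-z|\ge 2|y-z|$ holds automatically for $x\notin 2B$, $y\in B$; this gives absolute convergence of the far piece directly and produces $\widetilde T_{2B}(x^\nu)$, after which part (ii) (or rather \eqref{P-B0-B1}) is used to replace $2B$ by $B_0$. Your proposal to expand directly about $x_0$ skips this step; it is not wrong (the middle range between $B$ and $B_0$ is compact and causes no integrability problem, and you correctly observe that the identity is ``modulo $\mathcal P_s$'' and becomes exact against $a\in L^q_s$), but the estimates become case-heavy, and the cleanest route is the paper's detour through $\widetilde T_{2B}$ followed by the ball-change lemma. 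For the near-diagonal Fubini interchange, the paper's truncation $K_\eta$ combined with $L^q$-convergence of $T_\eta(a)\to T(a)$ (and duality with $x^\nu\mathbf 1_{2B}\in L^{q'}$) is a cleaner device than the annulus-splitting you describe, though both work.
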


\begin{proof}
Let $s$, $T$, $K$, $\widetilde{K}$, and $B_0=B(x_0,r_0)$
with $x_0\in\rn$ and $r_0\in(0,\fz)$ be as in the present lemma.
We first show (i). To this end,
let $\widetilde{B}$ be any given ball of $\rn$ and
$$R:=\sup\{2|x-x_0|+2|x_0|+1:\ x\in \widetilde{B}\}.$$
Then it is easy to see that $(B_0\cup \widetilde{B})\subset B(x_0,R)$
and, for any $y\notin B(x_0,R)$,
\begin{align}\label{sim1}
	|y|\geq|y-x_0|-|x_0|\geq R-|x_0|\geq1,
	\quad |y-x_0|\geq2|x-x_0|,
\end{align}
\begin{align*}
|y|\geq |y-x_0|-|x_0|>|y-x_0|-\frac{R}{2}
\geq \frac{1}{2}|y-x_0|,
\end{align*}
and
\begin{align*}
|y|\leq |y-x_0|+|x_0|< |y-x_0|+\frac{R}{2}
\leq \frac{3}{2}|y-x_0|,
\end{align*}
and hence
\begin{align}\label{sim2}
	|y|\sim |y-x_0|.
\end{align}
Moreover, by \eqref{size-s'}, \eqref{Kw-K},
the Taylor remainder theorem, \eqref{sim1}, and \eqref{sim2},
we find that, for any $\nu\in\zz_+^n$ with $|\nu|\leq s$,
and $x\in B$, there exists an
$\widetilde{x}\in\{\theta x+(1-\theta)x_0:\ \theta\in(0,1)\}$ such that
\begin{align}\label{2.21x}
	\lf|\widetilde{T}_{B_0}(y^{\nu})(x)\r|
	&\leq\lf|\lim_{\eta\to0^+}\int_{B(x_0,R)\setminus B(x,\eta)}
	\widetilde{K}(x,y)y^{\nu}\,dy\r|\\
	&\quad+\lf|\lim_{\eta\to0^+}\int_{B(x_0,R)\setminus B(x,\eta)}
	\lf[\sum_{\{\gamma\in\zz_+^n:\ |\gamma|\leq s\}}
	\frac{\partial_{(1)}^{\gamma}\widetilde{K}(x_0,y)}{\gamma!}(x-x_0)^\gamma
	\mathbf1_{\rn\setminus B_0}(y)\r]y^{\nu}\,dy\r|\noz\\
	&\quad+\int_{\rn\setminus B(x_0,R)}
	\lf|\widetilde{K}(x,y)-\sum_{\{\gamma\in\zz_+^n:\ |\gamma|\leq s\}}
	\frac{\partial_{(1)}^{\gamma}\widetilde{K}(x_0,y)}
	{\gamma!}(x-x_0)^\gamma\r||y|^{|\nu|}\,dy\noz\\
	&\leq\lf|T_{(1)}(y^{\nu}\mathbf{1}_{B(x_0,R)})(x)\r|
	+\sum_{\{\gamma\in\zz_+^n:\ |\gamma|\leq s\}}
	\int_{B(x_0,R)\setminus B_0}\frac{|y|^{|\nu|}
		|\partial_{(1)}^{\gamma}\widetilde{K}(x_0,y)|
		|x-x_0|^{|\gamma|}}{\gamma!}\,dy\noz\\
	&\quad+\int_{\rn\setminus B(x_0,R)}
	\lf|\sum_{\{\gamma\in\zz_+^n:\ |\gamma|=s\}}
	\frac{\partial_{(1)}^{\gamma}\widetilde{K}(\widetilde{x},y)
		-\partial_{(1)}^{\gamma}\widetilde{K}(x_0,y)}
	{\gamma!}(x-x_0)^\gamma\r||y|^{|\nu|}\,dy\noz\\
	&\ls\lf|T_{(1)}(y^{\nu}\mathbf{1}_{B(x_0,R)})(x)\r|
	+\sum_{\{\gamma\in\zz_+^n:\ |\gamma|\leq s\}}
	|x-x_0|^{|\gamma|}\int_{B(x_0,R)\setminus B_0}
	\frac{|y|^{|\nu|}}{|y-x_0|^{n+|\gamma|}}\,dy\noz\\
	&\quad+\int_{\rn\setminus B(x_0,R)}
	\frac{|y|^{s}|x_0-\widetilde{x}
		|^{\dz}|x-x_0|^s}{|y-x_0|^{n+s+\dz}}\,dy\noz\\
	&\ls\lf|T_{(1)}(y^{\nu}\mathbf{1}_{B(x_0,R)})(x)\r|+1,\noz
\end{align}
where the implicit positive constants depend on $x_0$ and $R$,
$T_{(1)}$ denotes the $s$-order Calder\'on--Zygmund
singular integral operator with kernel $\widetilde{K}$,
and, in the penultimate step, we used \eqref{regular2-s}
and \eqref{Kw-K} together with $|y-x_0|\geq 2|x_0-\widetilde{x}|$.
This and Remark \ref{rem-2.15} then finish the proof of (i).

Next, we prove (ii). Indeed, let
$B_1:=B(x_1,r_1)$ be a given ball of $\rn$ with $x_1\in\rn$
and $r_1\in(0,\fz)$. For any $x\in\rn$,
choose a ball $B_2\subset \rn$ such that
$x\in B_2$ and $(B_0\cup B_1)\subset B_2$.
Then, from the Lebesgue dominated convergence theorem,
we deduce that, for any $\nu\in\zz_+^n$ with $|\nu|\leq s$,
and almost every $x\in\rn$,
\begin{align*}
	&\widetilde{T}_{B_0}(y^{\nu})(x)
	-\widetilde{T}_{B_1}(y^{\nu})(x)\\
	&\quad=\int_{\rn\setminus B_2}
	\sum_{\{\gamma\in\zz_+^n:\ |\gamma|\leq s\}}
	\lf[\frac{\partial_{(1)}^{\gamma}\widetilde{K}(x_1,y)}{\gamma!}
	(x-x_1)^{\gamma}\mathbf1_{\rn\setminus B_1}(y)
	-\frac{\partial_{(1)}^{\gamma}\widetilde{K}(x_0,y)}{\gamma!}
	(x-x_0)^{\gamma}\mathbf1_{\rn\setminus B_0}(y)\r]y^{\nu}\,dy\\
	&\qquad+\lim_{\eta\to 0^+}\int_{B_2\setminus B(x,\eta)}
	\sum_{\{\gamma\in\zz_+^n:\ |\gamma|\leq s\}}
	\lf[\frac{\partial_{(1)}^{\gamma}\widetilde{K}(x_1,y)}{\gamma!}
	(x-x_1)^{\gamma}\mathbf1_{\rn\setminus B_1}(y)\r.\\
	&\quad\qquad\lf.-\frac{\partial_{(1)}^{\gamma}\widetilde{K}(x_0,y)}{\gamma!}
	(x-x_0)^{\gamma}\mathbf1_{\rn\setminus B_0}(y)\r]y^{\nu}\,dy\\
	&\quad=\int_{\rn}\sum_{\{\gamma\in\zz_+^n:\ |\gamma|\leq s\}}
	\lf[\frac{\partial_{(1)}^{\gamma}\widetilde{K}(x_1,y)}{\gamma!}
	(x-x_1)^{\gamma}\mathbf1_{\rn\setminus B_1}(y)
	-\frac{\partial_{(1)}^{\gamma}\widetilde{K}(x_0,y)}{\gamma!}
	(x-x_0)^{\gamma}\mathbf1_{\rn\setminus B_0}(y)\r]y^{\nu}\,dy.
\end{align*}
For any $x\in \rn$, let
$$
F(x):=\int_{\rn}\sum_{\{\gamma\in\zz_+^n:\ |\gamma|\leq s\}}
\lf[\frac{\partial_{(1)}^{\gamma}\widetilde{K}(x_1,y)}{\gamma!}
(x-x_1)^{\gamma}\mathbf1_{\rn\setminus B_1}(y)
-\frac{\partial_{(1)}^{\gamma}\widetilde{K}(x_0,y)}{\gamma!}
(x-x_0)^{\gamma}\mathbf1_{\rn\setminus B_0}(y)\r]y^{\nu}\,dy.
$$
Using \eqref{size-s'}, \eqref{Kw-K}, and
an argument similar to the estimation of \eqref{2.21x},
we find that, for any $x\in\rn$,
\begin{align}\label{2.22x}
|F(x)|
&\leq \int_{\rn\setminus B_2}
\lf|\sum_{\{\gamma\in\zz_+^n:\ |\gamma|\leq s\}}
\frac{\partial_{(1)}^{\gamma}\widetilde{K}(x_1,y)}
{\gamma!}(x-x_1)^\gamma-\widetilde{K}(x,y)\r||y|^{|\nu|}\,dy\\
&\quad+\int_{\rn\setminus B_2}
\lf|\widetilde{K}(x,y)-\sum_{\{\gamma\in\zz_+^n:\ |\gamma|\leq s\}}
\frac{\partial_{(1)}^{\gamma}\widetilde{K}(x_0,y)}
{\gamma!}(x-x_0)^\gamma\r||y|^{|\nu|}\,dy\noz\\
&\quad+\sum_{\{\gamma\in\zz_+^n:\ |\gamma|\leq s\}}\int_{B_2}
\lf|\frac{\partial_{(1)}^{\gamma}\widetilde{K}(x_1,y)}{\gamma!}
(x-x_1)^{\gamma}\mathbf1_{\rn\setminus B_1}(y)\r.\noz\\
&\qquad\lf.-\frac{\partial_{(1)}^{\gamma}\widetilde{K}(x_0,y)}{\gamma!}
(x-x_0)^{\gamma}\mathbf1_{\rn\setminus B_0}(y)\r|\lf|y^{\nu}\r|\,dy\noz\\
&<\fz,\noz
\end{align}
which, combined with Lemma \ref{int-B-P},
further implies that, for any $\nu\in\zz_+^n$ with $|\nu|\leq s$,
\begin{align}\label{P-B0-B1}
	\widetilde{T}_{B_0}(y^{\nu})
	-\widetilde{T}_{B_1}(y^{\nu})\in \mathcal{P}_s(\rn)
\end{align}
after changing values on a set of measure zero.
This finishes the proof of (ii).

Now, we prove (iii). Indeed, from the definition of $T$, and \eqref{Kw-K},
we deduce that, for any $a\in L^q_{s}(\rn)$ supported in $B:=B(z,r)$,
with $q\in(1,\fz)$, $z\in \rn$, and $r\in(0,\fz)$,
and for any $\nu\in\zz_+^n$ with $|\nu|\leq s$,
\begin{align}\label{2x'}
	&\int_{\rn}T(a)(x)x^{\nu}\,dx\\
	&\quad=\int_{\rn}\lim_{\eta\to0^+}\int_{B\setminus B(x,\eta)}
	K(x,y)a(y)\,dy\,x^{\nu}\,dx\noz\\
	&\quad=\int_{\rn}\lim_{\eta\to0^+}\int_{B\setminus B(x,\eta)}
	\lf[K(x,y)-\sum_{\{\gamma\in\zz_+^n:\ |\gamma|\leq s\}}
	\frac{\partial_{(2)}^{\gamma}K(x,z)}{\gamma!}(y-z)^\gamma
	\mathbf{1}_{\rn\setminus 2B}(x)\r]
	a(y)\,dy\,x^{\nu}\,dx\noz\\
	&\quad=\int_{2B}\lim_{\eta\to0^+}\int_{B\setminus B(x,\eta)}
	K(x,y)a(y)\,dy\,x^{\nu}\,dx\noz\\
	&\qquad+\int_{\rn\setminus 2B}\int_{B}
	\lf[K(x,y)-\sum_{\{\gamma\in\zz_+^n:\ |\gamma|\leq s\}}
	\frac{\partial_{(2)}^{\gamma}K(x,z)}{\gamma!}
	(y-z)^\gamma\r]a(y)\,dy\,x^{\nu}\,dx\noz\\
	&\quad=:\mathrm{I}_1+\mathrm{I}_2.\noz
\end{align}
Thus, to show \eqref{0-0'}, we only need to calculate
$\mathrm{I}_1$ and $\mathrm{I}_2$, respectively.

We first consider $\mathrm{I}_1$.
Indeed, for any $\eta\in(0,\fz)$ and $x$, $y\in\rn$, let
\begin{align}\label{p20x}
	K_\eta(x,y):=
	\begin{cases}
		\displaystyle
		K(x,y), &|x-y|\geq\eta,\\
		\displaystyle
		0, &\text{otherwise}.
	\end{cases}	
\end{align}
Then, for any $x\in\rn$,
$$T_\eta (a)(x)=\int_{B}K_{\eta}(x,y)a(y)\,dy,$$
where $T_\eta$ is as in \eqref{def-T-eta}.
By this, $a\in L^q(\rn)$, Remark \ref{rem-2.15},
and the fact that $x^{\nu}\mathbf{1}_{2B}\in L^{q'}(\rn)$,
we find that,
for any $\nu\in\zz_+^n$ with $|\nu|\leq s$,
\begin{align*}
	&\lf|\int_{2B}
	T_\eta (a)(x)\,x^{\nu}\,dx
	-\int_{2B}\lim_{\eta\to0^+}
	T_\eta (a)(x)x^{\nu}\,dx\r|\\
	&\quad\leq\int_{2B}
	\lf|T_\eta (a)(x)-\lim_{\eta\to0^+}
	T_\eta (a)(x)\r|\lf|x^{\nu}\r|\,dx\\
	&\quad\leq\lf\|T_\eta (a)-\lim_{\eta\to0^+}
	T_\eta (a)\r\|_{L^q(\rn)}\lf(\int_{2B}
	\lf|x^{\nu}\r|^{q'}\,dx\r)^{\frac{1}{q'}}
	\to0
\end{align*}
as $\eta\to 0^+$, which implies that
\begin{align}\label{2.26x}
\lim_{\eta\to0^+}\int_{2B}
T_\eta (a)(x)\,x^{\nu}\,dx
=\int_{2B}\lim_{\eta\to0^+}
T_\eta (a)(x)x^{\nu}\,dx.
\end{align}
From this and the Fubini theorem, we deduce that,
for any $\nu\in\zz_+^n$ with $|\nu|\leq s$,
\begin{align}\label{2xx'}
	\mathrm{I}_1
	&=\lim_{\eta\to0^+}\int_{2B}
	\int_{B}K_{\eta}(x,y)a(y)\,dy\,x^{\nu}\,dx\\
	&=\lim_{\eta\to0^+}\int_{B}a(y)
	\int_{2B}K_{\eta}(x,y)x^{\nu}\,dx\,dy
	=\int_{B}a(y)\lim_{\eta\to0^+}
	\int_{2B}K_{\eta}(x,y)x^{\nu}\,dx\,dy,\noz
\end{align}
where the proof of the last equality
is similar to \eqref{2.26x}.
This is a desired conclusion of $\mathrm{I}_1$.

Now, we consider $\mathrm{I}_2$.
To this end, we first show that,
for any $\nu\in\zz_+^n$ with $|\nu|\leq s$,
\begin{align}\label{3'}
	\widetilde{\mathrm{I}}_2:=\int_{\rn\setminus 2B}\int_{B}
	\lf|K(x,y)-\sum_{\{\gamma\in\zz_+^n:\ |\gamma|\leq s\}}
	\frac{\partial_{(2)}^{\gamma}K(x,z)}{\gamma!}(y-z)^\gamma\r|
	|a(y)||x|^{|\nu|}\,dy\,dx<\fz.
\end{align}
Indeed, by the Tonelli theorem, the Taylor remainder theorem,
we find that, for any $y\in B$,
there exists a $\widetilde{y}\in B$ such that,
for any $\nu\in\zz_+^n$ with $|\nu|\leq s$,
\begin{align*}
	\widetilde{\mathrm{I}}_2
	&=\int_{B}\int_{\rn\setminus 2B}
	\lf|K(x,y)-\sum_{\{\gamma\in\zz_+^n:\ |\gamma|\leq s\}}
	\frac{\partial_{(2)}^{\gamma}K(x,z)}{\gamma!}(y-z)^\gamma\r|
	|x|^{|\nu|}\,dx\,|a(y)|\,dy\\
	&=\int_{B}\int_{\rn\setminus 2B}
	\lf|\sum_{\{\gamma\in\zz_+^n:\ |\gamma|=s\}}
	\frac{\partial_{(2)}^{\gamma}K(x,\widetilde{y})
		-\partial_{(2)}^{\gamma}K(x,z)}{\gamma!}
	(y-z)^\gamma\r||x|^{|\nu|}\,dx\,|a(y)|\,dy\\
	&\ls\int_{B}\int_{\rn\setminus 2B}
	\frac{|\widetilde{y}-z|^{\dz}|y-z|^s}{|x-z|^{n+s+\dz}}
	|x|^{|\nu|}\,dx\,|a(y)|\,dy\\
	&\lesssim r^{s+\dz}\|a\|_{L^1(B)}\int_{\rn\setminus 2B}
	\frac{|x|^{|\nu|}}{|x-z|^{n+s+\dz}}\,dx<\fz,
\end{align*}
where, in the penultimate step, we used \eqref{regular2-s}
and \eqref{Kw-K} together with $|x-z|\geq 2r> 2|\widetilde{y}-z|$.
This implies that \eqref{3'} holds true.
Moreover, using \eqref{3'} and
the Fubini theorem, we find that
\begin{align}\label{4x'}
	\mathrm{I}_2=\int_{B}\int_{\rn\setminus 2B}
	\lf[K(x,y)-\sum_{\{\gamma\in\zz_+^n:\ |\gamma|\leq s\}}
	\frac{\partial_{(2)}^{\gamma}K(x,z)}{\gamma!}
	(y-z)^\gamma\r]x^{\nu}\,dx\,a(y)\,dy.
\end{align}
This is a desired conclusion of $\mathrm{I}_2$.

Next, we show \eqref{0-0'}. Indeed,
from \eqref{2x'}, \eqref{2xx'}, \eqref{4x'},
\eqref{Kw-K}, and \eqref{P-B0-B1}, we deduce that,
for any $\nu\in \zz_+^n$ with $|\nu|\leq s$,
\begin{align*}
	&\int_{\rn}T(a)(x)x^{\nu}\,dx\\
	&\quad=\int_{B}a(y)\lim_{\eta\to0^+}\int_{\rn\setminus B(y,\eta)}
	\lf[K(x,y)-\sum_{\{\gamma\in\zz_+^n:\ |\gamma|\leq s\}}
	\frac{\partial_{(2)}^{\gamma}K(x,z)}{\gamma!}(y-z)^\gamma
	\mathbf{1}_{\rn\setminus 2B}(x)\r]x^{\nu}\,dx\,dy\noz\\
	&\quad=\int_{B}a(y)\lim_{\eta\to0^+}\int_{\rn\setminus B(y,\eta)}
	\lf[\widetilde{K}(y,x)-\sum_{\{\gamma\in\zz_+^n:\ |\gamma|\leq s\}}
	\frac{\partial_{(1)}^{\gamma}\widetilde{K}(z,x)}{\gamma!}(y-z)^\gamma
	\mathbf{1}_{\rn\setminus 2B}(x)\r]x^{\nu}\,dx\,dy\\
	&\quad=\int_{B}a(y)\widetilde{T}_{2B}(x^{\nu})(y)\,dy
	=\int_{B}a(y)\widetilde{T}_{B_0}(x^{\nu})(y)\,dy.
\end{align*}
This shows that \eqref{0-0'} holds true and hence
finishes the proof of Lemma \ref{assume-lem}.
\end{proof}

\begin{proposition}\label{Assume}
Let $s\in\zz_+$, $K$ be an $s$-order standard kernel, and $T$ an
$s$-order Calder\'on--Zygmund singular integral operator with kernel $K$.
Let $\widetilde{K}$ be as in \eqref{Kw-K},
$B_0:=B(x_0,r_0)\subset \rn$ a given ball
with $x_0\in\rn$ and $r_0\in(0,\fz)$,
and, for any $\nu\in\zz_+^n$ with $|\nu|\leq s$,
$\widetilde{T}_{B_0}(y^{\nu})$ as \eqref{P(B,B0)}.
Then \eqref{T-x-gam} holds true if and only if,
for any $\nu\in\zz_+^n$ with $|\nu|\leq s$,
$\widetilde{T}_{B_0}(y^{\nu})(\cdot)\in\mathcal{P}_s(\rn)$
after changing values on a set of measure zero.
\end{proposition}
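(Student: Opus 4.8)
The plan is to deduce Proposition \ref{Assume} directly from the technical Lemma \ref{assume-lem}, whose part (iii)---the pairing identity \eqref{0-0'}---provides the bridge between the two formulations; parts (i) and (ii) will be used to guarantee, respectively, that each $\widetilde{T}_{B_0}(y^\nu)$ lies in $L^q_{\loc}(\rn)$ for all $q\in(1,\fz)$ and that the conclusion ``$\widetilde{T}_{B_0}(y^\nu)\in\mathcal{P}_s(\rn)$'' does not depend on the particular choice of $B_0$, so that the equivalence is well posed.

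For the implication ``$\Longleftarrow$'' I would argue as follows. Suppose that, for every $\nu\in\zz_+^n$ with $|\nu|\le s$, $\widetilde{T}_{B_0}(y^\nu)$ equals almost everywhere some $P_\nu\in\mathcal{P}_s(\rn)$. Let $a\in L^2(\rn)$ have compact support and satisfy $\int_\rn a(x)x^\gamma\,dx=0$ for all $\gamma\in\zz_+^n$ with $|\gamma|\le s$; then $a\in L^2_s(\rn)$ and it is supported in some ball $B$, so Lemma \ref{assume-lem}(iii) (with $q=2$ and $\nu=\gamma$) yields $\int_\rn T(a)(x)x^\gamma\,dx=\int_\rn a(x)\,\widetilde{T}_{B_0}(y^\gamma)(x)\,dx=\int_\rn a(x)P_\gamma(x)\,dx$. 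Since $P_\gamma$ is a finite linear combination of the monomials $x^\mu$ with $|\mu|\le s$ and $a$ annihilates every such monomial, the last integral vanishes, which is exactly \eqref{T-x-gam}.

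For the implication ``$\Longrightarrow$'' I would fix $\nu\in\zz_+^n$ with $|\nu|\le s$ and set $g:=\widetilde{T}_{B_0}(y^\nu)$, which by Lemma \ref{assume-lem}(i) lies in $L^{q'}_{\loc}(\rn)$ for every $q\in(1,\fz)$. Fixing such a $q$ and combining \eqref{0-0'} with the hypothesis \eqref{T-x-gam}, one gets, for every ball $B$ of $\rn$ and every $a\in L^q_s(\rn)$ supported in $B$, that $\int_\rn a(x)g(x)\,dx=\int_\rn T(a)(x)x^\nu\,dx=0$ (the pairing being absolutely convergent since $a\in L^q(B)$ and $g\in L^{q'}(B)$). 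Hence $g|_B$, viewed as an element of $L^{q'}(B)=(L^q(B))^*$, annihilates the subspace $L^q_s(B)$ of $L^q(B)$. Now $L^q_s(B)$ is the intersection of the kernels of the finitely many bounded, linearly independent functionals $a\mapsto\int_B a(x)x^\gamma\,dx$ (indexed by $\gamma\in\zz_+^n$ with $|\gamma|\le s$), whose representing functions form a basis of $\mathcal{P}_s(B)\subset L^{q'}(B)$; therefore the annihilator of $L^q_s(B)$ in $L^{q'}(B)$ is precisely $\mathcal{P}_s(B)$, and so $g$ agrees almost everywhere on $B$ with some polynomial in $\mathcal{P}_s(B)$. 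Letting $B$ exhaust $\rn$ and using that two polynomials of degree at most $s$ that coincide on a set of positive measure are identical, these local polynomials patch into a single $P\in\mathcal{P}_s(\rn)$ with $g=P$ almost everywhere on $\rn$, as desired.

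The genuine difficulties---the pointwise existence of $\widetilde{T}_{B_0}(y^\nu)$, its membership in $L^q_{\loc}(\rn)$, the independence of $B_0$ modulo $\mathcal{P}_s(\rn)$, and above all the identity \eqref{0-0'}---have already been absorbed into Lemma \ref{assume-lem}, so the remaining content of this proposition is the functional-analytic observation that the annihilator of $L^q_s(B)$ equals $\mathcal{P}_s(B)$ together with a routine gluing argument. The only points that still require a little care are checking that an $L^2$ function with compact support and vanishing moments indeed belongs to $L^2_s(\rn)$ so that Lemma \ref{assume-lem}(iii) applies, and confirming the absolute convergence of the pairing $\int_\rn a\,g$ in the ``$\Longrightarrow$'' direction; I expect no serious obstacle beyond bookkeeping.
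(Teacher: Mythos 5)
Your proposal is correct and follows essentially the same route as the paper: the identity \eqref{0-0'} from Lemma \ref{assume-lem}(iii) is the bridge, and your ``annihilator of $L^q_s(B)$ equals $\mathcal{P}_s(B)$'' observation is exactly the abstract form of the paper's concrete computation with $a=h-P_B^{(s)}(h)$, which yields $\widetilde{T}_{B_0}(y^\nu)=P_B^{(s)}(\widetilde{T}_{B_0}(y^\nu))$ a.e.\ on $B$ and then on all of $\rn$ by the same gluing step you use. The one point to tidy up in the necessity direction is that you must take $q=2$ (rather than an arbitrary $q\in(1,\fz)$): the hypothesis \eqref{T-x-gam} is stated only for $a\in L^2(\rn)$ with compact support and vanishing moments, and the extension to general $q$ is Corollary \ref{coro2.18}, which the paper derives from this very proposition, so invoking it here would be circular.
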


\begin{proof}
Let $s$, $T$, $K$, $\widetilde{K}$, and $B_0:=B(x_0,r_0)$
with $x_0\in\rn$ and $r_0\in(0,\fz)$ be as in the present proposition.
Now, we show the necessity.
Indeed, if \eqref{T-x-gam} holds true, then, using \eqref{pq},
we conclude that, for any $h\in L^{2}(\rn)$
supported in a ball $B\subset \rn$,
$h-P_{B}^{(s)}(h)\in L^{2}_s(B)$. By this,
\eqref{T-x-gam} with $a$ replaced by
$[h-P_{B}^{(s)}(h)]\mathbf{1}_B$, \eqref{0-0'} with $q:=2$
and $a$ replaced by $[h-P_{B}^{(s)}(h)]\mathbf{1}_B$,
and \eqref{pq}, we conclude that, for any $h\in L^{2}(\rn)$
supported in a ball $B\subset \rn$,
\begin{align}\label{arb-h}
0&=\int_{\rn} T\lf(\lf[h-P_{B}^{(s)}(h)\r]
\mathbf{1}_B\r)(x)x^{\nu}\,dx\\
&=\int_{B}\lf[h(x)-P_{B}^{(s)}(h)(x)\r]
\widetilde{T}_{B_0}(y^{\nu})(x)\,dx\noz\\
&=\int_{B}\lf[h(x)-P_{B}^{(s)}(h)(x)\r]
\lf[\widetilde{T}_{B_0}(y^{\nu})(x)
-P_{B}^{(s)}\lf(\widetilde{T}_{B_0}(y^{\nu})\r)(x)\r]\,dx\noz\\
&=\int_{B}h(x)\lf[\widetilde{T}_{B_0}(y^{\nu})(x)
-P_{B}^{(s)}\lf(\widetilde{T}_{B_0}(y^{\nu})\r)(x)\r]\,dx.\noz
\end{align}
Moreover, using Lemma \ref{assume-lem}(i), we have
$\widetilde{T}_{B_0}(y^{\nu})
-P_{B}^{(s)}(\widetilde{T}_{B_0}(y^{\nu}))\in L^2(B)$.
From this and \eqref{arb-h}, we deduce that,
for almost every $x\in B$,
\begin{align*}
\widetilde{T}_{B_0}(y^{\nu})(x)
=P_{B}^{(s)}\lf(\widetilde{T}_{B_0}(y^{\nu})\r)(x).
\end{align*}
Repeating the above procedure,
we find that, for any $k\in\nn$ and almost every $x\in 2^kB$,
\begin{align*}
\widetilde{T}_{B_0}(y^{\nu})(x)
=P_{2^kB}^{(s)}\lf(\widetilde{T}_{B_0}(y^{\nu})\r)(x)
\end{align*}
and hence, for almost every $x\in\rn$,
\begin{align*}
\widetilde{T}_{B_0}(y^{\nu})(x)
=P_{B}^{(s)}\lf(\widetilde{T}_{B_0}(y^{\nu})\r)(x).
\end{align*}
This finishes the proof of the necessity.

Moreover, the sufficiency
immediately follows from \eqref{0-0'}.
This finishes the proof of Proposition \ref{Assume}.
\end{proof}

\begin{remark}
Proposition \ref{Assume} might be well known
but, to the best of our knowledge, we did not
find a complete proof.
\end{remark}

\begin{corollary}\label{coro2.18}
Let $s\in\zz_+$, $K$ be an $s$-order standard kernel, and $T$ an
$s$-order Calder\'on--Zygmund singular integral operator with kernel $K$. Then
\eqref{T-x-gam} holds true if and only if, for any $q\in(1,\fz)$,
$a\in L^q_s(\rn)$ having compact support,
and $\gamma\in\zz_+^n$ with $|\gamma|\leq s$,
\begin{align*}
	\int_{\rn} T(a)(x)x^\gamma\,dx=0.
\end{align*}
\end{corollary}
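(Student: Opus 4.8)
The plan is to deduce Corollary \ref{coro2.18} as a quick consequence of Proposition \ref{Assume} together with Lemma \ref{assume-lem}(iii). Concretely, I would prove the two implications separately, since one direction is essentially trivial and the other requires a density/approximation argument.

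First I would treat the "only if" direction. Assume \eqref{T-x-gam} holds true. Then, by Proposition \ref{Assume}, for any $\nu\in\zz_+^n$ with $|\nu|\leq s$, we have $\widetilde{T}_{B_0}(y^{\nu})\in\mathcal{P}_s(\rn)$ after changing values on a set of measure zero, where $B_0$ is an arbitrarily fixed ball. Now fix $q\in(1,\fz)$, a compactly supported $a\in L^q_s(\rn)$, and $\gamma\in\zz_+^n$ with $|\gamma|\leq s$. Choose a ball $B\subset\rn$ containing $\supp(a)$; then $a\in L^q_s(\rn)$ is supported in $B$, so Lemma \ref{assume-lem}(iii) (i.e.\ \eqref{0-0'}) applies and gives
\begin{align*}
\int_{\rn} T(a)(x)x^{\gamma}\,dx
=\int_{\rn}a(x)\widetilde{T}_{B_0}(y^{\gamma})(x)\,dx.
\end{align*}
Since $\widetilde{T}_{B_0}(y^{\gamma})$ agrees almost everywhere with a polynomial $P\in\mathcal{P}_s(\rn)$ and $a\in L^q_s(\rn)$ annihilates all polynomials of degree at most $s$ (indeed $\int_{\rn}a(x)x^{\beta}\,dx=0$ for all $|\beta|\leq s$ by definition of $L^q_s$, and $P$ is a finite linear combination of such monomials), the right-hand side vanishes. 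Hence $\int_{\rn}T(a)(x)x^{\gamma}\,dx=0$, which is the claim.

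For the "if" direction I would argue by approximation. Suppose that for every $q\in(1,\fz)$, every compactly supported $a\in L^q_s(\rn)$, and every $\gamma\in\zz_+^n$ with $|\gamma|\leq s$ one has $\int_{\rn}T(a)(x)x^{\gamma}\,dx=0$. We must recover \eqref{T-x-gam}, i.e.\ the same vanishing for $a\in L^2_s(\rn)$ with compact support. The point is that such an $a$ also lies in $L^q_s(\rn)$ for, say, $q\in(1,2]$? Actually $L^2(\rn)$ with compact support embeds into $L^1$, but we need $q\in(1,\fz)$; since $a$ is compactly supported, by H\"older $a\in L^q(\rn)$ for every $q\in(1,2]$, and the moment conditions are unchanged, so $a\in L^q_s(\rn)$ for any such $q$. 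Taking $q\in(1,2]$ in the hypothesis directly yields $\int_{\rn}T(a)(x)x^{\gamma}\,dx=0$, which is exactly \eqref{T-x-gam}. I would note that one must be slightly careful that $T(a)$ computed as the $L^2$-operator and as the $L^q$-operator coincide; this follows from Remark \ref{rem-2.15} (both equal $\lim_{\eta\to0^+}T_\eta(a)$ almost everywhere).

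I expect the main (mild) obstacle to be the bookkeeping in the second direction: making sure the integral $\int_{\rn}T(a)(x)x^{\gamma}\,dx$ is well defined and consistent across the spaces $L^2(\rn)$ and $L^q(\rn)$, and that compact support of $a$ genuinely places $a$ in $L^q_s(\rn)$ for some $q\in(1,\fz)$ without changing its moments. None of this is deep — it is essentially the observation that a compactly supported $L^2$ function is a compactly supported $L^q$ function for $q\le 2$ — so the real content of the corollary is entirely carried by Proposition \ref{Assume} and \eqref{0-0'} of Lemma \ref{assume-lem}, and the proof should be short.
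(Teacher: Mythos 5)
Your proof is correct and follows the paper's route: the ``only if'' direction is exactly the paper's argument via Proposition \ref{Assume} and \eqref{0-0'} of Lemma \ref{assume-lem}, and the ``if'' direction is what the paper dispatches in one line by taking $q=2$. Your detour through H\"older to place a compactly supported $L^2$ function in some $L^q$, $q\in(1,2]$, is superfluous, since $q=2$ already lies in the range $(1,\infty)$ permitted by the hypothesis, so one can simply specialize to $q=2$; the remark about the consistency of $T(a)$ across $L^2$ and $L^q$ via Remark \ref{rem-2.15} is a reasonable piece of bookkeeping but is not needed once one takes $q=2$.
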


\begin{proof}
Let $s$, $K$, and $T$ be as in the present
corollary. The sufficiency obviously holds true by taking $q=2$.
Now, we show the necessity. By \eqref{T-x-gam} and
Proposition \ref{Assume}, we conclude that, for any
$\gamma\in\zz_+^n$ with $|\gamma|\leq s$,
$\widetilde{T}_{B_0}(y^{\gamma})\in \mathcal{P}_s(\rn)$
after changing values on a set of measure zero,
where $\widetilde{T}_{B_0}$ is as in Definition \ref{def-JN-CZO}.
Using this and \eqref{0-0'}, we have, for
any $q\in(1,\fz)$, $a\in L^q_{s}(\rn)$ having compact support,
and $\gamma\in\zz_+^n$ with $|\gamma|\leq s$,
$$
\int_{\rn} T(a)(x)x^\gamma\,dx
=\int_{\rn}a(x)\widetilde{T}_{B_0}(y^{\gamma})(x)\,dx=0.
$$
This finishes the proof of the necessity
and hence of Corollary \ref{coro2.18}.
\end{proof}

The following conclusion is well known; we present
the details here for the convenience of the reader.

\begin{lemma}\label{N12-lemma}
	Let $s\in\zz_+$.
	Then there exists a positive constant $C$
	such that, for any ball (or cube) $B\subset \rn$,
	any $P\in \mathcal{P}_s(\rn)$, and any $\lambda\in(1,\fz)$,
	\begin{align}\label{mcs}
		\|P\|_{L^{\fz}(\lambda B)}
		\leq C\lambda^{\deg P}\|P\|_{L^{\fz}(B)},
	\end{align}
where $\deg P$ denotes the degree of $P$.
\end{lemma}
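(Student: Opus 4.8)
The plan is to reduce the statement to a finite-dimensional normalization argument on the fixed "model" unit cube $Q_0:=[-1/2,1/2]^n$ (or unit ball), where all norms on the finite-dimensional space $\mathcal{P}_s(\rn)$ are equivalent, and then transport the estimate to an arbitrary ball (or cube) $B$ by the affine change of variables that maps $B$ onto $Q_0$. First I would fix the reference body $Q_0$ and observe that, since $\mathcal{P}_s(\rn)$ is a finite-dimensional vector space, the two quantities $P\mapsto\|P\|_{L^\fz(Q_0)}$ and $P\mapsto\max\{|c_\gamma|:\ |\gamma|\le s\}$ (the max of the absolute values of the coefficients of $P$ in the monomial basis $\{x^\gamma\}_{|\gamma|\le s}$) are both norms on $\mathcal{P}_s(\rn)$ and hence are equivalent; let $C_0\in[1,\fz)$, depending only on $n$ and $s$, be such that $\max_{|\gamma|\le s}|c_\gamma|\le C_0\|P\|_{L^\fz(Q_0)}$ for every $P=\sum_{|\gamma|\le s}c_\gamma x^\gamma$.

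Next I would handle the scaling. Given an arbitrary ball $B=B(x_B,r_B)$ and $\lambda\in(1,\fz)$, introduce the affine map $\Phi(y):=x_B+r_B y$, so that $\Phi(Q_0')=B$ for the appropriate reference body $Q_0'$ (a fixed ball of radius $1$ in the ball case, or $[-1/2,1/2]^n$ in the cube case), and $\Phi$ maps the reference body dilated by $\lambda$ onto $\lambda B$. For $P\in\mathcal{P}_s(\rn)$ set $\widetilde P:=P\circ\Phi$, which again lies in $\mathcal{P}_s(\rn)$ and satisfies $\deg\widetilde P=\deg P$, together with the identities $\|\widetilde P\|_{L^\fz(Q_0')}=\|P\|_{L^\fz(B)}$ and $\|\widetilde P\|_{L^\fz(\lambda Q_0')}=\|P\|_{L^\fz(\lambda B)}$. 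Thus it suffices to prove \eqref{mcs} in the special case $B=Q_0'$, i.e.\ to show $\|\widetilde P\|_{L^\fz(\lambda Q_0')}\le C\lambda^{\deg\widetilde P}\|\widetilde P\|_{L^\fz(Q_0')}$.

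For this reduced inequality I would write $\widetilde P=\sum_{|\gamma|\le s}c_\gamma x^\gamma$ and estimate directly: for $x\in\lambda Q_0'$ one has $|x|\le C_1\lambda$ (with $C_1$ depending only on $n$), hence $|x^\gamma|\le (C_1\lambda)^{|\gamma|}\le C_1^s\lambda^{\deg\widetilde P}$ whenever $c_\gamma\ne0$, since $|\gamma|\le\deg\widetilde P$ for every nonvanishing coefficient and $\lambda>1$. Summing over the (finitely many, at most $C_{(n,s)}$) multi-indices and invoking the coefficient bound from the first paragraph gives
\begin{align*}
\|\widetilde P\|_{L^\fz(\lambda Q_0')}
\le C_{(n,s)}\,C_1^s\,\lambda^{\deg\widetilde P}\max_{|\gamma|\le s}|c_\gamma|
\le C_{(n,s)}\,C_1^s\,C_0\,\lambda^{\deg\widetilde P}\|\widetilde P\|_{L^\fz(Q_0')},
\end{align*}
which is the desired estimate with $C:=C_{(n,s)}C_1^sC_0$ depending only on $n$ and $s$. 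Transporting back through $\Phi$ as above yields \eqref{mcs} for general $B$ and completes the proof; the cube case is identical with $Q_0'=[-1/2,1/2]^n$. The only subtle point, and the one I would be most careful about, is making sure the norm-equivalence constant $C_0$ is taken on a \emph{fixed} reference body \emph{before} scaling — applying equivalence of norms directly on $B$ or on $\lambda B$ would produce constants depending on those sets, destroying the uniformity — so the argument must genuinely be "normalize first, then scale," and one must also check that $\deg$ is invariant under the affine map $\Phi$, which it is because $\Phi$ is invertible.
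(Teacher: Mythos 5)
Your proof is correct and takes essentially the same approach as the paper: both normalize to a fixed reference ball or cube via the affine map $y\mapsto x_B+r_By$ (the paper's $\widetilde P(x):=P(r_Bx+x_B)$ is precisely your $P\circ\Phi$), both invoke equivalence of norms on the finite-dimensional space $\mathcal{P}_s(\rn)$ to compare $\|P\|_{L^\infty}$ with the max of the coefficients, and both then bound $|x^\gamma|\le\lambda^{|\gamma|}\le\lambda^{\deg P}$ on the dilate of the reference body. Your cautionary remark about taking the norm-equivalence constant on a fixed body before scaling is exactly the point the paper's argument also relies on.
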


\begin{proof}
	Let $s\in \zz_+$ and,
	for any $x\in\rn$,
	$P(x):=\sum_{\{\gamma\in \zz_+^n:\ |\gamma|\leq \deg P\}}
	a_{\gamma}x^{\gamma}$ with $\deg P\leq s$ and $\{a_\gamma\}
	_{\{\gamma\in \zz_+^n:\ |\gamma|\leq \deg P\}}\subset \mathbb{C}$.
	We only show the case of balls because the proof of the case of cubes
	is similar.
	Indeed, by the norm-equivalence theorem
	(see, for instance, \cite[Corollary 3.3]{stein2011}),
	we find that
	\begin{align}\label{xjc1}
		\|P\|_{L^{\fz}(B(\mathbf{0},1))}
		\sim \max\lf\{|a_{\gamma}|
		:\ \gamma\in \zz_+^n\text{ and }|\gamma|\leq \deg P\r\},
	\end{align}
    where the positive equivalence constants are independent of $P$
    but depend on $s$ and $n$.
	Moreover, for any given ball $B(x_B,r_B)\subset \rn$
	with $x_B\in\rn$ and $r_B\in(0,\fz)$, and for any $x\in\rn$, let
	$$
	\widetilde{P}(x)
	:=\sum_{\{\gamma\in \zz_+^n:\ |\gamma|\leq \deg P\}}
	\widetilde{a}_{\gamma}x^{\gamma}:=P(r_Bx+x_B),
	$$
	where $\{\widetilde{a}_\gamma\}
	_{\{\gamma\in \zz_+^n:\ |\gamma|\leq \deg P\}}\subset \mathbb{C}$.
	Then, using \eqref{xjc1}, we find that
	\begin{align*}
		\|P\|_{L^{\fz}(\lambda B(x_B,r_B))}
		&=\lf\|\widetilde{P}\r\|_{L^{\fz}(\lambda B(\mathbf{0},1))}
		\ls \lambda^{\deg P}\max\lf\{|\widetilde{a}_{\gamma}|
		:\ \gamma\in \zz_+^n\text{ and }|\gamma|\leq \deg P\r\}\\
		&\sim \lambda^{\deg P}
		\lf\|\widetilde{P}\r\|_{L^\fz(B(\mathbf{0},1))}
		\sim \lambda^{\deg P}\|P\|_{L^{\fz}(B(x_B,r_B))},
	\end{align*}
	which implies that \eqref{mcs} holds true.
	This finishes the proof of Lemma \ref{N12-lemma}.
\end{proof}

By Lemma \ref{N12-lemma}, we have the following two conclusions,
which play vital roles in the proofs
of Propositions \ref{converge} and \ref{M-JN1} below.

\begin{lemma}\label{sum-g}
	Let $q\in[1,\infty)$, $s\in\zz_+$, and $\theta\in(0,2^{-s})$.
	Then there exists a positive constant $C$
	such that, for any $f\in L^1_{\mathrm{loc}}(\rn)$
	and any ball (or cube) $B\subset\rn$,
	\begin{align*}
		\sum_{k=1}^{\infty}\theta^k\lf[\fint_{2^kB}\lf|f(x)
		-P_{B}^{(s)}(f)(x)\r|^q\,dx\r]^{\frac{1}{q}}
		\leq C\sum_{k=1}^{\fz}\theta^k\lf[\fint_{2^kB}\lf|f(x)
		-P_{2^kB}^{(s)}(f)(x)\r|^q\,dx\r]^{\frac{1}{q}}.
	\end{align*}
\end{lemma}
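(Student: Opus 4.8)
The plan is to compare, on each dyadic dilate $2^kB$, the polynomial $P_B^{(s)}(f)$ with $P_{2^kB}^{(s)}(f)$ and to show that the discrepancy is summable against $\theta^k$ precisely because of the gap between $\theta$ and $2^{-s}$. First I would split, on $2^kB$,
\[
f-P_B^{(s)}(f)=\lf[f-P_{2^kB}^{(s)}(f)\r]+\lf[P_{2^kB}^{(s)}(f)-P_B^{(s)}(f)\r]
\]
and apply the Minkowski inequality with respect to the probability measure $|2^kB|^{-1}\,dx$ on $2^kB$. The term generated by the first bracket, multiplied by $\theta^k$ and summed over $k\in\nn$, is exactly the right-hand side of the asserted inequality; hence it remains only to control
\[
\sum_{k=1}^{\fz}\theta^k\lf[\fint_{2^kB}\lf|P_{2^kB}^{(s)}(f)(x)-P_B^{(s)}(f)(x)\r|^q\,dx\r]^{\frac1q}
\le\sum_{k=1}^{\fz}\theta^k\lf\|P_{2^kB}^{(s)}(f)-P_B^{(s)}(f)\r\|_{L^{\fz}(2^kB)},
\]
the last step being the trivial bound of an $L^q$ average by the supremum norm.

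Next I would telescope the polynomial difference. Writing $2^0B:=B$ and $Q_i:=P_{2^iB}^{(s)}(f)-P_{2^{i-1}B}^{(s)}(f)\in\mathcal{P}_s(\rn)$, we have $P_{2^kB}^{(s)}(f)-P_B^{(s)}(f)=\sum_{i=1}^{k}Q_i$. Since the projection $P_{2^{i-1}B}^{(s)}$ is linear and fixes every element of $\mathcal{P}_s(\rn)$, it follows that $Q_i=-P_{2^{i-1}B}^{(s)}(f-P_{2^iB}^{(s)}(f))$, so the pointwise bound \eqref{p} (applied on the ball $2^{i-1}B$ to the function $f-P_{2^iB}^{(s)}(f)$), together with $|2^{i-1}B|=2^{-n}|2^iB|$ and the H\"older inequality, yields
\[
\lf\|Q_i\r\|_{L^{\fz}(2^{i-1}B)}
\le C_{(s)}\fint_{2^{i-1}B}\lf|f(y)-P_{2^iB}^{(s)}(f)(y)\r|\,dy
\le 2^nC_{(s)}\lf[\fint_{2^iB}\lf|f(y)-P_{2^iB}^{(s)}(f)(y)\r|^q\,dy\r]^{\frac1q}=:2^nC_{(s)}\mathcal{O}_i.
\]
Then Lemma \ref{N12-lemma}, applied with $\lambda:=2^{k-i+1}$ so that $2^kB=\lambda(2^{i-1}B)$ and with $\deg Q_i\le s$, upgrades this to $\|Q_i\|_{L^{\fz}(2^kB)}\le C\,2^{(k-i)s}\mathcal{O}_i$ for all $1\le i\le k$.

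Finally I would reassemble and sum. Combining the two preceding steps gives
\[
\sum_{k=1}^{\fz}\theta^k\lf[\fint_{2^kB}\lf|f-P_B^{(s)}(f)\r|^q\,dx\r]^{\frac1q}
\le\sum_{k=1}^{\fz}\theta^k\mathcal{O}_k+C\sum_{k=1}^{\fz}\sum_{i=1}^{k}\theta^k\,2^{(k-i)s}\mathcal{O}_i,
\]
and, by the Tonelli theorem for nonnegative series and the substitution $m=k-i$,
\[
\sum_{k=1}^{\fz}\sum_{i=1}^{k}\theta^k\,2^{(k-i)s}\mathcal{O}_i
=\sum_{i=1}^{\fz}\theta^i\mathcal{O}_i\sum_{m=0}^{\fz}\lf(\theta\,2^s\r)^m
=\frac{1}{1-\theta\,2^s}\sum_{i=1}^{\fz}\theta^i\mathcal{O}_i,
\]
where the inner geometric series converges precisely because $\theta\in(0,2^{-s})$. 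Since $\sum_{k\ge1}\theta^k\mathcal{O}_k$ is exactly the right-hand side of the claimed estimate, this completes the argument; the proof for cubes is verbatim, Lemma \ref{N12-lemma} being stated for cubes as well. The step I expect to be the main obstacle is this telescoping estimate of $Q_i$: one must recognize $Q_i$ as $-P_{2^{i-1}B}^{(s)}(f-P_{2^iB}^{(s)}(f))$ in order to invoke \eqref{p}, and then carefully track the dilation factor $2^{(k-i)s}$ produced by Lemma \ref{N12-lemma} so that, after interchanging the order of summation, it combines with $\theta^k$ into a convergent geometric series — which is exactly the place where, and why, the hypothesis $\theta<2^{-s}$ is needed.
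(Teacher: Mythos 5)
Your proposal is correct and follows essentially the same route as the paper: split $f-P_B^{(s)}(f)$ via the triangle/Minkowski inequality against the "self" oscillation $f-P_{2^kB}^{(s)}(f)$, telescope the polynomial difference, recognize each increment as $-P_{2^{i-1}B}^{(s)}(f-P_{2^iB}^{(s)}(f))$ so that \eqref{p} applies, scale up with Lemma \ref{N12-lemma} to get the $2^{(k-i)s}$ factor, and finally interchange the double sum and invoke the geometric series $\sum_m(\theta 2^s)^m<\infty$, which is exactly where $\theta<2^{-s}$ enters. The only (cosmetic) difference is that the paper writes the growth factor $2^{(k-j+1)s}$ directly into its first display while you introduce it in a separate step, and the paper keeps the $L^1$ average and applies H\"older at the very end; neither change affects the argument.
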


\begin{proof}
	Let $q$, $s$, and $\theta$ be as in the present lemma.
	We only show the case of balls because the proof
	of the case of cubes is similar.
	Obviously, for any $f\in L^1_{\mathrm{loc}}(\rn)$
	and any ball $B\subset\rn$, we have
	\begin{align}\label{2.33x}
		&\sum_{k=1}^{\infty}\theta^k\lf[\fint_{2^kB}\lf|f(x)
		-P_{B}^{(s)}(f)(x)\r|^q\,dx\r]^{\frac{1}{q}}\\
		&\quad\leq \sum_{k=1}^{\infty}\theta^k
		\lf[\fint_{2^kB}\lf|f(x)
		-P_{2^kB}^{(s)}(f)(x)\r|^q\,dx\r]^{\frac{1}{q}}\noz\\
		&\qquad+\sum_{k=1}^{\infty}\theta^k\sum_{j=1}^{k}
		2^{(k-j+1)s}\lf\|P_{2^{j-1}B}^{(s)}\lf(f
		-P_{2^{j}B}^{(s)}(f)\r)\r\|_{L^{\fz}(2^{j-1} B)}\noz\\
		&\quad=:\sum_{k=1}^{\infty}\theta^k
		\lf[\fint_{2^kB}\lf|f(x)
		-P_{2^kB}^{(s)}(f)(x)\r|^q\,dx\r]^{\frac{1}{q}}+\mathrm{J}.\noz
	\end{align}
From $P_{B}^{(s)}(P)=P$ for any $P\in \mathcal{P}_s(\rn)$
and any ball $B\subset \rn$,
Lemma \ref{N12-lemma}, \eqref{p}, the H\"older inequality,
the Fubini theorem, and the fact that
$2^s\theta\in(0,1)$, we deduce that
	\begin{align*}
		\mathrm{J}
		\ls&\sum_{k=1}^{\infty}\theta^k\sum_{j=1}^{k}
		2^{(k-j+1)s}\fint_{2^{j-1}B}\lf|f(x)
		-P_{2^jB}^{(s)}(f)(x)\r|\,dx\\
		\ls&\sum_{j=1}^{\infty}\sum_{k=j}^{\infty}\theta^k2^{(k-j+1)s}
		\fint_{2^{j-1}B}\lf|f(x)
		-P_{2^jB}^{(s)}(f)(x)\r|\,dx\\
		\ls&\sum_{j=1}^{\fz}\theta^j\fint_{2^jB}
		\lf|f(x)-P_{2^jB}^{(s)}(f)(x)\r|\,dx\\
		\ls&\sum_{j=1}^{\fz}\theta^j\lf[\fint_{2^jB}
		\lf|f(x)-P_{2^jB}^{(s)}(f)(x)\r|^q\,dx\r]^{\frac{1}{q}}.
	\end{align*}
	This and \eqref{2.33x} imply the desired conclusion and hence
	finish the proof of Lemma \ref{sum-g}.
\end{proof}

\begin{lemma}\label{I-JN}
Let $p\in[1,\fz]$, $q\in(1,\fz)$, $s\in\zz_+$, $\beta\in(s,\fz)$,
and $\alpha\in (-\fz,\frac{1}{p}+\frac{\beta}{n})$.
Then there exists a positive constant $C$ such that,
for any $f\in JN_{(p,q,s)_\alpha}^{\mathrm{con}}(\rn)$ and
any ball $B(x,r)$ with $x\in\rn$ and $r\in(0,\fz)$,
\begin{align*}
\int_{\rn\setminus B(x,r)}\frac{|f(y)
-P_{B(x,r)}^{(s)}(f)(y)|}{|x-y|^{n+\beta}}\,dy
\leq Cr^{-\frac{n}{p}-\beta+\alpha n}
\|f\|_{JN_{(p,q,s)_\alpha}^{\mathrm{con}}(\rn)}.
\end{align*}
\end{lemma}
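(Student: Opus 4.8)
The plan is to split the region $\rn\setminus B(x,r)$ into the dyadic annuli $2^{k}B(x,r)\setminus 2^{k-1}B(x,r)$, $k\in\nn$, and to estimate each annular contribution separately. Writing $B:=B(x,r)$, on the $k$-th annulus one has $|x-y|\sim 2^{k}r$, so the part of the integral over this annulus is dominated by
$$(2^{k}r)^{-n-\beta}\int_{2^{k}B}\lf|f(y)-P_{B}^{(s)}(f)(y)\r|\,dy.$$
By the H\"older inequality together with $|2^{k}B|\sim(2^{k}r)^{n}$, this is in turn bounded by a constant multiple of $(2^{k}r)^{-\beta}[\fint_{2^{k}B}|f(y)-P_{B}^{(s)}(f)(y)|^{q}\,dy]^{1/q}$. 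Summing over $k\in\nn$, the left-hand side of the asserted inequality is therefore controlled by
$$r^{-\beta}\sum_{k=1}^{\fz}2^{-k\beta}\lf[\fint_{2^{k}B}\lf|f(y)-P_{B}^{(s)}(f)(y)\r|^{q}\,dy\r]^{\frac{1}{q}},$$
so it suffices to estimate this series.

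The crucial step is to replace the polynomial $P_{B}^{(s)}(f)$, which is adapted to the small ball $B$, by $P_{2^{k}B}^{(s)}(f)$, which is adapted to $2^{k}B$. This is precisely the content of Lemma \ref{sum-g} applied with $\theta:=2^{-\beta}$; the hypothesis $\beta\in(s,\fz)$ ensures that $\theta\in(0,2^{-s})$, so Lemma \ref{sum-g} applies and gives
$$r^{-\beta}\sum_{k=1}^{\fz}2^{-k\beta}\lf[\fint_{2^{k}B}\lf|f(y)-P_{B}^{(s)}(f)(y)\r|^{q}\,dy\r]^{\frac{1}{q}}\ls r^{-\beta}\sum_{k=1}^{\fz}2^{-k\beta}\lf[\fint_{2^{k}B}\lf|f(y)-P_{2^{k}B}^{(s)}(f)(y)\r|^{q}\,dy\r]^{\frac{1}{q}}.$$

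It then remains to bound each term $[\fint_{2^{k}B}|f(y)-P_{2^{k}B}^{(s)}(f)(y)|^{q}\,dy]^{1/q}$ by $C(2^{k}r)^{n\alpha-n/p}\|f\|_{JN_{(p,q,s)_\alpha}^{\mathrm{con}}(\rn)}$, with the convention $n/p:=0$ when $p=\fz$. For this, choose for each $k$ a cube $Q\supset 2^{k}B$ with $|Q|\sim|2^{k}B|\sim(2^{k}r)^{n}$; since $P_{2^{k}B}^{(s)}$ fixes polynomials of degree at most $s$, we have $P_{2^{k}B}^{(s)}(P_{Q}^{(s)}(f)-f)=P_{Q}^{(s)}(f)-P_{2^{k}B}^{(s)}(f)$, and hence, using \eqref{p}, the H\"older inequality, and then $|Q|\sim|2^{k}B|$,
$$\lf[\fint_{2^{k}B}\lf|f(y)-P_{2^{k}B}^{(s)}(f)(y)\r|^{q}\,dy\r]^{\frac{1}{q}}\ls\lf[\fint_{Q}\lf|f(y)-P_{Q}^{(s)}(f)(y)\r|^{q}\,dy\r]^{\frac{1}{q}}.$$
Applying Definition \ref{Defin.jncc} (when $p\in[1,\fz)$ to the one-cube collection $\{Q\}\in\Pi_{\ell}(\rn)$ with $\ell$ the side length of $Q$, and when $p=\fz$ directly, cf.\ Remark \ref{rem-JN}(ii)) yields $[\fint_{Q}|f-P_{Q}^{(s)}(f)|^{q}]^{1/q}\le|Q|^{\alpha-1/p}\|f\|_{JN_{(p,q,s)_\alpha}^{\mathrm{con}}(\rn)}\sim(2^{k}r)^{n\alpha-n/p}\|f\|_{JN_{(p,q,s)_\alpha}^{\mathrm{con}}(\rn)}$. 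Inserting this, the series becomes $r^{-\beta+n\alpha-n/p}\|f\|_{JN_{(p,q,s)_\alpha}^{\mathrm{con}}(\rn)}\sum_{k=1}^{\fz}2^{k(n\alpha-n/p-\beta)}$; the geometric series converges exactly because $\alpha<\frac{1}{p}+\frac{\beta}{n}$, and collecting the powers of $r$ gives the factor $r^{-n/p-\beta+\alpha n}$, which is the desired conclusion.

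The annular decomposition and the H\"older estimate are routine; the only point requiring care is the passage from $P_{B}^{(s)}(f)$ to $P_{2^{k}B}^{(s)}(f)$ — which is exactly why it was isolated as Lemma \ref{sum-g} — together with the bookkeeping ensuring that the resulting geometric series converges precisely in the stated range $\alpha<\frac{1}{p}+\frac{\beta}{n}$.
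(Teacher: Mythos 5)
Your proof is correct and follows essentially the same route as the paper: dyadic annular decomposition, an appeal to Lemma \ref{sum-g} with $\theta=2^{-\beta}$ to swap $P_{B}^{(s)}(f)$ for $P_{2^{k}B}^{(s)}(f)$, and then the $JN$-norm bound giving a geometric series convergent precisely when $\alpha<\frac{1}{p}+\frac{\beta}{n}$. The only differences are cosmetic — you apply H\"older before Lemma \ref{sum-g} (with general $q$) whereas the paper applies Lemma \ref{sum-g} with $q=1$ and then H\"older, and you spell out the ball-to-cube comparison that the paper leaves implicit.
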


\begin{proof}
Let $p$, $q$, $s$, $\beta$, and $\alpha$ be as in the present lemma.
By Lemma \ref{sum-g} with $\theta:=2^{-\beta}$
[Here, we need $\beta\in(s,\fz)$], the H\"older inequality,
and $\alpha\in (-\fz,\frac{1}{p}+\frac{\beta}{n})$,
we conclude that, for any
$f\in JN_{(p,q,s)_\alpha}^{\mathrm{con}}(\rn)$ and any
ball $B(x,r)$ with $x\in\rn$ and $r\in(0,\fz)$,
\begin{align}\label{JN-I-E}
&\int_{\rn\setminus B(x,r)}\frac{|f(y)
	-P_{B(x,r)}^{(s)}(f)(y)|}{|x-y|^{n+\beta}}\,dy\\
&\quad\sim \sum_{k\in\nn}(2^kr)^{-n-\beta}
\int_{B(x,2^{k}r)\setminus B(x,2^{k-1}r)}
\lf|f(y)-P_{B(x,r)}^{(s)}(f)(y)\r|\,dy\noz\\
&\quad\lesssim \sum_{k\in\nn}(2^kr)^{-\beta}
\fint_{B(x,2^{k}r)}\lf|f(y)-P_{B(x,r)}^{(s)}(f)(y)\r|\,dy\noz\\
&\quad\lesssim \sum_{k\in\nn}(2^kr)^{-\beta}
\fint_{B(x,2^{k}r)}\lf|f(y)-P_{B(x,2^{k}r)}^{(s)}(f)(y)\r|\,dy\noz\\
&\quad\lesssim \sum_{k\in\nn}(2^kr)^{-\beta}
\lf[\fint_{B(x,2^{k}r)}\lf|f(y)
-P_{B(x,2^{k}r)}^{(s)}(f)(y)\r|^q\,dy\r]^{\frac{1}{q}},\noz
\end{align}
which, together with $\alpha\in (-\fz,\frac{1}{p}+\frac{\beta}{n})$
and the definition of $\|\cdot\|_{JN_{(p,q,s)_\alpha}^{\mathrm{con}}(\rn)}$,
further implies that
\begin{align*}
&\int_{\rn\setminus B(x,r)}\frac{|f(y)
	-P_{B(x,r)}^{(s)}(f)(y)|}{|x-y|^{n+\beta}}\,dy\\
&\quad\sim\sum_{k\in\nn}(2^kr)^{-\frac{n}{p}-\beta+\alpha n}
\lf|B(x,2^{k}r)\r|^{\frac{1}{p}-\alpha}
\lf[\fint_{B(x,2^{k}r)}\lf|f(y)
-P_{B(x,2^{k}r)}^{(s)}(f)(y)\r|^q\,dy\r]^{\frac{1}{q}}\\
&\quad\lesssim r^{-\frac{n}{p}-\beta+\alpha n}
\|f\|_{JN_{(p,q,s)_\alpha}^{\mathrm{con}}(\rn)}.
\end{align*}
This finishes the proof of Lemma \ref{I-JN}.
\end{proof}

\begin{proposition}\label{converge}
Let $p\in[1,\fz]$, $q\in(1,\fz)$, $s\in\zz_+$,
and $\az\in(-\fz,\frac1p+\frac{s+\dz}{n})$
with $\dz\in(0,1]$ as in Definition \ref{def-s-k}.	
Let $K$ be an $s$-order standard kernel
as in Definition \ref{def-s-k} such that
there exists an $s$-order Calder\'on--Zygmund
singular integral operator $T$ with kernel $K$
having vanishing moments up to order $s$.
Let $\widetilde{K}$ be as in \eqref{Kw-K} and
$B_0:=B(x_0,r_0)$ as in \eqref{T-eta}
with $x_0\in\rn$ and $r_0\in(0,\fz)$.
Then, for any $f\in JN_{(p,q,s)_\alpha}^{\mathrm{con}}(\rn)$,
$\widetilde{T}_{B_0}(f)$ in \eqref{2.12x}
is well defined almost everywhere on $\rn$.
\end{proposition}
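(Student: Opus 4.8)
The plan is to fix the ball $B_0:=B(x_0,r_0)$ and to show that, for every ball $\widetilde B\subset\rn$, the limit $\widetilde T_{B_0}(f)(x)=\lim_{\eta\to0^+}\widetilde T_{B_0,\eta}(f)(x)$ in \eqref{2.12x} exists for almost every $x\in\widetilde B$; since $\rn$ is a countable union of balls, this gives the assertion. First I would, exactly as in the proof of Lemma \ref{assume-lem}(i), put $R:=\sup\{2|x-x_0|+2|x_0|+1:\ x\in\widetilde B\}$, so that $(B_0\cup\widetilde B)\subset B(x_0,R)$, $R\geq1$, and, for any $x\in\widetilde B$ and any $y\notin B(x_0,R)$, both $|y-x_0|\geq2|x-x_0|$ and $|y|\sim|y-x_0|$ hold. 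Then I would decompose $f=f_1+f_2$ with $f_1:=f\mathbf1_{B(x_0,R)}$ and $f_2:=f\mathbf1_{\rn\setminus B(x_0,R)}$; once the two integrals $\int_{\rn\setminus B(x,\eta)}(\cdots)f_1(y)\,dy$ and $\int_{\rn\setminus B(x,\eta)}(\cdots)f_2(y)\,dy$ are shown to converge absolutely, linearity of $\widetilde T_{B_0,\eta}$ yields $\widetilde T_{B_0,\eta}(f)=\widetilde T_{B_0,\eta}(f_1)+\widetilde T_{B_0,\eta}(f_2)$ and it suffices to treat $f_1$ and $f_2$ separately. For $f_1$: since $f\in L^q_{\mathrm{loc}}(\rn)$ and $B(x_0,R)$ is bounded, $f_1\in L^q(\rn)$, so Proposition \ref{prop-2.18x} (applied with $T_{(1)}$ the $s$-order Calder\'on--Zygmund singular integral operator with kernel $\widetilde K$, i.e., the adjoint of $T$) immediately gives that $\widetilde T_{B_0}(f_1)=\lim_{\eta\to0^+}\widetilde T_{B_0,\eta}(f_1)$ is well defined almost everywhere on $\rn$, in particular on $\widetilde B$; absolute convergence of the corresponding integral is part of that proposition (the $\widetilde K(x,\cdot)$-term is bounded by $\eta^{-n}$ on $\rn\setminus B(x,\eta)$ and $f_1$ has bounded support).

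For $f_2$, observe that for any $x\in\widetilde B$ and $\eta\in(0,\fz)$, since $\mathbf1_{\rn\setminus B_0}\equiv1$ on $\rn\setminus B(x_0,R)$,
\[
\widetilde T_{B_0,\eta}(f_2)(x)=\int_{(\rn\setminus B(x_0,R))\setminus B(x,\eta)}\lf[\widetilde K(x,y)-\sum_{\{\gamma\in\zz_+^n:\ |\gamma|\leq s\}}\frac{\partial_{(1)}^{\gamma}\widetilde K(x_0,y)}{\gamma!}(x-x_0)^{\gamma}\r]f(y)\,dy .
\]
I would estimate the bracket by the Taylor remainder theorem: it equals $\sum_{\{\gamma:\ |\gamma|=s\}}\frac{1}{\gamma!}[\partial_{(1)}^{\gamma}\widetilde K(\widetilde x,y)-\partial_{(1)}^{\gamma}\widetilde K(x_0,y)](x-x_0)^{\gamma}$ for some $\widetilde x$ on the segment joining $x_0$ and $x$; then, writing $\partial_{(1)}^{\gamma}\widetilde K(\cdot,y)=(\partial_{(2)}^{\gamma}K)(y,\cdot)$ and using the regularity \eqref{regular2-s} in the first variable together with $|y-x_0|\geq2|x-x_0|\geq2|\widetilde x-x_0|$, $|y|\sim|y-x_0|$, and $x\in\widetilde B$, the bracket is dominated by $C|x-x_0|^{s+\dz}|y-x_0|^{-n-s-\dz}\ls|y|^{-n-s-\dz}$, where $C$ depends only on $\widetilde B$, $x_0$, $R$. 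To see the integral is finite I would split $f=[f-P_{B(x_0,R)}^{(s)}(f)]+P_{B(x_0,R)}^{(s)}(f)$ on $\rn\setminus B(x_0,R)$: for the oscillation piece, Lemma \ref{I-JN} with $\beta:=s+\dz$ (legitimate since $\beta>s$ because $\dz>0$, and $\alpha<\frac{1}{p}+\frac{s+\dz}{n}$ by hypothesis) gives
\[
\int_{\rn\setminus B(x_0,R)}\frac{|f(y)-P_{B(x_0,R)}^{(s)}(f)(y)|}{|y-x_0|^{n+s+\dz}}\,dy\ls R^{-\frac{n}{p}-(s+\dz)+\alpha n}\|f\|_{JN_{(p,q,s)_\alpha}^{\mathrm{con}}(\rn)}<\fz ,
\]
while for the polynomial piece, $P_{B(x_0,R)}^{(s)}(f)$ is a fixed polynomial of degree at most $s$ (with finite coefficients depending on $f$, $x_0$, $R$; recall also \eqref{p} and Lemma \ref{N12-lemma}), so $|P_{B(x_0,R)}^{(s)}(f)(y)|\ls|y-x_0|^{s}$ for $|y-x_0|\geq R\geq1$ and hence $\int_{\rn\setminus B(x_0,R)}|P_{B(x_0,R)}^{(s)}(f)(y)|\,|y-x_0|^{-n-s-\dz}\,dy\ls\int_{|y-x_0|\geq R}|y-x_0|^{-n-\dz}\,dy<\fz$. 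Consequently the integrand in the displayed identity is absolutely integrable over $\rn\setminus B(x_0,R)$, uniformly in $\eta$, so the dominated convergence theorem shows that $\widetilde T_{B_0}(f_2)(x)=\lim_{\eta\to0^+}\widetilde T_{B_0,\eta}(f_2)(x)$ exists for every $x\in\widetilde B$ (and equals the integral over all of $\rn\setminus B(x_0,R)$).

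Combining the two parts, $\widetilde T_{B_0}(f)(x)=\widetilde T_{B_0}(f_1)(x)+\widetilde T_{B_0}(f_2)(x)$ exists for almost every $x\in\widetilde B$, which is the claim. I expect the only genuinely delicate point to be the tail estimate for $f_2$: one must choose the truncation radius $R$ large enough that the geometric separation $|y-x_0|\geq2|x-x_0|$ needed for the kernel's Taylor-remainder bound holds on the whole region of integration, and one must peel off the polynomial $P_{B(x_0,R)}^{(s)}(f)$ before applying Lemma \ref{I-JN} — and it is exactly at this point that the hypothesis $\alpha<\frac{1}{p}+\frac{s+\dz}{n}$ is used. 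Everything else is routine, and the local part reduces immediately to Proposition \ref{prop-2.18x}.
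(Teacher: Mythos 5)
Your proof is correct, but it takes a genuinely different route from the paper's. The paper fixes a ball $\widetilde B$ and, keeping $f$ intact, decomposes the \emph{integrand} of $\widetilde T_{B_0,\eta}(f)(x)$ into four pieces $E_{\widetilde B,\eta}$, $E_{\widetilde B}$, $E^{(1)}_{\widetilde B,B_0,\eta}$, $E^{(2)}_{\widetilde B,B_0,\eta}$, splitting simultaneously on region (inside/outside $2\widetilde B$) and on $f=[f-P^{(s)}_{2\widetilde B}(f)]+P^{(s)}_{2\widetilde B}(f)$; the local oscillation piece is handled by principal-value convergence of $T_{(1)}$, the far oscillation piece by Lemma \ref{I-JN}, the piece $E^{(1)}$ by dominated convergence together with Lemma \ref{int-B-P}, and the polynomial piece $E^{(2)}$ by invoking Proposition \ref{Assume}. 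You instead split the \emph{function} spatially, $f=f\mathbf 1_{B(x_0,R)}+f\mathbf 1_{\rn\setminus B(x_0,R)}$, dispatch the local part in one line via the already-proved $L^q$-case (Proposition \ref{prop-2.18x}), and for the tail observe that on $\rn\setminus B(x_0,R)$ the indicator $\mathbf 1_{\rn\setminus B_0}$ is identically one, so the bracket collapses to a single Taylor remainder with decay $|y-x_0|^{-n-s-\dz}$, after which absolute integrability follows by peeling off $P^{(s)}_{B(x_0,R)}(f)$ and applying Lemma \ref{I-JN} plus a trivial bound on the polynomial. A notable by-product is that your argument never touches Proposition \ref{Assume}, and hence never uses the vanishing-moments hypothesis at all (the corresponding step in the paper's proof could in fact also have been reduced to Lemma \ref{assume-lem}(i) rather than Proposition \ref{Assume}, so the hypothesis is not really essential here). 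What the paper's heavier decomposition buys is the explicit representation formula \eqref{limit}, which is then reused verbatim in the proof of Theorem \ref{thm-bdd-JN}; your shorter argument establishes well-definedness cleanly but does not directly yield that structural identity.
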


\begin{proof}
Let $p$, $q$, $s$, $\alpha$, $\dz$,
$K$, $\widetilde{K}$, and $B_0:=B(x_0,r_0)$
with $x_0\in\rn$ and $r_0\in(0,\fz)$ be as in the present proposition.
Also, let $\widetilde{T}_{B_0,\eta}$
with $\eta\in(0,\fz)$ be as in \eqref{T-eta}.
We only prove the case $p\in[1,\fz)$
because the proof of $p=\fz$ is similar.
To this end, let $f\in JN_{(p,q,s)_\alpha}^{\mathrm{con}}(\rn)$
and $\widetilde{B}:=B(z,r)$ be any given ball of
$\rn$ with $z\in\rn$ and $r\in(0,\fz)$.
Then it is easy to see that, for any given $\eta\in(0,r)$
[and hence $B(x,\eta)\subset 2\widetilde{B}$]
and for any $x\in \widetilde{B}$,
$$\widetilde{T}_{B_0,\eta}(f)(x)
=E_{\widetilde{B},\eta}(x)+E_{\widetilde{B}}(x)
+E_{\widetilde{B},B_0,\eta}^{(1)}(x)+E_{\widetilde{B},B_0,\eta}^{(2)}(x),$$
where
\begin{align*}
E_{\widetilde{B},\eta}(x):=\int_{2\widetilde{B}\setminus B(x,\eta)}
\widetilde{K}(x,y)\lf[f(y)-P_{2\widetilde{B}}^{(s)}(f)(y)\r]\,dy,
\end{align*}
\begin{align*}
E_{\widetilde{B}}(x):=\int_{\rn\setminus2\widetilde{B}}
\lf[\widetilde{K}(x,y)-\sum_{\{\gamma\in\zz_+^n:\ |\gamma|\leq s\}}
\frac{\partial_{(1)}^{\gamma}\widetilde{K}(z,y)}{\gamma!}(x-z)^{\gamma}\r]
\lf[f(y)-P_{2\widetilde{B}}^{(s)}(f)(y)\r]\,dy,
\end{align*}
\begin{align*}
E_{\widetilde{B},B_0,\eta}^{(1)}(x)
:=\int_{\rn\setminus B(x,\eta)}L_{\widetilde{B},B_0}(x,y)\lf[f(y)
-P_{2\widetilde{B}}^{(s)}(f)(y)\r]\,dy
\end{align*}
with
\begin{align*}
L_{\widetilde{B},B_0}(x,y)
&:=\sum_{\{\gamma\in\zz_+^n:\ |\gamma|\leq s\}}
\frac{\partial_{(1)}^{\gamma}\widetilde{K}(z,y)}{\gamma!}(x-z)^{\gamma}
\mathbf1_{\rn\setminus 2\widetilde{B}}(y)\\
&\quad-\sum_{\{\gamma\in\zz_+^n:\ |\gamma|\leq s\}}
\frac{\partial_{(1)}^{\gamma}\widetilde{K}(x_0,y)}{\gamma!}(x-x_0)^{\gamma}
\mathbf1_{\rn\setminus B_0}(y),
\end{align*}
and
$$E_{\widetilde{B},B_0,\eta}^{(2)}(x)
:=\int_{\rn\setminus B(x,\eta)}
\lf[\widetilde{K}(x,y)-\sum_{\{\gamma\in\zz_+^n:\ |\gamma|\leq s\}}
\frac{\partial_{(1)}^{\gamma}\widetilde{K}(x_0,y)}{\gamma!}(x-x_0)^{\gamma}
\mathbf1_{\rn\setminus B_0}(y)\r]P_{2\widetilde{B}}^{(s)}(f)(y)\,dy.$$

We first consider $E_{B,\eta}(x)$ for almost every $x\in \widetilde{B}$.
Indeed, since $f\in JN_{(p,q,s)_\alpha}^{\mathrm{con}}(\rn)$,
from the definition of $\|\cdot\|_{JN_{(p,q,s)_\alpha}^{\mathrm{con}}(\rn)}$,
it is easy to deduce that
\begin{align*}
\lf|2\widetilde{B}\r|^{\frac1p-\az}\lf[\fint_{2\widetilde{B}}
\lf|f(y)-P_{2\widetilde{B}}^{(s)}(f)(y)\r|^q\,dy\r]^\frac1q
\ls\|f\|_{JN_{(p,q,s)_\alpha}^{\mathrm{con}}(\rn)}<\fz,
\end{align*}
which further implies that
$[f-P_{2\widetilde{B}}^{(s)}(f)]\mathbf1_{2\widetilde{B}}\in L^q(\rn)$,
and hence $E_{\widetilde{B},\eta}(x)$ is well defined.
Moreover, let $T_{(1)}$ be the $s$-order Calder\'on--Zygmund
singular integral operator
with kernel $\widetilde{K}$. Then,
by Remark \ref{rem-2.15}, we know that
\begin{align}\label{T1}
\lim_{\eta\to0^+}E_{\widetilde{B},\eta}
=T_{(1)}\lf(\lf[f-P_{2\widetilde{B}}^{(s)}(f)\r]\mathbf1_{2\widetilde{B}}\r)
\end{align}
almost everywhere on $\widetilde{B}$.

Next, we estimate $E_{\widetilde{B}}$.
Noticing that $E_{\widetilde{B}}$ is independent of $\eta$,
we only need to show that $E_{\widetilde{B}}$ is bounded on $\widetilde{B}$.
Indeed, using the Taylor remainder theorem,
we find that, for any $x\in \widetilde{B}$,
there exists an $\widetilde{x}\in \widetilde{B}$ such that
\begin{align}\label{T2}
\lf|E_{\widetilde{B}}(x)\r|
&\le\int_{\rn\setminus2\widetilde{B}}\lf|\widetilde{K}(x,y)
-\sum_{\{\gamma\in\zz_+^n:\ |\gamma|\leq s\}}
\frac{\partial_{(1)}^{\gamma}
\widetilde{K}(z,y)}{\gamma!}(x-z)^{\gamma}\r|\lf|f(y)
-P_{2\widetilde{B}}^{(s)}(f)(y)\r|\,dy\\
&\ls\int_{\rn\setminus 2\widetilde{B}}
\lf|\sum_{\{\gamma\in\zz_+^n:\ |\gamma|=s\}}
\frac{\partial_{(1)}^{\gamma}\widetilde{K}(\widetilde{x},y)-
\partial_{(1)}^{\gamma}\widetilde{K}(z,y)}{\gamma!}
(x-z)^{\gamma}\r|\lf|f(y)-P_{2\widetilde{B}}^{(s)}(f)(y)\r|\,dy\noz\\
&\ls|z-\widetilde{x}|^\dz|x-z|^{s}
\int_{\rn\setminus 2\widetilde{B}}\frac{|f(y)
-P_{2\widetilde{B}}^{(s)}(f)(y)|}{|y-z|^{n+s+\dz}}\,dy,\noz
\end{align}
where, in the last inequality, we used \eqref{regular2-s}
and \eqref{Kw-K} together with $|y-z|\geq 2r\geq|\widetilde{x}-z|$.
From this, Lemma \ref{I-JN} with $\beta:=s+\dz\in(s,\fz)$,
and $\az\in(-\fz,\frac1p+\frac{s+\dz}{n})$,
we deduce that, for any $x\in \widetilde{B}$,
$$
\lf|E_{\widetilde{B}}(x)\r|\ls r^{-\frac{n}{p}+\alpha n}
\|f\|_{JN_{(p,q,s)_\alpha}^{\mathrm{con}}(\rn)}<\fz.
$$
This further implies that $E_{\widetilde{B}}(x)$
is well defined for any $x\in \widetilde{B}$.

Now, we estimate $E_{\widetilde{B},B_0,\eta}^{(1)}$.
To this end, let $R:=3|z-x_0|+2r+r_0$.
It is easy to see that
\begin{align}\label{3B}
\lf(2\widetilde{B}\cup B\r)\subset B(z,R)
\end{align}
and, for any $y\in \rn\setminus B(z,R)$,
\begin{align}\label{2.36x}
|y-x_0|\leq |y-z|+|z-x_0|<2|y-z|.
\end{align}
Moreover, if $x\in \widetilde{B}$ and
$y\in B(z,R)$, then, by \eqref{size-s'} and \eqref{Kw-K}, we have
\begin{align}\label{2.44x}
\lf|L_{\widetilde{B},B_0}(x,y)\r|
&\leq\lf|\sum_{\{\gamma\in\zz_+^n:\ |\gamma|\leq s\}}
\frac{\partial_{(1)}^{\gamma}\widetilde{K}(z,y)}{\gamma!}(x-z)^{\gamma}
\mathbf1_{\rn\setminus 2\widetilde{B}}(y)\r|\\
&\quad+\lf|\sum_{\{\gamma\in\zz_+^n:\ |\gamma|\leq s\}}
\frac{\partial_{(1)}^{\gamma}\widetilde{K}(x_0,y)}{\gamma!}(x-x_0)^{\gamma}
\mathbf1_{\rn\setminus B_0}(y)\r|\noz\\
&\ls\sum_{\{\gamma\in\zz_+^n:\ |\gamma|\leq s\}}
\frac{|x-z|^{|\gamma|}}{r^{n+|\gamma|}}
+\sum_{\{\gamma\in\zz_+^n:\ |\gamma|\leq s\}}
\frac{|x-x_0|^{|\gamma|}}{r_0^{n+|\gamma|}}\noz\\
&\ls\frac{1}{r^n}+\sum_{\{\gamma\in\zz_+^n:\ |\gamma|\leq s\}}
\frac{(r+|z-x_0|)^{|\gamma|}}{r_0^{n+|\gamma|}}
\ls 1,\noz
\end{align}
where the implicit positive constants depend on
$r$, $z$, and $x_0$, but are independent of $\eta$.
If $x\in \widetilde{B}$ and $y\in \rn\setminus B(z,R)$,
then, using \eqref{3B}, the Taylor remainder
theorem, and \eqref{regular2-s} and \eqref{Kw-K}
together with \eqref{2.36x}, we find that
there exist a $z_1\in \widetilde{B}$ and a
$z_2\in B(x_0,|z-x_0|+r)$ such that
\begin{align}\label{3.37x}
\lf|L_{\widetilde{B},B_0}(x,y)\r|
&=\lf|\sum_{\{\gamma\in\zz_+^n:\ |\gamma|\leq s\}}
\frac{\partial_{(1)}^{\gamma}\widetilde{K}(z,y)}{\gamma!}(x-z)^{\gamma}
-\sum_{\{\gamma\in\zz_+^n:\ |\gamma|\leq s\}}\frac{\partial_{(1)}^{\gamma}
\widetilde{K}(x_0,y)}{\gamma!}(x-x_0)^{\gamma}\r|\\
&\leq\lf|\sum_{\{\gamma\in\zz_+^n:\ |\gamma|\leq s\}}
\frac{\partial_{(1)}^{\gamma}
\widetilde{K}(z,y)}{\gamma!}(x-z)^{\gamma}-\widetilde{K}(x,y)\r|\noz\\
&\quad+\lf|\widetilde{K}(x,y)-\sum_{\{\gamma\in\zz_+^n:\ |\gamma|\leq s\}}
\frac{\partial_{(1)}^{\gamma}
\widetilde{K}(x_0,y)}{\gamma!}(x-x_0)^{\gamma}\r|\noz\\
&\sim\lf|\sum_{\{\gamma\in\zz_+^n:\ |\gamma|=s\}}
\frac{\partial_{(1)}^{\gamma}\widetilde{K}(z,y)
-\partial_{(1)}^{\gamma}\widetilde{K}(z_1,y)}{\gamma!}
(x-z)^{\gamma}\r|\noz\\
&\quad+\lf|\sum_{\{\gamma\in\zz_+^n:\ |\gamma|=s\}}
\frac{\partial_{(1)}^{\gamma}\widetilde{K}(z_2,y)
-\partial_{(1)}^{\gamma}\widetilde{K}(x_0,y)}{\gamma!}
(x-x_0)^{\gamma}\r|\noz\\
&\ls\sum_{\{\gamma\in\zz_+^n:\ |\gamma|=s\}}
\frac{|z-z_1|^{\delta}|x-z|^{|\gamma|}}{|y-z|^{n+|\gamma|+\dz}}
+\sum_{\{\gamma\in\zz_+^n:\ |\gamma|=s\}}
\frac{|z_2-x_0|^{\delta}|x-x_0|^{|\gamma|}}{|y-z|^{n+|\gamma|+\dz}}\noz\\
&\ls\frac{r^{s+\delta}+(r+|z-x_0|)^{s+\dz}}{|y-z|^{n+s+\dz}}
\sim \frac{1}{|y-z|^{n+s+\dz}},\noz
\end{align}
where the implicit positive constants depend on
$r$, $z$, and $x_0$, but are independent of $\eta$.
From this, \eqref{2.44x}, \eqref{3.37x}, and
Lemma \ref{I-JN} with $\beta:=s+\dz\in(s,\fz)$
and $\az\in(-\fz,\frac1p+\frac{s+\dz}{n})$, we deduce that,
for any $x\in \widetilde{B}$,
\begin{align}\label{2.37x}
\lf|E_{\widetilde{B},B_0,\eta}^{(1)}(x)\r|
&\leq\int_{\rn\setminus B(x,\eta)}\lf|L_{\widetilde{B},B_0}(x,y)\lf[f(y)
-P_{2\widetilde{B}}^{(s)}(f)(y)\r]\r|\,dy\\
&\ls\int_{B(z,R)}\lf|f(y)
-P_{2\widetilde{B}}^{(s)}(f)(y)\r|\,dy+\int_{\rn\setminus B(z,R)}
\frac{|f(y)-P_{2\widetilde{B}}^{(s)}(f)(y)|}{|y-z|^{n+s+\dz}}\,dy\noz\\
&\ls\int_{B(z,R)}\lf|f(y)
-P_{2\widetilde{B}}^{(s)}(f)(y)\r|\,dy+\int_{\rn\setminus 2\widetilde{B}}
\frac{|f(y)-P_{2\widetilde{B}}^{(s)}(f)(y)|}{|y-z|^{n+s+\dz}}\,dy\noz\\
&\ls\lf\|f-P_{2\widetilde{B}}^{(s)}(f)\r\|_{L^1(B(z,R))}
+\|f\|_{JN_{(p,q,s)_\alpha}^{\mathrm{con}}(\rn)}
<\fz,\noz
\end{align}
where the implicit positive constants depend on
$r$, $z$, and $x_0$, but are independent of $\eta$.
Using \eqref{2.37x} and the Lebesgue dominated
convergence theorem, we conclude that
\begin{align*}
P_{1}(\widetilde{B},B_0;f)(\cdot)
:=\lim_{\eta\to0^+}E_{\widetilde{B},B_0,\eta}^{(1)}(\cdot)
=\int_{\rn}L_{\widetilde{B},B_0}(\cdot,y)
\lf[f(y)-P_{\widetilde{B}}^{(s)}(f)(y)\r]\,dy
\end{align*}
is finite everywhere on $\widetilde{B}$, which, combined with
Lemma \ref{int-B-P}, further implies that
\begin{align}\label{T3-lim}
P_{1}(\widetilde{B},B_0;f)\in \mathcal{P}_s(B).
\end{align}

Finally, it remains to estimate $E_{\widetilde{B},B_0,\eta}^{(2)}$.
Indeed, by Proposition \ref{Assume}, we find that
there exists a polynomial $P_2(\widetilde{B},B_0;f)\in \mathcal{P}_s(\rn)$
such that, for almost every $x\in \widetilde{B}$,
\begin{align}\label{T4}
\lim_{\eta\to 0^+}E_{\widetilde{B},B_0,\eta}^{(2)}(x)
=P_2(\widetilde{B},B_0;f)(x).
\end{align}
This is a desired estimate of $E_{\widetilde{B},B_0,\eta}^{(2)}$.

From \eqref{T1}, \eqref{T2}, \eqref{T3-lim}, and \eqref{T4},
we deduce that, for any given ball $\widetilde{B}\subset\rn$
and for almost every $x\in \widetilde{B}$,
\begin{align}\label{limit}
\widetilde{T}_{B_0}(f)(x)=&
\lim_{\eta\to0^+}\widetilde{T}_{B_0,\eta}(f)(x)\\
=&T_{(1)}\lf(\lf[f-P_{2\widetilde{B}}^{(s)}(f)
\r]\mathbf1_{2\widetilde{B}}\r)(x)
+E_{\widetilde{B}}(x)+P_1(\widetilde{B},B_0;f)(x)
+P_2(\widetilde{B},B_0;f)(x).\noz
\end{align}
Therefore, $\widetilde{T}_{B_0}(f)$
is well defined almost everywhere on $\rn$.
This finishes the proof of Proposition \ref{converge}.
\end{proof}

\begin{remark}\label{rem-Tw-JN}
	Let all the symbols be as in  Proposition \ref{converge}.
	We now claim that, for any given ball $B_1\subset \rn$,
	$$
	\widetilde{T}_{B_0}(f)-\widetilde{T}_{B_1}(f)\in \mathcal{P}_s(\rn)
	$$
	after changing values on a set of measure zero. Indeed, let
	$B_1:=B(x_1,r_1)$ be any given ball of $\rn$ with $x_1\in\rn$
	and $r_1\in(0,\fz)$. Then, for almost every $x\in\rn$,
	choosing a ball $\widetilde{B}\subset \rn$
	such that $x\in \widetilde{B}$ and $(B_0\cup B_1)\subset \widetilde{B}$,
	by the Lebesgue dominated convergence theorem, we obtain
	\begin{align}\label{2.45x}
		&\widetilde{T}_{B_0}(f)(x)-\widetilde{T}_{B_1}(f)(x)\\
		&\quad=\int_{\rn\setminus \widetilde{B}}
		\sum_{\{\gamma\in\zz_+^n:\ |\gamma|\leq s\}}
		\lf[\frac{\partial_{(1)}^{\gamma}\widetilde{K}(x_1,y)}{\gamma!}
		(x-x_1)^{\gamma}\mathbf1_{\rn\setminus B_1}(y)\r.\noz\\
		&\qquad\lf.-\frac{\partial_{(1)}^{\gamma}\widetilde{K}(x_0,y)}{\gamma!}
		(x-x_0)^{\gamma}\mathbf1_{\rn\setminus B_0}(y)\r]f(y)\,dy\noz\\
		&\qquad+\lim_{\eta\to0^+}\int_{\widetilde{B}\setminus B(x,\eta)}
		\sum_{\{\gamma\in\zz_+^n:\ |\gamma|\leq s\}}
		\lf[\frac{\partial_{(1)}^{\gamma}\widetilde{K}(x_1,y)}{\gamma!}
		(x-x_1)^{\gamma}\mathbf1_{\rn\setminus B_1}(y)\r.\noz\\
		&\quad\qquad\lf.-\frac{\partial_{(1)}^{\gamma}\widetilde{K}(x_0,y)}{\gamma!}
		(x-x_0)^{\gamma}\mathbf1_{\rn\setminus B_0}(y)\r]f(y)\,dy\noz\\
		&\quad=\int_{\rn}\sum_{\{\gamma\in\zz_+^n:\ |\gamma|\leq s\}}
		\lf[\frac{\partial_{(1)}^{\gamma}\widetilde{K}(x_1,y)}{\gamma!}
		(x-x_1)^{\gamma}\mathbf1_{\rn\setminus B_1}(y)\r.\noz\\
		&\qquad\lf.-\frac{\partial_{(1)}^{\gamma}\widetilde{K}(x_0,y)}{\gamma!}
		(x-x_0)^{\gamma}\mathbf1_{\rn\setminus B_0}(y)\r]f(y)\,dy.\noz
	\end{align}
    For any $x\in\rn$, let
    \begin{align*}
    D(x)&:=\int_{\rn}\sum_{\{\gamma\in\zz_+^n:\ |\gamma|\leq s\}}
    \lf[\frac{\partial_{(1)}^{\gamma}\widetilde{K}(x_1,y)}{\gamma!}
    (x-x_1)^{\gamma}\mathbf1_{\rn\setminus B_1}(y)\r.\\
    &\qquad\qquad\quad\lf.-\frac{\partial_{(1)}^{\gamma}
    \widetilde{K}(x_0,y)}{\gamma!}
    (x-x_0)^{\gamma}\mathbf1_{\rn\setminus B_0}(y)\r]f(y)\,dy.
    \end{align*}
	By \eqref{2.45x}, an argument similar to the estimation
	of \eqref{2.37x}, and \eqref{2.22x}, we conclude that, for any $x\in\rn$,
	\begin{align*}
	|D(x)|
	&\leq \int_{\rn\setminus \widetilde{B}}
	\lf|\sum_{\{\gamma\in\zz_+^n:\ |\gamma|\leq s\}}
	\lf[\frac{\partial_{(1)}^{\gamma}\widetilde{K}(x_1,y)}{\gamma!}
	(x-x_1)^{\gamma}\mathbf1_{\rn\setminus B_1}(y)\r.\r.\\
	&\quad\lf.\lf.-\frac{\partial_{(1)}^{\gamma}\widetilde{K}(x_0,y)}{\gamma!}
	(x-x_0)^{\gamma}\mathbf1_{\rn\setminus B_0}(y)\r]\r|\lf|f(y)
	-P_{2B_1}^{(s)}(f)(y)\r|\,dy\\
    &\quad+\lf|\int_{\rn\setminus \widetilde{B}}
    \sum_{\{\gamma\in\zz_+^n:\ |\gamma|\leq s\}}
    \lf[\frac{\partial_{(1)}^{\gamma}\widetilde{K}(x_1,y)}{\gamma!}
    (x-x_1)^{\gamma}\mathbf1_{\rn\setminus B_1}(y)\r.\r.\\
    &\qquad\lf.\lf.-\frac{\partial_{(1)}^{\gamma}\widetilde{K}(x_0,y)}{\gamma!}
    (x-x_0)^{\gamma}\mathbf1_{\rn\setminus B_0}(y)\r]
    P_{2B_1}^{(s)}(f)(y)\,dy\r|\\
    &\quad+\sum_{\{\gamma\in\zz_+^n:\ |\gamma|\leq s\}}\int_{\widetilde{B}}
    \lf|\frac{\partial_{(1)}^{\gamma}\widetilde{K}(x_1,y)}{\gamma!}
    (x-x_1)^{\gamma}\mathbf1_{\rn\setminus B_1}(y)\r.\\
    &\qquad\lf.-\frac{\partial_{(1)}^{\gamma}\widetilde{K}(x_0,y)}{\gamma!}
    (x-x_0)^{\gamma}\mathbf1_{\rn\setminus B_0}(y)\r||f(y)|\,dy\\
    &<\fz,
	\end{align*}
	which, together with Lemma \ref{int-B-P}, further implies that
	$\widetilde{T}_{B_0}(f)-\widetilde{T}_{B_1}(f)\in \mathcal{P}_s(\rn)$
	after changing values on a set of measure zero.
	This finishes the proof of the above claim.
	Based on this claim, in what follows, we write $\widetilde{T}$
	instead of $\widetilde{T}_{B_0}$ if there exists no confusion.
\end{remark}

The following conclusion is just \cite[Proposition 2.2]{jtyyz1},
which gives an equivalent characterization of
$JN_{(p,q,s)_\alpha}^{\mathrm{con}}(\rn)$.

\begin{lemma}\label{AC}
	Let $p$, $q\in[1,\infty)$, $s\in\zz_+$, and $\alpha\in \rr$.
	Then $f\in JN_{(p,q,s)_\alpha}^{\mathrm{con}}(\rn)$ if and only if
	$f\in L^q_{\mathrm{loc}}(\rn)$ and
	\begin{align*}
		[f]_{JN_{(p,q,s)\alpha}^{\mathrm{con}}(\rn)}
		:=\sup_{r\in(0,\infty)}\lf[\int_{\rn}\lf\{|B(y,r)|^{-\alpha}
		\lf[\fint_{B(y,r)}\lf|f(x)-P_{B(y,r)}^{(s)}(f)(x)\r|^q\,dx
		\r]^{\frac{1}{q}}\r\}^p\,dy\r]^{\frac{1}{p}}<\fz.
	\end{align*}
    Moreover, for any $f\in JN_{(p,q,s)_\alpha}^{\mathrm{con}}(\rn)$,
    $$
    \|f\|_{JN_{(p,q,s)\alpha}^{\mathrm{con}}(\rn)}
    \sim [f]_{JN_{(p,q,s)\alpha}^{\mathrm{con}}(\rn)},
    $$
	where the positive equivalence constants are independent of $f$.
\end{lemma}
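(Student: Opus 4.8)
Since Lemma~\ref{AC} is the ``congruent-cube'' analogue of Lemma~\ref{eq-norm}, the plan is to reuse the geometric bookkeeping of that proof essentially verbatim, inserting the polynomials $P^{(s)}_{\,\cdot\,}(f)$ where needed; the one genuinely new ingredient is a comparison of $P^{(s)}_{E}(f)$ with $P^{(s)}_{F}(f)$ on a nested pair $E\subset F$ of cubes/balls. So I would first establish the auxiliary estimate that there is a positive constant $C=C_{(n,s)}$ such that, for any cube or ball $E$ and any cube or ball $F$ with $E\subset F$, and for any $f\in L^q_{\mathrm{loc}}(\rn)$,
\begin{align*}
\lf[\fint_{E}\lf|f(x)-P_{E}^{(s)}(f)(x)\r|^{q}\,dx\r]^{\frac1q}
\le C\lf[\frac{|F|}{|E|}\fint_{F}\lf|f(x)-P_{F}^{(s)}(f)(x)\r|^{q}\,dx\r]^{\frac1q}.
\end{align*}
The proof is short: write $\|f-P_{E}^{(s)}(f)\|_{L^q(E)}\le\|f-P_{F}^{(s)}(f)\|_{L^q(E)}+\|P_{F}^{(s)}(f)-P_{E}^{(s)}(f)\|_{L^q(E)}$; the first summand is at most $\|f-P_{F}^{(s)}(f)\|_{L^q(F)}$ because $E\subset F$, while for the second one uses $P_{E}^{(s)}(P_{F}^{(s)}(f))=P_{F}^{(s)}(f)$ (as $P_{F}^{(s)}(f)\in\mathcal{P}_s(\rn)$), so that $P_{F}^{(s)}(f)-P_{E}^{(s)}(f)=P_{E}^{(s)}(P_{F}^{(s)}(f)-f)$, and then \eqref{p} on $E$ together with the H\"older inequality bounds its $L^{\fz}(E)$-norm by $C_{(s)}[\frac{|F|}{|E|}\fint_{F}|f-P_{F}^{(s)}(f)|^q\,dx]^{1/q}$. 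Multiplying through by $|E|^{-1/q}$ gives the claim; crucially the constant depends only on $n$ and $s$ because \eqref{p} is scale- and translation-invariant.

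With this estimate in hand, I would prove the two inequalities separately, exactly mirroring the proof of Lemma~\ref{eq-norm}. For $\|f\|_{JN_{(p,q,s)_\alpha}^{\mathrm{con}}(\rn)}\gtrsim[f]_{JN_{(p,q,s)\alpha}^{\mathrm{con}}(\rn)}$, I would fix $r\in(0,\fz)$, tile $\rn$ by the cubes $Q_{r,k}:=rk+[0,r)^n$ with $k\in\zz^n$, split $\zz^n$ into the $2^n$ classes $k\in 2\zz^n+i$ with $i\in\{0,1\}^n$, and note that $B(y,r/2)\subset 2Q_{r,k}$ whenever $y\in Q_{r,k}$; applying the auxiliary estimate with $E:=B(y,r/2)$ and $F:=2Q_{r,k}$, using that for each fixed $i$ the cubes $\{2Q_{r,k+i}\}_{k\in2\zz^n}$ have pairwise disjoint interiors, and finally invoking the triangle inequality for the $\ell^p$-type sums over the $2^n$ classes (here $p\ge1$ suffices), one bounds $[\int_{\rn}\{|B(y,r/2)|^{-\alpha}[\fint_{B(y,r/2)}|f-P_{B(y,r/2)}^{(s)}(f)|^q\,dx]^{1/q}\}^p\,dy]^{1/p}$ by a constant multiple of $\|f\|_{JN_{(p,q,s)_\alpha}^{\mathrm{con}}(\rn)}$; taking the supremum over $r\in(0,\fz)$ (and using that $r\mapsto r/2$ maps $(0,\fz)$ onto itself) yields this direction. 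For $\|f\|_{JN_{(p,q,s)_\alpha}^{\mathrm{con}}(\rn)}\lesssim[f]_{JN_{(p,q,s)\alpha}^{\mathrm{con}}(\rn)}$, I would take an arbitrary $\{Q_j\}_j\in\Pi_r(\rn)$, write $|Q_j|=\int_{Q_j}\,dy$, use $Q_j\subset B(y,\sqrt nr)$ for $y\in Q_j$, apply the auxiliary estimate with $E:=Q_j$ and $F:=B(y,\sqrt nr)$, and then enlarge the sum over $j$ to the integral over $\rn$ --- literally the last display of the proof of Lemma~\ref{eq-norm} with $|f|$ replaced by $|f-P_{\,\cdot\,}^{(s)}(f)|$ --- which bounds the $\Pi_r$-sum by a constant times $[f]_{JN_{(p,q,s)\alpha}^{\mathrm{con}}(\rn)}$ uniformly in $r$ and in the collection; taking suprema finishes it.

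The whole argument is routine once the auxiliary estimate is available, and the tiling/disjointness part is identical to Lemma~\ref{eq-norm}; the only place requiring a little care is verifying that the comparison constant in that estimate is independent of $f$, of the common scale of $E$ and $F$, and of their location, which is precisely the content of the scale- and translation-invariance of \eqref{p}. A secondary, purely cosmetic point is to observe that the suprema defining $[f]_{JN_{(p,q,s)\alpha}^{\mathrm{con}}(\rn)}$ (over ball radii) and $\|f\|_{JN_{(p,q,s)_\alpha}^{\mathrm{con}}(\rn)}$ (over cube side lengths) both range over $(0,\fz)$, so the rescalings $r/2$ and $\sqrt nr$ above cause no loss.
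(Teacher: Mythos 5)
Your proposal is correct. The paper itself merely cites this lemma from \cite[Proposition 2.2]{jtyyz1} rather than proving it, but your strategy is exactly the natural one suggested by the paper's proof of Lemma \ref{eq-norm}: the tiling/disjointness bookkeeping is transplanted verbatim, and the single genuinely new ingredient --- the comparison estimate
$\bigl[\fint_{E}|f-P_{E}^{(s)}(f)|^{q}\bigr]^{1/q}
\le C\bigl[\tfrac{|F|}{|E|}\fint_{F}|f-P_{F}^{(s)}(f)|^{q}\bigr]^{1/q}$
for nested cubes/balls $E\subset F$ --- is stated and proved correctly via the identity $P_{F}^{(s)}(f)-P_{E}^{(s)}(f)=P_{E}^{(s)}\bigl(P_{F}^{(s)}(f)-f\bigr)$ together with \eqref{p} and H\"older. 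Since in both applications $|F|/|E|$ is a fixed dimensional constant, the resulting constant is uniform, and the two directions go through exactly as you describe (your remark that $p\ge 1$ suffices for the subadditivity $(\sum_i a_i)^{1/p}\le\sum_i a_i^{1/p}$ is in fact slightly more careful than the parenthetical ``$p\in(1,\infty)$'' appearing in the paper's proof of Lemma \ref{eq-norm}, whose statement allows $p=1$).
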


Next, we show the boundedness of $\widetilde{T}$ on
$JN_{(p,q,s)_\alpha}^{\mathrm{con}}(\rn)$.

\begin{theorem}\label{thm-bdd-JN}
Let $p\in[1,\fz]$, $q\in(1,\fz)$, $s\in\zz_+$,
and $\az\in(-\fz,\frac{s+\dz}{n})$ with
$\dz\in(0,1]$ as in Definition \ref{def-s-k}.
Let $K$ be an $s$-order standard kernel
as in  Definition \ref{def-s-k} and
$T$ the $s$-order Calder\'on--Zygmund
singular integral operator  with kernel $K$.
Let $\widetilde{K}$ be as in \eqref{Kw-K}
and $\widetilde{T}$ an $s$-order modified
Calder\'on--Zygmund operator with kernel $\widetilde{K}$.
Then $\widetilde{T}$ is bounded on
$JN_{(p,q,s)_\alpha}^{\mathrm{con}}(\rn)$, namely,
there exists a positive constant $C$ such that,
for any $f\in JN_{(p,q,s)_\alpha}^{\mathrm{con}}(\rn)$,
$$\lf\|\widetilde{T}(f)\r\|_{JN_{(p,q,s)_\alpha}^{\mathrm{con}}(\rn)}
\le C \|f\|_{JN_{(p,q,s)_\alpha}^{\mathrm{con}}(\rn)}$$
if and only if, for any $\gamma\in\zz_+^n$ with $|\gamma|\leq s$,
$T^*(x^{\gamma})=0$.
\end{theorem}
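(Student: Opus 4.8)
\emph{Proof strategy.} The plan is to treat the two implications separately. The forward direction (boundedness $\Rightarrow$ vanishing moments) is short, and for it the first step is to test the estimate on monomials. Indeed, suppose the displayed estimate holds and fix $\nu\in\zz_+^n$ with $|\nu|\le s$. Since $y^{\nu}\in\mathcal{P}_s(\rn)$ we have $P_Q^{(s)}(y^{\nu})=y^{\nu}$ for every cube $Q$, so $y^{\nu}-P_Q^{(s)}(y^{\nu})\equiv0$ and $\|y^{\nu}\|_{JN_{(p,q,s)_\alpha}^{\mathrm{con}}(\rn)}=0$; on the other hand, by Lemma \ref{assume-lem}(i), $\widetilde{T}(y^{\nu})=\widetilde{T}_{B_0}(y^{\nu})$ is well defined almost everywhere and belongs to $L^q_{\mathrm{loc}}(\rn)$. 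Hence the boundedness forces $\|\widetilde{T}(y^{\nu})\|_{JN_{(p,q,s)_\alpha}^{\mathrm{con}}(\rn)}=0$, which means that on every cube $Q$ the function $\widetilde{T}(y^{\nu})$ coincides almost everywhere with $P_Q^{(s)}(\widetilde{T}(y^{\nu}))\in\mathcal{P}_s(\rn)$; letting $Q$ exhaust $\rn$ and using that two polynomials of degree at most $s$ agreeing on a set of positive measure must coincide, we obtain $\widetilde{T}_{B_0}(y^{\nu})\in\mathcal{P}_s(\rn)$ after changing values on a set of measure zero. Since $\nu$ is arbitrary, Proposition \ref{Assume} then yields $T^{*}(x^{\gamma})=0$ for all $\gamma\in\zz_+^n$ with $|\gamma|\le s$.

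For the converse, assume $T^{*}(x^{\gamma})=0$ for all $|\gamma|\le s$, i.e.\ $T$ has vanishing moments up to order $s$. Then $\widetilde{T}(f)=\widetilde{T}_{B_0}(f)$ is well defined almost everywhere for every $f\in JN_{(p,q,s)_\alpha}^{\mathrm{con}}(\rn)$ by Proposition \ref{converge}, and its class modulo $\mathcal{P}_s(\rn)$ is independent of $B_0$ by Remark \ref{rem-Tw-JN}, so the oscillation quantities below are unambiguous. Take first $p\in[1,\fz)$. By Lemma \ref{AC} it suffices to bound, uniformly in $r\in(0,\fz)$, the integral $\int_{\rn}\{|B(z,r)|^{-\alpha}[\fint_{B(z,r)}|\widetilde{T}(f)(x)-P_{B(z,r)}^{(s)}(\widetilde{T}(f))(x)|^q\,dx]^{1/q}\}^p\,dz$. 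Fixing $z$ and writing $\widetilde{B}:=B(z,r)$, I would use the pointwise decomposition \eqref{limit} from the proof of Proposition \ref{converge}, namely $\widetilde{T}(f)=T_{(1)}([f-P_{2\widetilde{B}}^{(s)}(f)]\mathbf{1}_{2\widetilde{B}})+E_{\widetilde{B}}+P_1+P_2$ almost everywhere on $\widetilde{B}$, where $T_{(1)}$ is the $s$-order Calder\'on--Zygmund singular integral operator with kernel $\widetilde{K}$ and $P_1,P_2\in\mathcal{P}_s(\widetilde{B})$. Since subtracting $P_1+P_2$ does not enlarge the oscillation (the standard reduction using \eqref{p}),
\begin{align*}
&\fint_{\widetilde{B}}\lf|\widetilde{T}(f)(x)-P_{\widetilde{B}}^{(s)}(\widetilde{T}(f))(x)\r|^q\,dx\\
&\quad\ls\fint_{\widetilde{B}}\lf|T_{(1)}\lf(\lf[f-P_{2\widetilde{B}}^{(s)}(f)\r]\mathbf{1}_{2\widetilde{B}}\r)(x)\r|^q\,dx+\fint_{\widetilde{B}}\lf|E_{\widetilde{B}}(x)\r|^q\,dx=:\mathrm{D}_1(z)+\mathrm{D}_2(z).
\end{align*}
For $\mathrm{D}_1(z)$, the $L^q(\rn)$-boundedness of $T_{(1)}$ (Lemma \ref{Duo01}) gives $\mathrm{D}_1(z)\ls\fint_{2\widetilde{B}}|f(x)-P_{2\widetilde{B}}^{(s)}(f)(x)|^q\,dx$, so $\int_{\rn}\{|B(z,r)|^{-\alpha}[\mathrm{D}_1(z)]^{1/q}\}^p\,dz\ls\|f\|_{JN_{(p,q,s)_\alpha}^{\mathrm{con}}(\rn)}^p$ by Lemma \ref{AC} with $r$ replaced by $2r$.

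The term $\mathrm{D}_2(z)$ is the crux. From \eqref{T2} (the Taylor remainder together with \eqref{regular2-s}) one has, for almost every $x\in\widetilde{B}$, the $x$-independent bound $|E_{\widetilde{B}}(x)|\ls r^{s+\dz}\int_{\rn\setminus2\widetilde{B}}|f(w)-P_{2\widetilde{B}}^{(s)}(f)(w)|\,|w-z|^{-n-s-\dz}\,dw$; expanding this tail in dyadic annuli and changing the reference polynomial scale by scale exactly as in the proof of Lemma \ref{I-JN} (cf.\ \eqref{JN-I-E}, which uses Lemma \ref{sum-g} with $\theta:=2^{-(s+\dz)}\in(0,2^{-s})$, where $\dz>0$ is needed) bounds the right-hand side by $\sum_{j=1}^{\fz}2^{-j(s+\dz)}\Phi_{2^{j}r}(z)$, where $\Phi_{\rho}(z):=[\fint_{B(z,\rho)}|f(x)-P_{B(z,\rho)}^{(s)}(f)(x)|^q\,dx]^{1/q}$. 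Hence $[\mathrm{D}_2(z)]^{1/q}\ls\sum_{j\ge1}2^{-j(s+\dz)}\Phi_{2^{j}r}(z)$, and by Minkowski's inequality in $L^p(\rn)$ (valid since $p\ge1$) together with the scale-$2^{j}r$ bound $\|\Phi_{2^{j}r}\|_{L^p(\rn)}\ls(2^{j}r)^{\alpha n}\|f\|_{JN_{(p,q,s)_\alpha}^{\mathrm{con}}(\rn)}$ furnished by Lemma \ref{AC},
\begin{align*}
\lf\|\sum_{j=1}^{\fz}2^{-j(s+\dz)}\Phi_{2^{j}r}\r\|_{L^p(\rn)}\ls r^{\alpha n}\|f\|_{JN_{(p,q,s)_\alpha}^{\mathrm{con}}(\rn)}\sum_{j=1}^{\fz}2^{j(\alpha n-s-\dz)},
\end{align*}
and the geometric series converges precisely because $\alpha<\frac{s+\dz}{n}$ --- this is the only place the hypothesis on $\alpha$ enters, and it pins down the admissible range. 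Multiplying back by $|B(z,r)|^{-\alpha q}$ and integrating in $z$, the powers of $r$ cancel, so the $\mathrm{D}_2$-contribution is also $\ls\|f\|_{JN_{(p,q,s)_\alpha}^{\mathrm{con}}(\rn)}^p$; combining with the $\mathrm{D}_1$-bound and taking the supremum over $r$ finishes the case $p\in[1,\fz)$ (via Lemma \ref{AC}). For $p=\fz$, where $JN_{(\fz,q,s)_\alpha}^{\mathrm{con}}(\rn)=\mathcal{C}_{\alpha,q,s}(\rn)$, the argument is simpler: one passes from a fixed cube to a comparable enclosing ball, uses the same decomposition \eqref{limit}, bounds the $T_{(1)}$-term by its $L^q$-boundedness, and bounds $|E_{\widetilde{B}}(x)|\ls r^{s+\dz}\int_{\rn\setminus2\widetilde{B}}|f-P_{2\widetilde{B}}^{(s)}(f)|\,|w-z|^{-n-s-\dz}\,dw\ls r^{\alpha n}\|f\|_{\mathcal{C}_{\alpha,q,s}(\rn)}$ directly from Lemma \ref{I-JN} with $\beta:=s+\dz$, whose convergence again requires $\alpha<\frac{s+\dz}{n}$.

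The main obstacle is the estimate of $\mathrm{D}_2(z)$ in the range $p<\fz$: a naive pointwise bound via Lemma \ref{I-JN} is too lossy once one integrates in $z$, so the tail $E_{\widetilde{B}}$ must be kept in integral form, dyadically decomposed, its reference polynomial adjusted scale by scale, and then summed via Minkowski's inequality using the scale-indexed bound from Lemma \ref{AC}; the convergence condition of the resulting geometric series is exactly the restriction $\alpha<(s+\dz)/n$.
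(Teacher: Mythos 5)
Your proof is correct and follows essentially the same route as the paper's: the necessity is obtained by testing boundedness on monomials and invoking Proposition \ref{Assume}, while the sufficiency relies on the decomposition \eqref{limit}, $L^q$-boundedness of $T_{(1)}$ together with Lemma \ref{AC} for the local part, and the dyadic expansion of the tail $E_{\widetilde B}$ from \eqref{T2}/\eqref{JN-I-E} followed by Minkowski's inequality, with convergence coming precisely from $\alpha<\frac{s+\delta}{n}$. The only cosmetic difference is that you invoke Lemma \ref{sum-g} directly where the paper quotes \eqref{JN-I-E} from Lemma \ref{I-JN}, and you spell out the $p=\infty$ case which the paper leaves as ``similar.''
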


\begin{proof}
Let $p$, $q$, $s$, $\alpha$, $\dz$,
and $\widetilde{T}$ be as in the present theorem.
We only prove the case $p\in[1,\fz)$ because the proof of the case
$p=\fz$ is similar. We first show the sufficiency.
Indeed, using \eqref{p}, it is easy to see that,
for any $g\in L^1_{\mathrm{loc}}(\rn)$ and any ball $B$ of $\rn$,
\begin{align*}
	\lf[\fint_B\lf|g(x)-P_B^{(s)}(g)(x)
	\r|^q\,dx\r]^{\frac{1}{q}}
	\sim\inf_{P\in \mathcal{P}_s(B)}
	\lf[\fint_B|g(x)-P(x)|^q\,dx\r]^{\frac{1}{q}},
\end{align*}
where $\mathcal{P}_s(B)$ denotes the set
of all polynomials with degree not greater than $s$ on $B$
(see, for instance, \cite[(2.12)]{jtyyz1}).
From this, \eqref{limit}, the Minkowski inequality,
and $P_{B(z,r)}^{(s)}(P)=P$ for any
$P\in \mathcal{P}_s(\rn)$, we deduce that,
for any given $f\in JN_{(p,q,s)_\alpha}^{\mathrm{con}}(\rn)$
and for any $r\in(0,\fz)$,
\begin{align}\label{F3+F4}
&\lf[\int_{\rn}\lf\{|B(z,r)|^{-\alpha}
\lf[\fint_{B(z,r)}\lf|\widetilde{T}(f)(x)
-P_{B(z,r)}^{(s)}(\widetilde{T}(f))(x)\r|^q\,dx
\r]^{\frac1q}\r\}^p\,dz\r]^{\frac1p}\\
&\quad\ls\lf[\int_{\rn}\lf\{|B(z,r)|^{-\alpha}
\lf[\fint_{B(z,r)}\lf|T_{(1)}\lf(\lf[f-P_{2B(z,r)}^{(s)}(f)\r]
\mathbf1_{2B(z,r)}\r)(x)
\r|^q\,dx\r]^{\frac1q}\r\}^p\,dz\r]^{\frac1p}\notag\\
&\qquad+\lf[\int_{\rn}\lf\{|B(z,r)|^{-\alpha}
\lf[\fint_{B(z,r)}\lf|E_{B(z,r)}(x)\r|^q\,dx
\r]^{\frac1q}\r\}^p\,dz\r]^{\frac1p}\noz\\
&\quad\sim{\rm F}_1+{\rm F}_2,\notag
\end{align}
where $T_{(1)}$ and $E_{B(z,r)}$ are as in \eqref{limit},
$$
{\rm F}_1:=\lf[\int_{\rn}\lf\{|B(z,r)|^{-\alpha}
\lf[\fint_{B(z,r)}\lf|T_{(1)}\lf(\lf[f-P_{2B(z,r)}^{(s)}(f)\r]
\mathbf1_{2B(z,r)}\r)(x)\r|^q\,dx
\r]^{\frac1q}\r\}^p\,dz\r]^{\frac1p},
$$
and
$$
{\rm F}_2:=\lf[\int_{\rn}\lf\{|B(z,r)|^{-\alpha}
\lf[\fint_{B(z,r)}\lf|E_{B(z,r)}(x)\r|^q\,dx
\r]^{\frac1q}\r\}^p\,dz\r]^{\frac1p}.
$$

We now estimate $\mathrm{F}_1$.
Indeed, by the fact that
$[f-P_{2B(z,r)}^{(s)}(f)]\mathbf1_{2B(z,r)}\in L^q(\rn)$, and
Lemmas \ref{Duo01} and \ref{AC}, we find that
\begin{align}\label{F3}
{\rm F}_1\ls\lf[\int_{\rn}\lf\{|B(z,r)|^{-\alpha}
\lf[\fint_{2B(z,r)}\lf|f(x)-P_{2B(z,r)}^{(s)}(f)(x)\r|^q\,dx
\r]^{\frac1q}\r\}^p\,dz\r]^{\frac1p}
\ls\|f\|_{JN_{(p,q,s)_\alpha}^{\mathrm{con}}(\rn)}.
\end{align}
This is a desired estimate of $\mathrm{F}_1$.

Next, we estimate $\mathrm{F}_2$.
Indeed, using \eqref{T2}, \eqref{JN-I-E},
the Minkowski inequality, Lemma \ref{AC},
and $\az\in(-\fz,\frac{s+\dz}{n})$, we find that
\begin{align*}
{\rm F}_2
&\ls\lf\{\int_{\rn}\lf[|B(z,r)|^{-\alpha}
r^{s+\dz}\int_{\rn\setminus 2B(z,r)}\frac{|f(y)
-P_{2B(z,r)}^{(s)}(f)(y)|}{|y-z|^{n+s+\dz}}\,dy\r]^p\,dz\r\}^{\frac1p}\\
&\ls\lf[\int_{\rn}\lf\{|B(z,r)|^{-\alpha}
\sum_{k\in\nn}\frac1{2^{k(s+\dz)}}\lf[\fint_{B(z,2^{k+1} r)}
\lf|f(y)-P_{B(z,2^{k+1}r)}^{(s)}(f)(y)\r|^q
\,dy\r]^{\frac{1}{q}}\r\}^p\,dz\r]^{\frac1p}\\
&\ls\sum_{k\in\nn}\frac1{2^{k(s+\dz)}}\lf[\int_{\rn}\lf\{|B(z,r)|^{-\alpha}
\lf[\fint_{B(z,2^{k+1} r)}\lf|f(y)-P_{B(z,2^{k+1}r)}^{(s)}(f)(y)\r|^q
\,dy\r]^{\frac{1}{q}}\r\}^p\,dz\r]^{\frac1p}\\
&\sim\sum_{k\in\nn}\frac1{2^{k(s+\dz-n\alpha)}}
\lf[\int_{\rn}\lf\{\lf|B(z,2^{k+1}r)\r|^{-\alpha}
\lf[\fint_{B(z,2^{k+1} r)}\lf|f(y)-P_{B(z,2^{k+1}r)}^{(s)}(f)(y)\r|^q
\,dy\r]^{\frac{1}{q}}\r\}^p\,dz\r]^{\frac1p}\\
&\ls\|f\|_{JN_{(p,q,s)_\alpha}^{\mathrm{con}}(\rn)}.
\end{align*}
From this, \eqref{F3+F4}, \eqref{F3}, and Lemma \ref{AC},
we deduce that, for any $f\in JN_{(p,q,s)_\alpha}^{\mathrm{con}}(\rn)$,
$\widetilde{T}(f)\in JN_{(p,q,s)_\alpha}^{\mathrm{con}}(\rn)$ and
$$\lf\|\widetilde{T}(f)\r\|_{JN_{(p,q,s)_\alpha}^{\mathrm{con}}(\rn)}
\ls\|f\|_{JN_{(p,q,s)_\alpha}^{\mathrm{con}}(\rn)}.$$
This finishes the proof of the sufficiency.

We now show the necessity. Indeed, if $\widetilde{T}$
is bounded on $JN_{(p,q,s)_\alpha}^{\mathrm{con}}(\rn)$,
then, for any $\gamma\in\zz_+^n$ with $|\gamma|\leq s$,
$$
\lf\|\widetilde{T}(x^{\gamma})\r\|_{JN_{(p,q,s)_\alpha}^{\mathrm{con}}(\rn)}
\ls \lf\|x^{\gamma}\r\|_{JN_{(p,q,s)_\alpha}^{\mathrm{con}}(\rn)}\sim0
$$
and hence $\widetilde{T}(x^{\gamma})\in \mathcal{P}_s(\rn)$.
Using this and Proposition \ref{Assume}, we find that,
for any $\gamma\in\zz_+^n$ with $|\gamma|\leq s$, $T^*(x^{\gamma})=0$,
which completes the proof of the necessity
and hence of Theorem \ref{thm-bdd-JN}.
\end{proof}

\begin{remark}\label{rem-JN-CZO}
\begin{enumerate}
\item[\rm (i)]	
We should point out that Peetre \cite[Remark 1.5]{Pe1996}
claimed, without giving a proof, that the Calder\'on--Zygmund operator on
the generalized Campanato space $L_s^{q,\Phi}$, with $s\in\nn$,
which when $\Phi(r):=r^{\alpha pn+n}$ coincides with
$\mathcal{C}_{\alpha,q,s}(\rn)$, is bounded
under some unspecific assumptions on the kernel
of the Calder\'on--Zygmund operator.
It is doubtful whether or not this claim
holds true for the Riesz transforms.

\item[\rm (ii)]	
By Proposition \ref{Assume}, we further conclude that
Theorem \ref{thm-bdd-JN} coincides with \cite[Theorem 4.21]{kk2013}
when $p=\fz$.
\end{enumerate}
\end{remark}

\section{Calder\'on--Zygmund Operators on Hardy-Type Spaces\label{S-C-Z-HK}}

This section is devoted to studying the boundedness of
Calder\'on--Zygmund operators on the
Hardy-kind space $HK_{(p,q,s)_\alpha}^{\mathrm{con}}(\rn)$
which proves the predual space of
$JN_{(p',q',s)_\alpha}^{\mathrm{con}}(\rn)$
(see, for instance, \cite[Theorem 4.10]{jtyyz1}).
To overcome the difficulty caused by
the fact that $\|\cdot\|_{HK_{(p,q,s)_{\alpha}}^{\mathrm{con}}(\mathbb{R}^n)}$
is no longer concave, we establish a criterion for the boundedness
of linear operators on $HK_{(p,q,s)_\alpha}^{\mathrm{con}}(\rn)$
via using molecules of $HK_{(p,q,s)_\alpha}^{\mathrm{con}}(\rn)$
and the dual relation $(HK_{(p,q,s)_{\alpha}}^{\mathrm{con}}(\mathbb{R}^n))^*
=JN_{(p',q',s)_\alpha}^{\mathrm{con}}(\mathbb{R}^n)$.

\subsection{Hardy-Type Spaces\label{Def-HK}}

We first recall the notion of the Hardy-kind space
$HK_{(p,q,s)_\alpha}^{\mathrm{con}}(\rn)$
which was introduced in \cite[Section 4]{jtyyz1}.

\begin{definition}\label{d3.2}
Let $p\in(1,\infty)$, $q\in(1,\infty]$, $s\in\zz_+$, and $\alpha\in\rr$.
A measurable function $a$ on $\rn$ is called a $(p,q,s)_\alpha$-\emph{atom}
supported in a cube $Q\subset \rn$ if
\begin{enumerate}
\item[\rm(i)] $\{x\in\rn:\ a(x)\neq 0\}\subset Q$;
\item[\rm(ii)] $\|a\|_{L^q(Q)}\leq|Q|^{\frac{1}{q}-\frac{1}{p}-\alpha}$;
\item[\rm(iii)] $\int_Qa(x)x^{\gamma}\,dx=0$
for any $\gamma\in\zz_+^n$ with $|\gamma|\leq s$.
\end{enumerate}
\end{definition}

The following conclusion is just \cite[Proposition 4.3]{jtyyz1}.

\begin{lemma}\label{p3.1}
Let $p\in(1,\infty)$, $q\in(1,\infty]$, $s\in\zz_+$, $\alpha\in\rr$,
$a$ be a $(p,q,s)_\alpha$-atom supported in the cube $Q\subset \rn$,
and, for any $f\in JN_{(p',q',s)_\alpha}^{\mathrm{con}}(\rn)$,
$$\langle a,f\rangle:=\int_{Q}f(x)a(x)\,dx,$$
where $\frac{1}{p}+\frac{1}{p'}=1=\frac{1}{q}+\frac{1}{q'}$. Then
$\langle a,\cdot\rangle\in
(JN_{(p',q',s)_\alpha}^{\mathrm{con}}(\rn))^*,$
where $(JN_{(p',q',s)_\alpha}^{\mathrm{con}}(\rn))^*$
denotes the dual space of $JN_{(p',q',s)_\alpha}^{\mathrm{con}}(\rn)$,
which is defined to be the set of all
continuous linear functionals on
$JN_{(p',q',s)_\alpha}^{\mathrm{con}}(\rn)$
equipped with the weak-$\ast$ topology.
\end{lemma}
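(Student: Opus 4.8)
The plan is to show directly that the linear functional $\langle a,\cdot\rangle$ is \emph{bounded} on $JN_{(p',q',s)_\alpha}^{\mathrm{con}}(\rn)$, in fact with operator norm at most $1$; since $(JN_{(p',q',s)_\alpha}^{\mathrm{con}}(\rn))^*$ is by definition the space of continuous linear functionals on $JN_{(p',q',s)_\alpha}^{\mathrm{con}}(\rn)$, this yields the claim. Linearity of $f\mapsto\langle a,f\rangle$ is immediate from the linearity of the integral. That $\langle a,f\rangle$ is finite for every $f\in JN_{(p',q',s)_\alpha}^{\mathrm{con}}(\rn)$ follows because such an $f$ lies in $L^{q'}_{\loc}(\rn)$, while $a\in L^q(Q)$ with $Q$ a bounded cube, so the H\"older inequality gives $\int_Q|f(x)a(x)|\,dx<\fz$. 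Thus the only substantive point is the norm estimate, and this is where I would use the two defining properties of an atom.

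First I would exploit the vanishing moments. Since $P_Q^{(s)}(f)\in\mathcal{P}_s(Q)$ is well defined for $f\in L^1_{\loc}(\rn)$ (Remark \ref{rem-JN}(i)) and, by Definition \ref{d3.2}(iii), $\int_Q a(x)x^\gamma\,dx=0$ for every $\gamma\in\zz_+^n$ with $|\gamma|\le s$, we have $\int_Q a(x)P_Q^{(s)}(f)(x)\,dx=0$, hence $\langle a,f\rangle=\int_Q a(x)[f(x)-P_Q^{(s)}(f)(x)]\,dx$. Applying the H\"older inequality with exponents $q$ and $q'$, then the size condition $\|a\|_{L^q(Q)}\le|Q|^{\frac1q-\frac1p-\alpha}$ of Definition \ref{d3.2}(ii), and finally rewriting the $L^{q'}(Q)$ norm via the average, I expect the chain
\begin{align*}
|\langle a,f\rangle|
&\le\|a\|_{L^q(Q)}\lf[\int_Q\lf|f(x)-P_Q^{(s)}(f)(x)\r|^{q'}\,dx\r]^{\frac1{q'}}\\
&\le|Q|^{\frac1q-\frac1p-\alpha}|Q|^{\frac1{q'}}
\lf[\fint_Q\lf|f(x)-P_Q^{(s)}(f)(x)\r|^{q'}\,dx\r]^{\frac1{q'}}
=|Q|^{\frac1{p'}}\lf\{|Q|^{-\alpha}\lf[\fint_Q\lf|f(x)-P_Q^{(s)}(f)(x)\r|^{q'}\,dx\r]^{\frac1{q'}}\r\},
\end{align*}
where in the last step I used $\frac1q+\frac1{q'}=1$ to collapse the power of $|Q|$ to $1-\frac1p-\alpha=\frac1{p'}-\alpha$. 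The right-hand side is exactly the value of the defining supremum in $\|\cdot\|_{JN_{(p',q',s)_\alpha}^{\mathrm{con}}(\rn)}$ evaluated on the one-cube collection $\{Q\}$ (an admissible element of $\Pi_{\ell}(\rn)$ for $\ell$ the side length of $Q$), so it is bounded by $\|f\|_{JN_{(p',q',s)_\alpha}^{\mathrm{con}}(\rn)}$; therefore $|\langle a,f\rangle|\le\|f\|_{JN_{(p',q',s)_\alpha}^{\mathrm{con}}(\rn)}$.

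The case $q=\infty$ (hence $q'=1$) I would treat in the same way, replacing H\"older by $\int_Q|f-P_Q^{(s)}(f)|\,dx=|Q|\fint_Q|f-P_Q^{(s)}(f)|\,dx$ and using $\|a\|_{L^\infty(Q)}\le|Q|^{-\frac1p-\alpha}$; the power of $|Q|$ is again $1-\frac1p-\alpha=\frac1{p'}-\alpha$, giving the same bound. Combining the two cases shows $\langle a,\cdot\rangle$ is a continuous linear functional, i.e.\ $\langle a,\cdot\rangle\in(JN_{(p',q',s)_\alpha}^{\mathrm{con}}(\rn))^*$. I do not anticipate a genuine obstacle here: the proof is short, and the only points needing care are the bookkeeping of the powers of $|Q|$ (and the conjugacy relations among $p,p',q,q'$) and the observation that a single cube is an admissible test collection in the definition of the $JN$-norm, so that no passage through the equivalent characterization of Lemma \ref{AC} is needed.
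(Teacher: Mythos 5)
Your proof is correct. The paper itself does not reproduce a proof here (it cites \cite[Proposition 4.3]{jtyyz1}), but your argument is the canonical one: subtract $P_Q^{(s)}(f)$ using the atom's vanishing moments, apply H\"older with exponents $q,q'$, invoke the size condition $\|a\|_{L^q(Q)}\le|Q|^{1/q-1/p-\alpha}$, and observe that the resulting expression $|Q|^{1/p'}\{|Q|^{-\alpha}[\fint_Q|f-P_Q^{(s)}(f)|^{q'}]^{1/q'}\}$ is dominated by $\|f\|_{JN_{(p',q',s)_\alpha}^{\mathrm{con}}(\rn)}$ by testing the defining supremum on the single-cube collection $\{Q\}\in\Pi_\ell(\rn)$; the exponent bookkeeping $(1/q-1/p-\alpha)+1/q'=1/p'-\alpha$ and the $q=\infty$ case are both handled correctly.
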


Now, we say `$(p,q,s)_\alpha$-atom
$a\in (JN_{(p',q',s)_\alpha}^{\mathrm{con}}(\rn))^*$'
instead of `$\langle a,\cdot\rangle
\in (JN_{(p',q',s)_\alpha}^{\mathrm{con}}(\rn))^*$'.
Then we have the following notion of congruent $(p,q,s)_\alpha$-polymers.
Recall that, for any $\ell\in(0,\fz)$,
$\Pi_{\ell}(\rn)$ denotes the class of all collections of
interior pairwise disjoint subcubes $\{Q_j\}_j$ of $\rn$ with side length $\ell$.

\begin{definition}\label{d3.3}
Let $p\in(1,\infty)$, $q\in(1,\infty]$, $s\in\zz_+$, and $\alpha\in\rr$.
The \emph{space of congruent $(p,q,s)_\alpha$-polymers},
$\widetilde{HK}_{(p,q,s)_\alpha}^{\mathrm{con}}(\rn)$,
is defined to be the set of all
$g\in (JN_{(p',q',s)_\alpha}^{\mathrm{con}}(\rn))^*$ satisfying that
there exist an $\ell\in(0,\fz)$,
$(p,q,s)_{\alpha}$-atoms $\{a_j\}_j$ supported, respectively, in
$\{Q_j\}_j\in \Pi_{\ell}(\rn)$, and $\sum_{j}|\lambda_j|^p<\infty$
such that $g=\sum_{j}\lambda_ja_j$ in
$(JN_{(p',q',s)_\alpha}^{\mathrm{con}}(\rn))^*$,
where $\frac{1}{p}+\frac{1}{p'}=1=\frac{1}{q}+\frac{1}{q'}$.
Moreover, any $g\in\widetilde{HK}_{(p,q,s)_\alpha}^{\mathrm{con}}(\rn)$
is called a \emph{congruent $(p,q,s)_\alpha$-polymer} with its norm
$\|g\|_{\widetilde{HK}_{(p,q,s)_\alpha}^{\mathrm{con}}(\rn)}$ defined by setting
$$\|g\|_{\widetilde{HK}_{(p,q,s)_\alpha}^{\mathrm{con}}(\rn)}
:=\inf\lf(\sum_{j}|\lambda_j|^p\r)^{\frac{1}{p}},$$
where the infimum is taken over all decompositions of $g$ as above.
\end{definition}

\begin{definition}\label{d3.6}
Let $p\in(1,\infty)$, $q\in(1,\infty]$, $s\in\zz_+$, and $\alpha\in\rr$.
The \emph{Hardy-type space via congruent cubes},
$HK_{(p,q,s)_\alpha}^{\mathrm{con}}(\rn)$, is defined by setting
\begin{align*}
HK_{(p,q,s)_\alpha}^{\mathrm{con}}(\rn)
:=\lf\{g\in(JN_{(p',q',s)_\alpha}^{\mathrm{con}}(\rn))^*
:\ g=\sum_ig_i\text{ in }(JN_{(p',q',s)_\alpha}^{\mathrm{con}}(\rn))^*,\r.\\
\quad\lf.\{g_i\}_i\subset\widetilde{HK}_{(p,q,s)_\alpha}^{\mathrm{con}}(\rn),
\text{ and }\sum_i\|g_i
\|_{\widetilde{HK}_{(p,q,s)_\alpha}^{\mathrm{con}}(\rn)}<\infty\r\},
\end{align*}
where $\frac{1}{p}+\frac{1}{p'}=1=\frac{1}{q}+\frac{1}{q'}$.
Moreover, for any $g\in HK_{(p,q,s)_\alpha}^{\mathrm{con}}(\rn)$, let
$$\|g\|_{HK_{(p,q,s)_\alpha}^{\mathrm{con}}(\rn)}
:=\inf\sum_i\|g_i\|_{\widetilde{HK}_{(p,q,s)_\alpha}^{\mathrm{con}}(\rn)},$$
where the infimum is taken over all decompositions of $g$ as above.
\end{definition}

\begin{definition}\label{d3.5}
Let $p\in(1,\infty)$, $q\in(1,\infty]$, $s\in\zz_+$, and $\alpha\in\rr$.
The \emph{finite atomic Hardy-type space via congruent cubes},
$HK_{(p,q,s)_\alpha}^{\mathrm{con-fin}}(\rn)$, is defined to be the set of all
\begin{align*}
g=\sum_{j=1}^M\lambda_ja_j
\end{align*}
pointwisely, where $M\in\nn$, $\{a_j\}_{j=1}^M$
are $(p,q,s)_{\alpha}$-atoms supported, respectively,
in cubes $\{Q_j\}_{j=1}^M$ of $\rn$,
and $\{\lambda_j\}_{j=1}^M\subset \mathbb{C}$.
\end{definition}

The following conclusion is just
\cite[Propositions 4.9]{jtyyz1}.

\begin{lemma}\label{HK-normed}
Let $p\in(1,\infty)$, $q\in(1,\infty]$, $s\in\zz_+$,
and $\alpha\in\rr$. Then
$HK_{(p,q,s)_\alpha}^{\mathrm{con-fin}}(\rn)$
is dense in $HK_{(p,q,s)_\alpha}^{\mathrm{con}}(\rn)$
in the sense that, for any $g\in HK_{(p,q,s)_{\alpha}}^{\mathrm{con}}(\rn)$,
there exists a sequence $\{g_m\}_{m\in\nn}$ of
$HK_{(p,q,s)_{\alpha}}^{\mathrm{con-fin}}(\rn)$
such that
$$\lim_{m\to\fz}\|g-g_m\|_{HK_{(p,q,s)_{\alpha}}^{\mathrm{con}}(\rn)}=0.$$
\end{lemma}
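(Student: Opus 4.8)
The plan is to prove density by a two-stage truncation: given $g\in HK_{(p,q,s)_\alpha}^{\mathrm{con}}(\rn)$, first truncate the outer series of congruent polymers in a representation of $g$, and then, inside each of the finitely many retained polymers, truncate the inner $\ell^p$-series of atoms. The resulting object is a finite linear combination of $(p,q,s)_\alpha$-atoms, hence lies in $HK_{(p,q,s)_\alpha}^{\mathrm{con-fin}}(\rn)$ by Definition \ref{d3.5}, and it approximates $g$ in the $HK$-norm precisely because the two infima built into $\|\cdot\|_{HK_{(p,q,s)_\alpha}^{\mathrm{con}}(\rn)}$ are controlled by, respectively, an absolutely summable family of polymer norms and an $\ell^p$-summable family of atom coefficients, whose tails can be made arbitrarily small.

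First I would fix $\varepsilon\in(0,\fz)$. By Definition \ref{d3.6}, choose a decomposition $g=\sum_i g_i$ in $(JN_{(p',q',s)_\alpha}^{\mathrm{con}}(\rn))^*$ with each $g_i\in\widetilde{HK}_{(p,q,s)_\alpha}^{\mathrm{con}}(\rn)$ and $\sum_i\|g_i\|_{\widetilde{HK}_{(p,q,s)_\alpha}^{\mathrm{con}}(\rn)}<\fz$, and pick $N\in\nn$ with $\sum_{i>N}\|g_i\|_{\widetilde{HK}_{(p,q,s)_\alpha}^{\mathrm{con}}(\rn)}<\varepsilon/2$. For each $i\in\{1,\dots,N\}$, Definition \ref{d3.3} provides an $\ell_i\in(0,\fz)$, $(p,q,s)_\alpha$-atoms $\{a_{i,j}\}_j$ supported, respectively, in cubes $\{Q_{i,j}\}_j\in\Pi_{\ell_i}(\rn)$, and $\{\lambda_{i,j}\}_j$ with $\sum_j|\lambda_{i,j}|^p<\fz$ such that $g_i=\sum_j\lambda_{i,j}a_{i,j}$ in $(JN_{(p',q',s)_\alpha}^{\mathrm{con}}(\rn))^*$; then choose $M_i\in\nn$ with $(\sum_{j>M_i}|\lambda_{i,j}|^p)^{1/p}<\varepsilon/(2N)$. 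Setting $g_m:=\sum_{i=1}^N\sum_{j=1}^{M_i}\lambda_{i,j}a_{i,j}$, we have $g_m\in HK_{(p,q,s)_\alpha}^{\mathrm{con-fin}}(\rn)$; letting $\varepsilon$ run through $\{1/m\}_{m\in\nn}$ yields the desired sequence $\{g_m\}_{m\in\nn}$.

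For the error, I would subtract the finite partial sums. Since the outer series and each of the finitely many inner series converges in $(JN_{(p',q',s)_\alpha}^{\mathrm{con}}(\rn))^*$, we obtain
\begin{align*}
g-g_m=\sum_{i>N}g_i+\sum_{i=1}^N\lf(\sum_{j>M_i}\lambda_{i,j}a_{i,j}\r)
\end{align*}
in $(JN_{(p',q',s)_\alpha}^{\mathrm{con}}(\rn))^*$, where each tail $\sum_{j>M_i}\lambda_{i,j}a_{i,j}$ is again a congruent $(p,q,s)_\alpha$-polymer (its atoms still share the common side length $\ell_i$ and $\sum_{j>M_i}|\lambda_{i,j}|^p<\fz$) with $\widetilde{HK}$-norm at most $(\sum_{j>M_i}|\lambda_{i,j}|^p)^{1/p}$ by Definition \ref{d3.3}. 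Enumerating the countable family $\{g_i\}_{i>N}\cup\{\sum_{j>M_i}\lambda_{i,j}a_{i,j}\}_{i=1}^N$ as a single sequence of polymers whose weak-$\ast$ sum is $g-g_m$, the definition of $\|\cdot\|_{HK_{(p,q,s)_\alpha}^{\mathrm{con}}(\rn)}$ as an infimum over such representations gives
\begin{align*}
\lf\|g-g_m\r\|_{HK_{(p,q,s)_\alpha}^{\mathrm{con}}(\rn)}
&\le\sum_{i>N}\|g_i\|_{\widetilde{HK}_{(p,q,s)_\alpha}^{\mathrm{con}}(\rn)}
+\sum_{i=1}^N\lf(\sum_{j>M_i}|\lambda_{i,j}|^p\r)^{\frac1p}\\
&<\frac{\varepsilon}{2}+N\cdot\frac{\varepsilon}{2N}=\varepsilon,
\end{align*}
which is exactly the claimed density.

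The only genuine obstacle is the bookkeeping supporting the displayed identity for $g-g_m$: one must check that truncating the outer series $\sum_i g_i$ and each inner series $\sum_j\lambda_{i,j}a_{i,j}$ leaves data still satisfying the structural constraints of Definitions \ref{d3.3} and \ref{d3.6} (a single common side length per polymer, $\ell^p$-summable atom coefficients, summable polymer norms), and that regrouping the remaining countably many polymers into one series still converges to $g-g_m$ in the weak-$\ast$ topology of $(JN_{(p',q',s)_\alpha}^{\mathrm{con}}(\rn))^*$. Granting Lemma \ref{p3.1} (so that atoms, and hence their finite linear combinations, indeed define elements of $(JN_{(p',q',s)_\alpha}^{\mathrm{con}}(\rn))^*$), this is routine manipulation of weak-$\ast$ convergent series.
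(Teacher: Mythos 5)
Your two-stage truncation argument is correct and is the natural argument for this statement. The paper itself does not prove Lemma \ref{HK-normed} but simply cites \cite[Proposition 4.9]{jtyyz1}, so there is no internal proof to compare against; nonetheless, your argument is exactly the standard one that works.

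A few small confirmations worth recording, since you flagged them yourself as the ``only genuine obstacle.'' First, each tail $\sum_{j>M_i}\lambda_{i,j}a_{i,j}$ is indeed a congruent $(p,q,s)_\alpha$-polymer in the sense of Definition \ref{d3.3}: it equals $g_i-\sum_{j=1}^{M_i}\lambda_{i,j}a_{i,j}$ in $(JN_{(p',q',s)_\alpha}^{\mathrm{con}}(\rn))^*$ (difference of a polymer and a finite, hence admissible by Lemma \ref{p3.1}, sum of atoms), its atoms retain the common side length $\ell_i$, and the residual $\ell^p$ condition $\sum_{j>M_i}|\lambda_{i,j}|^p<\infty$ persists; thus its $\widetilde{HK}$-norm is bounded by $\bigl(\sum_{j>M_i}|\lambda_{i,j}|^p\bigr)^{1/p}$ as an infimum estimate. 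Second, regrouping $\{g_i\}_{i>N}\cup\{\sum_{j>M_i}\lambda_{i,j}a_{i,j}\}_{i=1}^N$ into a single countable family with summable polymer norms (its total is less than $\varepsilon$) is a valid representation of $g-g_m$ in Definition \ref{d3.6}, since the relevant series all converge in the weak-$\ast$ topology: $\sum_{i>N}g_i$ because $\sum_i g_i$ does and the first $N$ terms are a fixed finite sum, and each tail because $g_i=\sum_j\lambda_{i,j}a_{i,j}$ converges and the first $M_i$ terms are a fixed finite sum. With these checks your estimate $\|g-g_m\|_{HK_{(p,q,s)_\alpha}^{\mathrm{con}}(\rn)}<\varepsilon$ follows directly from the infimum definition, and choosing $\varepsilon=1/m$ gives the required sequence.
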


The following conclusion is an immediate
corollary of \cite[Theorem 4.10]{jtyyz1}
and \cite[Proposition 2.7]{jtyyz1}; we omit the details here.

\begin{lemma}\label{t3.9}
	Let $p\in(1,\infty)$, $q\in(1,\infty)$,
	$\frac{1}{p}+\frac{1}{p'}=1=\frac{1}{q}+\frac{1}{q'}$,
	$s\in\zz_+$, and $\alpha\in\rr$.
	Then $$\lf(HK_{(p',q',s)_{\alpha}}^{\mathrm{con}}(\rn)\r)^*
	=JN_{(p,q,s)_{\alpha}}^{\mathrm{con}}(\rn)$$
	with equivalent norms in the following sense:
	\begin{enumerate}
		\item[\rm (i)] any given
		$f\in JN_{(p,q,s)_{\alpha}}^{\mathrm{con}}(\rn)$
		induces a linear functional $\mathcal{L}_f$ given by setting,
		for any $g\in HK_{(p',q',s)_{\alpha}}^{\mathrm{con}}(\rn)$
		and $\{g_i\}_i\subset
		\widetilde{HK}_{(p',q',s)_{\alpha}}^{\mathrm{con}}(\rn)$
		with $g=\sum_i g_i$ in$(JN_{(p,q,s)_{\alpha}}^{\mathrm{con}}(\rn))^*$,
		\begin{align}\label{3.0x}
			\mathcal{L}_f(g):=\langle g,f\rangle
			=\sum_i\langle g_i,f\rangle.
		\end{align}
		Moreover,
		for any $g\in HK_{(p',q',s)_{\alpha}}^{\mathrm{con-fin}}(\rn)$,
        $\mathcal{L}_f(g)=\int_{\rn}f(x)g(x)\,dx$
		and there exists a positive constant $C$ such that
		\begin{align*}
			\lf\|\mathcal{L}_f\r
			\|_{(HK_{(p',q',s)_{\alpha}}^{\mathrm{con}}(\rn))^*}
			\leq C\|f\|_{JN_{(p,q,s)_{\alpha}}^{\mathrm{con}}(\rn)};
		\end{align*}
		\item[\rm (ii)] conversely, for any continuous
		linear functional $\mathcal{L}$ on
		$HK_{(p',q',s)_{\alpha}}^{\mathrm{con}}(\rn)$,
		there exists a unique
		$f\in JN_{(p,q,s)_{\alpha}}^{\mathrm{con}}(\rn)$ such that,
		for any $g\in HK_{(p',q',s)_{\alpha}}^{\mathrm{con-fin}}(\rn)$,
		$\mathcal{L}(g)=\int_{\rn}f(x)g(x)\,dx$
		and there exists a positive constant $C$ such that
		$$
		\|f\|_{JN_{(p,q,s)_{\alpha}}^{\mathrm{con}}(\rn)}
		\leq C\|\mathcal{L}\|_{(HK_{(p',q',s)_{\alpha}}^{\mathrm{con}}(\rn))^*}.
		$$
	\end{enumerate}
\end{lemma}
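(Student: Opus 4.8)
The plan is to read off Lemma~\ref{t3.9} from \cite[Theorem 4.10]{jtyyz1} by interchanging the roles of $(p,q)$ and $(p',q')$, and then to make explicit the concrete form of the duality pairing on the dense subspace $HK_{(p',q',s)_{\alpha}}^{\mathrm{con\text{-}fin}}(\rn)$. Recall that \cite[Theorem 4.10]{jtyyz1} asserts that $HK_{(p,q,s)_{\alpha}}^{\mathrm{con}}(\rn)$ is the predual of $JN_{(p',q',s)_{\alpha}}^{\mathrm{con}}(\rn)$; since the present hypotheses force $p'$, $q'\in(1,\fz)\subset(1,\fz]$ and $(p')'=p$, $(q')'=q$, applying that theorem with $(p,q)$ replaced by $(p',q')$ yields at once $(HK_{(p',q',s)_{\alpha}}^{\mathrm{con}}(\rn))^*=JN_{(p,q,s)_{\alpha}}^{\mathrm{con}}(\rn)$ as Banach spaces with equivalent norms. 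The role of \cite[Proposition 2.7]{jtyyz1} is to supply the structural facts about $JN_{(p,q,s)_{\alpha}}^{\mathrm{con}}(\rn)$ used below, namely that an element induces the same functional after being modified by a polynomial in $\mathcal{P}_s(\rn)$, and that the $L^q$-mean oscillation defining its norm is comparable with its infimum over $\mathcal{P}_s$.

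For part (i), fix $f\in JN_{(p,q,s)_{\alpha}}^{\mathrm{con}}(\rn)$. For a congruent polymer $g=\sum_j\lambda_j a_j$, where the $a_j$ are $(p',q',s)_{\alpha}$-atoms supported, respectively, in $\{Q_j\}_j\in\Pi_{\ell}(\rn)$ and $\sum_j|\lambda_j|^{p'}<\fz$, I would set $\langle g,f\rangle:=\sum_j\lambda_j\int_{Q_j}f(x)a_j(x)\,dx$ and first check absolute convergence: by the vanishing moments of $a_j$, the H\"older inequality, and the size estimate of $a_j$,
\begin{align*}
\lf|\int_{Q_j}f(x)a_j(x)\,dx\r|
=\lf|\int_{Q_j}\lf[f(x)-P_{Q_j}^{(s)}(f)(x)\r]a_j(x)\,dx\r|
\leq |Q_j|^{\frac1p}\lf\{|Q_j|^{-\az}\lf[\fint_{Q_j}\lf|f(x)-P_{Q_j}^{(s)}(f)(x)\r|^q\,dx\r]^{\frac1q}\r\},
\end{align*}
and a further H\"older inequality over $j$ with exponents $p'$ and $p$, together with $\{Q_j\}_j\in\Pi_{\ell}(\rn)$, bounds $\sum_j|\lambda_j||\int_{Q_j}fa_j|$ by $(\sum_j|\lambda_j|^{p'})^{1/p'}\|f\|_{JN_{(p,q,s)_{\alpha}}^{\mathrm{con}}(\rn)}$. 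Extending this over countable sums of polymers produces the functional $\mathcal{L}_f$ via \eqref{3.0x}; that $\mathcal{L}_f$ is independent of the chosen decomposition of $g$ and satisfies $\|\mathcal{L}_f\|_{(HK_{(p',q',s)_{\alpha}}^{\mathrm{con}}(\rn))^*}\ls\|f\|_{JN_{(p,q,s)_{\alpha}}^{\mathrm{con}}(\rn)}$ is exactly what \cite[Theorem 4.10]{jtyyz1} provides. Finally, for $g=\sum_{j=1}^M\lambda_j a_j\in HK_{(p',q',s)_{\alpha}}^{\mathrm{con\text{-}fin}}(\rn)$, linearity and Lemma~\ref{p3.1} give $\mathcal{L}_f(g)=\sum_{j=1}^M\lambda_j\int_{Q_j}f(x)a_j(x)\,dx=\int_{\rn}f(x)g(x)\,dx$.

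For part (ii), given a continuous linear functional $\mathcal{L}$ on $HK_{(p',q',s)_{\alpha}}^{\mathrm{con}}(\rn)$, \cite[Theorem 4.10]{jtyyz1} (again with swapped indices) produces $f\in JN_{(p,q,s)_{\alpha}}^{\mathrm{con}}(\rn)$ with $\mathcal{L}=\mathcal{L}_f$ and $\|f\|_{JN_{(p,q,s)_{\alpha}}^{\mathrm{con}}(\rn)}\ls\|\mathcal{L}\|_{(HK_{(p',q',s)_{\alpha}}^{\mathrm{con}}(\rn))^*}$; combining with part (i) gives $\mathcal{L}(g)=\int_{\rn}f(x)g(x)\,dx$ for every $g\in HK_{(p',q',s)_{\alpha}}^{\mathrm{con\text{-}fin}}(\rn)$, and this determines $\mathcal{L}$ because that subspace is dense by Lemma~\ref{HK-normed}. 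For the uniqueness of $f$ (as an element of $JN_{(p,q,s)_{\alpha}}^{\mathrm{con}}(\rn)$, i.e.\ modulo $\mathcal{P}_s(\rn)$): if $\int_{\rn}(f_1-f_2)g=0$ for all finite atomic $g$, then testing against atoms supported in an arbitrary cube $Q$ shows that $f_1-f_2$ annihilates $L^{q'}_{s}(Q)$ on $Q$, whence $f_1-f_2=P_Q^{(s)}(f_1-f_2)\in\mathcal{P}_s(Q)$ for each $Q$, and a routine patching argument forces $f_1-f_2\in\mathcal{P}_s(\rn)$; alternatively this uniqueness is itself part of the statement of \cite[Theorem 4.10]{jtyyz1}.

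The main obstacle is slight: because the deep ingredient, the predual duality, is already available from \cite[Theorem 4.10]{jtyyz1}, the only genuinely delicate point is verifying that the abstract duality pairing of that theorem coincides with the integral pairing not merely on $HK^{\mathrm{con\text{-}fin}}$ but, in the form $\sum_j\lambda_j\int_{Q_j}fa_j$, on all of $\widetilde{HK}_{(p',q',s)_{\alpha}}^{\mathrm{con}}(\rn)$; this is handled by the two-step H\"older estimate above, which guarantees absolute convergence and hence independence of the atomic decomposition, together with Lemma~\ref{HK-normed}. One should also dispatch the harmless bookkeeping that the index ranges of \cite[Theorem 4.10]{jtyyz1}, stated for $q\in(1,\fz]$, remain valid after the substitution $q\mapsto q'$, which holds since $q\in(1,\fz)$ gives $q'\in(1,\fz)$.
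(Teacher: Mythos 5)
Your proposal is correct and follows essentially the same route as the paper, which records Lemma~\ref{t3.9} as ``an immediate corollary of \cite[Theorem 4.10]{jtyyz1} and \cite[Proposition 2.7]{jtyyz1}'' and omits all details. You simply carry out the omitted bookkeeping (the $(p,q)\leftrightarrow(p',q')$ index swap, the two-step H\"older estimate giving absolute convergence of the pairing, the identification with $\int fg$ on the finite atomic subspace via Lemma~\ref{p3.1}, density via Lemma~\ref{HK-normed}, and uniqueness modulo $\mathcal{P}_s(\rn)$) without introducing any new ideas.
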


The following conclusion can immediately be deduced
from \cite[Corollary 4.11]{jtyyz1},
which plays a key role in the proof of
Theorem \ref{T-bounded-HK} below; we omit the details here.

\begin{proposition}\label{Coro-JN}
Let $p\in(1,\infty)$, $q\in(1,\infty)$, $s\in\zz_+$, and $\alpha\in\rr$. Then
$\|\cdot\|_{HK_{(p,q,s)_\alpha}^{\mathrm{con}}(\rn)}$
and $\|\cdot\|_{(JN_{(p',q',s)_\alpha}^{\mathrm{con}}(\rn))^{*}}$
are equivalent norms of $HK_{(p,q,s)_\alpha}^{\mathrm{con}}(\rn)$.
\end{proposition}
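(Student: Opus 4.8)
The plan is to deduce the asserted norm equivalence from the dual relation $(HK_{(p,q,s)_\alpha}^{\mathrm{con}}(\rn))^{*}=JN_{(p',q',s)_\alpha}^{\mathrm{con}}(\rn)$ (Lemma \ref{t3.9}) together with the fact that $HK_{(p,q,s)_\alpha}^{\mathrm{con}}(\rn)$ is a Banach space (Proposition \ref{HK-banach}), by passing to the bidual. Write $X:=HK_{(p,q,s)_\alpha}^{\mathrm{con}}(\rn)$ and $Y:=JN_{(p',q',s)_\alpha}^{\mathrm{con}}(\rn)$. By Definitions \ref{d3.3} and \ref{d3.6} one has $X\subset Y^{*}$ as sets, so that $\|g\|_{Y^{*}}$ is meaningful for every $g\in X$, and what must be shown is that $\|g\|_{X}\sim\|g\|_{Y^{*}}$ with positive equivalence constants independent of $g$.

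First I would record the easy inequality $\|g\|_{Y^{*}}\lesssim\|g\|_{X}$ directly from the atomic structure: for a congruent $(p,q,s)_\alpha$-polymer $g=\sum_j\lambda_ja_j$ with $\{Q_j\}_j\in\Pi_\ell(\rn)$ and any $f\in Y$, the vanishing-moment condition of the atoms permits replacing $f$ by $f-P_{Q_j}^{(s)}(f)$ on each $Q_j$; then H\"older's inequality on each $Q_j$ (with exponents $q$ and $q'$), the size estimate $\|a_j\|_{L^q(Q_j)}\le|Q_j|^{1/q-1/p-\alpha}$, and H\"older's inequality in $j$ (with exponents $p$ and $p'$) yield $|\langle g,f\rangle|\lesssim(\sum_j|\lambda_j|^p)^{1/p}\|f\|_{Y}$; summing over the polymers in a decomposition of a general $g\in X$ and taking the infimum over all such decompositions gives $\|g\|_{Y^{*}}\lesssim\|g\|_{X}$, which in particular makes the inclusion $X\hookrightarrow Y^{*}$ continuous.

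For the reverse inequality I would invoke general functional analysis. Since $X$ is a Banach space, the canonical embedding $\iota\colon X\to X^{**}$ is isometric, i.e. $\|\iota(g)\|_{X^{**}}=\|g\|_{X}$ for all $g\in X$, by the Hahn--Banach theorem. By Lemma \ref{t3.9} the map $T\colon f\mapsto\mathcal{L}_f$ is a bijective bicontinuous linear map from $Y$ onto $X^{*}$ with $\|\mathcal{L}_f\|_{X^{*}}\sim\|f\|_{Y}$; hence its Banach adjoint $T^{*}$ carries $X^{**}$ isomorphically onto $Y^{*}$ with equivalent norms, a functional $\Phi\in X^{**}$ corresponding to $f\mapsto\Phi(\mathcal{L}_f)\in Y^{*}$. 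The key bookkeeping point is that, under this identification, $\iota(g)$ corresponds precisely to the functional $f\mapsto\mathcal{L}_f(g)=\langle g,f\rangle$ on $Y$, which—by the very definition of the pairing in \eqref{3.0x} and of atoms acting on $Y$—is nothing but $g$ itself viewed as an element of $Y^{*}$. Therefore $\|g\|_{X}=\|\iota(g)\|_{X^{**}}\sim\|T^{*}(\iota(g))\|_{Y^{*}}=\|g\|_{Y^{*}}$, which is the desired equivalence.

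The main point requiring care is not a genuine analytic obstacle but the consistency of the several pairings all denoted $\langle\cdot,\cdot\rangle$: one must verify that the action of $g\in X\subset Y^{*}$ on $Y$, the functional $\mathcal{L}_f$ of Lemma \ref{t3.9}, and the canonical bidual map $\iota$ all refer to the same bilinear form, and that the isomorphism $(HK_{(p,q,s)_\alpha}^{\mathrm{con}}(\rn))^{*}\cong JN_{(p',q',s)_\alpha}^{\mathrm{con}}(\rn)$ of Lemma \ref{t3.9} is bijective and bicontinuous, so that its adjoint is again an isomorphism with controlled constants. Once these identifications are pinned down the statement follows with no further work. Alternatively, one could argue entirely by hand, reproducing the atomic computation of \cite[Corollary 4.11]{jtyyz1}: the lower bound $\|g\|_{X}\lesssim\|g\|_{Y^{*}}$ can also be obtained by testing $g\in Y^{*}$ against suitably normalized elements of $Y$ built from the cubes appearing in a near-optimal polymer decomposition of $g$, but the bidual route seems shorter.
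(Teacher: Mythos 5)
Your bidual argument is mathematically sound and is, in all essentials, the natural route to this equivalence (and almost certainly what the cited \cite[Corollary 4.11]{jtyyz1} does; the paper itself omits the proof and refers there). The forward direction $\|g\|_{(JN_{(p',q',s)_\alpha}^{\mathrm{con}}(\rn))^*}\lesssim\|g\|_{HK_{(p,q,s)_\alpha}^{\mathrm{con}}(\rn)}$ you give twice: once by an atomic computation and once as part of the adjoint isomorphism $T^*\colon (HK)^{**}\to (JN)^*$. That redundancy is harmless. The key identification $T^*\iota(g)=g$, via $\iota(g)(\mathcal{L}_f)=\mathcal{L}_f(g)=\langle g,f\rangle$ and the fact that $g\in HK\subset(JN)^*$ acts on $f$ by exactly this pairing, is correct and is indeed the one point that must be checked carefully.

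There is, however, a logical wrinkle you need to repair. You invoke Proposition \ref{HK-banach} (completeness of $HK_{(p,q,s)_\alpha}^{\mathrm{con}}(\rn)$) to justify the isometry of the canonical embedding $\iota\colon X\to X^{**}$. But within this paper, the proof of Proposition \ref{HK-banach} itself cites Proposition \ref{Coro-JN} in both directions (first to pass from a Cauchy sequence in $\|\cdot\|_{HK}$ to one in $\|\cdot\|_{(JN)^*}$, then at the end to upgrade $\|\cdot\|_{(JN)^*}$-convergence back to $\|\cdot\|_{HK}$-convergence). Taken at face value, your proof is therefore circular. The repair is trivial and worth stating explicitly: the Hahn--Banach isometry $\|\iota(g)\|_{X^{**}}=\|g\|_{X}$ holds for \emph{any} normed linear space $X$, with no completeness hypothesis at all. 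What you actually need is only that $HK_{(p,q,s)_\alpha}^{\mathrm{con}}(\rn)$ is a normed space, which the paper takes from \cite[Proposition 4.8]{jtyyz1} independently of Proposition \ref{Coro-JN}. Once you cite normed-space structure rather than completeness, the argument is self-contained and non-circular; in fact, this ordering is forced, since Proposition \ref{Coro-JN} is logically \emph{prior} to Proposition \ref{HK-banach} in the paper.
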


Now, we show that $(HK_{(p,q,s)_\alpha}^{\mathrm{con}}(\rn),
\|\cdot\|_{HK_{(p,q,s)_\alpha}^{\mathrm{con}}(\rn)})$
is a Banach space.

\begin{proposition}\label{HK-banach}
	Let $p\in(1,\infty)$, $q\in(1,\infty]$,
	$s\in\zz_+$, and $\alpha\in\rr$. Then
	$$\lf(HK_{(p,q,s)_\alpha}^{\mathrm{con}}(\rn),
	\|\cdot\|_{HK_{(p,q,s)_\alpha}^{\mathrm{con}}(\rn)}\r)$$
	is a Banach space.
\end{proposition}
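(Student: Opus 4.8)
The plan is to show that $HK_{(p,q,s)_\alpha}^{\mathrm{con}}(\rn)$ is a Banach space by reducing almost everything to results already available in the excerpt, in particular Proposition \ref{Coro-JN} and Lemma \ref{t3.9}. First I would treat the case $q\in(1,\fz)$ separately from the case $q=\fz$, since only for $q\in(1,\fz)$ do we have the duality $(HK_{(p',q',s)_\alpha}^{\mathrm{con}}(\rn))^*=JN_{(p,q,s)_\alpha}^{\mathrm{con}}(\rn)$ and, more relevantly here, the identification $(HK_{(p,q,s)_\alpha}^{\mathrm{con}}(\rn))^*=JN_{(p',q',s)_\alpha}^{\mathrm{con}}(\rn)$ together with the norm equivalence $\|\cdot\|_{HK_{(p,q,s)_\alpha}^{\mathrm{con}}(\rn)}\sim\|\cdot\|_{(JN_{(p',q',s)_\alpha}^{\mathrm{con}}(\rn))^*}$ from Proposition \ref{Coro-JN}. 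For $q\in(1,\fz)$, the strategy is: the space $(JN_{(p',q',s)_\alpha}^{\mathrm{con}}(\rn))^*$, being the dual of a normed space, is automatically complete in its operator norm; since $HK_{(p,q,s)_\alpha}^{\mathrm{con}}(\rn)$, as a set, coincides with (a norm-closed subspace of) this dual space and carries an equivalent norm by Proposition \ref{Coro-JN}, it suffices to check that $HK_{(p,q,s)_\alpha}^{\mathrm{con}}(\rn)$ is closed in $(JN_{(p',q',s)_\alpha}^{\mathrm{con}}(\rn))^*$, or alternatively to verify completeness directly via the standard criterion that every absolutely convergent series converges.

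Concretely, I would use the series criterion: a normed space is complete if and only if every series $\sum_k g_k$ with $\sum_k\|g_k\|_{HK_{(p,q,s)_\alpha}^{\mathrm{con}}(\rn)}<\fz$ converges in $HK_{(p,q,s)_\alpha}^{\mathrm{con}}(\rn)$. Given such a series, each $g_k$ admits, for any $\ez>0$, a decomposition $g_k=\sum_i g_{k,i}$ into congruent polymers with $\sum_i\|g_{k,i}\|_{\widetilde{HK}_{(p,q,s)_\alpha}^{\mathrm{con}}(\rn)}\le\|g_k\|_{HK_{(p,q,s)_\alpha}^{\mathrm{con}}(\rn)}+\ez 2^{-k}$. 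Concatenating all these polymers over $k$ and $i$ gives a countable family whose $\widetilde{HK}$-norms are summable; one then defines $g:=\sum_{k,i}g_{k,i}$, where the convergence must first be understood in $(JN_{(p',q',s)_\alpha}^{\mathrm{con}}(\rn))^*$ — this is legitimate because $(JN_{(p',q',s)_\alpha}^{\mathrm{con}}(\rn))^*$ is complete and the partial sums form a Cauchy sequence there, using that $\|\cdot\|_{(JN_{(p',q',s)_\alpha}^{\mathrm{con}}(\rn))^*}\ls\|\cdot\|_{HK_{(p,q,s)_\alpha}^{\mathrm{con}}(\rn)}$ and hence $\ls\|\cdot\|_{\widetilde{HK}_{(p,q,s)_\alpha}^{\mathrm{con}}(\rn)}$ on polymers. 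By definition of $HK_{(p,q,s)_\alpha}^{\mathrm{con}}(\rn)$, this $g$ lies in $HK_{(p,q,s)_\alpha}^{\mathrm{con}}(\rn)$ and $\|g\|_{HK_{(p,q,s)_\alpha}^{\mathrm{con}}(\rn)}\le\sum_{k,i}\|g_{k,i}\|_{\widetilde{HK}_{(p,q,s)_\alpha}^{\mathrm{con}}(\rn)}<\fz$. Finally, applying the same bound to the tail $g-\sum_{k=1}^N g_k=\sum_{k>N}\sum_i g_{k,i}$ shows $\|g-\sum_{k=1}^N g_k\|_{HK_{(p,q,s)_\alpha}^{\mathrm{con}}(\rn)}\to0$, i.e. the series converges in $HK_{(p,q,s)_\alpha}^{\mathrm{con}}(\rn)$; since $\ez>0$ was arbitrary, the norm estimate is sharp enough. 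For the remaining case $q=\fz$, I would note that a $(p,\fz,s)_\alpha$-atom is in particular a $(p,q_0,s)_\alpha$-atom (up to a harmless normalization constant) for any fixed $q_0\in(1,\fz)$, so that $HK_{(p,\fz,s)_\alpha}^{\mathrm{con}}(\rn)$ embeds continuously into $HK_{(p,q_0,s)_\alpha}^{\mathrm{con}}(\rn)$; combined with the reverse containment at the level of the ambient dual space and a closedness argument, one transfers completeness from the already-settled case. Alternatively, one runs the identical series argument, now working inside $(JN_{(p',q',s)_\alpha}^{\mathrm{con}}(\rn))^*$ with $q':=1$, which is still a dual Banach space of $JN_{(p',1,s)_\alpha}^{\mathrm{con}}(\rn)$, so the completeness of the ambient space is not lost.

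The main obstacle I anticipate is bookkeeping the two layers of decomposition: $HK$-elements decompose into polymers, and polymers themselves decompose into congruently-supported atoms with $\ell^p$-summable coefficients, and one must ensure that the double series obtained by concatenation really converges in the ambient dual space and that its $HK$-norm is controlled. The delicate point is that $\|\cdot\|_{HK_{(p,q,s)_\alpha}^{\mathrm{con}}(\rn)}$ is built from an infimum over \emph{all} such decompositions, so subadditivity $\|g+h\|_{HK}\le\|g\|_{HK}+\|h\|_{HK}$ is immediate, but one must still check that the candidate limit $g$, a priori only an element of $(JN_{(p',q',s)_\alpha}^{\mathrm{con}}(\rn))^*$, is genuinely represented by the concatenated family of polymers — this is exactly where Proposition \ref{Coro-JN} is used, to pass between the $HK$-norm and the $(JN)^*$-norm and to guarantee that convergence in the weak sense upgrades appropriately. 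Everything else (the triangle inequality, homogeneity, and the fact that $\|g\|_{HK_{(p,q,s)_\alpha}^{\mathrm{con}}(\rn)}=0$ implies $g=0$, the last again via Proposition \ref{Coro-JN}) is routine.
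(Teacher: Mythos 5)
Your proposal is correct and follows essentially the same route as the paper's proof: both transfer the problem, via the norm equivalence of Proposition \ref{Coro-JN}, to the complete ambient dual $(JN_{(p',q',s)_\alpha}^{\mathrm{con}}(\rn))^*$, identify the limit there, and then use the two-layer decomposition into polymers (whose $\widetilde{HK}$-norms are summable) to show that this limit lies in $HK_{(p,q,s)_\alpha}^{\mathrm{con}}(\rn)$ and that the tails vanish, the only cosmetic difference being that the paper arranges a Cauchy sequence with geometrically decaying gaps where you invoke the absolutely-convergent-series criterion for completeness. You are slightly more careful than the paper in flagging that Proposition \ref{Coro-JN} is stated only for $q\in(1,\infty)$ while the present proposition allows $q=\infty$, a point the paper glosses over; your second fallback for $q=\infty$ (running the same argument inside $(JN_{(p',1,s)_\alpha}^{\mathrm{con}}(\rn))^*$, relying on the endpoint version of the norm equivalence from \cite[Corollary 4.11]{jtyyz1}) is the correct fix, whereas your first suggestion of proving closedness in the ambient dual is circular.
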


\begin{proof}
	Let $p$, $q$, $s$, and $\alpha$ be as in the present proposition.
	Since \cite[Proposition 4.8]{jtyyz1} proves that
	$(HK_{(p,q,s)_\alpha}^{\mathrm{con}}(\rn),
	\|\cdot\|_{HK_{(p,q,s)_\alpha}^{\mathrm{con}}(\rn)})$
	is a linear normed space, from this, we deduce that,
	to prove the present proposition, it suffices to show
	that it is complete.
	To this end, let $\{g^{(m)}\}_{m\in\nn}$ be any given
	Cauchy sequence of $(HK_{(p,q,s)_\alpha}^{\mathrm{con}}(\rn),
	\|\cdot\|_{HK_{(p,q,s)_\alpha}^{\mathrm{con}}(\rn)})$,
	and $g^{(0)}$ the zero element of
	$HK_{(p,q,s)_\alpha}^{\mathrm{con}}(\rn)$.
	Without loss of generality, we may assume that,
	for any $m\in\nn$,
	\begin{align}\label{gm-01}
		\lf\|g^{(m)}-g^{(m-1)}
		\r\|_{HK_{(p,q,s)_\alpha}^{\mathrm{con}}(\rn)}
		< \frac{1}{2^m}.
	\end{align}
	By $HK_{(p,q,s)_\alpha}^{\mathrm{con}}(\rn)
	\subset (JN_{(p',q',s)_\alpha}^{\mathrm{con}}(\rn))^*$,
	and Proposition \ref{Coro-JN},
	we conclude that $\{g^{(m)}\}_{m\in\nn}$ is also a
	Cauchy sequence of $(HK_{(p,q,s)_\alpha}^{\mathrm{con}}(\rn),
	\|\cdot\|_{(JN_{(p',q',s)_\alpha}^{\mathrm{con}}(\rn))^*})$
	and hence, from the completeness of
	$(JN_{(p',q',s)_\alpha}^{\mathrm{con}}(\rn))^*$,
	we deduce that there exists an element
	$g\in (JN_{(p',q',s)_\alpha}^{\mathrm{con}}(\rn))^*$
	such that
	\begin{align}\label{3.1x}
	\lim_{N\to\fz}\lf\|g-g^{(N)}
	\r\|_{(JN_{(p',q',s)_\alpha}^{\mathrm{con}}(\rn))^*}=0
	\end{align}
	and, for any $f\in JN_{(p',q',s)_\alpha}^{\mathrm{con}}(\rn)$,
	\begin{align}\label{g-00}
		\langle g,f\rangle=\lim_{N\to\fz}\lf\langle g^{(N)},f\r\rangle.
	\end{align}

    Next, we show that $g\in HK_{(p,q,s)_\alpha}^{\mathrm{con}}(\rn)$.
	Indeed, using \eqref{gm-01} and the definition
	$HK_{(p,q,s)_\alpha}^{\mathrm{con}}(\rn)$,
	we find that, for any $m\in \nn$,
	there exists a sequence $\{g^{(m)}_{i}\}_{i\in\nn}$ of
	$\widetilde{HK}_{(p,q,s)_\alpha}^{\mathrm{con}}(\rn)$
	such that
	\begin{align}\label{g-04}
		g^{(m)}-g^{(m-1)}=\sum_{i\in\nn}g^{(m)}_{i}
	\end{align}
	in $(JN_{(p',q',s)_\alpha}^{\mathrm{con}}(\rn))^*$, and
	\begin{align}\label{gm-02}
		\sum_{i\in\nn}\lf\|g^{(m)}_{i}
		\r\|_{\widetilde{HK}_{(p,q,s)_\alpha}^{\mathrm{con}}(\rn)}
		< \frac{1}{2^{m}}.
	\end{align}
Moreover, from \eqref{gm-01}, we deduce that,
for any $f\in JN_{(p',q',s)_\alpha}^{\mathrm{con}}(\rn)$ and $N\in\nn$,
\begin{align*}
&\lf\|\sum_{m=1}^{N}\lf[g^{(m)}-g^{(m-1)}\r]
\r\|_{(JN_{(p',q',s)_\alpha}^{\mathrm{con}}(\rn))^*}\\
&\quad\sim\lf\|\sum_{m=1}^{N}\lf[g^{(m)}-g^{(m-1)}\r]
\r\|_{HK_{(p,q,s)_\alpha}^{\mathrm{con}}(\rn)}
\ls\sum_{m\in\nn}\lf\|g^{(m)}-g^{(m-1)}
\r\|_{HK_{(p,q,s)_\alpha}^{\mathrm{con}}(\rn)}
\ls1.
\end{align*}
Using this and the completeness of
$(JN_{(p',q',s)_\alpha}^{\mathrm{con}}(\rn))^*$, we find that
$$\sum_{m\in\nn}\lf[g^{(m)}-g^{(m-1)}\r]\in
\lf(JN_{(p',q',s)_\alpha}^{\mathrm{con}}(\rn)\r)^*,$$
which, together with \eqref{g-00} and \eqref{g-04}, implies that,
for any $f\in JN_{(p',q',s)_\alpha}^{\mathrm{con}}(\rn)$,
\begin{align*}
	\langle g,f\rangle
	&=\lim_{N\to\fz}\lf\langle g^{(N)},f\r\rangle
	=\lim_{N\to\fz}\lf\langle
	\sum_{m=1}^{N}\lf[g^{(m)}-g^{(m-1)}\r],f\r\rangle\\
	&=\lf\langle \sum_{m\in\nn}\lf[g^{(m)}-g^{(m-1)}\r],f\r\rangle
	=\lf\langle \sum_{m\in\nn}\sum_{i\in\nn}g^{(m)}_{i},f\r\rangle.
\end{align*}
Thus, $g=\sum_{m\in\nn}\sum_{i\in\nn}g^{(m)}_{i}$
in $(JN_{(p',q',s)_\alpha}^{\mathrm{con}}(\rn))^*$,
which, combined with \eqref{gm-02}, further implies that
$g\in HK_{(p,q,s)_\alpha}^{\mathrm{con}}(\rn)$.
By this, \eqref{3.1x}, and Lemma \ref{Coro-JN}, we find that
\begin{align*}
	\lim_{N\to\fz}\lf\|g-g^{(N)}
	\r\|_{HK_{(p,q,s)_\alpha}^{\mathrm{con}}(\rn)}=0
\end{align*}
and hence completes the proof of Proposition \ref{HK-banach}.
\end{proof}

\subsection{Molecules of
$HK_{(p,q,s)_\alpha}^{\mathrm{con}}(\rn)$\label{Mo}}

In this subsection, we introduce the notion of
$(p,q,s,\alpha,\epsilon)$-molecules
of $HK_{(p,q,s)_\alpha}^{\mathrm{con}}(\rn)$,
which plays an important role in the proof of the boundedness
of Calder\'on--Zygmund operators
on $HK_{(p,q,s)_\alpha}^{\mathrm{con}}(\rn)$.

Via borrowing some ideas from
\cite[Definition 1.2]{HYZ2009}, we introduce the following
$(p,q,s,\alpha,\epsilon)$-molecule.
In what follows, for any $z\in\rn$ and $r\in(0,\fz)$,
we denote by $Q_z(r)$ the cube with center $z$ and side length $r$.

\begin{definition}\label{Def-mole}
Let $p\in (1,\fz)$, $q\in(1,\fz]$, $s\in\zz_+$,
$\alpha\in(\frac{1}{q}-\frac{1}{p},\fz)$,
and $\epsilon\in (0,\fz)$.
A measurable function $M$ on $\rn$ is called
a \textit{$(p,q,s,\alpha,\epsilon)$-molecule}
centered at the cube $Q_z(r)$ with center
$z\in\rn$ and length $r\in(0,\fz)$ if
\begin{enumerate}
\item[\rm (i)] $\|M\mathbf{1}_{Q_z(r)}\|_{L^q(\rn)}
\leq |Q_z(r)|^{\frac{1}{q}-\frac{1}{p}-\alpha}$;
\item[\rm (ii)] for any $j\in \nn$, $\|M\mathbf{1}_{Q_z(2^{j}r)
\setminus Q_z(2^{j-1}r)}\|_{L^q(\rn)}
\leq 2^{\frac{jn}{\epsilon}(\frac{1}{q}-\frac{1}{p}-\alpha)}
|Q_z(r)|^{\frac{1}{q}-\frac{1}{p}-\alpha}$;
\item[\rm (iii)] $\int_{\rn}M(x)x^{\gamma}\,dx=0$
for any $\gamma\in\zz_+^n$ with $|\gamma|\leq s$.
\end{enumerate}
\end{definition}

Now, we show that any
$(p,q,s,\alpha,\epsilon)$-molecule
induces an element of $(JN_{(p',q',s)_\alpha}^{\mathrm{con}}(\rn))^*$.

\begin{proposition}\label{M-JN1}
Let $p\in (1,\fz)$, $q\in(1,\fz]$,
$\frac{1}{p}+\frac{1}{p'}=1=\frac{1}{q}+\frac{1}{q'}$, $s\in\zz_+$,
$\alpha\in(\frac{1}{q}-\frac{1}{p},\fz)$,
and $\epsilon\in (0,1)$ be such that
$\frac{1}{\epsilon}(\frac{1}{q}-\frac{1}{p}-\alpha)+\frac{1}{q'}+\frac{s}{n}<0$.
Let $M$ be a $(p,q,s,\alpha,\epsilon)$-molecule.
Then, for any $f\in JN_{(p',q',s)_\alpha}^{\mathrm{con}}(\rn)$,
$$\langle M,f\rangle:=\int_{\rn}f(x)M(x)\,dx$$
induces a continuous linear functional on
$JN_{(p',q',s)_\alpha}^{\mathrm{con}}(\rn)$.
\end{proposition}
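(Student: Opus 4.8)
Throughout, write $\beta_0:=\frac1q-\frac1p-\alpha$, which is negative since $\alpha>\frac1q-\frac1p$. The plan is to prove directly that, for every $f\in JN_{(p',q',s)_\alpha}^{\mathrm{con}}(\rn)$, the integral $\int_{\rn}f(x)M(x)\,dx$ converges absolutely and is bounded by a constant multiple of $\|f\|_{JN_{(p',q',s)_\alpha}^{\mathrm{con}}(\rn)}$; linearity of $f\mapsto\langle M,f\rangle$ is then obvious. Assume $M$ is centered at $Q_0:=Q_z(r)$, and set $Q_j:=Q_z(2^jr)$, $A_0:=Q_0$, $A_j:=Q_j\setminus Q_{j-1}$ for $j\in\nn$, so that $\{A_j\}_{j\in\zz_+}$ partitions $\rn$ and $|Q_j|=2^{jn}|Q_0|$. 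The first reduction uses the moment condition: by Definition \ref{Def-mole}(i), (ii), the H\"older inequality, and $|A_j|\le|Q_j|$, for any $\gamma\in\zz_+^n$ with $|\gamma|\le s$ one checks $\int_{A_j}|x^{\gamma}M(x)|\,dx\lesssim 2^{j(|\gamma|+\frac{n}{q'}+\frac{n\beta_0}{\epsilon})}$ (times a constant depending on $z,r$), whose exponent is at most $n(\frac sn+\frac1{q'}+\frac{\beta_0}{\epsilon})<0$ by hypothesis; hence $x^{\gamma}M\in L^1(\rn)$ and, by Definition \ref{Def-mole}(iii), $\int_{\rn}P(x)M(x)\,dx=0$ for every $P\in\mathcal{P}_s(\rn)$. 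Therefore, once we know $gM\in L^1(\rn)$ for $g:=f-P_{Q_0}^{(s)}(f)$, we will have $\langle M,f\rangle=\int_{\rn}f(x)M(x)\,dx=\int_{\rn}g(x)M(x)\,dx$.

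The core estimate is annular. By the H\"older inequality and Definition \ref{Def-mole}(i), (ii),
$$
\int_{A_j}|g(x)M(x)|\,dx\le\lf\|M\mathbf1_{A_j}\r\|_{L^q(\rn)}\,\|g\|_{L^{q'}(Q_j)}\lesssim 2^{\frac{jn\beta_0}{\epsilon}}|Q_0|^{\beta_0}\,\|g\|_{L^{q'}(Q_j)},
$$
so the whole problem reduces to controlling $\|g\|_{L^{q'}(Q_j)}$. I would split $g=[f-P_{Q_j}^{(s)}(f)]+[P_{Q_j}^{(s)}(f)-P_{Q_0}^{(s)}(f)]$. For the first bracket, testing the norm $\|f\|_{JN_{(p',q',s)_\alpha}^{\mathrm{con}}(\rn)}$ against the single cube $Q_j\in\Pi_{2^jr}(\rn)$ yields
$$
\lf\|f-P_{Q_j}^{(s)}(f)\r\|_{L^{q'}(Q_j)}\lesssim|Q_j|^{\frac1{q'}+\alpha-\frac1{p'}}\|f\|_{JN_{(p',q',s)_\alpha}^{\mathrm{con}}(\rn)}=|Q_j|^{-\beta_0}\|f\|_{JN_{(p',q',s)_\alpha}^{\mathrm{con}}(\rn)},
$$
where we used $\frac1{q'}+\alpha-\frac1{p'}=-\beta_0$.

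For the second bracket I would telescope $P_{Q_j}^{(s)}(f)-P_{Q_0}^{(s)}(f)=\sum_{k=1}^{j}[P_{Q_k}^{(s)}(f)-P_{Q_{k-1}}^{(s)}(f)]$, bound each consecutive difference on $Q_{k-1}$ by the same single-cube testing (together with $|Q_{k-1}|\sim|Q_k|$ and \eqref{p} to pass between $L^{q'}(Q_{k-1})$ and $L^{\fz}(Q_{k-1})$ on the finite-dimensional space $\mathcal{P}_s(\rn)$), and then transfer each difference from $Q_{k-1}$ to $Q_j$ by Lemma \ref{N12-lemma} at the cost $2^{(j-k)s}$. Summing the resulting geometric series produces $\|P_{Q_j}^{(s)}(f)-P_{Q_0}^{(s)}(f)\|_{L^{q'}(Q_j)}\lesssim 2^{j\tau}|Q_0|^{-\beta_0}\|f\|_{JN_{(p',q',s)_\alpha}^{\mathrm{con}}(\rn)}$ with $\tau:=\max\{-n\beta_0,\ \frac{n}{q'}+s\}$ (with a harmless extra factor $j$ in the borderline case $n(\alpha-\frac1{p'})=s$). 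Altogether $\|g\|_{L^{q'}(Q_j)}\lesssim 2^{j\tau}|Q_0|^{-\beta_0}\|f\|_{JN_{(p',q',s)_\alpha}^{\mathrm{con}}(\rn)}$, so, combining with the annular estimate, $\int_{A_j}|gM|\lesssim 2^{j(\tau+\frac{n\beta_0}{\epsilon})}\|f\|_{JN_{(p',q',s)_\alpha}^{\mathrm{con}}(\rn)}$, the powers of $|Q_0|$ cancelling exactly. If $\tau=-n\beta_0$, then $\tau+\frac{n\beta_0}{\epsilon}=n\beta_0(\frac1\epsilon-1)<0$ because $\beta_0<0$ and $\epsilon\in(0,1)$; if $\tau=\frac{n}{q'}+s$, then $\tau+\frac{n\beta_0}{\epsilon}=n(\frac1{q'}+\frac sn+\frac{\beta_0}{\epsilon})<0$ by the standing hypothesis. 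Hence $\sum_{j\in\zz_+}\int_{A_j}|gM|\lesssim\|f\|_{JN_{(p',q',s)_\alpha}^{\mathrm{con}}(\rn)}$, so $gM\in L^1(\rn)$ and $|\langle M,f\rangle|\lesssim\|f\|_{JN_{(p',q',s)_\alpha}^{\mathrm{con}}(\rn)}$, which is the desired conclusion.

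The main obstacle is the second bracket in the estimate of $\|g\|_{L^{q'}(Q_j)}$: the polynomial corrections $P_{Q_j}^{(s)}(f)$ grow as $j\to\fz$ while $JN_{(p',q',s)_\alpha}^{\mathrm{con}}(\rn)$ only controls oscillations, so one must carefully telescope and track the precise growth rate $\tau$ via Lemma \ref{N12-lemma}; the argument closes only because $\tau$ is strictly beaten by the molecular decay $2^{jn\beta_0/\epsilon}$, and this is exactly what the two quantitative hypotheses $\epsilon\in(0,1)$ and $\frac1\epsilon(\frac1q-\frac1p-\alpha)+\frac1{q'}+\frac sn<0$ encode. An alternative route, requiring essentially the same polynomial estimates, is to decompose $M$ into a series $\sum_j\lambda_ja_j$ of $(p,q,s)_\alpha$-atoms supported in $\{Q_j\}_j$ (subtracting dual-polynomial corrections to restore the vanishing moments, with the tails of these corrections summable since $\int_{\rn}M(x)x^{\gamma}\,dx=0$) such that $\sum_j|\lambda_j|<\fz$; this shows $M\in HK_{(p,q,s)_\alpha}^{\mathrm{con}}(\rn)$, and the claim then follows from Lemma \ref{p3.1} together with Proposition \ref{Coro-JN}.
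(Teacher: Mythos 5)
Your proof is correct and follows essentially the same approach as the paper: use the vanishing moments of $M$ to replace $f$ by $f-P_{Q_z(r)}^{(s)}(f)$, decompose $\rn$ into dyadic annuli, apply H\"older on each annulus using Definition~\ref{Def-mole}(ii), and control the resulting polynomial discrepancy via the growth estimate of Lemma~\ref{N12-lemma}. The only presentational difference is that the paper packages the comparison of $P_{Q_z(r)}^{(s)}(f)$ with $P_{Q_z(2^jr)}^{(s)}(f)$ into Lemma~\ref{sum-g} (with the geometric ratio $\theta=2^{n[\frac{1}{\epsilon}(\frac1q-\frac1p-\alpha)+\frac1{q'}]}$), whereas you carry out the telescoping explicitly and track the exponent $\tau$ by hand; the underlying mechanism and the role of the two quantitative hypotheses on $\epsilon$ are identical.
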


\begin{proof}
Let $p,$ $q$, $s$, $\alpha$, and $\epsilon$ be as in the present proposition.
Let $M$ be a $(p,q,s,\alpha,\epsilon)$-molecule
centered at the cube $Q_z(r)$ with $z\in\rn$ and $r\in(0,\fz)$.
Let $L_0:=Q_z(r)$ and, for any $j\in\nn$,
let $L_j:=Q_z(2^jr)\setminus Q_z(2^{j-1}r)$.
Then, from Definition \ref{Def-mole}(iii),
the H\"older inequality, Definition \ref{Def-mole}(ii), Lemma \ref{sum-g}
with $$\theta:=2^{\frac{n}{\epsilon}
	(\frac{1}{q}-\frac{1}{p}-\alpha)+\frac{n}{q'}}\in(0,2^{-s}),$$
$\frac{1}{\epsilon}
(\frac{1}{q}-\frac{1}{p}-\alpha)+\frac{1}{q'}+\frac{s}{n}<0$,
and $\alpha\in(\frac{1}{q}-\frac{1}{p},\fz)$,
we deduce that, for any
$f\in JN_{(p',q',s)_\alpha}^{\mathrm{con}}(\rn)$,
\begin{align*}
&\lf|\int_{\rn}M(x)f(x)\,dx\r|\\
&\quad=\lf|\int_{\rn}M(x)\lf[f(x)-P_{Q_z(r)}^{(s)}(f)(x)\r]\,dx\r|\\
&\quad\leq \sum_{j\in\zz_+}\int_{L_j}\lf|M(x)
\lf[f(x)-P_{Q_z(r)}^{(s)}(f)(x)\r]\r|\,dx\\
&\quad\leq \sum_{j\in\zz_+}\lf\|M\mathbf{1}_{L_j}
\r\|_{L^q(\rn)}\lf|Q_z(2^{j}r)\r|^{\frac{1}{q'}}
\lf[\fint_{Q_z(2^{j}r)}\lf|f(x)
-P_{Q_z(r)}^{(s)}(f)(x)\r|^{q'}\,dx\r]^{\frac{1}{q'}}\\
&\quad\leq \sum_{j\in\zz_+}2^{jn[\frac{1}{\epsilon}
(\frac{1}{q}-\frac{1}{p}-\alpha)+\frac{1}{q'}]}
\lf|Q_z(r)\r|^{\frac{1}{p'}-\alpha}
\lf[\fint_{Q_z(2^{j}r)}\lf|f(x)
-P_{Q_z(r)}^{(s)}(f)(x)\r|^{q'}\,dx\r]^{\frac{1}{q'}}\\
&\quad\ls \sum_{j\in\zz_+}2^{jn[\frac{1}{\epsilon}
(\frac{1}{q}-\frac{1}{p}-\alpha)+\frac{1}{q'}]}
\lf|Q_z(r)\r|^{\frac{1}{p'}-\alpha}
\lf[\fint_{Q_z(2^{j}r)}\lf|f(x)
-P_{Q_z(2^jr)}^{(s)}(f)(x)\r|^{q'}\,dx\r]^{\frac{1}{q'}}\\
&\quad\ls \sum_{j\in\zz_+}2^{jn(\frac{1}{\epsilon}-1)
(\frac{1}{q}-\frac{1}{p}-\alpha)}
\|f\|_{JN_{(p',q',s)_\alpha}^{\mathrm{con}}(\rn)}
\lesssim \|f\|_{JN_{(p',q',s)_\alpha}^{\mathrm{con}}(\rn)},
\end{align*}
which implies the desired conclusion and hence
completes the proof of Proposition \ref{M-JN1}.
\end{proof}

Next, we show that any
$(p,q,s,\alpha,\epsilon)$-molecule $M$
is in $HK_{(p,q,s)_\alpha}^{\mathrm{con}}(\rn)$
via borrowing some ideas from
\cite[Theorem 7.2]{L} and \cite[Lemma 6.3]{N17}.
To this end, we first establish several technical lemmas.
The following lemma is widely used
(see, for instance, \cite[p.\,86]{L}),
whose proof is similar to that of
\cite[Lemma 4.1]{L}; we omit the details here.

\begin{lemma}\label{lem-4.13x}
Let $s\in\zz_+$. Then
there exists a positive constant $C$ such that,
for any cube (or ball) $Q$ of $\rn$, any $f\in L^1_{\mathrm{loc}}(\rn)$,
and any $x\in 2Q\setminus Q$,
\begin{align}\label{pq2}
\lf|P_{2Q\setminus Q}^{(s)}(f)(x)\r|
\leq C\fint_{2Q\setminus Q}|f(x)|\,dx,
\end{align}	
where $P_{2Q\setminus Q}^{(s)}(f)$ denotes the
unique polynomial $P\in \mathcal{P}_s(\rn)$ such that,
for any $\gamma\in\zz_+^n$ with $|\gamma|\leq s$,
\begin{align*}
	\int_{2Q\setminus Q}\lf[f(x)
	-P_{2Q\setminus Q}^{(s)}(f)(x)\r]x^{\gamma}\,dx=0.
\end{align*}
\end{lemma}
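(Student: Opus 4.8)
The plan is to reduce, by an affine rescaling, to a single fixed reference region and there exploit the finite-dimensionality of $\mathcal{P}_s$. First I would fix a cube $Q:=Q_{x_Q}(r_Q)$ with center $x_Q\in\rn$ and side length $r_Q\in(0,\fz)$ (the case of balls being entirely analogous), and introduce the affine bijection $\phi(y):=r_Qy+x_Q$, which carries the unit reference cube $\widehat{Q}:=Q_{\mathbf{0}}(1)$ onto $Q$, hence $\widehat{E}:=2\widehat{Q}\setminus\widehat{Q}$ onto $2Q\setminus Q$, with constant Jacobian $r_Q^n=|Q|$. Setting $\widetilde{f}:=f\circ\phi$ and $\widetilde{P}:=P_{2Q\setminus Q}^{(s)}(f)\circ\phi$, one has $\widetilde{P}\in\mathcal{P}_s(\rn)$, and performing the change of variables $x=\phi(y)$ in the defining moment identities, together with the fact that $R\mapsto R\circ\phi$ is a linear bijection of $\mathcal{P}_s(\rn)$ (so that testing against $\{x^\gamma\}_{|\gamma|\leq s}$ on $2Q\setminus Q$ becomes testing $\widetilde{f}-\widetilde{P}$ against all of $\mathcal{P}_s(\rn)$ on $\widehat{E}$, hence against every $y^\gamma$), yields
\[
\int_{\widehat{E}}\lf[\widetilde{f}(y)-\widetilde{P}(y)\r]y^{\gamma}\,dy=0
\quad\text{for every }\gamma\in\zz_+^n\text{ with }|\gamma|\leq s,
\]
that is, $\widetilde{P}=P_{\widehat{E}}^{(s)}(\widetilde{f})$.

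Next I would prove the reference estimate: there is a constant $C=C_{(n,s)}$ such that $|P_{\widehat{E}}^{(s)}(g)(y)|\leq C\|g\|_{L^1(\widehat{E})}$ for every $g\in L^1(\widehat{E})$ and every $y\in\widehat{E}$. Writing $P_{\widehat{E}}^{(s)}(g)=\sum_{|\beta|\leq s}c_\beta y^\beta$, the coefficient vector $(c_\beta)_{|\beta|\leq s}$ solves the normal equations $\sum_{|\beta|\leq s}c_\beta\int_{\widehat{E}}y^{\beta+\gamma}\,dy=\int_{\widehat{E}}g(y)y^{\gamma}\,dy$ for $|\gamma|\leq s$; the associated Gram matrix $(\int_{\widehat{E}}y^{\beta+\gamma}\,dy)_{|\beta|,|\gamma|\leq s}$ is symmetric positive definite — a polynomial of degree at most $s$ vanishing almost everywhere on the positive-measure set $\widehat{E}$ must be identically zero — hence invertible with inverse depending only on $n$ and $s$, so $\max_{|\beta|\leq s}|c_\beta|\lesssim\max_{|\gamma|\leq s}|\int_{\widehat{E}}g(y)y^{\gamma}\,dy|\lesssim\|g\|_{L^1(\widehat{E})}$ since $\widehat{E}$ is bounded; because also $|y^\beta|\lesssim1$ on $\widehat{E}$, summing over $\beta$ gives the claim.

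Finally I would transfer the estimate back. Given $x\in2Q\setminus Q$, write $x=\phi(y)$ with $y\in\widehat{E}$; then, using $\widetilde{P}=P_{\widehat{E}}^{(s)}(\widetilde{f})$, the reference estimate, and the change of variables formula,
\begin{align*}
\lf|P_{2Q\setminus Q}^{(s)}(f)(x)\r|
&=\lf|\widetilde{P}(y)\r|=\lf|P_{\widehat{E}}^{(s)}(\widetilde{f})(y)\r|
\lesssim\lf\|\widetilde{f}\r\|_{L^1(\widehat{E})}\\
&=\frac{1}{|Q|}\int_{2Q\setminus Q}|f(y)|\,dy
=\lf(2^n-1\r)\fint_{2Q\setminus Q}|f(y)|\,dy,
\end{align*}
which is precisely \eqref{pq2}. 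I expect the only points needing care to be the bookkeeping in the first step — verifying that, after the change of variables, the moment conditions on $2Q\setminus Q$ against the monomials $x^\gamma$ really are equivalent to the defining moment conditions of $P_{\widehat{E}}^{(s)}(\widetilde{f})$ — together with the positive-definiteness of the Gram matrix; everything else amounts to tracking the single dilation factor $|Q|$.
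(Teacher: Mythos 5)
Your proof is correct and follows essentially the same strategy as the one the paper points to in Lu's Lemma~4.1: rescale by a dilation to a fixed reference annulus $\widehat{E}=2\widehat{Q}\setminus\widehat{Q}$, observe that the defining moment conditions (being equivalent to orthogonality against all of $\mathcal{P}_s$) transfer under the affine change of variables, and then exploit the finite-dimensionality of $\mathcal{P}_s$ on the reference domain. The only cosmetic variation is that you invert the Gram matrix via the normal equations whereas Lu's argument constructs a Gram--Schmidt orthonormal basis; both give the same uniform bound on the projection, so the proof stands as written.
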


The following Abel transformation is well known.

\begin{lemma}\label{Abel}
Let $\{a_j\}_{j\in\zz_+}$, $\{b_j\}_{j\in\zz_+}\subset \cc$. Then,
for any $k\in\nn$,
$$
\sum_{j=1}^ka_jb_j=a_k\sum_{j=1}^k b_j
-\sum_{j=1}^{k-1}\lf[\lf(\sum_{i=1}^jb_i\r)(a_{j+1}-a_j)\r].
$$
\end{lemma}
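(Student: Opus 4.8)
The plan is to prove this purely algebraic identity by partial summation, introducing the partial sums of $\{b_j\}_j$ as auxiliary quantities. First I would set $B_0:=0$ and, for any $j\in\nn$, $B_j:=\sum_{i=1}^j b_i$, so that $b_j=B_j-B_{j-1}$ for every $j\in\nn$. Substituting this into the left-hand side gives
$$
\sum_{j=1}^k a_jb_j=\sum_{j=1}^k a_j\lf(B_j-B_{j-1}\r)
=\sum_{j=1}^k a_jB_j-\sum_{j=1}^k a_jB_{j-1}.
$$

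Next I would reindex the second sum by replacing $j$ with $j+1$, using $B_0=0$ to discard the would-be term $j=0$, which yields $\sum_{j=1}^k a_jB_{j-1}=\sum_{j=1}^{k-1}a_{j+1}B_j$. Combining this with the first sum and isolating its top term $j=k$ then gives
$$
\sum_{j=1}^k a_jb_j=a_kB_k+\sum_{j=1}^{k-1}\lf(a_j-a_{j+1}\r)B_j
=a_k\sum_{j=1}^k b_j-\sum_{j=1}^{k-1}\lf[\lf(\sum_{i=1}^j b_i\r)(a_{j+1}-a_j)\r],
$$
which is exactly the asserted formula. Alternatively, one could argue by induction on $k\in\nn$: the case $k=1$ is immediate because the second sum is empty and the first reduces to $a_1b_1$, and the inductive step follows by adding $a_{k+1}b_{k+1}$ to the identity for $k$ and regrouping, again using $B_{k+1}-B_k=b_{k+1}$.

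Since the statement is a finite algebraic identity over $\cc$, there is no analytic input and no genuine obstacle; the only point that needs a little care is the index shift in the second sum together with the bookkeeping of the boundary terms $j=0$ and $j=k$, which is precisely where the convention $B_0=0$ and the separate treatment of the $j=k$ term enter. I would present the partial-summation argument as the main proof, since it is short and makes transparent why the identity holds, and merely mention the inductive alternative in passing.
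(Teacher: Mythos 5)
Your proof is correct: the summation-by-parts argument with $B_0:=0$, $B_j:=\sum_{i=1}^j b_i$, the index shift, and the isolation of the $j=k$ term yields exactly the stated identity, and the inductive alternative is also sound. The paper itself offers no proof of this lemma (it is stated as the well-known Abel transformation), so there is nothing in the paper to compare against; your argument is the standard one and fills that gap cleanly.
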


We also need the following technical lemma.

\begin{lemma}\label{BMO-JN}
Let $u\in (1,\fz)$, $v\in[1,\fz)$, $s\in\zz_+$,
and $\alpha\in \rr$.
Then there exists a positive constant
$C$ such that, for any
cube $Q_z(r)\subset \rn$ with $z\in\rn$ and $r\in (0,\fz)$,
any $k\in\nn$, and any $f\in JN_{(u,v,s)_\alpha}^{\mathrm{con}}(\rn)$,
\begin{align*}
\lf[\fint_{Q_z(2^kr)}\lf|f(x)
-P_{Q_z(r)}^{(s)}(f)(x)\r|^v\,dy\r]^{\frac{1}{v}}
\leq C k\lf[2^{ks}+2^{nk(\alpha-\frac{1}{u})}\r]
r^{n(\alpha-\frac{1}{u})}\|f\|_{JN_{(u,v,s)_\alpha}^{\mathrm{con}}(\rn)}.
\end{align*}
\end{lemma}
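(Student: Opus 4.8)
Let $u\in(1,\infty)$, $v\in[1,\infty)$, $s\in\zz_+$, and $\alpha\in\rr$. Then there is a positive constant $C$ such that, for any cube $Q_z(r)\subset\rn$ with $z\in\rn$ and $r\in(0,\infty)$, any $k\in\nn$, and any $f\in JN_{(u,v,s)_\alpha}^{\mathrm{con}}(\rn)$,
$$\lf[\fint_{Q_z(2^kr)}\lf|f(x)-P_{Q_z(r)}^{(s)}(f)(x)\r|^v\,dx\r]^{\frac1v}\leq Ck\lf[2^{ks}+2^{nk(\alpha-\frac1u)}\r]r^{n(\alpha-\frac1u)}\|f\|_{JN_{(u,v,s)_\alpha}^{\mathrm{con}}(\rn)}.$$

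=== PROOF PROPOSAL ===

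The plan is to compare $P_{Q_z(r)}^{(s)}(f)$ with $P_{Q_z(2^kr)}^{(s)}(f)$ by telescoping through the dyadic scales $Q_z(2^jr)$, $j=0,1,\dots,k$. First I would use the triangle inequality to split
$$\lf[\fint_{Q_z(2^kr)}\lf|f-P_{Q_z(r)}^{(s)}(f)\r|^v\,dx\r]^{\frac1v}\le \lf[\fint_{Q_z(2^kr)}\lf|f-P_{Q_z(2^kr)}^{(s)}(f)\r|^v\,dx\r]^{\frac1v}+\lf\|P_{Q_z(2^kr)}^{(s)}(f)-P_{Q_z(r)}^{(s)}(f)\r\|_{L^\infty(Q_z(2^kr))}.$$
The first term is, by definition of $\|\cdot\|_{JN_{(u,v,s)_\alpha}^{\mathrm{con}}(\rn)}$ (with a single congruent cube), bounded by $|Q_z(2^kr)|^{\alpha-\frac1u}\|f\|_{JN_{(u,v,s)_\alpha}^{\mathrm{con}}(\rn)}\sim 2^{nk(\alpha-\frac1u)}r^{n(\alpha-\frac1u)}\|f\|_{JN_{(u,v,s)_\alpha}^{\mathrm{con}}(\rn)}$, which is absorbed into the second term of the claimed bound. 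The remaining task is to estimate the polynomial difference.

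For the polynomial difference, I would telescope: $P_{Q_z(2^kr)}^{(s)}(f)-P_{Q_z(r)}^{(s)}(f)=\sum_{j=1}^k\lf[P_{Q_z(2^jr)}^{(s)}(f)-P_{Q_z(2^{j-1}r)}^{(s)}(f)\r]$, so by the triangle inequality and Lemma \ref{N12-lemma} (applied on $Q_z(2^{j-1}r)$ with dilation factor $2^{k-j+1}$) one has
$$\lf\|P_{Q_z(2^kr)}^{(s)}(f)-P_{Q_z(r)}^{(s)}(f)\r\|_{L^\infty(Q_z(2^kr))}\ls\sum_{j=1}^k 2^{(k-j+1)s}\lf\|P_{Q_z(2^jr)}^{(s)}(f)-P_{Q_z(2^{j-1}r)}^{(s)}(f)\r\|_{L^\infty(Q_z(2^{j-1}r))}.$$
Next, since $P_{Q_z(2^jr)}^{(s)}(f)-P_{Q_z(2^{j-1}r)}^{(s)}(f)=P_{Q_z(2^{j-1}r)}^{(s)}\lf(P_{Q_z(2^jr)}^{(s)}(f)-f\r)$ lies in $\mathcal P_s(\rn)$ (using $P_{Q}^{(s)}(P)=P$ for $P\in\mathcal P_s(\rn)$), I would apply \eqref{p} to bound its $L^\infty(Q_z(2^{j-1}r))$ norm by $C_{(s)}\fint_{Q_z(2^{j-1}r)}|f-P_{Q_z(2^jr)}^{(s)}(f)|\,dx$, and then by $Q_z(2^{j-1}r)\subset Q_z(2^jr)$ and the Hölder inequality this is at most $C\fint_{Q_z(2^jr)}|f-P_{Q_z(2^jr)}^{(s)}(f)|\,dx\le C[\fint_{Q_z(2^jr)}|f-P_{Q_z(2^jr)}^{(s)}(f)|^v\,dx]^{1/v}\le C\,2^{nj(\alpha-\frac1u)}r^{n(\alpha-\frac1u)}\|f\|_{JN_{(u,v,s)_\alpha}^{\mathrm{con}}(\rn)}$, again by the definition of the congruent JNC norm with a single cube.

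Assembling, the polynomial difference is bounded by a constant multiple of
$$\sum_{j=1}^k 2^{(k-j+1)s}\,2^{nj(\alpha-\frac1u)}r^{n(\alpha-\frac1u)}\|f\|_{JN_{(u,v,s)_\alpha}^{\mathrm{con}}(\rn)}=2^{(k+1)s}r^{n(\alpha-\frac1u)}\|f\|_{JN_{(u,v,s)_\alpha}^{\mathrm{con}}(\rn)}\sum_{j=1}^k 2^{j[n(\alpha-\frac1u)-s]},$$
and the geometric sum $\sum_{j=1}^k 2^{j[n(\alpha-\frac1u)-s]}$ is, in all cases (whether the exponent is positive, negative, or zero), bounded by $k\max\{1,2^{k[n(\alpha-\frac1u)-s]}\}$, so the whole expression is $\ls k\lf[2^{ks}+2^{nk(\alpha-\frac1u)}\r]r^{n(\alpha-\frac1u)}\|f\|_{JN_{(u,v,s)_\alpha}^{\mathrm{con}}(\rn)}$. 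This together with the first-term estimate finishes the proof. The only mildly delicate point — the "main obstacle" such as it is — is bookkeeping the dilation powers from Lemma \ref{N12-lemma} correctly through the telescoping sum and checking that the geometric series is controlled uniformly regardless of the sign of $n(\alpha-\frac1u)-s$; the linear-in-$k$ factor is exactly what absorbs the borderline case.
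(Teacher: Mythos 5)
Your proposal is correct and takes essentially the same route as the paper: split off $P_{Q_z(2^kr)}^{(s)}(f)$, telescope the polynomial difference through dyadic scales, control each increment via Lemma~\ref{N12-lemma} and~\eqref{p}, bound the averages by the congruent JNC norm, and sum the geometric series. The paper absorbs the leading ``same-scale'' term into the $j=k$ summand rather than treating it separately, and is somewhat less explicit about the case analysis on the sign of $n(\alpha-\tfrac1u)-s$, but the argument is the same.
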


\begin{proof}
Let $u$, $v$, $s$, and $\alpha$
be as in the present lemma.
By Lemma \ref{N12-lemma} with $\lambda:=2$, \eqref{p},
the H\"older inequality, and the definition
of $\|\cdot\|_{JN_{(u,v,s)_\alpha}^{\mathrm{con}}(\rn)}$,
we find that, for any cube $Q_z(r)\subset \rn$
with $z\in\rn$ and $r\in (0,\fz)$, any $k\in \nn$, and any
$f\in JN_{(u,v,s)_\alpha}^{\mathrm{con}}(\rn)$,
\begin{align*}
&\lf[\fint_{Q_z(2^kr)}\lf|f(x)
-P_{Q_z(r)}^{(s)}(f)(x)\r|^v\,dx\r]^{\frac{1}{v}}\\
&\quad\leq\lf[\fint_{Q_z(2^kr)}\lf|f(x)
-P_{Q_z(2^kr)}^{(s)}(f)(x)\r|^v\,dx\r]^{\frac{1}{v}}
+\sum_{j=1}^k\lf\|P_{Q_z(2^jr)}^{(s)}(f)
-P_{Q_z(2^{j-1}r)}^{(s)}(f)\r\|_{L^{\fz}(Q_z(2^kr))}\\
&\quad\ls\lf[\fint_{Q_z(2^kr)}\lf|f(x)
-P_{Q_z(2^kr)}^{(s)}(f)(x)\r|^v\,dx\r]^{\frac{1}{v}}\\
&\qquad+\sum_{j=1}^k2^{(k-j+1)s}\lf\|P_{Q_z(2^{j-1}r)}^{(s)}
\lf(f-P_{Q_z(2^jr)}^{(s)}(f)\r)\r\|_{L^{\fz}(Q_z(2^{j-1}r))}\\
&\quad\ls\lf[\fint_{Q_z(2^kr)}\lf|f(x)
-P_{Q_z(2^kr)}^{(s)}(f)(x)\r|^v\,dx\r]^{\frac{1}{v}}\\
&\qquad+\sum_{j=1}^k2^{(k-j+1)s}\fint_{Q_z(2^{j-1}r)}
\lf|f(x)-P_{Q_z(2^jr)}^{(s)}(f)(x)\r|\,dx\\
&\quad\ls\sum_{j=1}^k 2^{(k-j+1)s}\lf[\fint_{Q_z(2^{j}r)}
\lf|f(x)-P_{Q_z(2^jr)}^{(s)}(f)(x)\r|^v\,dx\r]^{\frac{1}{v}}\\
&\quad\sim\sum_{j=1}^k 2^{(k-j+1)s}\lf|Q_z(2^jr)\r|^{\alpha-\frac{1}{u}}
\lf|Q_z(2^jr)\r|^{-\alpha+\frac{1}{u}}\lf[\fint_{Q_z(2^{j}r)}
\lf|f(x)-P_{Q_z(2^jr)}^{(s)}(f)(x)\r|^v\,dx\r]^{\frac{1}{v}}\\
&\quad\ls k\lf[2^{ks}+2^{nk(\alpha-\frac{1}{u})}\r]r^{n(\alpha-\frac{1}{u})}
\|f\|_{JN_{(u,v,s)_\alpha}^{\mathrm{con}}(\rn)}.
\end{align*}
This finishes the proof of Lemma \ref{BMO-JN}.
\end{proof}

\begin{proposition}\label{HK-mole}
Let $p\in (1,\fz)$, $q\in(1,\fz]$,
$\frac{1}{q}+\frac{1}{q'}=1$, $s\in\zz_+$,
$\alpha\in (\frac{1}{q}-\frac{1}{p},\fz)$,
and $\epsilon\in(0,1)$ satisfy
$\frac{1}{\epsilon}
(\frac{1}{q}-\frac{1}{p}-\alpha)+\frac{1}{q'}+\frac{s}{n}<0$.
If $M$ is a $(p,q,s,\alpha,\epsilon)$-molecule,
then $M\in HK_{(p,q,s)_\alpha}^{\mathrm{con}}(\rn)$
and $\|M\|_{HK_{(p,q,s)_\alpha}^{\mathrm{con}}(\rn)}\leq C$,
where the positive constant $C$ is independent of $M$.
\end{proposition}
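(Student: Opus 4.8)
The plan is to decompose a $(p,q,s,\alpha,\epsilon)$-molecule $M$ centered at $Q_z(r)$ into a sum of congruent $(p,q,s)_\alpha$-polymers and then control the resulting $\ell^p$ sum of coefficients. First I would set $L_0:=Q_z(r)$ and, for $j\in\nn$, $L_j:=Q_z(2^jr)\setminus Q_z(2^{j-1}r)$, so that $M=\sum_{j\in\zz_+}M\mathbf{1}_{L_j}$. The piece $M\mathbf{1}_{L_j}$ is supported in the annulus $L_j$ and satisfies the size estimate from Definition \ref{Def-mole}(ii), but it fails the moment condition, so I would correct it: following the idea of \cite[Theorem 7.2]{L}, subtract from $M\mathbf{1}_{L_j}$ the polynomial $P_{L_j}^{(s)}(M)$ (as in Lemma \ref{lem-4.13x}) and redistribute these polynomial corrections using an Abel-type summation (Lemma \ref{Abel}). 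Concretely, I expect to rewrite
\begin{align*}
M=\sum_{j\in\zz_+}\lf[M\mathbf{1}_{L_j}-P_{L_j}^{(s)}(M)\mathbf{1}_{L_j}\r]
+\sum_{j\in\zz_+}P_{L_j}^{(s)}(M)\mathbf{1}_{L_j},
\end{align*}
and then, using $\int_{\rn}M\,x^\gamma\,dx=0$ for $|\gamma|\le s$ together with Lemma \ref{Abel}, reorganize the second sum into a telescoping series whose building blocks are again supported in cubes comparable to $Q_z(2^jr)$ and have vanishing moments up to order $s$. Each resulting block, after normalization, is (a constant multiple of) a $(p,q,s)_\alpha$-atom supported in a single cube, hence a congruent $(p,q,s)_\alpha$-polymer consisting of one atom.

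Next I would estimate the $L^q$-norm of each block. For the "diagonal" pieces $M\mathbf{1}_{L_j}-P_{L_j}^{(s)}(M)\mathbf{1}_{L_j}$, Lemma \ref{lem-4.13x} and Definition \ref{Def-mole}(ii) give
\begin{align*}
\lf\|M\mathbf{1}_{L_j}-P_{L_j}^{(s)}(M)\mathbf{1}_{L_j}\r\|_{L^q(\rn)}
\ls 2^{\frac{jn}{\epsilon}(\frac{1}{q}-\frac{1}{p}-\alpha)}|Q_z(r)|^{\frac{1}{q}-\frac{1}{p}-\alpha}.
\end{align*}
Since each such piece is supported in $Q_z(2^jr)$ with $|Q_z(2^jr)|^{\frac1q-\frac1p-\alpha}\sim 2^{jn(\frac1q-\frac1p-\alpha)}|Q_z(r)|^{\frac1q-\frac1p-\alpha}$, the normalizing coefficient $\lambda_j$ for the corresponding atom satisfies $|\lambda_j|\ls 2^{jn(\frac{1}{\epsilon}-1)(\frac{1}{q}-\frac{1}{p}-\alpha)}$; because $\epsilon\in(0,1)$ and $\alpha>\frac1q-\frac1p$, the exponent $n(\frac{1}{\epsilon}-1)(\frac{1}{q}-\frac{1}{p}-\alpha)$ is negative, so $\sum_j|\lambda_j|^p<\infty$ with a bound independent of $M$. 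For the polynomial-correction blocks I would use Lemma \ref{lem-4.13x} to bound $\|P_{L_j}^{(s)}(M)\|_{L^\fz(L_j)}$ by the average $\fint_{L_j}|M|$, hence by $2^{jn(\frac{1}{\epsilon}-1)(\frac{1}{q}-\frac{1}{p}-\alpha)}|Q_z(r)|^{-\frac1p-\alpha}$ (via Hölder and Definition \ref{Def-mole}(ii)); summing the geometric series after the Abel rearrangement gives the same kind of $\ell^p$-summable coefficients, possibly with an extra polynomial factor in $j$ coming from Lemma \ref{N12-lemma}/Lemma \ref{BMO-JN}, which is harmless against the exponential gain.

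Finally, I would assemble these estimates: the identity displays $M$ as a sum $\sum_i\lambda_i a_i$ with $(p,q,s)_\alpha$-atoms $a_i$ supported in cubes of varying sizes $2^jr$, and the pieces of a fixed scale $2^jr$ can be grouped into one congruent polymer $g_j\in\widetilde{HK}_{(p,q,s)_\alpha}^{\rm con}(\rn)$ with $\|g_j\|_{\widetilde{HK}_{(p,q,s)_\alpha}^{\rm con}(\rn)}\ls 2^{jn(\frac{1}{\epsilon}-1)(\frac{1}{q}-\frac{1}{p}-\alpha)/p}\cdot(\text{polynomial in }j)$; then $M=\sum_j g_j$ in $(JN_{(p',q',s)_\alpha}^{\rm con}(\rn))^*$ (the convergence being justified by Proposition \ref{M-JN1}), and $\sum_j\|g_j\|_{\widetilde{HK}_{(p,q,s)_\alpha}^{\rm con}(\rn)}\le C$ with $C$ independent of $M$, whence $M\in HK_{(p,q,s)_\alpha}^{\rm con}(\rn)$ with $\|M\|_{HK_{(p,q,s)_\alpha}^{\rm con}(\rn)}\le C$. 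The main obstacle I anticipate is the careful bookkeeping in the Abel rearrangement of the polynomial corrections: one must verify that the telescoped blocks genuinely retain vanishing moments up to order $s$ and are supported in controlled cubes, and that the scale-wise grouping into polymers (which requires the cubes at a given scale to be pairwise disjoint, hence belong to some $\Pi_\ell(\rn)$) is legitimate — this is where the congruent-cube structure interacts most delicately with the standard molecular machinery, and where Lemmas \ref{N12-lemma}, \ref{sum-g}, and \ref{BMO-JN} will be needed to absorb the polynomial corrections.
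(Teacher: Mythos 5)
Your proposal follows the same route as the paper's proof: annular decomposition $M=\sum_{j\in\zz_+}M\mathbf{1}_{L_j}$, correction of each annular piece by $P_{L_j}^{(s)}(M)$ (Lemma \ref{lem-4.13x}), Abel rearrangement of the polynomial part (Lemma \ref{Abel}), and coefficient estimates of the exact form $2^{jn(\frac{1}{\epsilon}-1)(\frac1q-\frac1p-\alpha)}$, which is summable since $\epsilon\in(0,1)$ and $\alpha>\frac1q-\frac1p$. Two small points where your diagnosis of the difficulties is slightly off. First, the coefficient estimates carry no polynomial factor in $j$: the paper's bounds on $\lambda_j$ and $\widetilde\lambda_j$ are pure geometric decay, and Lemma \ref{BMO-JN} (with its factor $k[2^{ks}+2^{nk(\alpha-1/u)}]$) is needed only to show that the Abel \emph{boundary} term $\sum_{\nu}\eta_\nu^{(l)}\psi_\nu^{(l)}\mathbf{1}_{L_l}/|L_l|$ tends to zero when paired against an arbitrary $f\in JN_{(p',q',s)_\alpha}^{\mathrm{con}}(\rn)$ as $l\to\fz$; your phrase ``telescoping series'' glosses over the fact that this boundary term does not vanish identically and is precisely what makes the weak convergence $M=\sum_j\lambda_jA_j+\sum_{j,\nu}\widetilde\lambda_j\widetilde A_\nu^{(j)}$ in $(JN_{(p',q',s)_\alpha}^{\mathrm{con}}(\rn))^*$ nontrivial. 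Second, your worry about scale-wise grouping and the $\Pi_\ell$ structure is unnecessary here: each scaled atom is by itself a one-cube congruent polymer, and the quantity to control is $\sum_j|\lambda_j|+\sum_{j,\nu}|\widetilde\lambda_j|$ (the polymer norm of a single-atom polymer with coefficient $\lambda$ is exactly $|\lambda|$), so the exponent you wrote with the spurious $/p$, namely $2^{jn(\frac{1}{\epsilon}-1)(\frac1q-\frac1p-\alpha)/p}$, should simply read $2^{jn(\frac{1}{\epsilon}-1)(\frac1q-\frac1p-\alpha)}$.
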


\begin{proof}
Let	$p$, $q$, $s$, $\alpha$,
and $\epsilon$ be as in the present proposition.
Also, let $M$ be a $(p,q,s,\alpha,\epsilon)$-molecule
centered at the cube $Q_z(r)$ with $z\in\rn$ and $r\in(0,\fz)$.
Without loss of generality, we may assume that $z:=\mathbf{0}$
and $M$ is a real-valued function.
Let $L_0:=Q_{\mathbf{0}}(r)$ and, for any $j\in\nn$,
$L_j:=Q_{\mathbf{0}}(2^jr)\setminus Q_{\mathbf{0}}(2^{j-1}r)$.
For any $j\in\zz_+$ and $x\in \rn$, let
$P_{L_j}^{(s)}(M)$ denote the
unique polynomial of $\mathcal{P}_s(\rn)$
such that, for any $\gamma\in\zz_+^n$ with
$|\gamma|\leq s$, and $j\in\zz_+$,
\begin{align}\label{HK-M-1}
\int_{L_j}\lf[M(x)-P_{L_j}^{(s)}(M)(x)\r]x^{\gamma}\,dx=0.
\end{align}

To prove the present proposition, we first recall some known
facts on $P_j:=P_{L_j}^{(s)}(M)$ with $j\in\zz_+$
(see, for instance, \cite[pp.\,85-88]{L}).
For any given $j\in\zz_+$,
let $\{\phi_\nu^{(j)}:\ \nu\in\zz_+^n\text{ and }|\nu|\leq s\}$
be the \textit{orthogonal polynomials} with weight $\frac{1}{|L_j|}$
by means of the Gram--Schmidt method from
$\{x^\nu:\ \nu\in\zz_+^n\text{ and }|\nu|\leq s\}$
restricted on $L_j$, namely,
$\{\phi_\nu^{(j)}:\ \nu\in\zz_+^n\text{ and }
|\nu|\leq s\}\subset \mathcal{P}_s(\rn)$
and, for any $\nu_1$, $\nu_2\in\zz_+^n$ with
$|\nu_1|\leq s$ and $|\nu_2|\leq s$,
\begin{align*}
	\lf\langle \phi_{\nu_1}^{(j)}, \phi_{\nu_2}^{(j)}\r\rangle_{L_j}
	:=\frac{1}{|L_j|}\int_{L_j}
	\phi_{\nu_1}^{(j)}(x)\phi_{\nu_2}^{(j)}(x)\,dx
	=\begin{cases}
		\displaystyle
		1,\ & \nu_1=\nu_2,\\
		\displaystyle
		0,\ & \nu_1\neq \nu_2.
	\end{cases}
\end{align*}
It is easy to see that, for any $x\in L_j$ with $j\in\zz_+$,
\begin{align}\label{P-j}
	P_j(x)=\sum_{\{\nu\in\zz_+^n:\ |\nu|\leq s\}}
	\lf\langle M,\phi_{\nu}^{(j)}
	\r\rangle_{L_j}\phi_\nu^{(j)}(x).
\end{align}
Moreover, for any given $j\in\zz_+$, let
$\{\psi_{\nu}^{(j)}:\ \nu\in\zz_+^n\text{ and }|\nu|\leq s\}$
be the \emph{dual basis} of
$\{x^{\nu}:\ \nu\in\zz_+^n\text{ and }|\nu|\leq s\}$
restricted on $L_j$ with respect to the weight $\frac{1}{|L_j|}$, namely,
$\psi_{\nu}^{(j)}\in \mathcal{P}_s(\rn)$ and,
for any $\nu_1$, $\nu_2\in\zz_+^n$ with $|\nu_1|\leq s$ and $|\nu_2|\leq s$,
\begin{align}\label{4.10x}
	\lf\langle \psi_{\nu_1}^{(j)}, x^{\nu_2}\r\rangle_{L_j}
	:=\frac{1}{|L_j|}\int_{L_j}\psi_{\nu_1}^{(j)}(x)x^{\nu_2}\,dx
	=\begin{cases}
		\displaystyle
		1,\ & \nu_1=\nu_2,\\
		\displaystyle
		0,\ & \nu_1\neq \nu_2.
	\end{cases}
\end{align}
Using the definition of the dual basis, we find that,
if, for any $\nu\in\zz_+^n$ with $|\nu|\leq s$,
$j\in\zz_+$, and $x\in L_j$, $\phi_{\nu}^{(j)}(x)
=\sum_{\{\gamma\in\zz_+^n:\ |\gamma|\leq s\}}
m_{\nu,\gamma}^{(j)}x^{\gamma}$
with $\{m_{\nu,\gamma}^{(j)}\}\subset \rr$, then
$$
\psi_{\nu}^{(j)}=\sum_{\{\gamma\in\zz_+^n:\ |\gamma|\leq s\}}
m_{\gamma,\nu}^{(j)}\phi_{\gamma}^{(j)}.
$$
From this and \eqref{P-j},
we deduce that, for any $j\in\zz_+$,
\begin{align}\label{3.8x}
	P_j\mathbf{1}_{L_j}=\sum_{\{\nu\in\zz_+^n:\ |\nu|\leq s\}}
	\lf\langle M,x^{\nu}\r\rangle_{L_j}\psi_{\nu}^{(j)}\mathbf{1}_{L_j}.
\end{align}

To show $M\in HK_{(p,q,s)_{\alpha}}^{\mathrm{con}}(\rn)$,
we first estimate $M\mathbf{1}_{Q_{\mathbf{0}}(2^lr)}$
for any $l\in\nn$. Indeed, for any $l\in\nn$ and $x\in\rn$,
\begin{align}\label{M-l'}
	M\mathbf{1}_{Q_{\mathbf{0}}(2^lr)}(x)
	&=\sum_{j=1}^l \lf[M\mathbf{1}_{L_j}(x)-P_j\mathbf{1}_{L_j}(x)\r]
	+\sum_{j=0}^{l}P_j\mathbf{1}_{L_j}(x)\\
	&=\sum_{j=1}^l \alpha_j(x)
	+\sum_{j=0}^{l}P_j\mathbf{1}_{L_j}(x),\noz
\end{align}
where $\alpha_j:=(M-P_j)\mathbf{1}_{L_j}$.
We then show that, for any $j\in\zz_+$, $\alpha_j$
is a $(p,q,s)_\alpha$-atom multiplying a positive constant.
Indeed, by \eqref{HK-M-1}, we conclude that, for any $j\in\zz_+$
and  $\gamma\in\zz_+^n$ with $|\gamma|\leq s$,
\begin{align}\label{3.10x}
	\int_{\rn}\alpha_j(x)x^\gamma\,dx=0
	\quad\text{and}\quad \supp\,(\alpha_j)\subset Q_{\mathbf{0}}(2^jr).
\end{align}
Thus, for any $j\in\zz_+$,
$\alpha_j$ satisfies (i) and (iii) of Definition \ref{d3.2}.
Moreover, we show that, for any $j\in\zz_+$, $\alpha_j$
satisfies Definition \ref{d3.2}(ii) in the sense
of multiplying a positive constant. Indeed,
from \eqref{pq2}, the H\"older inequality,
and Definition \ref{Def-mole}(ii),
we deduce that, for any $j\in\zz_+$,
\begin{align*}
	\|\alpha_j\|_{L^q(\rn)}
	&\leq (1+C)\lf\|M\mathbf{1}_{L_j}\r\|_{L^q(\rn)}
	\leq (1+C)2^{\frac{jn}{\epsilon}(\frac{1}{q}-\frac{1}{p}-\alpha)}
	|Q_{\mathbf{0}}(r)|^{\frac{1}{q}-\frac{1}{p}-\alpha}\\
	&=(1+C)2^{jn[(\frac{1}{\epsilon}-1)(\frac{1}{q}-\frac{1}{p}-\alpha)]}
	\lf|Q_{\mathbf{0}}(2^jr)\r|^{\frac{1}{q}-\frac{1}{p}-\alpha}
	=\lambda_j\lf|Q_{\mathbf{0}}(2^jr)
	\r|^{\frac{1}{q}-\frac{1}{p}-\alpha},
\end{align*}
where $C$ is as in \eqref{pq2}, and
\begin{align}\label{lam-j}
\lambda_j:=(1+C)2^{jn(\frac{1}{\epsilon}-1)(\frac{1}{q}-\frac{1}{p}-\alpha)}.
\end{align}
This and \eqref{3.10x} imply that, for any $j\in\zz_+$,
$A_j:=\frac{\alpha_j}{\lambda_j}$
is a $(p,q,s)_\alpha$-atom.

To end the estimation of $M\mathbf{1}_{Q_{\mathbf{0}}(2^lr)}$
for any $l\in\nn$, it remains to consider $\sum_{j=0}^{l}P_j\mathbf{1}_{L_j}$
for any $l\in\nn$. For any
$\nu\in\zz_+^n$ with $|\nu|\leq s$, and $j\in\zz_+$, let
\begin{align}\label{4.12x}
	\eta^{(j)}_{\nu}:=\int_{\rn\setminus Q_{\mathbf{0}}(2^jr)}M(x)x^{\nu}
	\,dx\quad\text{and}\quad \eta_{\nu}^{(-1)}
	:=\int_{\rn}M(x)x^{\nu}\,dx=0.
\end{align}
Then, by \eqref{3.8x}, we conclude that,
for any $j\in\zz_+$,
\begin{align*}
	P_j\mathbf{1}_{L_j}
	=\sum_{\{\nu\in\zz_+^n:\ |\nu|\leq s\}}
	\frac{1}{|L_j|}\int_{L_j}M(x)x^{\nu}
	\,dx\,\psi_{\nu}^{(j)}\mathbf{1}_{L_j}
	=\sum_{\{\nu\in\zz_+^n:\ |\nu|\leq s\}}
	\frac{\eta_{\nu}^{(j-1)}-\eta_{\nu}^{(j)}}
	{|L_j|}\psi_{\nu}^{(j)}\mathbf{1}_{L_j},
\end{align*}
which, together with Lemma \ref{Abel} and \eqref{4.12x},
further implies that, for any $l\in\nn$,
\begin{align}\label{sum-Pj}
	\sum_{j=0}^{l}P_j\mathbf{1}_{L_j}
	&=\sum_{\{\nu\in\zz_+^n:\ |\nu|\leq s\}}
	\sum_{j=0}^{l}\lf[\eta_{\nu}^{(j-1)}-\eta_{\nu}^{(j)}\r]
	\frac{\psi_{\nu}^{(j)}\mathbf{1}_{L_j}}{|L_j|}\\
	&=\sum_{\{\nu\in\zz_+^n:\ |\nu|\leq s\}}
	\sum_{j=0}^{l-1}\eta_{\nu}^{(j)}\lf[\frac{\psi_{\nu}^{(j+1)}
		\mathbf{1}_{L_{j+1}}}{|L_{j+1}|}
	-\frac{\psi_{\nu}^{(j)}\mathbf{1}_{L_j}}{|L_j|}\r]
	-\sum_{\{\nu\in\zz_+^n:\ |\nu|\leq s\}}
	\frac{\eta_{\nu}^{(l)}\psi_{\nu}^{(l)}\mathbf{1}_{L_l}}{|L_l|}\noz\\
	&=\sum_{\{\nu\in\zz_+^n:\ |\nu|\leq s\}}
	\sum_{j=0}^{l-1}\widetilde{\alpha}_\nu^{(j)}
	-\sum_{\{\nu\in\zz_+^n:\ |\nu|\leq s\}}
	\frac{\eta_{\nu}^{(l)}\psi_{\nu}^{(l)}\mathbf{1}_{L_l}}{|L_l|},\noz
\end{align}
where, for any $\nu\in\zz_+^n$ with $|\nu|\leq s$, and $j\in\{0,\ldots,l-1\}$,
$$
\widetilde{\alpha}_\nu^{(j)}
:=\eta_{\nu}^{(j)}\lf[\frac{\psi_{\nu}^{(j+1)}
\mathbf{1}_{L_{j+1}}}{|L_{j+1}|}
-\frac{\psi_{\nu}^{(j)}\mathbf{1}_{L_j}}{|L_j|}\r].
$$
Now, we claim that, for any $\nu\in\zz_+$
with $|\nu|\leq s$, and $j\in\zz_+$, $\widetilde{\alpha}_\nu^{(j)}$
is a $(p,q,s)_\alpha$-atom multiplying a positive constant
and, for any $f\in JN_{(p',q',s)_\alpha}^{\mathrm{con}}(\rn)$,
\begin{align}\label{eta-3'}
\lim_{l\to\fz}\lf|\int_{\rn}\frac{\eta_{\nu}^{(l)}\psi_{\nu}^{(l)}(x)
\mathbf{1}_{L_l}(x)}{|L_l|}f(x)\,dx\r|
=0.
\end{align}
Indeed, using \eqref{4.10x}, we conclude that,
for any $\nu$, $\gamma\in\zz_+^n$ with $|\nu|\leq s$
and $|\gamma|\leq s$, and $j\in\zz_+$,
\begin{align}\label{4.14x}
\int_{\rn}\widetilde{\alpha}_\nu^{(j)}(x)x^{\gamma}\,dx=0\quad\text{and}\quad
\supp\,(\widetilde{\alpha}_\nu^{(j)})\subset Q_{\mathbf{0}}(2^{j+1}r).
\end{align}
Moreover, from \cite[(7.2)]{L},
it follows that there exists a positive constant
$C_0$, independent of $\nu$, $j$, and $r$, such that,
for any $\nu\in\zz_+^n$ with $|\nu|\leq s$,
and any $x\in L_j$ with $j\in\zz_+$,
\begin{align}\label{L-7.2}
\lf|\psi_{\nu}^{(j)}(x)\r|\leq \frac{C_0}{(2^{j-1}r)^{|\nu|}}.
\end{align}
By this, \eqref{4.12x}, the H\"older inequality,
Definition \ref{Def-mole}(ii),
and $\frac{1}{\epsilon}
(\frac{1}{q}-\frac{1}{p}-\alpha)+\frac{1}{q'}+\frac{s}{n}<0$,
we conclude that, for any
$\nu\in \zz_+^n$ with $|\nu|\leq s$, and $j\in\zz_+$,
\begin{align}\label{eta-1}
\lf|\eta_{\nu}^{(j)}\r|
&\leq\int_{\rn\setminus Q_{\mathbf{0}}(2^jr)}\lf|M(x)x^{\nu}\r|\,dx
=\sum_{i=j+1}^{\fz}\int_{L_i}\lf|M(x)x^{\nu}\r|\,dx\\
&\leq \sum_{i=j+1}^{\fz}\lf\|M\mathbf{1}_{L_i}\r\|_{L^q(\rn)}
|L_i|^{\frac{1}{q'}}\lf(2^{i}\sqrt nr\r)^{|\nu|}\noz\\
&\ls\sum_{i=j+1}^{\fz}
2^{\frac{in}{\epsilon}(\frac{1}{q}-\frac{1}{p}-\alpha)}
|Q_{\mathbf{0}}(r)|^{\frac{1}{q}-\frac{1}{p}-\alpha}
\lf|Q_{\mathbf{0}}(2^ir)\r|^{\frac{1}{q'}+\frac{|\nu|}{n}}\noz\\
&\sim\sum_{i=j+1}^{\fz}2^{in[\frac{1}
{\epsilon}(\frac{1}{q}-\frac{1}{p}-\alpha)+\frac{1}{q'}+\frac{|\nu|}{n}]}
|Q_{\mathbf{0}}(r)|^{\frac{1}{p'}-\alpha+\frac{|\nu|}{n}}\noz\\
&\sim2^{jn[(\frac{1}{\epsilon}-1)(\frac{1}{q}-\frac{1}{p}-\alpha)]}
\lf|Q_{\mathbf{0}}(2^{j+1}r)\r|^{\frac{1}{p'}-\alpha+\frac{|\nu|}{n}}\noz.
\end{align}
From \eqref{L-7.2} and \eqref{eta-1}, it follows that,
for any $\nu\in\zz_+^n$ with $|\nu|\leq s$, and $j\in\zz_+$,
\begin{align*}
\lf\|\widetilde{\alpha}_\nu^{(j)}\r\|_{L^q(\rn)}
&\leq \lf\|\widetilde{\alpha}_\nu^{(j)}
\r\|_{L^{\fz}(\rn)}\lf|Q_{\mathbf{0}}(2^{j+1}r)\r|^{\frac{1}{q}}
\ls\frac{|\eta_{\nu}^{(j)}|
|Q_{\mathbf{0}}(2^{j+1}r)|^{\frac{1}{q}}}{(2^{j-1}r)^{|\nu|}|L_j|}\\
&\sim\frac{|\eta_{\nu}^{(j)}|}{(2^{j-1}r)^{|\nu|}}
\lf|Q_{\mathbf{0}}(2^{j+1}r)\r|^{-\frac{1}{q'}}
\sim\lf|\eta_{\nu}^{(j)}\r|
\lf|Q_{\mathbf{0}}(2^{j+1}r)\r|^{-\frac{1}{q'}-\frac{|\nu|}{n}}\\
&\ls2^{jn[(\frac{1}{\epsilon}-1)(\frac{1}{q}-\frac{1}{p}-\alpha)]}
\lf|Q_{\mathbf{0}}(2^{j+1}r)\r|^{\frac{1}{q}-\frac{1}{p}-\alpha}.
\end{align*}
This and \eqref{4.14x} imply that  there exists a positive constant
$\widetilde{C}$ such that, for any
$\nu\in\zz_+^n$ with $|\nu|\leq s$,
and $j\in\zz_+$, $\widetilde{A}_{\nu}^{(j)}
:=\frac{\widetilde{\alpha}_\nu^{(j)}}{\widetilde{\lambda}_{j}}$
is a $(p,q,s)_\alpha$-atom,
where
\begin{align}\label{4.13x}
	\widetilde{\lambda}_{j}:=\widetilde{C}
	(2^{jn})^{(\frac{1}{\epsilon}-1)(\frac{1}{q}-\frac{1}{p}-\alpha)}.
\end{align}
Thus, the first claim holds true.
Then we show \eqref{eta-3'}.
Indeed, by \eqref{L-7.2}, the H\"older inequality,
Lemma \ref{BMO-JN} with $u:=p'$ and $v:=q'$,
\eqref{eta-1}, \eqref{p}, Lemma \ref{N12-lemma},
$\alpha\in(\frac{1}{q}-\frac{1}{p},\fz)$,
$\epsilon\in(0,1)$, and $\frac{1}{\epsilon}(\frac{1}{q}
-\frac{1}{p}-\alpha)+\frac{1}{q'}+\frac{s}{n}<0$,
we conclude that, for any $l\in\nn$
and $f\in JN_{(p',q',s)_\alpha}^{\mathrm{con}}(\rn)$,
\begin{align*}\
	&\lf|\int_{\rn}\frac{\eta_{\nu}^{(l)}\psi_{\nu}^{(l)}(x)
		\mathbf{1}_{L_l}(x)}{|L_l|}f(x)\,dx\r|\\
	&\quad\lesssim\frac{|\eta_\nu^{(l)}|}{2^{(l-1)|\nu|}|L_l|}
	\int_{L_l}\lf|f(x)\r|\,dx
	\lesssim\frac{|\eta_\nu^{(l)}|}{2^{(l-1)|\nu|}}
	\fint_{Q_{\mathbf{0}}(2^lr)}\lf|f(x)\r|\,dx\\
	&\quad\lesssim \frac{|\eta_\nu^{(l)}|}{2^{(l-1)|\nu|}}
	\lf[\fint_{Q_{\mathbf{0}}(2^lr)}\lf|f(x)\r|^{q'}\,dx\r]^{\frac{1}{q'}}\\
	&\quad\lesssim\frac{|\eta_\nu^{(l)}|}{2^{(l-1)|\nu|}}
	\lf\{\lf[\fint_{Q_{\mathbf{0}}(2^lr)}
	\lf|f(x)-P_{Q_{\mathbf{0}}(r)}^{(s)}(f)(x)\r|^{q'}\,dx\r]^{\frac{1}{q'}}
	+\lf\|P_{Q_{\mathbf{0}}(r)}^{(s)}(f)
	\r\|_{L^{\fz}(Q_{\mathbf{0}}(2^lr))}\r\}\\
	&\quad\ls2^{nl[\frac{1}{\epsilon}
		(\frac{1}{q}-\frac{1}{p}-\alpha)+\frac{1}{q'}]}
	l\lf[2^{ls}+2^{nl(\alpha-\frac{1}{p'})}\r]
	\|f\|_{JN_{(p',q',s)_\alpha}^{\mathrm{con}}(\rn)}\\
	&\qquad+2^{nl[\frac{1}{\epsilon}
		(\frac{1}{q}-\frac{1}{p}-\alpha)+\frac{1}{q'}+\frac{s}{n}]}
	\lf\|P_{Q_{\mathbf{0}}(r)}^{(s)}(f)
	\r\|_{L^{\fz}(Q_{\mathbf{0}}(r))}\\
	&\quad \ls 2^{nl[\frac{1}{\epsilon}
		(\frac{1}{q}-\frac{1}{p}-\alpha)+\frac{1}{q'}+\frac{s}{n}]}l
	\|f\|_{JN_{(p',q',s)_\alpha}^{\mathrm{con}}(\rn)}
	+2^{nl[(\frac{1}{\epsilon}-1)
		(\frac{1}{q}-\frac{1}{p}-\alpha)]}l
	\|f\|_{JN_{(p',q',s)_\alpha}^{\mathrm{con}}(\rn)}\\
	&\qquad+2^{nl[\frac{1}{\epsilon}
		(\frac{1}{q}-\frac{1}{p}-\alpha)+\frac{1}{q'}+\frac{s}{n}]}
	\fint_{Q_{\mathbf{0}}(r)}|f(x)|\,dx\\
	&\quad\to 0
\end{align*}
as $l\to\fz$, where the implicit positive constants depend on $r$.
This further implies that \eqref{eta-3'} holds true.

Finally, we show that $M\in HK_{(p,q,s)_\alpha}^{\mathrm{con}}(\rn)$.
Indeed, using \eqref{M-l'} and \eqref{sum-Pj},
we conclude that, for any $l\in\nn$ and $x\in\rn$,
\begin{align}\label{3.19x}
	M\mathbf{1}_{Q_{\mathbf{0}}(2^lr)}(x)
	=\sum_{j=0}^{l}\lambda_j A_j(x)+
	\sum_{\{\nu\in\zz_+^n:\ |\nu|\leq s\}}
	\sum_{j=0}^{l-1}
	\widetilde{\lambda}_{j}\widetilde{A}_\nu^{(j)}(x)
	-\sum_{\{\nu\in\zz_+^n:\ |\nu|\leq s\}}
	\frac{\eta_{\nu}^{(l)}\psi_{\nu}^{(l)}(x)\mathbf{1}_{L_l}(x)}{|L_l|}.
\end{align}
By this, Lemma \ref{p3.1}, Proposition \ref{M-JN1},
the Lebesgue dominated convergence theorem,
\eqref{3.19x}, and \eqref{eta-3'}, we conclude that,
for any $l\in\nn$ and $f\in JN_{(p',q',s)_\alpha}^{\mathrm{con}}(\rn)$,
\begin{align*}
&\lf|\lf\langle M-\sum_{j=0}^{l}\lambda_j A_j
-\sum_{j=0}^{l-1}\sum_{\{\nu\in\zz_+^n:\ |\nu|\leq s\}}
\widetilde{\lambda}_{j}\widetilde{A}_\nu^{(j)},f\r\rangle\r|\\
&\quad=\lf|\int_{\rn}\lf[M(x)-\sum_{j=1}^{l}\lambda_j A_j(x)
-\sum_{j=0}^{l-1}\sum_{\{\nu\in\zz_+^n:\ |\nu|\leq s\}}
\widetilde{\lambda}_{j}\widetilde{A}_\nu^{(j)}(x)\r]f(x)\,dx\r|\\
&\quad\leq\lf|\int_{\rn\setminus Q_{\mathbf{0}}(2^lr)}M(x)f(x)\,dx\r|
+\sum_{\{\nu\in\zz_+^n:\ |\nu|\leq s\}}
\lf|\int_{\rn}\frac{\eta_{\nu}^{(l)}\psi_{\nu}^{(l)}(x)
	\mathbf{1}_{L_l}(x)}{|L_l|}f(x)\,dx\r|\\
&\quad\to0
\end{align*}
as $l\to\fz$, and hence
\begin{align}\label{M-Ml-JN*}
M=\sum_{j\in\zz_+}\lambda_j A_j
+\sum_{j\in\zz_+}\sum_{\{\nu\in\zz_+^n:\ |\nu|\leq s\}}
\widetilde{\lambda}_{j}\widetilde{A}_\nu^{(j)}
\end{align}
in $(JN_{(p',q',s)_\alpha}^{\mathrm{con}}(\rn))^*$.
Moreover, since,
for any $\nu\in\zz_+^n$ with $|\nu|\leq s$, and $j\in\zz_+$,
$A_j$ and $\widetilde{A}_\nu^{(j)}$ are $(p,q,s)_\alpha$-atoms,
we deduce that $\lambda_jA_j$,
$\widetilde{\lambda}_{j}\widetilde{A}_\nu^{(j)}
\in \widetilde{HK}_{(p,q,s)_\alpha}^{\mathrm{con}}(\rn)$,
which, combined with \eqref{lam-j}, \eqref{4.13x},
$(\frac{1}{\epsilon}-1)(\frac{1}{q}-\frac{1}{p}-\alpha)<0$,
and \eqref{M-Ml-JN*}, further implies that
\begin{align*}
\|M\|_{HK_{(p,q,s)_\alpha}^{\mathrm{con}}(\rn)}
&\leq\sum_{j\in\zz_+}\lf\|\lambda_j A_j
\r\|_{\widetilde{HK}_{(p,q,s)_\alpha}^{\mathrm{con}}(\rn)}
+\sum_{j\in\zz_+}\sum_{\{\nu\in\zz_+^n:\ |\nu|\leq s\}}
\lf\|\widetilde{\lambda}_{j}\widetilde{A}_\nu^{(j)}
\r\|_{\widetilde{HK}_{(p,q,s)_\alpha}^{\mathrm{con}}(\rn)}\\
&\leq\sum_{j\in\zz_+}|\lambda_j|+\sum_{\{\nu\in\zz_+^n:\ |\nu|\leq s\}}
\sum_{j\in\zz_+}\lf|\widetilde{\lambda}_{j}\r|\\
&\ls\sum_{j\in\zz_+}2^{jn[(\frac{1}{\epsilon}-1)(\frac{1}{q}-\frac{1}{p}-\alpha)]}
+\sum_{\{\nu\in\zz_+^n:\ |\nu|\leq s\}}\sum_{j\in\zz_+}
2^{jn[(\frac{1}{\epsilon}-1)(\frac{1}{q}-\frac{1}{p}-\alpha)]}
\sim 1,
\end{align*}
where the implicit constants are independent of $M$,
and hence $M\in HK_{(p,q,s)_\alpha}^{\mathrm{con}}(\rn)$.
This finishes the proof of Proposition \ref{HK-mole}.
\end{proof}

Using Proposition \ref{HK-mole}, we obtain the following useful
criterion for the boundedness of linear operators on
$HK_{(p,q,s)_\alpha}^{\mathrm{con}}(\rn)$.
In what follows, we equip $HK_{(p,q,s)_\alpha}^{\mathrm{con}}(\rn)$
and $HK_{(p,q,s)_\alpha}^{\mathrm{con-fin}}(\rn)$
with the same norm $\|\cdot\|_{HK_{(p,q,s)_\alpha}^{\mathrm{con}}(\rn)}$
unless specifically indicated.

\begin{theorem}\label{Bounded-HK-A}
Let $p_1$, $p_2\in (1,\fz)$, $q_1$, $q_2\in (1,\fz]$,
$\frac{1}{p_i}+\frac{1}{p_i'}=1=\frac{1}{q_i}+\frac{1}{q_i'}$
for any $i\in\{1,2\}$, $s_1$, $s_2\in\zz_+$, $\alpha_1\in\rr$,
and $\alpha_2\in (\frac{1}{q_2}-\frac{1}{p_2},\fz)$.
Let $A$ be a linear operator defined on
$HK_{(p_1,q_1,s_1)_{\alpha_1}}^{\mathrm{con-fin}}(\rn)$
and $\widetilde{A}$ a linear operator bounded
from $JN_{(p_2',q_2',s_2)_{\alpha_2}}^{\mathrm{con}}(\rn)$
to $JN_{(p_1',q_1',s_1)_{\alpha_1}}^{\mathrm{con}}(\rn)$.
If the following statements hold true:
\begin{enumerate}
\item[\rm (i)] there exists a positive constant $C_0$ such that,
for any $(p_1,q_1,s_1)_{\alpha_1}$-atom $a$
and some given $\epsilon\in(0,1)$ such that
$\frac{1}{\epsilon}
(\frac{1}{q_2}-\frac{1}{p_2}-\alpha_2)+\frac{1}{q_2'}+\frac{s_2}{n}<0$,
$\frac{A(a)}{C_0}$ is a $(p_2,q_2,
s_2,\alpha_2,\epsilon)$-molecule;
\item[\rm (ii)] for any $(p_1,q_1,s_1)_{\alpha_1}$-atom $a$ and any
$f\in JN_{(p_2',q_2',s_2)_{\alpha_2}}^{\mathrm{con}}(\rn)$,
$\langle A(a),f\rangle=\langle a, \widetilde{A}(f)\rangle$,
\end{enumerate}
then the linear operator $A$ has a unique continuous linear extension,
still denoted by $A$, from
$HK_{(p_1,q_1,s_1)_{\alpha_1}}^{\mathrm{con}}(\rn)$
to $HK_{(p_2,q_2,s_2)_{\alpha_2}}^{\mathrm{con}}(\rn)$,
namely, there exists a positive constant $C$ such that,
for any $g\in HK_{(p_1,q_1,s_1)_{\alpha_1}}^{\mathrm{con}}(\rn)$,
$$\|A(g)\|_{HK_{(p_2,q_2,s_2)_{\alpha_2}}^{\mathrm{con}}(\rn)}
\leq C\|g\|_{HK_{(p_1,q_1,s_1)_{\alpha_1}}^{\mathrm{con}}(\rn)}$$
and, moreover,
for any $f\in JN_{(p_2',q_2',s_2)_{\alpha_2}}^{\mathrm{con}}(\rn)$,
\begin{align}\label{3.20x}
\langle A(g),f\rangle
=\lf\langle g,\widetilde{A}(f)\r\rangle.
\end{align}
\end{theorem}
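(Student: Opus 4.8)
The plan is to proceed in three steps: (1) prove the claimed norm inequality for $g$ in the finite atomic space $HK_{(p_1,q_1,s_1)_{\alpha_1}}^{\mathrm{con-fin}}(\rn)$; (2) extend $A$ to all of $HK_{(p_1,q_1,s_1)_{\alpha_1}}^{\mathrm{con}}(\rn)$ by density and completeness; and (3) verify the pairing identity \eqref{3.20x} for the extension by a limiting argument. The device that circumvents the non-concavity of $\|\cdot\|_{HK_{(p_2,q_2,s_2)_{\alpha_2}}^{\mathrm{con}}(\rn)}$ is Proposition \ref{Coro-JN}, which lets the $HK$-norm of $A(g)$ be computed as a dual norm over $JN_{(p_2',q_2',s_2)_{\alpha_2}}^{\mathrm{con}}(\rn)$; since a $(p_2,q_2,s_2,\alpha_2,\epsilon)$-molecule with $q_2=\fz$ is also one with any finite exponent in place of $q_2$, and likewise for atoms, one may freely assume $q_1$, $q_2<\fz$ when invoking the duality lemmas.

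For step (1), fix $g=\sum_{j=1}^M\lambda_ja_j\in HK_{(p_1,q_1,s_1)_{\alpha_1}}^{\mathrm{con-fin}}(\rn)$ with each $a_j$ a $(p_1,q_1,s_1)_{\alpha_1}$-atom. By the linearity of $A$, $A(g)=\sum_{j=1}^M\lambda_jA(a_j)$ pointwise. Hypothesis (i) and Proposition \ref{HK-mole} (whose hypotheses hold thanks to $\alpha_2\in(\frac{1}{q_2}-\frac{1}{p_2},\fz)$ and the stipulated $\epsilon$) show that each $A(a_j)$ is a fixed multiple of a $(p_2,q_2,s_2,\alpha_2,\epsilon)$-molecule, so $A(a_j)\in HK_{(p_2,q_2,s_2)_{\alpha_2}}^{\mathrm{con}}(\rn)$ and hence $A(g)\in HK_{(p_2,q_2,s_2)_{\alpha_2}}^{\mathrm{con}}(\rn)$. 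By Proposition \ref{Coro-JN} it then suffices to bound $|\langle A(g),f\rangle|$ uniformly over $f\in JN_{(p_2',q_2',s_2)_{\alpha_2}}^{\mathrm{con}}(\rn)$ with $\|f\|_{JN_{(p_2',q_2',s_2)_{\alpha_2}}^{\mathrm{con}}(\rn)}\le1$; since $A(g)$ is a finite combination of molecules, Proposition \ref{M-JN1} gives $\langle A(g),f\rangle=\int_{\rn}A(g)(x)f(x)\,dx=\sum_{j=1}^M\lambda_j\langle A(a_j),f\rangle$.

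Now apply hypothesis (ii) termwise: $\langle A(a_j),f\rangle=\langle a_j,\widetilde{A}(f)\rangle=\int_{\rn}a_j(x)\widetilde{A}(f)(x)\,dx$, the second equality by Lemma \ref{p3.1} since $\widetilde{A}(f)\in JN_{(p_1',q_1',s_1)_{\alpha_1}}^{\mathrm{con}}(\rn)$. Summing over $j$ collapses the sum to $\langle A(g),f\rangle=\int_{\rn}g(x)\widetilde{A}(f)(x)\,dx$, which by Lemma \ref{t3.9} with JN-parameters $(p_1',q_1',s_1)_{\alpha_1}$ is bounded by $C\|g\|_{HK_{(p_1,q_1,s_1)_{\alpha_1}}^{\mathrm{con}}(\rn)}\|\widetilde{A}(f)\|_{JN_{(p_1',q_1',s_1)_{\alpha_1}}^{\mathrm{con}}(\rn)}$; the boundedness of $\widetilde{A}$ then bounds the last factor by $C\|f\|_{JN_{(p_2',q_2',s_2)_{\alpha_2}}^{\mathrm{con}}(\rn)}\le C$. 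Taking the supremum over such $f$ and using Proposition \ref{Coro-JN} once more yields
\[
\lf\|A(g)\r\|_{HK_{(p_2,q_2,s_2)_{\alpha_2}}^{\mathrm{con}}(\rn)}
\ls\|g\|_{HK_{(p_1,q_1,s_1)_{\alpha_1}}^{\mathrm{con}}(\rn)}\qquad
\text{for all }g\in HK_{(p_1,q_1,s_1)_{\alpha_1}}^{\mathrm{con-fin}}(\rn).
\]

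For step (2), $HK_{(p_1,q_1,s_1)_{\alpha_1}}^{\mathrm{con-fin}}(\rn)$ is dense in $HK_{(p_1,q_1,s_1)_{\alpha_1}}^{\mathrm{con}}(\rn)$ by Lemma \ref{HK-normed} and $HK_{(p_2,q_2,s_2)_{\alpha_2}}^{\mathrm{con}}(\rn)$ is complete by Proposition \ref{HK-banach}, so a standard density argument produces a unique continuous linear extension of $A$ from $HK_{(p_1,q_1,s_1)_{\alpha_1}}^{\mathrm{con}}(\rn)$ to $HK_{(p_2,q_2,s_2)_{\alpha_2}}^{\mathrm{con}}(\rn)$ obeying the same bound. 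For step (3), given $g\in HK_{(p_1,q_1,s_1)_{\alpha_1}}^{\mathrm{con}}(\rn)$ choose $\{g_m\}_{m\in\nn}\subset HK_{(p_1,q_1,s_1)_{\alpha_1}}^{\mathrm{con-fin}}(\rn)$ with $g_m\to g$ in $HK_{(p_1,q_1,s_1)_{\alpha_1}}^{\mathrm{con}}(\rn)$; then $A(g_m)\to A(g)$ in $HK_{(p_2,q_2,s_2)_{\alpha_2}}^{\mathrm{con}}(\rn)$, so, for $f\in JN_{(p_2',q_2',s_2)_{\alpha_2}}^{\mathrm{con}}(\rn)$, Lemma \ref{t3.9} gives $\langle A(g),f\rangle=\lim_{m\to\fz}\langle A(g_m),f\rangle=\lim_{m\to\fz}\langle g_m,\widetilde{A}(f)\rangle=\langle g,\widetilde{A}(f)\rangle$, where the middle step is the finite-atomic identity established above and the last step uses that $\widetilde{A}(f)$ induces a continuous functional on $HK_{(p_1,q_1,s_1)_{\alpha_1}}^{\mathrm{con}}(\rn)$ via Lemma \ref{t3.9}. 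I expect no deep obstacle; the delicate part is purely bookkeeping, namely keeping straight the three dual pairings — over the triples $(p_1,q_1,s_1)_{\alpha_1}$, $(p_2,q_2,s_2)_{\alpha_2}$, and the mixed one produced by (ii) — and checking at each stage that the abstract pairing agrees with genuine integration against an atom or a molecule, which is exactly what Lemma \ref{p3.1}, Lemma \ref{t3.9}, Proposition \ref{M-JN1}, and Proposition \ref{Coro-JN} guarantee.
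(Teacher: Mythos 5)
Your proposal is correct and follows the same approach as the paper's proof: reduce, via Proposition~\ref{Coro-JN} and the duality Lemma~\ref{t3.9}, to the estimate $|\langle A(g),f\rangle|=|\langle g,\widetilde A(f)\rangle|\lesssim\|g\|_{HK^{\mathrm{con}}_{(p_1,q_1,s_1)_{\alpha_1}}}\|f\|_{JN^{\mathrm{con}}_{(p_2',q_2',s_2)_{\alpha_2}}}$ for finite atomic $g$, then extend by density (Lemma~\ref{HK-normed}) and completeness (Proposition~\ref{HK-banach}), and verify~\eqref{3.20x} by a limiting argument. One remark: your opening observation that one may pass to finite $q_i$ is prompted precisely by the fact that Lemma~\ref{t3.9} and Proposition~\ref{Coro-JN} are stated only for $q\in(1,\infty)$ while the theorem allows $q_i=\infty$ (so $q_i'=1$); the paper's own proof elides this point, and the reduction you sketch would need more care than ``freely assume $q_i<\infty$'' since changing $q_1$ changes the atom class and hence the space $HK^{\mathrm{con}}_{(p_1,q_1,s_1)_{\alpha_1}}(\rn)$ itself, though in the intended application (Theorem~\ref{T-bounded-HK}) $q$ is finite anyway.
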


\begin{proof}
Let $p_1$, $p_2$, $q_1$, $q_2$,
$s_1$, $s_2$, $\alpha_1$, $\alpha_2$,
$A$, and $\widetilde{A}$ be as in the present theorem.
We first prove that $A$ has a unique continuous linear extension on
$HK_{(p_1,q_1,s_1)_{\alpha_1}}^{\mathrm{con}}(\rn)$.
To this end, we claim that $A$ is bounded from
$HK_{(p_1,q_1,s_1)_{\alpha_1}}^{\mathrm{con-fin}}(\rn)$
to $HK_{(p_2,q_2,s_2)_{\alpha_2}}^{\mathrm{con}}(\rn)$.
Indeed, let $g\in HK_{(p_1,q_1,s_1)_{\alpha_1}}^{\mathrm{con-fin}}(\rn)$.
Then, from (i) and (ii) of the present theorem,
Lemma \ref{t3.9}, and Propositions \ref{M-JN1} and \ref{HK-mole},
it follows that, for any
$f\in JN_{(p_2',q_2',s_2)_{\alpha_2}}^{\mathrm{con}}(\rn)$,
\begin{align*}
\lf|\lf\langle A(g), f\r\rangle\r|
&=\lf|\lf\langle g, \widetilde{A}(f)\r\rangle\r|
\lesssim\|g\|_{HK_{(p_1,q_1,s_1)_{\alpha_1}}^{\mathrm{con}}(\rn)}
\lf\|\widetilde{A}(f)\r\|_{JN_{(p_1',q_1',s_1)_{\alpha_1}}^{\mathrm{con}}(\rn)}\\
&\lesssim\|g\|_{HK_{(p_1,q_1,s_1)_{\alpha_1}}^{\mathrm{con}}(\rn)}
\|f\|_{JN_{(p_2',q_2',s_2)_{\alpha_2}}^{\mathrm{con}}(\rn)},
\end{align*}
which, together with Proposition \ref{Coro-JN}, further implies that
\begin{align}\label{key-A}
\|A(g)\|_{HK_{(p_2,q_2,s_2)_{\alpha_2}}^{\mathrm{con}}(\rn)}
\sim\|A(g)\|_{(JN_{(p_2',q_2',s_2)_{\alpha_2}}^{\mathrm{con}}(\rn))^*}
\lesssim\|g\|_{HK_{(p_1,q_1,s_1)_{\alpha_1}}^{\mathrm{con}}(\rn)}
\end{align}
and hence the above claim holds true.

Moreover, by \eqref{key-A},
Lemma \ref{HK-normed}, and Proposition \ref{HK-banach},
we conclude that the linear operator
$A$ is bounded from the dense subspace
$HK_{(p_1,q_1,s_1)_{\alpha_1}}^{\mathrm{con-fin}}(\rn)$
of $HK_{(p_1,q_1,s_1)_{\alpha_1}}^{\mathrm{con}}(\rn)$
to $HK_{(p_2,q_2,s_2)_{\alpha_2}}^{\mathrm{con}}(\rn)$.
From this and \cite[p.\,22, Proposition 5.4]{stein2011}, we deduce that
$A$ can be extended to a unique continuous linear operator,
still denoted by $A$, from
$HK_{(p_1,q_1,s_1)_{\alpha_1}}^{\mathrm{con}}(\rn)$
to $HK_{(p_2,q_2,s_2)_{\alpha_2}}^{\mathrm{con}}(\rn)$
in the sense that, for any
$g\in HK_{(p_1,q_1,s_1)_{\alpha_1}}^{\mathrm{con}}(\rn)$,
\begin{align}\label{4.21x}
\lim_{m\to\fz}\lf\|A(g)-A(g_m)
\r\|_{HK_{(p_2,q_2,s_2)_{\alpha_2}}^{\mathrm{con}}(\rn)}=0,
\end{align}
where $\{g_m\}_{m\in\nn}$ is a sequence of
$HK_{(p_1,q_1,s_1)_{\alpha_1}}^{\mathrm{con-fin}}(\rn)$
such that
\begin{align}\label{3.24x}
\lim_{m\to\fz}\|g-g_m\|_{HK_{(p_1,q_1,s_1)_{\alpha_1}}^{\mathrm{con}}(\rn)}=0,
\end{align}
$A(g)$ is independent of the choice of $\{g_m\}_{m\in\nn}$,
and, for any $g\in HK_{(p_1,q_1,s_1)_{\alpha_1}}^{\mathrm{con}}(\rn)$,
$$\|A(g)\|_{HK_{(p_2,q_2,s_2)_{\alpha_2}}^{\mathrm{con}}(\rn)}
\ls\|g\|_{HK_{(p_1,q_1,s_1)_{\alpha_1}}^{\mathrm{con}}(\rn)}.$$

Next, we show \eqref{3.20x}.
Indeed, for any $f\in JN_{(p_2',q_2', s_2)_{\alpha_2}}^{\mathrm{con}}(\rn)$
and $g\in HK_{(p_1,q_1,s_1)_{\alpha_1}}^{\mathrm{con}}(\rn)$,
let $g_m:=\sum_{i=1}^{M_m}\lambda_{m,i}a_{m,i}$ be as in \eqref{4.21x}
with $\{\lambda_{m,i}\}_{i=1}^{M_m}\subset\cc$
and $\{a_{m,i}\}_{i=1}^{M_m}$ being $(p_1,q_1,s_1)_{\alpha_1}$-atoms.
Then, by \eqref{4.21x}, Lemma \ref{t3.9}, (ii)
of the present theorem, and \eqref{3.24x}, we find that
\begin{align*}
\langle A(g),f\rangle
&=\lim_{m\to\fz}\lf\langle A(g_m),f\r\rangle
=\lim_{m\to\fz}\sum_{i=1}^{M_m}\lambda_{m,i}
\lf\langle A(a_{m,i}),f\r\rangle\\
&=\lim_{m\to\fz}\sum_{i=1}^{M_m}\lambda_{m,i}
\lf\langle a_{m,i},\widetilde{A}(f)\r\rangle
=\lf\langle g,\widetilde{A}(f)\r\rangle,
\end{align*}
which shows the above claim.
This finishes the proof of Theorem \ref{Bounded-HK-A}.
\end{proof}

\begin{remark}\label{R-T-JN}
Although we are not able to establish a molecular characterization
of $HK_{(p,q,s)_{\alpha}}^{\mathrm{con}}(\rn)$ because
the classical method seems no longer feasible
in the present setting, the proof of
Theorem \ref{Bounded-HK-A} contains some thoughts
of the molecular characterization of
$HK_{(p,q,s)_{\alpha}}^{\mathrm{con}}(\rn)$.
\end{remark}

\subsection{Boundedness of Calder\'on--Zygmund Operators
on $HK_{(p,q,s)_\alpha}^{\mathrm{con}}(\rn)$\label{S-T-HK}}

In this subsection, we show that the $s$-order
Calder\'on--Zygmund singular integral operator $T$,
having the vanishing moments up
to order $s$, can be extended to a unique continuous
linear operator on $HK_{(p,q,s)_\alpha}^{\mathrm{con}}(\rn)$.
To this end, we establish two technical lemmas.
We first show that, for any $(p,q,s)_\alpha$-atom
$a$, $T(a)$ is a $(p,q,s,\alpha,\epsilon)$-molecule
in the sense of multiplying a positive constant independent of $a$.

\begin{lemma}\label{Ta-N}
Let $p\in(1,\fz)$, $q\in (1,\fz)$, $\frac{1}{q}+\frac{1}{q'}=1$, $s\in\zz_+$,
$\alpha\in(\frac{1}{q}-\frac{1}{p},1-\frac{1}{p}+\frac{s+\dz}{n})$
with $\dz\in(0,1]$ as in Definition \ref{def-s-k},
and $\epsilon\in (0,1)$ be such that
$-\frac{1}{q'}-\frac{s+\dz}{n}\leq \frac{1}{\epsilon}(\frac{1}{q}
-\frac{1}{p}-\alpha)$.
Let $K$ be an $s$-order standard kernel and $T$ the $s$-order
Calder\'on--Zygmund singular integral operator with kernel $K$.
Then the following two statements are equivalent:
\begin{enumerate}
\item[\rm (i)]
there exists a positive constant $C$ such that,
for any $(p,q,s)_\alpha$-atom $a$,
$\frac{T(a)}{C}$ is a $(p,q,s,\alpha,\epsilon)$-molecule;
\item[\rm (ii)]
for any $\gamma\in\zz_+^n$ with $|\gamma|\leq s$, $T^*(x^{\gamma})=0$.
\end{enumerate}
\end{lemma}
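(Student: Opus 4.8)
The plan is to establish the two implications separately. The direction (ii)\,$\Rightarrow$\,(i) is the constructive one: given a $(p,q,s)_\alpha$-atom $a$, one checks directly that $T(a)$ satisfies the three defining conditions of a $(p,q,s,\alpha,\epsilon)$-molecule in Definition \ref{Def-mole}, using the $L^q(\rn)$-boundedness of $T$ (Lemma \ref{Duo01}), the kernel regularity \eqref{regular2-s}, the vanishing moments of $a$, and the hypothesis $T^*(x^\gamma)=0$. The direction (i)\,$\Rightarrow$\,(ii) will instead be extracted from the molecule's own vanishing-moment condition, via \eqref{0-0'} and Proposition \ref{Assume}.

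For (ii)\,$\Rightarrow$\,(i), fix a $(p,q,s)_\alpha$-atom $a$ supported in $Q:=Q_z(r)$, and set $L_0:=Q_z(r)$ and $L_j:=Q_z(2^jr)\setminus Q_z(2^{j-1}r)$ for $j\in\nn$. Condition (i) of Definition \ref{Def-mole} follows at once from Lemma \ref{Duo01} and Definition \ref{d3.2}(ii), since $\|T(a)\mathbf1_{L_0}\|_{L^q(\rn)}\le\|T(a)\|_{L^q(\rn)}\ls\|a\|_{L^q(Q)}\le|Q_z(r)|^{\frac1q-\frac1p-\alpha}$; the same bound covers $L_1$ in condition (ii), because $\alpha>\frac1q-\frac1p$ makes $2^{\frac n\epsilon(\frac1q-\frac1p-\alpha)}$ a fixed positive constant that may be absorbed into the normalization. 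For $j\ge2$ one has $L_j\subset\rn\setminus2Q$, so the representation $T(a)(x)=\int_QK(x,y)a(y)\,dy$ (valid a.e.\ off $\supp a$ by Remark \ref{rem-2.15}) applies; subtracting the degree-$s$ Taylor expansion of $K(x,\cdot)$ about $z$ — permissible since $\int_Qa(y)(y-z)^\gamma\,dy=0$ for $|\gamma|\le s$ — and invoking the Taylor remainder theorem with \eqref{regular2-s} and then the H\"older inequality ($\|a\|_{L^1(Q)}\le|Q|^{1/q'}\|a\|_{L^q(Q)}\le|Q|^{1-\frac1p-\alpha}$), we obtain, for $x\in L_j$,
\begin{align*}
|T(a)(x)|\ls\frac{r^{s+\dz}\,\|a\|_{L^1(Q)}}{(2^jr)^{n+s+\dz}}
\ls2^{-j(n+s+\dz)}r^{-\frac nq}|Q_z(r)|^{\frac1q-\frac1p-\alpha},
\end{align*}
whence, since $|L_j|\sim(2^jr)^n$,
\begin{align*}
\lf\|T(a)\mathbf1_{L_j}\r\|_{L^q(\rn)}
\ls2^{jn\lf(-\frac1{q'}-\frac{s+\dz}{n}\r)}|Q_z(r)|^{\frac1q-\frac1p-\alpha}
\le2^{\frac{jn}{\epsilon}(\frac1q-\frac1p-\alpha)}|Q_z(r)|^{\frac1q-\frac1p-\alpha},
\end{align*}
where the last step is exactly where the hypothesis $-\frac1{q'}-\frac{s+\dz}{n}\le\frac1\epsilon(\frac1q-\frac1p-\alpha)$ enters. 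Finally, condition (iii), i.e.\ $\int_\rn T(a)(x)x^\gamma\,dx=0$ for $|\gamma|\le s$ (the integral being absolutely convergent by the decay just proved), is precisely Corollary \ref{coro2.18} applied to $a\in L^q_s(\rn)$ under \eqref{T-x-gam}. Tracking constants shows $T(a)/C$ is a $(p,q,s,\alpha,\epsilon)$-molecule with $C$ independent of $a$ (i.e.\ of $z$ and $r$).

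For (i)\,$\Rightarrow$\,(ii), assume each $T(a)/C$ is a $(p,q,s,\alpha,\epsilon)$-molecule; Definition \ref{Def-mole}(iii) then gives $\int_\rn T(a)(x)x^\gamma\,dx=0$ for every $(p,q,s)_\alpha$-atom $a$ and every $\gamma\in\zz_+^n$ with $|\gamma|\le s$. Feeding this into \eqref{0-0'} (with $a$ regarded as supported in a ball containing its defining cube) yields $\int_\rn a(x)\widetilde T_{B_0}(y^\gamma)(x)\,dx=0$ for all such $a$. Since, for any cube $Q\subset\rn$ and any $h\in L^q(Q)$, a suitable scalar multiple of $[h-P_Q^{(s)}(h)]\mathbf1_Q$ is a $(p,q,s)_\alpha$-atom, the computation \eqref{arb-h} applies verbatim and gives $\int_Qh(x)[\widetilde T_{B_0}(y^\gamma)(x)-P_Q^{(s)}(\widetilde T_{B_0}(y^\gamma))(x)]\,dx=0$ for every $h\in L^q(Q)$. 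As $\widetilde T_{B_0}(y^\gamma)-P_Q^{(s)}(\widetilde T_{B_0}(y^\gamma))\in L^{q'}(Q)$ by Lemma \ref{assume-lem}(i), this forces $\widetilde T_{B_0}(y^\gamma)=P_Q^{(s)}(\widetilde T_{B_0}(y^\gamma))$ a.e.\ on $Q$; letting $Q$ exhaust $\rn$ shows $\widetilde T_{B_0}(y^\gamma)\in\mathcal P_s(\rn)$ after modification on a null set, and Proposition \ref{Assume} then yields \eqref{T-x-gam}, i.e.\ $T^*(x^\gamma)=0$ for all $|\gamma|\le s$.

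The main obstacle is the off-diagonal estimate in (ii)\,$\Rightarrow$\,(i): one must align the decay exponent $-\frac1{q'}-\frac{s+\dz}{n}$ coming from the kernel smoothness with the molecular exponent $\frac1\epsilon(\frac1q-\frac1p-\alpha)$ prescribed in Definition \ref{Def-mole}(ii), which is possible exactly because of the stated relation $-\frac1{q'}-\frac{s+\dz}{n}\le\frac1\epsilon(\frac1q-\frac1p-\alpha)$; this same relation, together with $\epsilon\in(0,1)$ and $\alpha>\frac1q-\frac1p$, is what makes the upper restriction $\alpha<1-\frac1p+\frac{s+\dz}{n}$ consistent. Everything else is routine bookkeeping or a direct citation of results already in hand — Lemmas \ref{Duo01} and \ref{assume-lem}, Corollary \ref{coro2.18}, and Proposition \ref{Assume}.
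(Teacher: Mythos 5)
Your proposal follows the same two-directional strategy as the paper's proof, and the direction (i)\,$\Rightarrow$\,(ii) is essentially identical to the paper's (normalize an arbitrary $L^q_s$-function to an atom, use Definition \ref{Def-mole}(iii) and \eqref{0-0'}, then run the Proposition \ref{Assume} argument). However, there is a genuine (though easily repaired) gap in the off-diagonal estimate of (ii)\,$\Rightarrow$\,(i). You center the molecule at $Q:=Q_z(r)$ and assert that for every $j\geq 2$ the Taylor-remainder bound via \eqref{regular2-s} applies on $L_j=Q_z(2^jr)\setminus Q_z(2^{j-1}r)$. But \eqref{regular2-s} requires the separation $|x-\widetilde y|\geq 2|\widetilde y-z|$ (equivalently, up to the usual harmless adjustment, $|x-z|\geq 2|y-z|$) uniformly over $y\in Q$, where $\sup_{y\in Q}|y-z|=\tfrac{\sqrt n}{2}r$. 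For $x\in L_j$ one only has $|x-z|\geq 2^{j-2}r$, so the needed inequality $2^{j-2}r\geq \sqrt n\,r$ fails for $j=2,\ldots,\lceil 2+\log_2\sqrt n\,\rceil$ whenever $n\geq 2$; as written, your pointwise bound for those $j$ is unjustified. The paper sidesteps this entirely by declaring the molecule to be centered at the inflated cube $Q_z(R_0)$ with $R_0:=2\sqrt n\,r$, which makes the separation automatic for every annulus $Q_z(2^jR_0)\setminus Q_z(2^{j-1}R_0)$ with $j\geq 1$, at the harmless cost of a factor $|Q_z(R_0)|/|Q_z(r)|\sim_n 1$. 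Alternatively you can keep $Q_z(r)$ as center and treat the finitely many initial annuli exactly as you already treat $L_1$, namely by the $L^q(\rn)$-boundedness of $T$ (Lemma \ref{Duo01}) and absorbing the resulting finitely many factors $2^{\frac{jn}{\epsilon}|\frac1q-\frac1p-\alpha|}$ into the molecular constant $C$. With either repair your argument is complete and coincides with the paper's.
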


\begin{proof}
Let $p$, $q$, $q'$, $\alpha$, $s$, $\alpha$, $\dz$, $\epsilon$,
$T$, and $K$ be as in the present lemma.
We first show (i) $\Rightarrow$ (ii).
Indeed, for any given $A\in L_s^q(\rn)$ with
$\|A\|_{L^q(\rn)}>0$ supported in a
ball $B(x_0,r_0)$ with $x_0\in\rn$ and $r_0\in(0,\fz)$, it is easy to see that
$\widetilde{A}:=\frac{|Q_{x_0}(2r_0)|^{\frac{1}{q}-\frac{1}{p}-\alpha}A}
{\|A\|_{L^q(\rn)}}$
is a $(p,q,s)_\alpha$-atom supported in $Q_{x_0}(2r_0)$.
By this, (i) of the present lemma, and \eqref{0-0'}, we conclude that,
for any $\gamma\in\zz_+^n$ with $|\gamma|\leq s$,
$$
\int_{B(x_0,r_0)}\widetilde{A}(x)\widetilde{T}(y^{\gamma})(x)\,dx
=\int_{\rn}T(\widetilde{A})(x)x^{\gamma}\,dx
=0.
$$
Using this and a proof similar to that of Proposition \ref{Assume},
we find that, for any $\gamma\in\zz_+^n$ with $|\gamma|\leq s$,
$\widetilde{T}(y^{\gamma})\in\mathcal{P}_s(\rn)$
after changing values on a set of measure zero,
which, combined with Proposition \ref{Assume}, further implies that,
for any $\gamma\in\zz_+^n$ with $|\gamma|\leq s$, $T^*(x^{\gamma})=0$.
This finishes the proof that (i) $\Rightarrow$ (ii).

Next, we show (ii) $\Rightarrow$ (i).
Let $a$ be a $(p,q,s)_\alpha$-atom supported in the cube
$Q_{z}(r)$ with $z\in \rn$ and $r\in (0,\fz)$.
We show that there exists a positive constant $C$,
independent of $a$, such that
$\frac{T(a)}{C}$ is a $(p,q,s,\alpha,\epsilon)$-molecule.
To this end, let $R_0:=2\sqrt nr$.
We first prove that there exists a positive
constant $C_1$, independent of $a$, such that
\begin{align}\label{Ta-i}
\lf\|T (a)\mathbf{1}_{Q_z(R_0)}\r\|_{L^q(\rn)}
\leq C_1\lf|Q_z(R_0)\r|^{\frac{1}{q}-\frac{1}{p}-\alpha}.
\end{align}
Indeed, using Lemma \ref{Duo01}, Definition \ref{d3.2}(ii),
and $r\sim R_0$, we find that
\begin{align*}
\lf\|T (a)\mathbf{1}_{Q_z(R_0)}\r\|_{L^q(\rn)}
\lesssim \|a\|_{L^q(Q_{z}(r))}
\lesssim |Q_z(r)|^{\frac{1}{q}-\frac{1}{p}-\alpha}
\sim \lf|Q_z(R_0)\r|^{\frac{1}{q}-\frac{1}{p}-\alpha}.
\end{align*}
This implies that \eqref{Ta-i} holds true.

Now, we show that there exists a positive
constant $C_2$, independent of $a$,
such that, for any $j\in\nn$,
\begin{align}\label{Ta-ii}
\lf\|T (a)\mathbf{1}_{Q_z(2^{j}R_0)
\setminus Q_z(2^{j-1}R_0)}\r\|_{L^q(\rn)}
\leq C_2\lf(2^{jn}\r)^{\frac{1}{\epsilon}
(\frac{1}{q}-\frac{1}{p}-\alpha)}
|Q_z(R_0)|^{\frac{1}{q}-\frac{1}{p}-\alpha}.
\end{align}
Indeed, for any $x\notin Q_z(R_0)$
and $y\in Q_z(r)$, we have
$|z-x|\geq 2|z-y|$,
which, combined with $x\notin \supp\,(a)$,
Definition \ref{d3.2}(iii),
the Taylor remainder theorem, \eqref{regular2-s},
the H\"older inequality, and Definition \ref{d3.2}(ii),
further implies that, for any $y\in Q_z(r)$,
there exists a $\widetilde{y}\in Q_z(r)$
such that, for any $x\notin Q_z(R_0)$,
\begin{align*}
|T(a)(x)|&=\lf|\int_{Q_z(r)}K(x,y)a(y)\,dy\r|\\
&\leq\int_{Q_z(r)}\lf|K(x,y)-\sum_{\{\gamma\in\zz_+^n:\ |\gamma|\leq s\}}
\frac{\partial_{(2)}^{\gamma}K(x,z)}{\gamma!}(y-z)^{\gamma}\r||a(y)|\,dy\\
&=\int_{Q_z(r)}\lf|\sum_{\{\gamma\in\zz_+^n:\ |\gamma|=s\}}
\frac{\partial_{(2)}^{\gamma}K(x,\widetilde{y})-\partial_{(2)}^{\gamma}K(x,z)}
{\gamma!}(y-z)^{\gamma}\r||a(y)|\,dy\\
&\lesssim \frac{r^{s+\delta}}{|z-x|^{n+s+\delta}}\int_{Q_z(r)}|a(y)|\,dy
\lesssim \frac{r^{\frac{n}{q'}+s+\delta}}{|z-x|^{n+s+\delta}}
\|a\|_{L^{q}(Q_z(r))}\\
&\lesssim \lf(\frac{r}{|z-x|}\r)^{\frac{n}{q'}+s+\delta}
|z-x|^{-\frac{n}{q}}|Q_z(r)|^{\frac{1}{q}-\frac{1}{p}-\alpha}.
\end{align*}
From this, $r\sim R_0$, and
$-\frac{1}{q'}-\frac{s+\dz}{n}\leq \frac{1}{\epsilon}(\frac{1}{q}
-\frac{1}{p}-\alpha)$,
we deduce that, for any $j\in\nn$ and
$x\in Q_z(2^jR_0)\setminus Q_z(2^{j-1}R_0)$,
\begin{align*}
\lf|T(a)(x)\r|
&\lesssim\lf(\frac{r}{2^jR_0}\r)^{\frac{n}{q'}+s+\delta}
(2^jR_0)^{-\frac{n}{q}}|Q_z(R_0)|^{\frac{1}{q}-\frac{1}{p}-\alpha}\\
&\lesssim (2^{jn})^{\frac{1}{\epsilon}(\frac{1}{q}-\frac{1}{p}-\alpha)}
\lf|Q_z(2^jR_0)\setminus Q_z(2^{j-1}R_0)\r|^{-\frac{1}{q}}
|Q_z(R_0)|^{\frac{1}{q}-\frac{1}{p}-\alpha}
\end{align*}
and hence
\begin{align*}
\lf\|T(a)\mathbf{1}_{Q_z(2^jR_0)\setminus Q_z(2^{j-1}R_0)}\r\|_{L^{q}(\rn)}
\lesssim (2^{jn})^{\frac{1}{\epsilon}(\frac{1}{q}-\frac{1}{p}-\alpha)}
|Q_z(R_0)|^{\frac{1}{q}-\frac{1}{p}-\alpha},
\end{align*}
which implies that \eqref{Ta-ii} holds true.

Moreover, by (ii) of the present lemma, we know that
$T(a)$ satisfies Definition \ref{Def-mole}(iii).
From this, \eqref{Ta-i}, and \eqref{Ta-ii},
we deduce that $\frac{T(a)}{C}$ satisfies
(i), (ii), and (iii) of Definition \ref{Def-mole},
where $C:=\max\{C_1,C_2\}$.
This finishes the proof of Lemma \ref{Ta-N}.
\end{proof}

\begin{lemma}\label{duity-T-Tw}
Let $p\in(1,\infty)$, $q\in(1,\fz)$,
$\frac{1}{p}+\frac{1}{p'}=1=\frac{1}{q}+\frac{1}{q'}$, $s\in\zz_+$,
and $\alpha\in(\frac{1}{q}-\frac{1}{p},1-\frac{1}{p}+\frac{s+\delta}{n})$
with $\delta\in(0,1]$ as in Definition \ref{def-s-k}.
Let $K$ be an $s$-order standard kernel and $T$ an $s$-order
Calder\'on--Zygmund singular integral operator
having the vanishing moments up to order $s$ with
kernel $K$.
Then, for any $(p,q,s)_\alpha$-atom $a$ and any
$f\in JN_{(p',q',s)_\alpha}^{\mathrm{con}}(\rn)$,
$$
\langle T(a),f\rangle=\lf\langle a,\widetilde{T}(f)\r\rangle,
$$
where $\widetilde{T}$ is as in Definition \ref{def-JN-CZO}
with kernel $\widetilde{K}$ as in \eqref{Kw-K}.
\end{lemma}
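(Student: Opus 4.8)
The plan is to fix a ball $\widetilde{B}=B(c,\rho)$ large enough that $Q_z(r)\cup B_0\subset\widetilde{B}$, where $Q_z(r)$ is the cube supporting the atom $a$ and $B_0$ is the ball in Definition \ref{def-JN-CZO} used to realize $\widetilde{T}=\widetilde{T}_{B_0}$, and then to expand both sides of the claimed identity around $\widetilde{B}$. First I would check that $\langle T(a),f\rangle$ is a genuine absolutely convergent integral: since $T$ has the vanishing moments up to order $s$, Lemma \ref{Ta-N} shows that $T(a)$ is a constant multiple of a $(p,q,s,\alpha,\epsilon)$-molecule for some $\epsilon\in(0,1)$, and the assumption $\alpha<1-\frac1p+\frac{s+\dz}{n}$ is exactly what allows $\epsilon$ to be chosen so that moreover $\frac1\epsilon(\frac1q-\frac1p-\alpha)+\frac1{q'}+\frac sn<0$, whence Proposition \ref{M-JN1} applies and $\langle T(a),f\rangle=\int_{\rn}T(a)(x)f(x)\,dx$ converges absolutely. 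Moreover $T(a)$ has the vanishing moments up to order $s$ (by $T^{*}(x^{\gamma})=0$ and Corollary \ref{coro2.18}) and decays like $|\cdot-z|^{-n-s-\dz}$ at infinity (the pointwise estimate in the proof of Lemma \ref{Ta-N}), so $\int_{\rn}T(a)\,P=0$ for every $P\in\mathcal{P}_s(\rn)$ and, since $Q_z(r)\subset\widetilde{B}$ and Lemma \ref{I-JN} (with $\beta:=s+\dz$) controls the tail,
\[
\langle T(a),f\rangle=\int_{2\widetilde{B}}T(a)(x)\lf[f(x)-P_{2\widetilde{B}}^{(s)}(f)(x)\r]\,dx+\int_{\rn\setminus2\widetilde{B}}T(a)(x)\lf[f(x)-P_{2\widetilde{B}}^{(s)}(f)(x)\r]\,dx.
\]

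Next I would bring in formula \eqref{limit} from the proof of Proposition \ref{converge}, applied with the ball $\widetilde{B}$: for almost every $x\in\widetilde{B}$,
\[
\widetilde{T}(f)(x)=T_{(1)}\lf(\lf[f-P_{2\widetilde{B}}^{(s)}(f)\r]\mathbf{1}_{2\widetilde{B}}\r)(x)+E_{\widetilde{B}}(x)+P_1(x)+P_2(x),
\]
where $T_{(1)}$ is the $s$-order Calder\'on--Zygmund singular integral operator with kernel $\widetilde{K}$, $g:=[f-P_{2\widetilde{B}}^{(s)}(f)]\mathbf{1}_{2\widetilde{B}}\in L^{q'}(\rn)$ (so $T_{(1)}(g)\in L^{q'}(\rn)$), $E_{\widetilde{B}}$ is bounded on $\widetilde{B}$, and $P_1,P_2$ are polynomials of degree at most $s$ on $\widetilde{B}$. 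Since $a$ has compact support in $Q_z(r)\subset\widetilde{B}$ and annihilates $\mathcal{P}_s(\rn)$, the pairing $\langle a,\widetilde{T}(f)\rangle=\int_{Q_z(r)}a(x)\widetilde{T}(f)(x)\,dx$ is a convergent integral and $\langle a,\widetilde{T}(f)\rangle=\int_{Q_z(r)}a\,T_{(1)}(g)+\int_{Q_z(r)}a\,E_{\widetilde{B}}$. To match the first terms I would pass through the truncations $T_{\eta}$: by Remark \ref{rem-2.15}, $T_{\eta}(a)\to T(a)$ in $L^{q}(\rn)$, so $\int_{2\widetilde{B}}T(a)g=\lim_{\eta\to0^+}\int_{2\widetilde{B}}T_{\eta}(a)g$; Fubini on the bounded set $2\widetilde{B}\times Q_z(r)$ (where the truncated kernel is bounded) transfers $K$ to $\widetilde{K}$, turning the inner integral into $(T_{(1)})_{\eta}(g)$, and then $(T_{(1)})_{\eta}(g)\to T_{(1)}(g)$ in $L^{q'}(\rn)$ (again Remark \ref{rem-2.15}) yields $\int_{2\widetilde{B}}T(a)g=\int_{Q_z(r)}a\,T_{(1)}(g)$, which is exactly the first term in the expansion of $\langle a,\widetilde{T}(f)\rangle$.

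It then remains to identify the ``far'' pieces, that is, to show that $\int_{Q_z(r)}a\,E_{\widetilde{B}}$ equals $\int_{\rn\setminus2\widetilde{B}}T(a)(x)[f(x)-P_{2\widetilde{B}}^{(s)}(f)(x)]\,dx$. Recalling from the proof of Proposition \ref{converge} that
\[
E_{\widetilde{B}}(x)=\int_{\rn\setminus2\widetilde{B}}\lf[\widetilde{K}(x,y)-\sum_{\{\gamma\in\zz_+^n:\ |\gamma|\leq s\}}\frac{\partial_{(1)}^{\gamma}\widetilde{K}(c,y)}{\gamma!}(x-c)^{\gamma}\r]\lf[f(y)-P_{2\widetilde{B}}^{(s)}(f)(y)\r]\,dy,
\]
I would apply Fubini to $\int_{Q_z(r)}a(x)E_{\widetilde{B}}(x)\,dx$; absolute convergence of the double integral follows from the Taylor-remainder bound $|\widetilde{K}(x,y)-\sum_{|\gamma|\leq s}\frac{\partial_{(1)}^{\gamma}\widetilde{K}(c,y)}{\gamma!}(x-c)^{\gamma}|\lesssim|x-c|^{s+\dz}|y-c|^{-n-s-\dz}$ for $x\in\widetilde{B}$, $y\notin2\widetilde{B}$ (via \eqref{regular2-s} and \eqref{Kw-K}), together with Lemma \ref{I-JN} with $\beta:=s+\dz$ applied to $f\in JN_{(p',q',s)_\alpha}^{\mathrm{con}}(\rn)$ (its hypothesis $\alpha<\frac1{p'}+\frac{\beta}{n}=1-\frac1p+\frac{s+\dz}{n}$ being precisely the one assumed here) and $\int_{Q_z(r)}|a|<\fz$. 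After the interchange, for each fixed $y$ the Taylor part of the kernel is a polynomial in $x$ of degree at most $s$, so its integral against $a$ vanishes; using $\widetilde{K}(x,y)=K(y,x)$ and $y\notin2\widetilde{B}\supset\supp(a)$, the remaining inner integral $\int_{Q_z(r)}a(x)K(y,x)\,dx$ equals $T(a)(y)$, which gives the displayed identity. Adding this to the matching of the near pieces from the previous paragraph yields $\langle T(a),f\rangle=\langle a,\widetilde{T}(f)\rangle$.

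The step I expect to be the main obstacle is this last identification: $E_{\widetilde{B}}$ is built from the \emph{regularized} kernel, and it cannot be split into a plain-kernel term plus a polynomial term since neither would be absolutely convergent against $f-P_{2\widetilde{B}}^{(s)}(f)$; the point is to keep the regularized integrand while applying Fubini---where it \emph{is} absolutely integrable by Lemma \ref{I-JN}---and only afterwards use the vanishing moments of $a$ to discard the Taylor polynomial and so recover the plain kernel $K(y,\cdot)$, thereby reconstructing $T(a)$ on $\rn\setminus2\widetilde{B}$. A secondary, purely bookkeeping, point is the simultaneous choice of a single $\epsilon$ that makes $T(a)$ a molecule (Lemma \ref{Ta-N}) and makes Proposition \ref{M-JN1} applicable, which is possible exactly because $\alpha$ lies in the stated interval.
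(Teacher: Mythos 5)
Your proposal is correct, and it reaches the same conclusion by essentially the same technical ingredients (Fubini with truncated kernels on the ``near'' piece; Taylor-remainder bound from \eqref{regular2-s} and \eqref{Kw-K} together with Lemma \ref{I-JN} on the ``far'' piece; and the vanishing moments of $a$ to discard polynomial terms), but you organize the computation around a different pivot than the paper does. The paper chooses $B_0$ tightly around $\operatorname{supp}(a)$, writes $\langle T(a),f\rangle=\int T(a)f$, splits directly into $\int_{2B_0}$ and $\int_{\rn\setminus2B_0}$, inserts the Taylor polynomial of $K(x,\cdot)$ around $x_0$ in the far piece (which costs nothing by the vanishing moments of $a$), applies Fubini on each piece, and then recognizes the result verbatim as $\int a\,\widetilde{T}_{2B_0}(f)$ (hence $\langle a,\widetilde{T}(f)\rangle$ modulo $\mathcal{P}_s(\rn)$). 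You instead pick a larger ball $\widetilde{B}$ containing both $\operatorname{supp}(a)$ and $B_0$, expand $\widetilde{T}(f)$ on $\widetilde{B}$ via the decomposition \eqref{limit} from Proposition \ref{converge} into $T_{(1)}(g)+E_{\widetilde{B}}+P_1+P_2$, kill the polynomial terms by the vanishing moments of $a$, and then separately match the $T_{(1)}(g)$ and $E_{\widetilde{B}}$ pieces to the two pieces of $\int T(a)\,[f-P_{2\widetilde{B}}^{(s)}(f)]$. What the paper's route buys is a more self-contained computation: it never needs to split off and separately verify $\int T(a)\,P_{2\widetilde{B}}^{(s)}(f)=0$ (which you rightly justify via Corollary \ref{coro2.18} and the far-field decay from Lemma \ref{Ta-N}), and it never invokes the structure of \eqref{limit}. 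What your route buys is conceptual clarity: it makes transparent that the identity is a duality statement matching term-by-term the atomic decomposition of $T(a)$ against the local decomposition of $\widetilde{T}(f)$, and it isolates the only genuinely delicate step (the Fubini swap through the regularized kernel in $E_{\widetilde{B}}$), which you identify correctly. Your bookkeeping about the choice of $\epsilon$ reconciling Lemma \ref{Ta-N} and Proposition \ref{M-JN1} under the hypothesis $\alpha<1-\frac1p+\frac{s+\dz}{n}$ is also accurate and is precisely what the paper does implicitly when it invokes both in its opening line.
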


\begin{proof}
Let $p$, $q$, $s$, $\alpha$, $\dz$,
$T$, $\widetilde{T}$, $K$, and
$\widetilde{K}$ be as in the present lemma.
Also, let $a$ be a $(p,q,s)_\alpha$-atom
supported in the cube $Q_{x_0}(2r_0/\sqrt n)$
with $x_0\in\rn$ and $r_0\in(0,\fz)$,
and $B_0:=B(x_0,r_0)$. Then $Q_{x_0}(2r_0/\sqrt n)\subset B_0$.
From this, Lemma \ref{Ta-N},
Proposition \ref{M-JN1}, Remark \ref{rem-2.15},
and Definition \ref{d3.2}(iii),
we deduce that, for any $f\in JN_{(p',q',s)_\alpha}^{\mathrm{con}}(\rn)$,
\begin{align}\label{2x}
\langle T(a),f\rangle
&=\int_{\rn}T(a)(x)f(x)\,dx\\
&=\int_{\rn}\lim_{\eta\to0^+}\int_{B_0\setminus B(x,\eta)}
K(x,y)a(y)\,dy f(x)\,dx\noz\\
&=\int_{2B_0}\lim_{\eta\to0^+}\int_{B_0\setminus B(x,\eta)}
K(x,y)a(y)\,dy f(x)\,dx\noz\\
&\quad+\int_{\rn\setminus 2B_0}\int_{B_0}
\lf[K(x,y)-\sum_{\{\gamma\in\zz_+^n:\ |\gamma|\leq s\}}
\frac{\partial_{(2)}^{\gamma}K(x,x_0)}
{\gamma!}(y-x_0)^\gamma\r]a(y)\,dy f(x)\,dx\noz\\
&=:\mathrm{\Lambda}_1+\mathrm{\Lambda}_2,\noz
\end{align}
where $T_{\eta}$ is as in \eqref{def-T-eta}.

We first consider $\mathrm{\Lambda}_1$.
To this end, for any $\eta\in(0,\fz)$,
let $K_{\eta}$ be as in \eqref{p20x}.
Then, by $a\in L^q(\rn)$, Remark \ref{rem-2.15},
the fact that $f\mathbf{1}_{2B_0}\in L^{q'}(\rn)$,
and the Fubini theorem, we find that
\begin{align}\label{2xx}
\mathrm{\Lambda}_1
&=\lim_{\eta\to0^+}\int_{2B_0}\int_{B_0}K_{\eta}(x,y)a(y)\,dyf(x)\,dx\\
&=\lim_{\eta\to0^+}\int_{B_0}a(y)\int_{2B_0}K_{\eta}(x,y)f(x)\,dx\,dy\noz\\
&=\int_{B_0}a(y)\lim_{\eta\to0^+}\int_{2B_0}K_{\eta}(x,y)f(x)\,dx\,dy.\noz
\end{align}
This is a desired conclusion of $\mathrm{\Lambda}_1$.

Next, we consider $\mathrm{\Lambda}_2$. We first claim that
\begin{align}\label{3}
\int_{\rn\setminus 2B_0}\int_{B_0}
\lf|\lf[K(x,y)-\sum_{\{\gamma\in\zz_+^n:\ |\gamma|\leq s\}}
\frac{\partial_{(2)}^{\gamma}K(x,x_0)}{\gamma!}
(y-x_0)^\gamma\r]a(y)f(x)\r|\,dy\,dx<\fz.
\end{align}
Indeed, from \eqref{3'}, we deduce that
\begin{align}\label{4}
\int_{\rn\setminus 2B_0}\int_{B_0}
\lf|\lf[K(x,y)-\sum_{\{\gamma\in\zz_+^n:\ |\gamma|\leq s\}}
\frac{\partial_{(2)}^{\gamma}K(x,x_0)}{\gamma!}(y-x_0)^\gamma\r]a(y)\r|
\lf|P_{2B_0}^{(s)}(f)(x)\r|\,dy\,dx<\fz.
\end{align}
Moreover, using the Tonelli theorem,
the Taylor remainder theorem, \eqref{regular2-s},
Lemma \ref{I-JN} with $\beta:=s+\dz\in(s,\fz)$,
and $\alpha\in(\frac{1}{q}-\frac{1}{p},1-\frac{1}{p}+\frac{s+\delta}{n})$,
we find that, for any $y\in B_0$,
there exists a $\widetilde{y}\in B_0$ such that
\begin{align*}
&\int_{\rn\setminus 2B_0}\int_{B_0}
\lf|\lf[K(x,y)-\sum_{\{\gamma\in\zz_+^n:\ |\gamma|\leq s\}}
\frac{\partial_{(2)}^{\gamma}K(x,x_0)}{\gamma!}(y-x_0)^\gamma\r]a(y)
\lf[f(x)-P_{2B_0}^{(s)}(f)(x)\r]\r|\,dy\,dx\\
&\quad=\int_{B_0}\int_{\rn\setminus 2B_0}
\lf|\lf[K(x,y)-\sum_{\{\gamma\in\zz_+^n:\ |\gamma|\leq s\}}
\frac{\partial_{(2)}^{\gamma}K(x,x_0)}{\gamma!}(y-x_0)^\gamma
\r]a(y)\lf[f(x)-P_{2B_0}^{(s)}(f)(x)\r]\r|\,dx\,dy\\
&\quad=\int_{B_0}\int_{\rn\setminus 2B_0}
\lf|\sum_{\{\gamma\in\zz_+^n:\ |\gamma|=s\}}
\frac{\partial_{(2)}^{\gamma}K(x,\widetilde{y})
-\partial_{(2)}^{\gamma}K(x,x_0)}{\gamma!}
(y-x_0)^\gamma\r|
|a(y)|\lf|f(x)-P_{2B_0}^{(s)}(f)(x)\r|\,dx\,dy\\
&\quad\ls\int_{B_0}\int_{\rn\setminus 2B_0}
\frac{|\widetilde{y}-x_0|^{\dz}|y-x_0|^s}{|x-x_0|^{n+s+\dz}}
|a(y)|\lf|f(x)-P_{2B_0}^{(s)}(f)(x)\r|\,dx\,dy\\
&\quad\lesssim r_0^{s+\dz}
\|a\|_{L^1(B_0)}\int_{\rn\setminus 2B_0}
\frac{|f(x)-P_{2B_0}^{(s)}(f)(x)|}{|x-x_0|^{n+s+\dz}}\,dx\\
&\quad\ls r_0^{(-\frac{1}{p'}+\alpha)n}\|a\|_{L^1(B_0)}
\|f\|_{JN_{(p',q',s)_\alpha}^{\mathrm{con}}(\rn)}
<\fz,
\end{align*}
which, together with \eqref{4}, further implies that
\begin{align*}
&\int_{\rn\setminus 2B_0}\int_{B_0}
\lf|\lf[K(x,y)-\sum_{\{\gamma\in\zz_+^n:\ |\gamma|\leq s\}}
\frac{\partial_{(2)}^{\gamma}K(x,x_0)}{\gamma!}
(y-x_0)^\gamma\r]a(y)f(x)\r|\,dy\,dx\\
&\quad\leq\int_{\rn\setminus 2B_0}\int_{B_0}
\lf|\lf[K(x,y)-\sum_{\{\gamma\in\zz_+^n:\ |\gamma|\leq s\}}
\frac{\partial_{(2)}^{\gamma}K(x,x_0)}{\gamma!}
(y-x_0)^\gamma\r]a(y)P_{B_0}^{(s)}(f)(x)\r|\,dy\,dx\\
&\quad\quad+\int_{\rn\setminus 2B_0}\int_{B_0}
\lf|\lf[K(x,y)-\sum_{\{\gamma\in\zz_+^n:\ |\gamma|\leq s\}}
\frac{\partial_{(2)}^{\gamma}K(x,x_0)}{\gamma!}(y-x_0)^\gamma
\r]a(y)\lf[f(x)-P_{2B_0}^{(s)}(f)(x)\r]\r|\,dy\,dx\\
&\quad<\fz
\end{align*}
and hence \eqref{3} holds true. Then, by \eqref{3} and
the Fubini theorem, we obtain
\begin{align}\label{4x}
\mathrm{\Lambda}_2=\int_{B_0}\int_{\rn\setminus 2B_0}
\lf[K(x,y)-\sum_{\{\gamma\in\zz_+^n:\ |\gamma|\leq s\}}
\frac{\partial_{(2)}^{\gamma}K(x,x_0)}{\gamma!}
(y-x_0)^\gamma\r]f(x)\,dx\,a(y)\,dy.
\end{align}
This is a desired conclusion of $\mathrm{\Lambda}_2$.

Altogether, from \eqref{2x}, \eqref{2xx},
and \eqref{4x}, it follows that,
for any $f\in JN_{(p,q,s)_\alpha}^{\mathrm{con}}(\rn)$,
\begin{align*}
&\langle T(a),f\rangle\\
&\quad=\int_{B_0}a(y)\lim_{\eta\to0^+}
\int_{2B_0\setminus B(y,\eta)}K(x,y)f(x)\,dx\,dy\\
&\qquad+\int_{B_0}\int_{\rn\setminus 2B_0}
\lf[K(x,y)-\sum_{\{\gamma\in\zz_+^n:\ |\gamma|\leq s\}}
\frac{\partial_{(2)}^{\gamma}K(x,x_0)}{\gamma!}(y-x_0)^\gamma\r]f(x)\,dxa(y)\,dy\\
&\quad=\int_{B_0}a(y)\lim_{\eta\to0^+}\int_{\rn\setminus B(y,\eta)}
\lf[K(x,y)-\sum_{\{\gamma\in\zz_+^n:\ |\gamma|\leq s\}}
\frac{\partial_{(2)}^{\gamma}K(x,x_0)}{\gamma!}(y-x_0)^\gamma
\mathbf{1}_{\rn\setminus 2B_0}(x)\r]f(x)\,dx\,dy\noz\\
&\quad=\int_{B_0}a(y)\lim_{\eta\to0^+}\int_{\rn\setminus B(y,\eta)}
\lf[\widetilde{K}(y,x)-\sum_{\{\gamma\in\zz_+^n:\ |\gamma|\leq s\}}
\frac{\partial_{(1)}^{\gamma}\widetilde{K}(x_0,x)}{\gamma!}(y-x_0)^\gamma
\mathbf{1}_{\rn\setminus 2B_0}(x)\r]f(x)\,dx\,dy\\
&\quad=\int_{B_0}a(y)\widetilde{T}(f)(y)\,dy
=\lf\langle a,\widetilde{T}(f)\r\rangle.
\end{align*}
This finishes the proof of Lemma \ref{duity-T-Tw}.
\end{proof}

Now, we give the following main result of
this subsection, which shows the boundedness
of Calder\'on--Zygmund operators on
$HK_{(p,q,s)_\alpha}^{\mathrm{con}}(\rn)$.

\begin{theorem}\label{T-bounded-HK}
Let $p\in(1,\fz)$, $q\in(1,\fz)$,
$\frac{1}{p}+\frac{1}{p'}=1=\frac{1}{q}+\frac{1}{q'}$, $s\in\zz_+$,
and $\alpha\in\rr$ satisfy $\frac{1}{q}-\frac{1}{p}<\alpha<\frac{s+\delta}{n}$
with $\dz\in(0,1]$ as in Definition \ref{def-s-k}.
Let $K$ be an $s$-order standard kernel as
in Definition \ref{def-s-k} and $T$ the $s$-order
Calder\'on--Zygmund singular integral operator with kernel $K$.
Then the following two statements are equivalent:
\begin{enumerate}
\item[\rm (i)]
there exists a positive constant $C$ such that,
for any $(p,q,s)_\alpha$-atom $a$,
\begin{align}\label{Ta-C}
	\|T(a)\|_{HK_{(p,q,s)_\alpha}^{\mathrm{con}}(\rn)}\leq C
\end{align}
and $T$ can be extended to a unique continuous
linear operator, still denoted by $T$, on the space
$HK_{(p,q,s)_\alpha}^{\mathrm{con}}(\rn)$,
namely, there exists a positive constant $C$ such that,
for any $g\in HK_{(p,q,s)_\alpha}^{\mathrm{con}}(\rn)$,
\begin{align}\label{3.34y}
\|T(g)\|_{HK_{(p,q,s)_\alpha}^{\mathrm{con}}(\rn)}
\leq C\|g\|_{HK_{(p,q,s)_\alpha}^{\mathrm{con}}(\rn)}
\end{align}
and, moreover, for any $f\in JN_{(p',q',s)_\alpha}^{\mathrm{con}}(\rn)$,
$\widetilde{T}(f)\in JN_{(p',q',s)_\alpha}^{\mathrm{con}}(\rn)$ and
\begin{align}\label{3.34x}
\langle T(g),f\rangle=\lf\langle g,\widetilde{T}(f)\r\rangle,	
\end{align}
where $\widetilde{T}$ is as in Definition \ref{def-JN-CZO}
with kernel $\widetilde{K}$ as in \eqref{Kw-K};
\item[\rm (ii)]
for any $\gamma\in\zz_+^n$ with $|\gamma|\leq s$, $T^*(x^{\gamma})=0$.
\end{enumerate}
\end{theorem}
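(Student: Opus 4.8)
The plan is to obtain both implications of Theorem \ref{T-bounded-HK} from results already assembled in the excerpt, so the only genuine labour is reconciling the numerical parameters. To prove (ii) $\Rightarrow$ (i) I would invoke Theorem \ref{Bounded-HK-A} with $p_1=p_2=p$, $q_1=q_2=q$, $s_1=s_2=s$, $\alpha_1=\alpha_2=\alpha$, $A:=T$ and $\widetilde{A}:=\widetilde{T}$. The condition $\alpha>\frac1q-\frac1p$ places $\alpha$ in $(\frac1q-\frac1p,\fz)$, as required there. The crux is to fix one $\epsilon\in(0,1)$ that simultaneously satisfies the inequality demanded in hypothesis (i) of Theorem \ref{Bounded-HK-A}, namely $\frac1\epsilon(\frac1q-\frac1p-\alpha)+\frac1{q'}+\frac sn<0$, and the inequality demanded by Lemma \ref{Ta-N}, namely $-\frac1{q'}-\frac{s+\dz}n\le\frac1\epsilon(\frac1q-\frac1p-\alpha)$. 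Since $\alpha>\frac1q-\frac1p$ the quantity $\frac1q-\frac1p-\alpha$ is negative, so these two requirements together amount to $\epsilon$ lying in the half-open interval $[\frac{\alpha-1/q+1/p}{1/q'+(s+\dz)/n},\frac{\alpha-1/q+1/p}{1/q'+s/n})$, whose two endpoints are positive, which is nonempty because $\dz>0$, and whose left endpoint is strictly below $1$ precisely because $\alpha<\frac{s+\dz}n$ and $p>1$; hence an admissible $\epsilon$ exists.

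With such an $\epsilon$ fixed, hypothesis (i) of Theorem \ref{Bounded-HK-A} --- that $\frac{T(a)}{C_0}$ is a $(p,q,s,\alpha,\epsilon)$-molecule for every $(p,q,s)_\alpha$-atom $a$ --- follows from Lemma \ref{Ta-N}, whose admissibility range $\alpha<1-\frac1p+\frac{s+\dz}n$ is implied by $\alpha<\frac{s+\dz}n$; hypothesis (ii) of Theorem \ref{Bounded-HK-A} --- the duality $\langle T(a),f\rangle=\langle a,\widetilde{T}(f)\rangle$ --- is exactly Lemma \ref{duity-T-Tw}; and the boundedness of $\widetilde{T}$ from $JN_{(p',q',s)_\alpha}^{\mathrm{con}}(\rn)$ to itself is Theorem \ref{thm-bdd-JN} applied with $(p,q)$ replaced by $(p',q')$ (legitimate since $q\in(1,\fz)$ forces $q'\in(1,\fz)$, and $\alpha<\frac{s+\dz}n$). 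Theorem \ref{Bounded-HK-A} then produces the unique continuous extension of $T$ on $HK_{(p,q,s)_\alpha}^{\mathrm{con}}(\rn)$ with $\|T(g)\|_{HK_{(p,q,s)_\alpha}^{\mathrm{con}}(\rn)}\le C\|g\|_{HK_{(p,q,s)_\alpha}^{\mathrm{con}}(\rn)}$ and the pairing formula \eqref{3.34x}; the uniform atomic bound \eqref{Ta-C} then follows from Lemma \ref{Ta-N} combined with Proposition \ref{HK-mole}, and $\widetilde{T}(f)\in JN_{(p',q',s)_\alpha}^{\mathrm{con}}(\rn)$ is once more Theorem \ref{thm-bdd-JN}. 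This yields all of (i).

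For (i) $\Rightarrow$ (ii), fix $\gamma\in\zz_+^n$ with $|\gamma|\le s$ and a $(p,q,s)_\alpha$-atom $a$. By (i) we have $\|T(a)\|_{HK_{(p,q,s)_\alpha}^{\mathrm{con}}(\rn)}<\fz$, hence $T(a)\in HK_{(p,q,s)_\alpha}^{\mathrm{con}}(\rn)\subset(JN_{(p',q',s)_\alpha}^{\mathrm{con}}(\rn))^*$; since every element of $HK_{(p,q,s)_\alpha}^{\mathrm{con}}(\rn)$ annihilates $\mathcal{P}_s(\rn)$ (each atomic constituent does, by Definition \ref{d3.2}(iii)), it follows that $\langle T(a),y^\gamma\rangle=0$. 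On the other hand, \eqref{3.34x} gives $\langle T(a),y^\gamma\rangle=\langle a,\widetilde{T}(y^\gamma)\rangle$; recalling that $\widetilde{T}$ stands for $\widetilde{T}_{B_0}$ and that, for an atom $a$, the pairing $\langle a,\cdot\rangle$ is the plain integral (Lemma \ref{p3.1}), identity \eqref{0-0'} --- which holds without any moment assumption --- shows $\langle a,\widetilde{T}(y^\gamma)\rangle=\int_{\rn}a(x)\widetilde{T}_{B_0}(y^\gamma)(x)\,dx=\int_{\rn}T(a)(x)x^\gamma\,dx$. Therefore $\int_{\rn}T(a)(x)x^\gamma\,dx=0$ for every $(p,q,s)_\alpha$-atom $a$, and, by normalizing, for every $a\in L^q_s(\rn)$ with compact support; Corollary \ref{coro2.18} then gives $T^*(x^\gamma)=0$.

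Since Lemma \ref{Ta-N}, Lemma \ref{duity-T-Tw}, Theorem \ref{thm-bdd-JN}, Theorem \ref{Bounded-HK-A} and Proposition \ref{HK-mole} are already available, I expect the main obstacle to be purely the parameter bookkeeping in the first step: exhibiting $\epsilon\in(0,1)$ that reconciles the molecular decay rate forced by the kernel regularity exponent $\dz$ (through Lemma \ref{Ta-N}) with the summability rate needed for a molecule to belong to $HK_{(p,q,s)_\alpha}^{\mathrm{con}}(\rn)$ (through Theorem \ref{Bounded-HK-A} and Proposition \ref{HK-mole}) --- this is exactly the place where the two-sided restriction $\frac1q-\frac1p<\alpha<\frac{s+\dz}n$ is consumed. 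A lesser point needing care is, in (i) $\Rightarrow$ (ii), the legitimacy of the pairing identifications and the fact that elements of $HK_{(p,q,s)_\alpha}^{\mathrm{con}}(\rn)$ kill all polynomials of degree at most $s$.
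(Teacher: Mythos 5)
Your proof of (ii) $\Rightarrow$ (i) matches the paper's argument step by step: choose an admissible $\epsilon$, invoke Lemma \ref{Ta-N} and Proposition \ref{HK-mole} for \eqref{Ta-C}, Theorem \ref{thm-bdd-JN} for the $JN$-boundedness of $\widetilde{T}$, Lemma \ref{duity-T-Tw} for hypothesis (ii) of Theorem \ref{Bounded-HK-A}, and then Theorem \ref{Bounded-HK-A}. You add the useful extra detail of exhibiting explicitly the interval $[\,\beta/(1/q'+(s+\delta)/n),\,\beta/(1/q'+s/n)\,)$, with $\beta:=\alpha-1/q+1/p$, from which $\epsilon$ can be drawn, where the paper simply asserts such an $\epsilon$ exists; that is correct bookkeeping and worth recording.

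For (i) $\Rightarrow$ (ii) your route is genuinely different and more direct. The paper first deduces, via the identification $(HK_{(p,q,s)_\alpha}^{\mathrm{con}})^*\simeq JN_{(p',q',s)_\alpha}^{\mathrm{con}}$, \eqref{3.34x}, and \eqref{3.34y}, that $\widetilde{T}$ is bounded on $JN_{(p',q',s)_\alpha}^{\mathrm{con}}(\rn)$, and then applies the necessity part of Theorem \ref{thm-bdd-JN}. You instead test \eqref{3.34x} directly against $f=y^\gamma$, use that any continuous functional on a seminormed space (here $T(a)\in(JN_{(p',q',s)_\alpha}^{\mathrm{con}}(\rn))^*$) annihilates elements of zero seminorm, and then convert $\langle a,\widetilde{T}(y^\gamma)\rangle$ into $\int_{\rn}T(a)(x)x^\gamma\,dx$ via Lemma \ref{p3.1} and \eqref{0-0'}. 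This bypasses the passage through $JN$-boundedness of $\widetilde{T}$ and is a cleaner way to extract the moment condition from the duality.

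One imprecision needs fixing at the very end. Corollary \ref{coro2.18} states the equivalence of \eqref{T-x-gam} with the universal statement ranging over all $q\in(1,\fz)$; its sufficiency direction is proved by taking $q=2$, so it consumes exactly the case you have not produced when the theorem's $q$ exceeds $2$ (compactly supported $L^2_s$ functions need not lie in $L^q_s$ for $q>2$). The correct move is the one the paper itself makes in the (i) $\Rightarrow$ (ii) part of Lemma \ref{Ta-N}: rescale atoms to cover all $a\in L^q_s(\rn)$ with compact support, then run the necessity argument of Proposition \ref{Assume} with $q$ in place of $2$ (or use density of $L^q_s(B)$ in $L^2_s(B)$) to conclude $\widetilde{T}_{B_0}(y^\gamma)\in\mathcal{P}_s(\rn)$ a.e., whence Proposition \ref{Assume} gives $T^*(x^\gamma)=0$. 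With that substitution your proof is complete.
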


\begin{proof}
Let $p$, $q$, $s$, $\alpha$, $\dz$, $K$,
$T$, $\widetilde{K}$, and $\widetilde{T}$
be as in the present theorem.
We first show (i) $\Rightarrow$ (ii).
Indeed, using Lemma \ref{t3.9}, \eqref{3.34x}, and \eqref{3.34y},
we find that, for any
$f\in JN_{(p',q',s)_\alpha}^{\mathrm{con}}(\rn)$,
\begin{align*}
\lf\|\widetilde{T}(f)\r\|_{JN_{(p',q',s)_\alpha}^{\mathrm{con}}(\rn)}
&\sim\lf\|\mathcal{L}_{\widetilde{T}(f)}
\r\|_{(HK_{(p,q,s)_\alpha}^{\mathrm{con}}(\rn))^*}
\sim\sup_{\|g\|_{HK_{(p,q,s)_\alpha}^{\mathrm{con}}(\rn)}=1}
\lf|\lf\langle \mathcal{L}_{\widetilde{T}(f)},g\r\rangle\r|\\
&\sim\sup_{\|g\|_{HK_{(p,q,s)_\alpha}^{\mathrm{con}}(\rn)}=1}
\lf|\lf\langle g, \widetilde{T}(f)\r\rangle\r|
\sim\sup_{\|g\|_{HK_{(p,q,s)_\alpha}^{\mathrm{con}}(\rn)}=1}
\lf|\langle T(g), f\rangle\r|\\
&\ls\sup_{\|g\|_{HK_{(p,q,s)_\alpha}^{\mathrm{con}}(\rn)}=1}
\|T(g)\|_{HK_{(p,q,s)_\alpha}^{\mathrm{con}}(\rn)}
\|f\|_{JN_{(p',q',s)_\alpha}^{\mathrm{con}}(\rn)}\\
&\ls\|f\|_{JN_{(p',q',s)_\alpha}^{\mathrm{con}}(\rn)},
\end{align*}
where $\mathcal{L}_{\widetilde{T}(f)}$ is as in
\eqref{3.0x} with $f$ replaced by $\widetilde{T}(f)$.
This shows that $\widetilde{T}$ is bounded on
$JN_{(p',q',s)_\alpha}^{\mathrm{con}}(\rn)$,
which, combined with Theorem \ref{thm-bdd-JN},
further implies that, for any $\gamma\in\zz_+^n$
with $|\gamma|\leq s$, $T^*(x^{\gamma})=0$.
This finishes the proof that (i) $\Rightarrow$ (ii).

Next, we show (ii) $\Rightarrow$ (i).	
Let $\epsilon\in(0,1)$ be such that
$\frac{1}{\epsilon}
(\frac{1}{q}-\frac{1}{p}-\alpha)+\frac{1}{q'}+\frac{s}{n}<0$ and
$-\frac{1}{q'}-\frac{s+\dz}{n}\leq \frac{1}{\epsilon}(\frac{1}{q}
-\frac{1}{p}-\alpha)$.
Then, by Proposition \ref{HK-mole} and Lemma \ref{Ta-N}, we
conclude that, for any
$(p,q,s)_\alpha$-atom $a$, $T(a)$ is
a $(p,q,s,\alpha,\epsilon)$-molecule and
$\|T(a)\|_{HK_{(p,q,s)_\alpha}^{\mathrm{con}}(\rn)}\leq C$,
where the positive constant $C$
is independent of $a$. This implies that \eqref{Ta-C} holds true.
Moreover, from (ii) of the present theorem and Theorem \ref{thm-bdd-JN},
it follows that $\widetilde{T}$ is bounded on
$JN_{(p',q',s)_\alpha}^{\mathrm{con}}(\rn)$
and hence, for any $f\in JN_{(p',q',s)_\alpha}^{\mathrm{con}}(\rn)$,
$\widetilde{T}(f)\in JN_{(p',q',s)_\alpha}^{\mathrm{con}}(\rn)$.
Furthermore, by the boundedness of $\widetilde{T}$
on $JN_{(p',q',s)_\alpha}^{\mathrm{con}}(\rn)$ and
Lemmas \ref{Ta-N} and \ref{duity-T-Tw},
we conclude that (i) and (ii) of Theorem \ref{Bounded-HK-A} hold true
with $A:=T$, $\widetilde{A}:=\widetilde{T}$,
$p_1:=p=:p_2$, $q_1:=q=:q_2$,
$s_1:=s=:s_2$, $\alpha_1:=\alpha=:\alpha_2$,
which further implies that \eqref{3.34y} and \eqref{3.34x} hold true.
Then we complete the proof that (ii) $\Rightarrow$ (i)
and hence of Theorem \ref{T-bounded-HK}.
\end{proof}

\begin{remark}\label{4.21} 	
\begin{enumerate}
\item[\rm (i)]	
Theorem \ref{T-bounded-HK} implies that
the adjoint operator of $T$ on $HK_{(p,q,s)_\alpha}^{\rm{con}}(\rn)$
is just the operator $\widetilde{T}$
defined on $JN_{(p',q',s)_\alpha}^{\rm{con}}(\rn)$;
see, for instance, \cite[Theorem 5]{MS79}
and \cite[Theorem 3.1]{N17} for
similar results on Campanato-type spaces.

\item[\rm (ii)]
In Theorem \ref{T-bounded-HK},
$\frac{1}{q}-\frac{1}{p}<\alpha<\frac{s+\delta}{n}$
is a suitable assumption
because we need $\az\in(\frac{1}{q}-\frac{1}{p},\fz)$ when
proving that the molecule in Definition \ref{Def-mole} belongs to $HK_{(p,q,s)_\alpha}^{\rm{con}}(\rn)$
(see Proposition \ref{HK-mole}), and
$\alpha\in(-\fz,\frac{s+\delta}{n})$
when proving that $\widetilde{T}$ is bounded on
the congruent JNC space
(see Theorem \ref{thm-bdd-JN}).
It is still unknown whether or not
Theorem \ref{T-bounded-HK} holds true
with $\alpha\in (-\fz,\frac{1}{q}-\frac{1}{p}]$.

\item[\rm (iii)]	
Let $q\in(1,\fz)$, $s\in\zz_+$, $K$
be an $s$-order standard kernel, and $T$ the
$s$-order Calder\'on--Zygmund
singular integral operator with kernel $K$.
An essential ingredient in this article is
the boundedness of $T$ on $L^q(\rn)$,
which can be deduced from the boundedness of $T$ on $L^2(\rn)$
and (i) through (iv) of Definition \ref{def-s-k}.
However, if we directly assume that $T$ is bounded on $L^q(\rn)$,
then Theorems \ref{thm-bdd-JN} and \ref{T-bounded-HK}
still hold true under the assumption that $K$ only satisfies
(i) and (iii) of Definition \ref{def-s-k}.
\end{enumerate}
\end{remark}

\noindent\textbf{Acknowledgements}\quad
Dachun Yang would like to thank Professor Ibrahim Fofana
for him to bring their attention to references \cite{mfs2013,ffk2015}.

\bigskip

\noindent Hongchao Jia, Jin Tao,
Dachun Yang (Corresponding author), Wen Yuan and Yangyang Zhang

\smallskip

\noindent  Laboratory of Mathematics and Complex Systems
(Ministry of Education of China),
School of Mathematical Sciences, Beijing Normal University,
Beijing 100875, People's Republic of China

\smallskip

\noindent {\it E-mails}: \texttt{hcjia@mail.bnu.edu.cn} (H. Jia)

\noindent\phantom{{\it E-mails:}} \texttt{jintao@mail.bnu.edu.cn} (J. Tao)

\noindent\phantom{{\it E-mails:}} \texttt{dcyang@bnu.edu.cn} (D. Yang)

\noindent\phantom{{\it E-mails:}} \texttt{wenyuan@bnu.edu.cn} (W. Yuan)

\noindent\phantom{{\it E-mails:}} \texttt{yangyzhang@mail.bnu.edu.cn} (Y. Zhang)


\begin{thebibliography}{99}
\bibitem{ABKY}
D. Aalto, L. Berkovits, O. E. Kansanen and H. Yue,
John--Nirenberg lemmas for a doubling measure,
Studia Math. 204 (2011), 21-37.

\vspace{-0.3cm}

\bibitem{ap}
P. Auscher and C. Prisuelos-Arribas,
Tent space boundedness via extrapolation,
Math. Z. 286 (2017), 1575-1604.

\vspace{-0.3cm}

\bibitem{Bo2003}
M. Bownik,
Anisotropic Hardy spaces and wavelets,
Mem. Amer. Math. Soc. 164 (2003), no. 781, 122 pp.

\vspace{-0.3cm}

\bibitem{C}
S. Campanato,
Propriet\`{a} di una famiglia di spazi funzionali,
Ann. Scuola Norm. Sup. Pisa Cl. Sci. (3) 18 (1964), 137-160.

\vspace{-0.3cm}

\bibitem{CW}
R. R. Coifman and G. Weiss, Extensions
of Hardy spaces and their use in analysis,
Bull. Amer. Math. Soc. 83 (1977), 569-645.

\vspace{-0.3cm}

\bibitem{DHKY}
G. Dafni, T. Hyt\"onen, R. Korte and H. Yue,
The space $JN_p$: nontriviality and duality, J.
Funct. Anal. 275 (2018), 577-603.

\vspace{-0.3cm}

\bibitem{DM}
O. Dom\'inguez and M. Milman, Sparse Brudnyi and John--Nirenberg spaces,
arXiv: 2107. 05117.

\vspace{-0.3cm}

\bibitem{mfs2013}
M. Dosso, I. Fofana and M. Sanogo,
On some subspaces of Morrey-Sobolev spaces
and boundedness of Riesz integrals,
Ann. Polon. Math. 108 (2013), 133-153.

\vspace{-0.3cm}

\bibitem{Duo01}
J. Duoandikoetxea, Fourier Analysis, Graduate Studies
in Mathematics 29, American
Mathematical Society, Providence, RI, 2001.

\vspace{-0.3cm}

\bibitem{ffk2010}
J. Feuto, I. Fofana and K. Koua,
Integrable fractional mean functions on spaces of homogeneous type,
Afr. Diaspora J. Math. (N.S.) 9 (2010), 8-30.

\vspace{-0.3cm}

\bibitem{F88}
I. Fofana,
\'Etude d'une classe d'espace de fonctions contenant les espaces de Lorentz,
Afrika Mat. (2) 1 (1988), 29-50.

\vspace{-0.3cm}

\bibitem{ffk2015}
I. Fofana, F. R. Fal\'ea and B. A. Kpata,
A class of subspaces of Morrey spaces and norm inequalities on Riesz potential operators,
Afr. Mat. 26 (2015), 717-739.

\vspace{-0.3cm}

\bibitem{FPW}
B. Franchi, C. P\'erez and R. L. Wheeden, Self-improving properties of
John--Nirenberg and Poincar\'e inequalities on spaces
of homogeneous type, J. Funct. Anal. 153 (1998), 108-146.

\vspace{-0.3cm}

\bibitem{FMS}
N. Fusco, G. Moscariello and C. Sbordone,
BMO-type seminorms and Sobolev functions,
ESAIM Control Optim. Calc. Var. 24 (2018), 835-847.

\vspace{-0.3cm}

\bibitem{HYZ2009}
G. Hu, D. Yang and Y. Zhou,
Boundedness of singular integrals in Hardy spaces on spaces of homogeneous type,
Taiwanese J. Math. 13 (2009), 91-135.

\vspace{-0.3cm}

\bibitem{HMV}
R. Hurri-Syrj\"anen, N. Marola and A. V. V\"ah\"akangas,
Aspects of local-to-global results,
Bull. Lond. Math. Soc. 46 (2014), 1032-1042.

\vspace{-0.3cm}

\bibitem{jtyyz1}
H. Jia, J. Tao, D. Yang, W. Yuan and Y. Zhang,
Special John--Nirenberg--Campanato spaces via congruent cubes,
Sci. China Math. (2021), https://doi.org/10.1007/s11425-021-1866-4.

\vspace{-0.3cm}

\bibitem{jtyy3}
H. Jia, J. Tao, D. Yang, W. Yuan and Y. Zhang,
Boundedness of fractional integrals
on special John--Nirenberg--Campanato and
Hardy-type spaces via congruent cubes,
Submitted or arXiv: 2108.01891.
\vspace{-0.3cm}

\bibitem{jyyz2}
H. Jia, D. Yang, W. Yuan and Y. Zhang,
Estimates for Littlewood--Paley operators on special
John--Nirenberg--Campanato spaces via congruent cubes,
Submitted or arXiv: 2108.01559.

\vspace{-0.3cm}

\bibitem{JN}
F. John and L. Nirenberg,
On functions of bounded mean oscillation,
Comm. Pure Appl. Math. 14 (1961), 415-426.

\vspace{-0.3cm}

\bibitem{kntyy2007}
N. Kikuchi, E. Nakai, N. Tomita, K. Yabuta and T. Yoneda,
Calder\'on--Zygmund operators on amalgam spaces
and in the discrete case,
J. Math. Anal. Appl. 335 (2007), 198-212.

\vspace{-0.3cm}

\bibitem{kk2013}
S. Kislyakov and N. Kruglyak,
Extremal Problems in Interpolation Theory,
Whitney--Besicovitch Coverings, and Singular Integrals,
Instytut Matematyczny Polskiej Akademii Nauk. Monografie
Matematyczne (New Series) 74, Birkh\"auser/Springer Basel AG, Basel, 2013.

\vspace{-0.3cm}

\bibitem{L}
S. Lu,
Four Lectures on Real $H^p$ Spaces,
World Scientific Publishing Co., Inc., River Edge, NJ, 1995.

\vspace{-0.3cm}

\bibitem{MS79}
R. A. Mac\'ias and C. Segovia,
Singular integrals on generalized Lipschitz and Hardy spaces,
Studia Math. 65 (1979), 55-75.

\vspace{-0.3cm}

\bibitem{MP}
P. MacManus and C. P\'erez,
Generalized Poincar\'e inequalities: sharp self-improving properties,
Internat. Math. Res. Notices 1998 (1998), 101-116.

\vspace{-0.3cm}

\bibitem{mc1997}
Y. Meyer and R. R. Coifman,
Wavelets. Calder\'on--Zygmund and Multilinear Operators,
Cambridge Studies in Advanced Mathematics 48,
Cambridge University Press, Cambridge, 1997.

\vspace{-0.3cm}

\bibitem{M}
M. Milman, Marcinkiewicz spaces,
Garsia--Rodemich spaces and the scale of
John--Nirenberg self improving inequalities,
Ann. Acad. Sci. Fenn. Math. 41 (2016), 491-501.

\vspace{-0.3cm}

\bibitem{MM}
M. Milman, Garsia--Rodemich spaces: Bourgain--Brezis--Mironescu space,
embeddings and rearrangement invariant spaces,
J. Anal. Math. 139 (2019), 121-141.

\vspace{-0.3cm}

\bibitem{Mo}
C. B. Morrey,
On the solutions of quasi-linear elliptic partial differential equations.
Trans. Amer. Math. Soc. 43 (1938), 126-166.

\vspace{-0.3cm}

\bibitem{N94}
E. Nakai, Hardy--Littlewood maximal operator,
singular integral operators and the Riesz potentials on generalized
Morrey spaces, Math. Nachr. 166 (1994), 95-103.

\vspace{-0.3cm}

\bibitem{N06}
E. Nakai, The Campanato, Morrey and H\"older spaces
on spaces of homogeneous type, Studia Math. 176 (2006), 1-19.

\vspace{-0.3cm}

\bibitem{N10}
E. Nakai,
Singular and fractional integral operators on Campanato
spaces with variable growth conditions,
Rev. Mat. Complut. 23 (2010), 355-381.

\vspace{-0.3cm}


\bibitem{N17}
E. Nakai,
Singular and fractional integral operators on preduals of
Campanato spaces with variable growth condition,
Sci. China Math. 60 (2017), 2219-2240.

\vspace{-0.3cm}

\bibitem{ES2012}
E. Nakai and Y. Sawano, Hardy spaces with variable exponents
and generalized Campanato spaces, J. Funct. Anal.
262 (2012), 3665-3748.

\vspace{-0.3cm}


\bibitem{N97}
E. Nakai and K. Yabuta, Pointwise multipliers for functions
of weighted bounded mean oscillation on spaces of homogeneous
type, Math. Japon. 46 (1997), 15-28.

\vspace{-0.3cm}

\bibitem{Pe1996}
J. Peetre,
On convolution operators leaving $L^{p,\lambda}$ spaces invariant,
Ann. Mat. Pura Appl. (4) 72 (1966), 295-304.

\vspace{-0.3cm}

\bibitem{ST05}
Y. Sawano and H. Tanaka,
Morrey spaces for non-doubling measures,
Acta Math. Sin. (Engl. Ser.) 21 (2005), 1535-1544.

\vspace{-0.3cm}

\bibitem{EMS1970}
E. M. Stein, Singular Integrals and Differentiability Properties of Functions,
Princeton Mathematical Series 30,
Princeton University Press, Princeton, NJ, 1970.

\vspace{-0.3cm}

\bibitem{stein1993}
E. M. Stein,
Harmonic analysis: Real-Variable Methods,
Orthogonality, and Oscillatory Integrals,
Princeton Mathematical Series 43,
Monographs in Harmonic Analysis, III,
Princeton University Press, Princeton, NJ, 1993.

\vspace{-0.3cm}

\bibitem{stein2011}
E. M. Stein and R. Shakarchi, Functional Analysis.
Introduction to Further Topics in Analysis, Princeton Lectures in
Analysis 4, Princeton University Press, Princeton, NJ, 2011.

\vspace{-0.3cm}

\bibitem{SXY}
J. Sun, G. Xie and D. Yang,
Localized John--Nirenberg--Campanato spaces,
Anal. Math. Phys. 11 (2021), Paper No. 29, 49 pp.

\vspace{-0.3cm}

\bibitem{TYY19}
J. Tao, D. Yang and W. Yuan,
John--Nirenberg--Campanato spaces,
Nonlinear Anal. 189 (2019), 111584, 36 pp.

\vspace{-0.3cm}

\bibitem{TYY21}
J. Tao, D. Yang and W. Yuan,
A bridge connecting Lebesgue and Morrey spaces via Riesz norms,
Banach J. Math. Anal. 15 (2021), Paper No. 20, 29 pp.

\vspace{-0.3cm}

\bibitem{TYY2}
J. Tao, D. Yang and W. Yuan,
Vanishing John--Nirenberg spaces,
Adv. Calc. Var. (2021), https://doi.org/10.1515/acv-2020-0061.

\vspace{-0.3cm}

\bibitem{TYY20S}
J. Tao, D. Yang and W. Yuan,
A survey on function spaces of John--Nirenberg type,
Submitted or arXiv: 2108.01511.

\vspace{-0.3cm}

\bibitem{zdjy}
Z. Zeng, D.-C. Chang, J. Tao and D. Yang,
Nontriviality of Riesz--Morrey spaces,
Appl. Anal. (2021), https://doi.org/10.1080/00036811.2021.1932836.

\vspace{-0.3cm}
\end{thebibliography}
\end{document}